\newtheorem{condition}{Condition}
\newtheorem{theorem}{Theorem}
\newtheorem{proposition}{Proposition}
\newtheorem{lemma}{Lemma}
\newtheorem{example}{Example}
\newtheorem{definition}{Definition}
\newtheorem{corollary}{Corollary}
\newtheorem{remark}{Remark}
\newenvironment{proof}{\paragraph{Proof:}}{\hfill$\square$}
\begin{document}

\title{A unifying approach for doubly-robust $\ell_1$ regularized estimation
of causal contrasts}
\date{}

\author[1]{Ezequiel Smucler \thanks{esmucler@utdt.edu}}
\affil[1]{Department of Mathematics and Statistics, Universidad Torcuato Di Tella}
\author[2]{Andrea Rotnitzky \thanks{arotnitzky@utdt.edu}}
\affil[2]{Department of Economics, Universidad Torcuato Di Tella and CONICET}
\author[3]{James M. Robins \thanks{robins@hsph.harvard.edu}}
\affil[3]{Department of Epidemiology and Biostatistics, Harvard T.H. Chan School of Public Health}

\maketitle

\begin{abstract}
We consider inference about a scalar parameter under a non-parametric model
based on a one-step estimator computed as a plug in estimator plus the
empirical mean of an estimator of the parameter's influence function. We
focus on a class of parameters that have influence function which depends on
two infinite dimensional nuisance functions and such that the bias of the
one-step estimator of the parameter of interest is the expectation of the
product of the estimation errors of the two nuisance functions. Our class
includes many important treatment effect contrasts of interest in causal
inference and econometrics, such as ATE, ATT, an integrated causal contrast
with a continuous treatment, and the mean of an outcome missing not at
random. We propose estimators of the target parameter that entertain
approximately sparse regression models for the nuisance functions allowing
for the number of potential confounders to be even larger than the sample
size. By employing sample splitting, cross-fitting and $\ell_1$-regularized
regression estimators of the nuisance functions based on objective functions
whose directional derivatives agree with those of the parameter's influence
function, we obtain estimators of the target parameter with two desirable
robustness properties: (1) they are \textit{rate doubly-robust} in that they
are root-n consistent and asymptotically normal when both nuisance functions
follow approximately sparse models, even if one function has a very
non-sparse regression coefficient, so long as the other has a sufficiently
sparse regression coefficient, and (2) they are \textit{model doubly-robust}
in that they are root-n consistent and asymptotically normal even if one of
the nuisance functions does not follow an approximately sparse model so long
as the other nuisance function follows an approximately sparse model with a
sufficiently sparse regression coefficient.
\end{abstract}

\affil[1]{Department of Mathematics and Statistics, Universidad Torcuato Di
Tella}

\affil[2]{Department of Economics, Universidad Torcuato Di Tella and CONICET}

\affil[3]{Department of Epidemiology and Biostatistics, Harvard T.H. Chan
School of Public Health}

\affil[1]{Department of Mathematics and Statistics, Universidad Torcuato Di
Tella}

\affil[2]{Department of Economics, Universidad Torcuato Di Tella and CONICET}

\affil[3]{Department of Epidemiology and Biostatistics, Harvard T.H. Chan
School of Public Health}

\section{Introduction}

\label{sec:intro}

This paper was motivated by a spate of recent papers (\cite{Farrell}, \cite%
{stijn}, \cite{Tan}, \cite{NeweyCherno19}, \cite{debiased} and \cite{Peng})\ on methods for
the estimation of the average treatment effect (ATE) from observational data
in the current `big data era' in which the collection of data on a
high-dimensional $p-$vector of potential confounding factors, of length
often greater than the sample size $n$, has become standard practice.
Assuming no confounding by unmeasured covariates, successful control of
confounding requires accurate estimation of the conditional mean of the
outcome of interest given data on potential confounders and the treatment
(referred to as the outcome regression) and/or the conditional expectation
of treatment given the confounders (referred to as the propensity score).
All of the existing papers have assumed that the outcome regression and the
propensity score functions were exactly or approximately sparse and
therefore proposed to estimate them using $\ell _{1}$ regularized methods.
Following \cite{BelloniCherno11} and \cite{BelloniChernoSparse}, by
approximately sparse we mean one of two things: either the function can be
well approximated by a linear combination of $s=o\left( n\right) $ of the $p$
covariates, possibly after a transformation by a non-linear link function,
or the function is smooth enough so that it can be well approximated by a
linear combination of $s=o\left( n\right) $ elements of a countable
dictionary of functions of the covariates $L.$ Note that, in particular,
approximate sparsity includes the exactly sparse case in which the function
depends solely on a small fraction, compared to the sample size, of the $p$
covariates.

The goal of this paper is to propose a unifying methodology that extends and
improves upon the various existing $\ell _{1}$ regularized methods. The
existing papers on the topic differ in the estimators proposed, the
assumptions made about the data generating process, and the theorems proved
or conjectures made about the statistical behavior of their estimators under
their assumptions. In particular, all earlier papers prove or conjecture
that their estimators are doubly robust; however, in the high dimensional
setting, there are two different natural definitions of double robustness:
model double robustness and rate double robustness, both rigorously defined
in Section \ref{sec:models} and loosely defined later in this introduction.
Each paper concentrates on one definition or the other. We propose an
estimator of ATE that improves upon previous estimators by being
simultaneously doubly robust in both senses.

In fact, our methodology is not restricted to estimation of just ATE.
Specifically, \cite{NeweyCherno19} showed that ATE is an instance of a much
larger class of functionals, which includes many parameters of interest in
causal inference, and which can be expressed as a continuos and linear
functional of the conditional mean of an outcome given covariates.
Parameters in \cite{NeweyCherno19} et. al. class have the property that rate
doubly robust estimators can be obtained by the estimation of two nuisance
functions of covariates, one of them being the conditional mean of the
outcome given covariates, even when the number $p$ of covariates exceeds the
sample size. In an earlier article, \cite{Robins08Higher} considered another
class of functionals, which also admit rate doubly robust estimators. The
classes of \cite{Robins08Higher} and \cite{NeweyCherno19} intersect but none
is included in the other. The class of \cite{Robins08Higher} includes
parameters not in the \cite{NeweyCherno19} class that are of interest to
statisticians, and that have been extensively studied in the low dimensional
setting with $p\ll n$ \citep{scharfstein}. In a companion paper (\cite%
{globalclass}) we show that there exists a strictly larger class of
functionals, i.e. a class which strictly includes the classes of \cite%
{NeweyCherno19} and of \cite{Robins08Higher}, for which it is possible to
construct estimators with the rate doubly robust property. In fact, the
unifying methodology presented in this paper is suitable for parameters in
this larger class and provides estimators that have both the model and rate
double robust property. Specifically, for parameters in this class we
construct estimators that are simultaneously rate and model doubly robust
and that use $\ell _{1}$ regularized estimation of the two nuisance
functions. As we will explain in Section \ref{sec:heuristic}, to achieve
model robustness our methodology strongly relies on specially chosen loss
functions for the $\ell _{1}$ regularized estimation of each of the two
nuisance functions. Each such loss function is derived as a consequence of a
result which establishes that for parameters in our class, the derivative of
their influence function in the direction of one of the nuisance functions
is an unbiased estimating function for the second nuisance function. This
key result extends to our general class of parameters, a similar one by \cite%
{Robins08Higher} for parameters in their class. In turn, both our result and 
\cite{Robins08Higher}, extend to the non-parametric setting a seminal
analogous result in the parametric setting by \cite{vermeulen}. To the best
of our knowledge, \cite{stijn} is the first article to have noticed that
using loss functions for the nuisance functions derived from this key
property, yields estimators with the model double robustness property under
working sparse models for the two nuisance functions, albeit for the special
case of estimation of ATE.

The existing papers on model double robustness restrict attention to
estimation of ATE (\cite{stijn}, \cite{Tan} and \cite{Peng}). These papers
begin by specifying two working $p$ dimensional parametric generalized
linear models with known link functions for the outcome regression and for
the propensity score, with $p$ often much greater than $n.$ An estimator of
ATE has the model double robustness property if it is consistent and
asymptotically normal (CAN) when at least one of the two working models is
correctly specified, without needing to know which of the two is correct.
Our estimators will be model doubly robust for working models that can be
not only exactly sparse, as assumed by the existing papers, but also
approximately sparse. However, as in all available aforementioned articles
about model double robustness, we will need to make strong assumptions about
the probability limit of the estimated, but missmodeled, nuisance function.
Specifically, we will need to assume that this limit exists and that it is
an approximately sparse function. This is a strong requirement because, even
if the true nuisance function depends only on a small subset of the $p$
covariates, if this nuisance function is estimated under a misspecified
model, it is not clear why the estimated function should converge to a limit
that also depends only on small subset of covariates. In Section \ref%
{sec:asym_lin} we provide a few stylized examples in which this requirement
holds. However, we are not aware of general sufficient conditions that
ensure the requirement. This appears to be an open question in the
literature. Having recognized this limitation we emphasize that our goal is
to provide a unifying methodology that encompasses and extends all available
proposals on model doubly robust estimation, and at the same time produces
estimators with the rate double robustness property.

Papers on rate double robustness of ATE \cite{Farrell}$\,$ or, more
generally, of parameters in the \cite{NeweyCherno19} class (see also \cite{cherno2}), with both
nuisance functions estimated with $\ell _{1}$ regularized methods, seek to
come up with estimators that are CAN if one succeeds in estimating both
nuisance functions at sufficiently fast rates, with the possibility of
trading off slower rates of convergence for estimators of one of the
nuisance functions for faster rates of convergence of the estimator of the
other nuisance function. In this paper we will show that, assuming that each
of the nuisance functions is approximately sparse, possibly on a non-linear
scale, with sparsities $s_{a}$ and $s_{b}$, then using $\ell _{1}-$%
penalization with the aforementioned specially chosen loss functions, one
obtains estimators of the nuisance functions that converge at rates $\sqrt{{%
s_{a}\log (p)}/{n}}$ and $\sqrt{{s_{b}\log (p)}/{n}}$ respectively. Our
estimators of the parameters in our class will have the rate double
robustness property in that they are CAN if ${s_{a}s_{b}\log (p)}^{2}=o(n).$
Thus, the rate double robustness property will imply the possibility of
obtaining $\sqrt{n}-$ consistent estimation of the target parameter, even
when one of the nuisance functions -regardless of which one - is quite
non-sparse, i.e. with a sparsity degree of order $n^{1-\delta }$ for small $%
\delta ,$ so long as the remaining nuisance function is sufficiently sparse,
i.e. with a sparsity degree of order $n^{\delta ^{\prime }},$ for any $%
\delta ^{\prime }<\delta .$ For parameters in the \cite{NeweyCherno19} class
for which, as indicated earlier, one of the nuisance functions is the
conditional mean of an outcome on covariates, our specially chosen loss
function yields the usual $\ell _{1}$ penalized least squares of this
nuisance function when the working model for it is a linear regression.
Furthermore, our estimators of the remaining nuisance function for the
special case in which the working model is a linear model, coincide with the
estimators proposed and studied in \cite{NeweyCherno19}. Lastly, when both
working models are linear and the two nuisance functions are estimated in
this fashion, our estimator of the target parameter coincides with that in 
\cite{NeweyCherno19}. As in \cite{NeweyCherno19}, to achieve the rate double
robustness property we require that our estimator of the target parameter\
use sample splitting and cross-fitting. By sample splitting we mean that the
data is randomly divided into two (or more) samples - the estimation sample
and the nuisance sample. The estimators of the nuisance parameters are
computed using the nuisance sample data. In turn, the estimator $\widetilde{%
\chi }$ of the target parameter is computed from the estimation sample data,
treating the estimates of the nuisance parameters as fixed functions. This
approach is employed to avoid imposing conditions on the complexity of the
nuisance functions. Without sample splitting $\sqrt{n}-$ consistency of the
estimator of the target parameter would not be guaranteed unless one makes
strong Donsker assumptions on the complexity of both nuisance functions.
However, such Donsker assumptions defeat the purpose of double robustness,
namely trading off the complexity of one function for the simplicity of the
other. The efficiency lost due to sample splitting can be recovered by
cross-fitting. The cross-fit estimator $\widehat{\chi }$ averages $%
\widetilde{\chi }$ with its `twin' obtained by exchanging the roles of the
estimation and training sample. In the semiparametric statistics literature,
the possibility of using sample-splitting with cross-fitting to avoid the
need for Donsker conditions has a long history (\cite{schick1986}, Chapter
25 of \cite{vaart-book98}), although the idea of explicitly combining
cross-fitting with machine learning was not emphasized until recently. \cite%
{ayyagariphd} Ph.D. thesis (subsequently published as \citet{robins2013new})
and \citet{zheng2011cross} are early examples of papers that emphasized the
theoretical and finite sample advantages of doubly robust machine learning
estimators.

The rest of this paper is organized as follows. In Section \ref{sec:setup}
we define the class of parameters that we consider estimation for. We list
several examples of parameters in this class that are of interest in causal
inference and econometrics. We also formally define the rate and model
double robustness properties. In Section \ref{sec:doubly_robust_algo} we
introduce the estimating algorithms that we propose. The approximately
sparse models that we consider for the nuisance functions are defined in
Section \ref{sec:models}. Moreover, in this section we provide some informal
heuristic arguments to explain why our estimators are simultaneously rate
and model doubly robust. Then, in Section \ref{sec:asym_results_chi} we
state and discuss our formal asymptotic results for the proposed estimators.
In Section \ref{sec:litrev} we review the related literature. Finally, in
Section \ref{sec:conclusion} we give some concluding remarks and discuss
possible future research directions. Due to space constraints, we report the
results of a simulation study in Appendix D.

Appendix A contains the proofs of all our results regarding the $\ell_{1}$
regularized estimators of the nuisance parameters that we propose. The
proofs of all the asymptotic results stated in Section \ref%
{sec:asym_results_chi} can be found in Appendix B. Appendix C contains
several technical results that are needed throughout.

\section{The setup}

\label{sec:setup}

Given a sample $\mathcal{D}_{n}$ of $n$ i.i.d. copies of $O$ with law $P$
assumed to belong to a model 
\begin{equation*}
\mathcal{M}=\left\{ P_{\eta }:\eta \in \mathbf{\Xi }\right\}
\end{equation*}%
where $\mathbf{\Xi }$ is a large, non-Euclidean, parameter space we consider
inference about a one dimensional regular parameter $\chi \left( \eta
\right) $. We allow the model, i.e. the sample sample space of $O$ and the
parameter space $\mathbf{\Xi ,}$ as well as the parameter of interest $\chi
\left( \eta \right) $ to depend on $n$ but we suppress $n$ from the
notation. We assume $O$ includes a vector $Z$ with sample space $\mathcal{Z}$
$\subset R^{d}$ where $d$ can depend on $n.$ Furthermore, we assume $\mathbf{%
\Xi =\Xi }_{1}\mathbf{\times \Xi }_{2}$ with parameters in $\mathbf{\Xi }%
_{1} $ indexing the law $P_{Z}$ of $Z$ and parameters in $\mathbf{\Xi }_{2}$
governing the law of \ $O|Z.$ We will consider inference under a
non-parametric model $\mathcal{M}$ in the sense that its maximal tangent
space at each $\eta $, i.e. the $L_{2}\left( P_{\eta }\right) $-closed
linear span of the collections of all scores for regular one dimensional
parametric submodels through $P_{\eta },$ is equal to $L_{2}\left( P_{\eta
}\right) .$

Let $\widehat{\eta }$ be some estimator of $\eta $ and $\chi \left( \widehat{%
\eta }\right) $ be the plug-in estimator of $\chi \left( \eta \right) .$ A
strategy for reducing the bias of $\chi \left( \widehat{\eta }\right) $ is
to subtract from it the estimate $-\mathbb{P}_{n}\chi _{\widehat{\eta }}^{1}$
of its first order bias (\cite{Robins17}, \cite{neweystep1}, \cite{neweystep2}), where $\chi _{\eta }^{1}$ is an
influence function of $\chi \left( \eta \right) $ in model $\mathcal{M}$,
yielding the one step estimator%
\begin{equation}
\widehat{\chi }=\chi \left( \widehat{\eta }\right) +\mathbb{P}_{n}\chi _{%
\widehat{\eta }}^{1}.  \label{eq:one_step}
\end{equation}%
An influence function $\chi _{\eta }^{1}$ of $\chi \left( \eta \right) $
under model $\mathcal{M}$ is any mean zero random variable with finite
variance under $P_{\eta }$ such that for every regular parametric submodel $%
t\rightarrow P_{\eta _{t}}$ of $\mathcal{M}$ through $P_{\eta }=P_{\eta
,t=0} $ with score $g$ at $t=0,$ satisfies $\left. \frac{d}{dt}\chi \left(
\eta _{t}\right) \right\vert _{t=0}=E_{\eta }\left( \chi _{\eta
}^{1}g\right) $ where throughout $E_{\eta }\left( \cdot \right) $ stands for
expectation under $P_{\eta }.$ Parameters for which an influence function
exists are called regular. Furthermore, when as we assume in this article,
the model $\mathcal{M}$ is non-parametric, regular parameters have a unique
influence function (Chapter 25, \cite{vaart-book98}).

We will consider one step estimation of parameters in the following class .

\begin{definition}[Class of bilinear influence function (BIF) functionals]
\label{def:quad_DR_class} The parameter $\chi \left( \eta \right) $ is in
the class of functionals with bilinear influence function if and only if $%
\chi \left( \eta \right) $ has an influence function of the form%
\begin{equation}
\chi _{\eta }^{1}\left( O\right) =S_{ab}a\left( Z\right) b\left( Z\right)
+m_{a}\left( O,a\right) +m_{b}\left( O,b\right) +S_{0}-\chi \left( \eta
\right) ,  \label{eq:mainIF}
\end{equation}%
where $a\left( Z\right) $ and $b\left( Z\right) $ are variation independent
elements of $L_{2}\left( P_{\eta ,Z}\right) $, $S_{ab}\equiv s_{ab}\left(
O\right) $ and $S_{0}\equiv s_{0}\left( O\right) $ are known functions of $O$
with $P\left( S_{ab}\geq 0\right) =1$ or $P\left( S_{ab}\leq 0\right) =1,\
E_{\eta }\left( S_{ab}|Z\right) a\left( Z\right) $ and $E_{\eta }\left(
S_{ab}|Z\right) b\left( Z\right) $ are in $L_{2}\left( P_{\eta ,Z}\right) ,$
and $m_{a}\left( \cdot ,\cdot \right) $ and $m_{b}\left( \cdot ,\cdot
\right) $ are known real valued functions such that for each $\eta $, the
maps 
\begin{equation*}
h\in L_{2}(P_{Z,\eta })\rightarrow m_{a}(O,h)\text{ and }h\in
L_{2}(P_{Z,\eta })\rightarrow m_{b}(O,h)\text{ are linear }a.s.(P_{Z,\eta })
\end{equation*}%
and the maps 
\begin{equation*}
h\in L_{2}(P_{Z,\eta })\rightarrow E_{\eta }\left[ m_{a}(O,h)\right] \text{
and }h\in L_{2}(P_{Z,\eta })\rightarrow E_{\eta }\left[ m_{b}(O,h)\right] 
\text{ are continuous}
\end{equation*}%
with Riesz representers $\mathcal{R}_{a}\left( Z\right) $ and $\mathcal{R}%
_{b}\left( Z\right) $ respectively.
\end{definition}

As we will illustrate, the class of parameters with bilinear influence
function includes many important examples of target parameters in causal
inference and econometrics. In fact, \cite{globalclass} showed that the
class strictly includes two classes of parameters studied earlier, one by 
\cite{Robins08Higher} and another by \cite{NeweyCherno19} (see also \cite{newey94} and \cite{hirshberg}). \ It is also
shown in \cite{globalclass} that a key feature of parameters with bilinear
influence functions is that they satisfy that for any $\eta ^{\prime }$ and
associated $a%
{\acute{}}%
$ and $b%
{\acute{}}%
$, 
\begin{equation}
\chi \left( \eta ^{\prime }\right) -\chi \left( \eta \right) +E_{\eta
}\left( \chi _{\eta ^{\prime }}^{1}\right) =E_{\eta }\left[ S_{ab}\left\{
a^{\prime }\left( Z\right) -a\left( Z\right) \right\} \left\{ b^{\prime
}\left( Z\right) -b\left( Z\right) \right\} \right]  \label{bias1}
\end{equation}%
We refer to this property as the mixed bias property.

For parameters in the BIF class, the one step estimator \eqref{eq:one_step}
depends on $\widehat{\eta }$ only through estimators $\widehat{a}$ and $%
\widehat{b}$. If $\widehat{a}$ and $\widehat{b}$ are estimated from a sample
independent of $\mathcal{D}_{n}$, then a key consequence of $\left( \ref%
{bias1}\right) $ is that the conditional bias of $\widehat{\chi },$ namely $%
E_{\eta }\left[ \left. \chi \left( \widehat{\eta }\right) +\chi _{\widehat{%
\eta }}^{1}\right\vert \widehat{a},\widehat{b}\right] -$ $\chi \left( \eta
\right) $, is of order $O\left( \gamma _{a,n}\gamma _{b,n}\right) $ if $%
\left\Vert \widehat{a}-a\right\Vert _{L_{2}\left( P_{Z}\right) }=O_{p}\left(
\gamma _{a,n}\right) $ and $\left\Vert \widehat{b}-b\right\Vert
_{L_{2}\left( P_{Z}\right) }=O_{p}\left( \gamma _{b,n}\right) $. Using
arguments as in Chapter 25 of \cite{vaart-book98}, it can be shown that this
in turn implies that, under regularity conditions, $\widehat{\chi }$ has the
following property.

\begin{definition}[Rate double robustness]
\label{def:rate_dr} An estimator $\widehat{\chi }$ that depends on estimates 
$\widehat{a},\widehat{b}$ of two nuisance parameters $a$ and $b$ has the
rate double robustness property if $\sqrt{n}\left( \widehat{\chi }-\chi
\left( \eta \right) \right) $ converges to a mean zero Normal distribution
whenever $\left\Vert \widehat{a}-a\right\Vert _{L_{2}\left( P_{Z}\right)
}=O_{p}\left( \gamma _{a,n}\right) ,$ $\left\Vert \widehat{b}-b\right\Vert
_{L_{2}\left( P_{Z}\right) }=O_{p}\left( \gamma _{b,n}\right) ,$ $\gamma
_{a,n}=o\left( 1\right) ,$ $\gamma _{b,n}=o\left( 1\right) $ and $\gamma
_{a,n}\gamma _{b,n}=o\left( n^{-1/2}\right)$.
\end{definition}

Because the rates of convergence $\gamma _{a,n}$ and $\gamma _{b,n}$ of
estimators $\widehat{a}$ and $\widehat{b}$ depend on the complexity of $a$
and $b,$ the rate double robustness property implies that $\widehat{\chi }$
is $\sqrt{n}-$ consistent and asymptotically normal even if one of the
functions $a$ or $b$ is very complex so long as the other is simple enough.

Now consider the following property.

\begin{definition}[Model double robustness]
\label{def:model_dr copy} An estimator $\widehat{\chi }$ that depends on
estimates $\widehat{a},\widehat{b}$ of two nuisance parameters $a$ and $b$
has the model double robustness property if $\sqrt{n}\left( \widehat{\chi }%
-\chi \left( \eta \right) \right) $ converges to a mean zero Normal
distribution whenever one of the following happens (i) $\left\Vert \widehat{a%
}-a\right\Vert _{L_{2}\left( P_{Z}\right) }=O_{p}\left( \gamma _{a,n}\right) 
$ and there exists $b^{0}\not=b$ such that $\left\Vert \widehat{b}%
-b^{0}\right\Vert _{L_{2}\left( P_{Z}\right) }=O_{p}\left( \gamma
_{b,n}\right) $ with $\gamma _{a,n}=o\left( 1\right) ,\gamma _{b,n}=o\left(
1\right) $ and $\gamma _{a,n}\gamma _{b,n}=o\left( n^{-1/2}\right) ,$ or
(ii) $\left\Vert \widehat{b}-b\right\Vert _{L_{2}\left( P_{Z}\right)
}=O_{p}\left( \gamma _{b,n}\right) $ and there exists $a^{0}\not=a$ such
that $\left\Vert \widehat{a}-a^{0}\right\Vert _{L_{2}\left( P_{Z}\right)
}=O_{p}\left( \gamma _{a,n}\right) ,$ with $\gamma _{a,n}=o\left( 1\right)
,\gamma _{b,n}=o\left( 1\right) $ and $\gamma _{a,n}\gamma _{b,n}=o\left(
n^{-1/2}\right) .$
\end{definition}

Convergence to the true nuisance function ordinarily requires that one
correctly posits a model for the unknown nuisance function. We thus call the
property model double robustness because it essentially establishes that $%
\sqrt{n}\left( \widehat{\chi }-\chi \left( \eta \right) \right) $ is mean
zero asymptotically normal so long as one posits a correct model for just
one of the two nuisance functions, regardless of which one.

Remarkably, in this paper we will show that for the parameters $\chi \left(
\eta \right) $ in the BIF class, with carefully chosen estimators $\widehat{a%
}$ and $\widehat{b}$, under regularity conditions, the one step estimator $%
\widehat{\chi }$ is simultaneously rate and model double robust.

The following proposition is part of Theorem 2 of \cite{globalclass}. As we
discuss in Section \ref{sec:doubly_robust_algo} this key property of
functionals in the BIF class allows us to construct loss functions for
estimating $a$ and $b$ using $\ell _{1}$ regularization.

\begin{proposition}
\label{prop:DR functional} If $\chi (\eta )$ is in the BIF class, Condition
R of the Appendix C holds and $\mathcal{M}$ is non-parametric then for any $%
h\left( Z\right) \in L_{2}\left( P_{Z,\eta }\right) ,$ 
\begin{equation*}
E_{\eta }\left[ S_{ab}a\left( Z\right) h\left( Z\right) +m_{b}\left(
O,h\right) \right] =0
\end{equation*}%
and%
\begin{equation*}
E_{\eta }\left[ S_{ab}b\left( Z\right) h\left( Z\right) +m_{a}\left(
O,h\right) \right] =0.
\end{equation*}
\end{proposition}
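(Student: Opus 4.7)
The plan is to exploit the mixed bias identity \eqref{bias1} by perturbing only one of the two nuisance functions at a time, and reading off the desired orthogonality from the resulting linear equation in the perturbation parameter.

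Fix $\eta \in \mathbf{\Xi}$ with associated $(a,b)$ and an arbitrary direction $h \in L_2(P_{Z,\eta})$. The first step is to produce, for all sufficiently small $t \in \mathbb{R}$, an element $\eta_t \in \mathbf{\Xi}$ whose associated pair is $(a_t, b_t) = (a, b + t h)$. Here one invokes the variation independence of $a$ and $b$, the non-parametric nature of $\mathcal{M}$, and Condition R to ensure both that such a $\eta_t$ exists in the model and that the mixed bias identity applies to it. This is the main obstacle: everything downstream is algebraic manipulation, but this construction is what makes the argument anything more than formal.

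Once $\eta_t$ is in hand, I would apply \eqref{bias1} with $\eta' = \eta_t$. Since $a_t - a \equiv 0$, the right-hand side is identically zero, giving
\begin{equation*}
\chi(\eta_t) - \chi(\eta) + E_\eta\!\left(\chi^1_{\eta_t}\right) = 0.
\end{equation*}
Next I would expand $\chi^1_{\eta_t}$ using its explicit bilinear form and the linearity of $h \mapsto m_b(O,h)$:
\begin{equation*}
\chi^1_{\eta_t}(O) = \chi^1_\eta(O) + t\bigl[\, S_{ab}\, a(Z)\, h(Z) + m_b(O,h) \,\bigr] + \chi(\eta) - \chi(\eta_t).
\end{equation*}
Taking expectations under $P_\eta$ and using the defining property $E_\eta(\chi^1_\eta) = 0$ of an influence function, I would obtain
\begin{equation*}
E_\eta\!\left(\chi^1_{\eta_t}\right) = t\, E_\eta\!\left[\, S_{ab}\, a(Z)\, h(Z) + m_b(O,h)\,\right] + \chi(\eta) - \chi(\eta_t).
\end{equation*}
Plugging this into the previous display cancels the $\chi(\eta_t) - \chi(\eta)$ terms and leaves
\begin{equation*}
t \cdot E_\eta\!\left[\, S_{ab}\, a(Z)\, h(Z) + m_b(O,h)\,\right] = 0
\end{equation*}
for all $t$ in a neighborhood of $0$, which forces the bracketed expectation to vanish. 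Since $h$ was arbitrary, this is the first identity.

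The second identity follows by the symmetric argument, this time constructing $\eta_t$ with $(a_t, b_t) = (a + t h, b)$ and expanding $\chi^1_{\eta_t}$ via the linearity of $h \mapsto m_a(O,h)$. The algebra is identical up to swapping the roles of $a$ and $b$, and the same Condition R ensures the required perturbation is admissible in $\mathbf{\Xi}$. No use is made of the sign hypothesis on $S_{ab}$ or of the continuity of the functionals $h \mapsto E_\eta[m_a(O,h)], E_\eta[m_b(O,h)]$ beyond what is implicit in justifying the expansions; those assumptions play their role elsewhere in the theory (in constructing the Riesz representers and loss functions), not in this proposition.
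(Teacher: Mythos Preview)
The paper does not prove this proposition; it states that the result is part of Theorem~2 of the companion paper \cite{globalclass} and, in Appendix~C, records Condition~R with the note ``See \cite{globalclass} for the proof.'' So there is no in-paper proof to compare against. Your approach via the mixed bias identity \eqref{bias1} is natural and the algebra is correct: perturbing only $b$ kills the right-hand side of \eqref{bias1}, and the bilinear structure of $\chi^1_\eta$ makes the expansion exact in $t$, so no differentiation is needed.

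There is, however, a genuine gap in how you invoke Condition~R. You fix an \emph{arbitrary} $h\in L_2(P_{Z,\eta})$ and then appeal to Condition~R to produce $\eta_t$ with $b_t=b+th$. But Condition~R only guarantees that $b+th$ stays in the model for $h$ in a \emph{dense} subset $H_b\subset L_2(P_{Z,\eta})$, not for every $h$. Your argument, as written, establishes the identity only for $h\in H_b$. To pass from $H_b$ to all of $L_2(P_{Z,\eta})$ you need continuity of the linear functional $h\mapsto E_\eta[S_{ab}a(Z)h(Z)+m_b(O,h)]$. This continuity is available: the BIF definition explicitly assumes $h\mapsto E_\eta[m_b(O,h)]$ is continuous, and $h\mapsto E_\eta[S_{ab}a(Z)h(Z)]=E_\eta[E_\eta(S_{ab}\mid Z)a(Z)h(Z)]$ is continuous because $E_\eta(S_{ab}\mid Z)a(Z)\in L_2(P_{Z,\eta})$, also part of the definition. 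So your closing remark that the continuity hypotheses play no role here is exactly backwards --- they are what lets you close the density argument. Add a one-line ``the identity holds on the dense set $H_b$ and extends to $L_2$ by continuity'' and the proof is complete.
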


Next, we list several examples of target parameters of interest in causal
inference and econometrics which can be shown to belong to the BIF class.
See \cite{globalclass} for a discussion of which of these parameters belong
to the class of \cite{Robins08Higher} and/or to the class of \cite%
{NeweyCherno19}. In what follows we ignore regularity conditions, see \cite%
{globalclass} for the precise conditions.

\begin{example}[Mean of an outcome that is missing at random]
\label{ex:MAR} Suppose that $O=\left( DY,D,Z\right) $ where $D$ is binary, $%
Y $ is an outcome which is observed if and only if $D=1$ and $Z$ is a vector
of always observed covariates. If we assume that the density $p\left(
y|D=0,Z\right) $ is equal to the density $p\left( y|D=1,Z\right) $, that is,
that the outcome $Y$ is missing at random then, for $P=P_{\eta }$, the mean
of $Y$ is equal to 
\begin{equation*}
\chi \left( \eta \right) =E_{\eta }\left[ a\left( Z\right) \right]
\end{equation*}%
where $a\left( Z\right) \equiv E_{\eta }\left( DY|Z\right) /E_{\eta }\left(
D|Z\right) $. The parameter $\chi \left( \eta \right) $ is in the BIF class
with $a(Z)$ as defined, $b\left( Z\right) =1/E_{\eta }\left( D|Z\right) $, $%
S_{ab}=-D$, $m_{a}\left( O,h\right) \equiv h$, $m_{b}\left( O,h\right)
\equiv DYh$ and $S_{0}=0$.
\end{example}

\begin{example}[Mean of outcome missing at random in the non-respondents]
\label{ex:MAR-non-respondents} With the notation and assumptions of Example %
\ref{ex:MAR}, the parameter 
\begin{equation*}
\psi \left( \eta \right) \equiv \frac{E_{\eta }\left[ \left( 1-D\right)
a\left( Z\right) \right] }{E_{\eta }\left[ \left( 1-D\right) \right] }
\end{equation*}%
where again, $a\left( Z\right) \equiv E_{\eta }\left( DY|Z\right) /E_{\eta
}\left( D|Z\right) ,$ is equal to the mean of $Y$ among the non-respondents,
i.e. in the population with $D=0.$ The parameter $\nu \left( \eta \right)
\equiv $ $E_{\eta }\left[ \left( 1-D\right) a\left( Z\right) \right] $ is in
the BIF class with $a(Z)$ as defined, $b\left( Z\right) =E_{\eta }\left\{
\left( 1-D\right) |Z\right\} /E_{\eta }\left( D|Z\right)$, $S_{ab}=-D$, $%
m_{a}\left( O,h\right) \equiv \left( 1-D\right) h$, $m_{b}\left( O,h\right)
\equiv DY h$ and $S_0=0$.
\end{example}

\begin{example}[Population average treatment effect]
\label{ex:ATE} Suppose that $O=\left( Y,D,Z\right) $ where $D$ is a binary
treatment indicator, $Y$ is an outcome and $Z$ is a baseline covariate
vector. Under the assumption of unconfoundedness given $Z,$ the population
average treatment effect contrast is $ATE\left( \eta \right) \equiv \chi
_{1}\left( \eta \right) -\chi _{2}\left( \eta \right) $ where $\chi
_{1}\left( \eta \right) \equiv E_{\eta }\left[ a_{1}\left( Z\right) \right] $
and $\chi _{2}\left( \eta \right) \equiv E_{\eta }\left[ a_{2}\left(
Z\right) \right] $ with $a_{1}\left( Z\right) \equiv E_{\eta }\left(
DY|Z\right) /E_{\eta }\left( D|Z\right) $ and $a_{2}\left( Z\right) \equiv
E_{\eta }\left\{ \left( 1-D\right) Y|Z\right\} /E_{\eta }\left\{ \left(
1-D\right) |Z\right\} $. Regarding $1-D\,\ $as another missing data
indicator, example $\left( \ref{ex:MAR}\right) $ implies that $ATE\left(
\eta \right) $ is a difference of two parameters, $\chi _{1}\left( \eta
\right) $ and $\chi _{2}\left( \eta \right)$, each in the BIF class.
\end{example}

\begin{example}[Treatment effect on the treated]
\label{ex:ATT} With the notation and assumptions of Example \ref{ex:ATE},
the parameter $ATT\left( \eta \right) \equiv E\left( Y|D=1\right) -\rho
\left( \eta \right) /E_{\eta }\left( D\right) $ where $\rho \left( \eta
\right) \equiv E_{\eta }\left[ Da\left( Z\right) \right] $ and $a\left(
Z\right) $ defined as $E_{\eta }\left\{ \left( 1-D\right) Y|Z\right\}
/E_{\eta }\left\{ \left( 1-D\right) |Z\right\} $, the parameter $ATT\left(
\eta \right) $ is the average treatment effect on the treated. Once again,
regarding $1-D$ as another missing data indicator, Example \ref%
{ex:MAR-non-respondents} implies that $ATT\left( \eta \right) $ is a
continuous function of a parameter $\rho \left( \eta \right) $ in the BIF
class, and other parameters $E\left( Y|D=1\right) $ and $E_{\eta }\left(
D\right) $ whose estimation does not require the estimation of high
dimensional nuisance parameters.
\end{example}

\begin{example}[Expected conditional covariance]
\label{ex:CC} Let $O=\left( Y,D,Z\right) ,$ where $Y$ and $D$ are real
valued. Let $\chi \left( \eta \right) \equiv E_{\eta }\left[ cov_{\eta
}\left( D,Y|Z\right) \right] $ be the expected conditional covariance
between $D$ and $Y$. When $D$ is a binary treatment, $\chi \left( \eta
\right) $ is an important component of the variance weighted average
treatment effect \cite{Robins08Higher}. The parameter $\chi \left( \eta
\right) $ is in the BIF\ class with $S_{ab}=-1,a\left( Z\right) \equiv
E_{\eta }\left( Y|Z\right) ,b\left( Z\right) =-E_{\eta }\left( D|Z\right) ,$ 
$m_{a}\left( O,a\right) =-Da,$ $m_{b}\left( O,b\right) =Yb$ and $S_{0}=DY$.
\end{example}

\begin{example}[Mean of an outcome that is missing not at random]
\label{ex:MNAR}Suppose that $O=\left( DY,D,Z\right) $ where $D$ is binary, $%
Y $ is an outcome which is observed if and only if $D=1$ and $Z$ is a vector
of always observed covariates. If we assume that the density $p\left(
y|D=0,Z\right) $ is a known exponential tilt of the density $p\left(
y|D=1,Z\right) ,$ i.e. 
\begin{equation}
p\left( y|D=0,Z\right) =p\left( y|D=1,Z\right) \exp \left( \delta y\right)
/E \left[ \exp \left( \delta Y\right) |D=1,Z\right]  \label{tilt}
\end{equation}%
where $\delta $ is a given constant, then under $P=P_{\eta }$ the mean of $Y$
is 
\begin{equation*}
\chi \left( \eta \right) =E_{\eta }\left[ DY+\left( 1-D\right) a\left(
Z\right) \right]
\end{equation*}%
assuming $a\left( Z\right) \equiv E_{\eta }\left[ DY\exp \left( \delta
Y\right) |Z\right] /E_{\eta }\left[ D\exp \left( \delta Y\right) |Z\right] $
exists. Estimation of $\chi \left( \eta \right) $ under different fixed
values of $\delta $ has been proposed in the literature as a way of
conducting sensitivity analysis to departures from the missing at random
assumption \citep{scharfstein}. Under the sole restriction $\left( \ref{tilt}%
\right) $ the law $P$ of the observed data $O$ is unrestricted, and hence
the model for $P$ is non-parametric. The parameter $\chi \left( \eta \right) 
$ is in the BIF class with $a(Z)$ as defined, $b\left( Z\right) \equiv
-E_{\eta }\left( 1-D|Z\right) /E_{\eta }\left[ D\exp \left( \delta Y\right)
|Z\right] $, $S_{ab}=-D\exp \left( \delta Y\right) $, $m_{a}\left(
O,a\right) \equiv \left( 1-D\right) a$, $m_{b}(O,b)\equiv DY\exp \left(
\delta Y\right) b\left( Z\right) $ and $S_{0}=DY$.
\end{example}

\begin{example}
\label{ex:APE} In the next two examples, $O=\left( Y,D,L\right) ,$ $Z=\left(
D,L\right) ,Y$ and $D$ are real valued, $D$ is a treatment variable taking
any value in $\left[ 0,1\right] $ and $L$ is a covariate vector.
Furthermore, $Y_{d}$ \ denotes the counterfactual outcome under treatment $%
D=d$ and in the following we assume that $E_{\eta }\left( Y_{d}|L\right)
=E_{\eta }\left( Y|D=d,L\right) $.

\begin{itemize}
\item[a)] \textbf{Causal effect of a treatment taking values on a continuum}
The parameter $\chi \left( \eta \right) \equiv E_{\eta }\left[ m_{a}\left(
O,a\right) \right] $ with $a\left( D,L\right) \equiv E_{\eta }\left(
Y|D,L\right) ,m_{a}\left( O,a\right) \equiv \int_{0}^{1}a\left( u,L\right)
w\left( u\right) du$ where $w\left( \cdot \right) $ is a given scalar
function satisfying $\int_{0}^{1}w\left( u\right) du=0$ agrees with the
treatment effect contrast $\int_{0}^{1}E_{\eta }\left( Y_{u}\right) w\left(
u\right) du.$ The parameter $\chi \left( \eta \right) $ is in the BIF class
with $a(Z)$ as defined, $b\left( Z\right) ={w\left( D\right) }/{f\left(
D|L\right) }$, $S_{ab}=-1$, $m_{a}\left( O,a\right) \equiv
\int_{0}^{1}a\left( u,L\right) w\left( u\right) du,$ $m_{b}(O,b)=Yb$ and $%
S_{0}=0$.

\item[b)] \textbf{Average policy effect of a counterfactual change of
covariate values} The parameter $\chi \left( \eta \right) \equiv E_{\eta }%
\left[ a\left( t\left( D\right) ,L\right) \right] -E_{\eta }\left( Y\right) $
with $a\left( D,L\right) \equiv E_{\eta }\left( Y|D,L\right) $ is the
average policy effect of a counterfactual change $d\rightarrow t\left(
d\right) $ of treatment values \cite{stock1989}. This parameter $\chi \left(
\eta \right) $ is in the BIF class with $a\left( Z\right) $ as defined, $%
b\left( Z\right) ={f_{t}\left( D|L\right) }/{f\left( D|L\right) }$, $%
m_{a}\left( O,a\right) =a\left( t\left( D\right) ,L\right) ,m_{b}(O,b)=Yb,$ $%
S_{ab}=-1$ and $S_{0}=-Y$.
\end{itemize}
\end{example}

\begin{example}
\label{ex:artificial} The following parameter $\chi \left( \eta \right) $ is
in the BIF class but it is neither in the class of \cite{NeweyCherno19} nor
in that \cite{Robins08Higher} . Let $O=\left( Y_{1},Y_{2},Z\right) $ for $%
Y_{1}$ and $Y_{2}$ continuous random variables, $Y_{2}>0$ and $Z$ a scalar
vector taking any values in $\left[ 0,1\right] $. The parameter $\chi \left(
\eta \right) \equiv \int_{0}^{1}a\left( z\right) dz$ where $a\left( Z\right)
\equiv E_{\eta }\left( Y_{1}|Z\right) /E_{\eta }\left( Y_{2}|Z\right) $ is
in the BIF class with $a(Z)$ as defined, $b\left( Z\right) ={1}/{f\left(
Z\right) E_{\eta }\left( Y_{2}|Z\right) }$, $m_{a}\left( O,a\right) \equiv
\int_{0}^{1}a\left( z\right) dz,$ $m_{b}(O,b)=Y_{1}b$, $S_{ab}=-Y_{2}$ and $%
S_{0}=0$.
\end{example}

\section{The proposed estimators}

\label{sec:doubly_robust_algo}

To compute our proposed estimators of $\chi \left( \eta \right) $ in the BIF
class, the data analyst entertains working models $a\left( Z\right) =\varphi
_{a}\left( \left\langle \theta _{a},\phi ^{a}\left( Z\right) \right\rangle
\right) $ and $b\left( Z\right) =\varphi _{b}\left( \left\langle \theta
_{b},\phi ^{b}\left( Z\right) \right\rangle \right) ,$ where $\varphi
_{a}\left( \cdot \right) $ and $\varphi _{b}\left( \cdot \right) $ are
given, possibly non-linear, link functions, with possibly distinct, $%
p_{a}\times 1$ and $p_{b}\times 1$ covariate vectors $\phi ^{a}\left(
Z\right) $ and $\phi ^{b}\left( Z\right) $. Our proposed estimators of $a$
and $b$ are of the form $\varphi _{a}\left( \left\langle \widehat{\theta }%
_{a},\phi \right\rangle \right) $ and $\varphi _{b}\left( \left\langle 
\widehat{\theta }_{b},\phi \right\rangle \right) $, where $\phi (z)=\left(
\phi _{1}(z),\dots ,\phi _{p}(z)\right) ^{\top },$ $\left\{ \phi
_{j}\right\} _{1\leq j\leq p}=\left\{ \phi _{j}^{a}\right\} _{1\leq j\leq
p_{a}}\cup \left\{ \phi _{j}^{b}\right\} _{1\leq j\leq p_{b}}$ is the union
of the covariates in both models, with $p$ possibly larger than the sample
size $n$.

The estimators $\widehat{\theta }_{a}$ and $\widehat{\theta }_{b}$ are $l_{1}
$-regularized regression estimators of the form 
\begin{equation}
\widehat{\theta }_{c}\in \arg \min_{\theta \in \mathbb{R}^{p}}\mathbb{P}_{n}%
\left[ Q_{c}\left( \theta ,\phi ,w\right) \right] +\lambda \Vert \theta
\Vert _{1}  \label{eq:loss}
\end{equation}%
for $c\in \left\{ a,b\right\} $, with objective function 
\begin{equation}
Q_{c}\left( \theta ,\phi ,w\right) \equiv S_{ab}w\left( Z\right) \psi
_{c}\left( \langle \theta ,\phi \left( Z\right) \rangle \right) +\langle
\theta ,m_{\overline{c}}(O,w\cdot \phi )\rangle   \label{eq:lossDef}
\end{equation}%
where $\overline{c}=\left\{ a,b\right\} \backslash \left\{ c\right\} ,\psi
_{c}\left( u\right) $ is an antiderivative of $\varphi _{c}\left( u\right) ,$
i.e. $\frac{d}{du}\psi _{c}\left( u\right) =\varphi _{c}\left( u\right) ,$
and $w\left( Z\right) $ is a scalar valued, possibly data dependent, weight
function satisfying that $S_{ab}w\left( Z\right) \psi _{c}\left( \langle
\theta ,\phi \left( Z\right) \rangle \right) $ is a convex function of $%
\theta $. The specific choice of weight $w$ depends whether none, one or two
of the given links $\varphi _{a}$ and $\varphi _{b}$ are non-linear
functions. When $\varphi _{a}$ and $\varphi _{b}$ are the identity links, we
use $w\left( Z\right) =1.$ When both $\varphi _{a}$ and $\varphi _{b}$ are
non-linear functions the weights are functions of $Z$ that depend on
preliminary estimators of $\theta _{a}$ and $\theta _{b},$ which, in turn,
also solve minimization problems like $\left( \ref{eq:loss}\right) $ with
weights $w\left( Z\right) =1$. When, say $\varphi _{a}$ is the identity and $%
\varphi _{b}$ is a non-linear link, then the estimator of $\theta _{b}$
solves $\left( \ref{eq:loss}\right) $ with weight $w\left( Z\right) =1$ and $%
\theta _{a}$ solves $\left( \ref{eq:loss}\right) $ with weight $w\left(
Z\right) $ that depends on the estimator of $\theta _{b}$.

The objective function has the key property that $dQ_{c}\left( \theta ,\phi
,w\right) /d\theta $ is an unbiased estimating function under the models
entertained by the investigator for $c\in \left\{ a,b\right\} ,$ since 
\begin{equation*}
\frac{d}{d\theta }Q_{c}\left( \theta ,\phi ,w\right) =S_{ab}w\left( Z\right)
\phi \left( Z\right) \varphi _{c}\left( \langle \theta ,\phi \left( Z\right)
\rangle \right) +m_{\overline{c}}(O,w.\phi )
\end{equation*}%
which, by Proposition \ref{prop:DR functional}, has mean 0 when $c\left(
Z\right) =a\left( Z\right) =$ $\varphi _{a}\left( \langle \theta ,\phi
\left( Z\right) \rangle \right) $ and when $c\left( Z\right) =b\left(
Z\right) =\varphi _{b}\left( \langle \theta ,\phi \left( Z\right) \rangle
\right) $.

Moreover, the objective function is convex if, as we have assumed and as it
is satisfied in all our examples, $S_{ab}w\left( Z\right) \psi _{c}\left(
\langle \theta ,\phi \left( Z\right) \rangle \right) $ is a convex function
of $\theta $. Thus, under this condition our estimators $\widehat{\theta }%
_{c}$ can be computed efficiently.

As an illustration, consider the missing data example \ref{ex:MAR}. In that example,
$a\left( Z\right) \equiv E_{\eta }\left( DY|Z\right) /E_{\eta }\left(
D|Z\right),$ $b\left( Z\right) =1/E_{\eta }\left( D|Z\right) $, $S_{ab}=-D$, $%
m_{a}\left( O,h\right) \equiv h$ and $m_{b}\left( O,h\right) \equiv DYh$.
Thus, when $\varphi _{a}\left( u\right) =u\,\ $and $\varphi _{b}\left(
u\right) =\exp \left( -u\right) +1$, i.e. when one assumes working linear and
logistic regression models for $E_{\eta }\left( Y|D=1,Z\right) $ and $%
P_{\eta }\left( D=1|Z\right) $ respectively, the loss function $Q_{a}$ is
\begin{eqnarray*}
Q_{a}\left( \theta ,\phi ,w\right)  &=&Dw\left( Z\right) \left\{ Y\langle
\theta ,\phi \left( Z\right) \rangle -\langle \theta ,\phi \left( Z\right)
\rangle ^{2}/2\right\}  \\
&=&-2^{-1}Dw\left( Z\right) \left\{ Y-\langle \theta ,\phi \left( Z\right)
\rangle \right\} ^{2}+2^{-1}Dw\left( Z\right) Y^{2}
\end{eqnarray*}%
The weight $w\left( Z\right) $ that our estimators of $a$ will use is equal
to $w\left( Z\right) =-\exp \left( -\langle \widetilde{\theta }_{b},\phi
\left( Z\right) \rangle \right) $ for a preliminary estimator $\widetilde{\theta }_{b}$ of $\theta
_{b}.$ Thus, $Q_{a}$ yields the same $\ell_1$
regularized estimator of $\theta _{a}$ as the one that uses the weighted
least squares loss $\mathbb{P}_{n}\left[ D\exp \left( -\langle \widetilde{%
\theta }_{b},\phi \left( Z\right) \rangle \right) \left\{ Y-\langle \theta
,\phi \left( Z\right) \rangle \right\} ^{2}\right] .$ On the other hand,    
$
Q_{b}\left( \theta ,\phi ,w\right) =w\left( Z\right) \left[ \left\{ -D\left[
\langle \theta ,\phi \left( Z\right) \rangle -\exp \left( -\langle \theta
,\phi \left( Z\right) \rangle \right) \right] \right\} +\langle \theta ,\phi
\left( Z\right) \rangle \right].
$
The weight $w\left( Z\right) $ that our estimators of $b$ will use is equal
to $w\left( Z\right) =1.$ Notice that for such weight, $\frac{d}{d\theta }%
Q_{b}\left( \theta ,\phi ,w=1\right) =\left\{ 1-{D}/{\text{expit}\left(
\langle \theta ,\phi \left( Z\right) \rangle \right) }\right\} \phi \left(
Z\right) .$ Thus, our estimator $\widehat{\theta }_{b}$ of $\theta _{b}$
approximately balances the covariates $\phi \left( Z\right) $ among the
units with observed data, i.e. it satisfies the "balancing" property   
\begin{equation*}
\mathbb{P}_{n}\left[ \phi \left( Z\right) \right] \approx \mathbb{P}_{n}%
\left[ \frac{D}{\text{expit}\left( \langle \widehat{\theta }_{b},\phi \left(
Z\right) \rangle \right) }\phi \left( Z\right) \right].
\end{equation*}%
Estimators satisfying this property have been used in \cite{zubizarreta, athey, changlobal, tan17, imaicov, hirshberg}, among others.

We describe the three procedures next, i.e. the estimation algorithm when
both, one or none of the links are non-linear. In what follows, 
\begin{eqnarray*}
\Upsilon \left( a,b\right) &\equiv &\left( \chi +\chi ^{1}\right) _{\left(
a,b\right) } \\
&=&S_{ab}a\left( Z\right) b\left( Z\right) +m_{a}\left( O,a\right)
+m_{b}\left( O,b\right) +S_{0},
\end{eqnarray*}%
$\mathcal{D}_{n}=\left\{ O_{1},...,O_{n}\right\} $ denotes the entire
sample, and for a subsample $\mathcal{D}_{m}$ of $\mathcal{D}_{n},$ $\mathbb{%
P}_{m}$ and $\mathbb{P}_{\overline{m}}$ denote the empirical sample
operators with respect to $\mathcal{D}_{m}$ and $\mathcal{D}_{\overline{m}%
}\equiv \mathcal{D}_{n}\backslash \mathcal{D}_{m}.$ Finally, for $k\in
\left\{ 1,2,3\right\} ,$ we let $j_{1}\left( k\right) $ and $j_{2}\left(
k\right) $ denote the two distinct, ordered, elements of the set $\left\{
1,2,3\right\} $ that are not equal to $k,$ i.e. $j_{1}\left( 2\right)
=1,j_{2}\left( 2\right) =3.$

\subsection{Procedure when both given links are linear \label{Algo both ALS}}

The estimator, denoted throughout $\widehat{\chi }_{lin},$ when both given
links are linear is the cross-fitted one-step estimator with, as indicated
earlier, $\widehat{\theta }_{a}$ and $\widehat{\theta }_{b}$, solving the
minimization problem \eqref{eq:loss} using weights $w\left( Z\right) =1.$
This estimator coincides with the estimator proposed by \cite{NeweyCherno19}
in the special case in which $\chi \left( \eta \right) $ falls in their
class and $a\left( Z\right) $ is estimated by $\ell_1$ regularization.
Specifically, for parameters $\chi \left( \eta \right) $ in the class
studied in \cite{NeweyCherno19}, $a\left( Z\right) $ is a conditional mean
of some outcome given $Z.$ These authors considered generic machine learning
estimators of $a\left( Z\right) $ and estimators of $b\left( Z\right) $ like
ours. In contrast, we use $\ell_{1}$ regularization for estimation of both $%
a $ and $b$. It can be easily checked that when $a\left( Z\right) $ is a
conditional mean, the loss function $\left( \ref{eq:lossDef}\right) $ is
exactly the weighted least squares loss. So, for such $a$ our estimator is
precisely the Lasso estimator \citep{Lasso96}.

We now give the algorithm for constructing $\widehat{\chi }_{lin}$. For
simplicity assume $n$ is even.

Randomly split the data into two disjoint, equally sized samples, $\mathcal{D%
}_{n1}$ and $\mathcal{D}_{n2}$.

\begin{enumerate}
\item For $k=1,2,$ $c\in \left\{ a,b\right\} $ and $w_{0}\left( Z\right)
=1,\,\ $compute 
\begin{equation*}
\widehat{\theta }_{c,\left( \overline{k}\right) }\in \arg \min_{\theta \in 
\mathbb{R}^{p}}\mathbb{P}_{\overline{nk}}\left[ Q_{c}\left( \theta ,\phi
,w_{0}\right) \right] +\lambda \Vert \theta \Vert _{1}
\end{equation*}%
where $\lambda $ is a given tuning parameter and define $\widehat{c}_{\left( 
\overline{k}\right) }(z)\equiv \langle \widehat{\theta }_{c,\left( \overline{%
k}\right) },\phi (z)\rangle $

\item Estimate $\chi \left( \eta \right) $ with $\widehat{\chi }_{lin}\equiv 
\frac{1}{2}\sum_{k=1}^{2}\widehat{\chi }_{lin}^{\left( k\right) }$ where $%
\widehat{\chi }_{lin}^{\left( k\right) }\equiv \mathbb{P}_{nk}\{\Upsilon (%
\widehat{a}_{\left( \overline{k}\right) },\widehat{b}_{\left( \overline{k}%
\right) })\}$ and estimate the asymptotic variance of $\widehat{\chi }_{lin}$
with $\widehat{V}_{lin}\equiv \frac{1}{2}\sum_{k=1}^{2}\mathbb{P}%
_{nk}\left\{ \left( \Upsilon (\widehat{a}_{\left( \overline{k}\right) },%
\widehat{b}_{\left( \overline{k}\right) })-\widehat{\chi }_{lin}^{\left(
k\right) }\right) ^{2}\right\} $
\end{enumerate}

\subsection{Procedure when both given links are non-linear \label{Algo both
GALS}}

The formal algorithm for constructing the estimator, throughout denoted as $%
\widehat{\chi }_{nonlin},$ when both given links are non-linear is given
below. We first provide a verbal description to facilitate its reading. For
simplicity assume $n$ is divisible by 3.

\begin{itemize}
\item \medskip Randomly split the sample into three subsamples, designate
one as the main estimation sample and the other two as the nuisance
estimation samples.\medskip\ Assume working models $a\left( Z\right)
=\varphi _{a}\left( \left\langle \theta _{a},\phi \left( Z\right)
\right\rangle \right) $ and $b\left( Z\right) =\varphi _{b}\left(
\left\langle \theta _{b},\phi \left( Z\right) \right\rangle \right) .$

\begin{itemize}
\item[(i)] In the first nuisance estimation sample, compute $\widehat{\theta 
}_{a}^{0}$ and $\widehat{\theta }_{b}^{0}$ solving the minimization problem $%
\left( \ref{eq:loss}\right) $ (for $c=$ $a$ and $b),$ using weight $w\left(
Z\right) =1.$

\item[(ii)] In the second nuisance estimation sample, compute now second
stage estimators $\widehat{\theta }_{a}$ and $\widehat{\theta }_{b}$ again
solving $\left( \ref{eq:loss}\right) $ (for $c=$ $a$ and $b)$ but this time
using weights $\widehat{w}_{a}\left( Z\right) =\varphi _{b}^{\prime }\left(
\left\langle \widehat{\theta }_{b}^{0},\phi \left( Z\right) \right\rangle
\right) $ and $\widehat{w}_{b}\left( Z\right) =\varphi _{a}^{\prime }\left(
\left\langle \widehat{\theta }_{a}^{0},\phi \left( Z\right) \right\rangle
\right) $ $\medskip $\ where $\varphi _{c}^{\prime }\left( u\right) =d$ $%
\varphi _{c}\left( u\right) /du,$ $c=a$ or $b.$

\item[(iii)] Repeat steps (i) and (ii), interchanging the roles of the first
and second nuisance samples, to obtain new second stage estimators of $%
\theta _{a}$ and $\theta _{b}.$ \medskip

\item[(iv)] Re-estimate $\theta _{a}$ with the average of the two previously
obtained second stage estimators of $\theta _{a}.$ Use this final estimator
of $\theta _{a}$ to compute $\widehat{a}\left( \cdot \right) .$ Carry out
the analogous steps to compute $\widehat{b}\left( \cdot \right) $.

\item[(v)] Compute the one step estimator in the main estimation sample
using the $\widehat{a}\left( \cdot \right) $ and $\widehat{b}\left( \cdot
\right) $ computed in step (iv).
\end{itemize}

\item Repeat steps (i)-(v) twice, interchanging each time one of the
nuisance samples with the main estimation sample.

\item The final estimator $\widehat{\chi }_{nonlin}$ of $\chi \left(
\eta\right) $ is the average of the three one step estimators.
\end{itemize}

We now give the algorithm for constructing $\widehat{\chi }_{nonlin}$.

Randomly split the data into three disjoint, equally sized samples, $%
\mathcal{D}_{n1}$, $\mathcal{D}_{n2}$ and $\mathcal{D}_{n3}$. Assume $%
\varphi _{a}\left( u\right) $ and $\varphi _{b}\left( u\right) $ are
non-linear functions.

\begin{enumerate}
\item For $k\in \left\{ 1,2,3\right\} $ and $c\in \left\{ a,b\right\} $ let $%
w_{0}\left( Z\right) =1$ and compute 
\begin{equation*}
\left. \widehat{\theta }_{c,\left( k\right) }^{0}\in \arg \min_{\theta \in 
\mathbb{R}^{p}}\mathbb{P}_{nk}\left[ Q_{c}\left( \theta ,\phi ,w_{0}\right) %
\right] +\lambda \Vert \theta \Vert _{1}.\right.
\end{equation*}%
where $\lambda $ is a given tuning parameter. Let $\widehat{c}_{\left(
k\right) }(z)\equiv \varphi _{c}\left( \langle \widehat{\theta }_{c,\left(
k\right) },\phi (z)\rangle \right) .$

\item For $c\in \left\{ a,b\right\} ,$ $l\in \left\{ 1,2\right\} $ and $k\in
\left\{ 1,2,3\right\} $ let $\widehat{w}_{\overline{c},\left( j_{l}\left(
k\right) \right) }\left( Z\right) =\varphi _{\overline{c}}^{\prime }(\langle 
\widehat{\theta }_{\overline{c},\left( j_{l}\left( k\right) \right) },\phi
\left( Z\right) \rangle )$ and compute 
\begin{equation*}
\widehat{\theta }_{c,\left( k\right) ,j_{l}\left( k\right) }\in \arg
\min_{\theta \in \mathbb{R}^{p}}\mathbb{P}_{nk}\{Q_{c}\left( \theta ,\phi ,%
\widehat{w}_{\overline{c},\left( j_{l}\left( k\right) \right) }\right)
\}+\lambda \Vert \theta \Vert _{1}.
\end{equation*}%
where $\lambda $ is a given tuning parameter. Let 
\begin{equation*}
\widehat{c}_{\left( \overline{k}\right) }\left( z\right) \equiv \varphi
_{c}\left( \left\langle \frac{\widehat{\theta }_{c,\left( j_{1}(k)\right)
,j_{2}\left( k\right) }+\widehat{\theta }_{c,\left( j_{2}(k)\right)
,j_{1}\left( k\right) }}{2},\phi (z)\right\rangle \right) .
\end{equation*}

\item Estimate $\chi \left( \eta \right) $ with $\widehat{\chi }%
_{nonlin}\equiv \frac{1}{3}\sum_{k=1}^{3}\widehat{\chi }_{nonlin}^{\left(
k\right) }$ where $\widehat{\chi }_{nonlin}^{\left( k\right) }\equiv \mathbb{%
P}_{nk}\{\Upsilon (\widehat{a}_{\left( \overline{k}\right) },\widehat{b}%
_{\left( \overline{k}\right) })\}$ and estimate the asymptotic variance of $%
\widehat{\chi }_{nonlin}$ with $\widehat{V}_{nonlin}\equiv \frac{1}{3}%
\sum_{k=1}^{3}\mathbb{P}_{nk}\left\{ \left( \Upsilon (\widehat{a}_{\left( 
\overline{k}\right) },\widehat{b}_{\left( \overline{k}\right) })-\widehat{%
\chi }_{nonlin}^{\left( k\right) }\right) ^{2}\right\} $
\end{enumerate}

\subsection{Procedure when one given link is linear and the other is
non-linear \label{Algo mixed}}

The formal algorithm for constructing the estimator, throughout denoted as $%
\widehat{\chi }_{mix},$ when, say, $\varphi _{a}\left( u\right) =u$ and,
say, $\varphi _{b}\left( u\right) $ is non-linear, is given below. As
earlier, we first provide a verbal description to facilitate its reading.
The algorithm again is based on a three way sample splitting followed by
cross-fitting strategy. The main difference with the algorithm when both
links are non-linear is that the estimator of $b$ used in the main
estimation sample is computed only once from the combined two nuisance
estimation samples using weight $w\left( Z\right) =1.$ However, we estimate $%
\theta _{a}$ as the average of two estimators of it, each being computed
from each nuisance estimation sample, using estimated weights that rely
additional preliminary estimators of $\theta _{b}$ computed from the
remaining nuisance estimation sample. For simplicity, assume $n$ is
divisible by 3.

\begin{itemize}
\item \medskip Randomly split the sample into three subsamples, designate
one as the main estimation sample and the other two as the nuisance
estimation samples. Assume working models $a\left( Z\right) = \left\langle
\theta _{a},\phi \left( Z\right) \right\rangle $ and $b\left( Z\right)
=\varphi _{b}\left( \left\langle \theta _{b},\phi \left( Z\right)
\right\rangle \right) .$

\begin{itemize}
\item[(i)] Using combined data from the first and second nuisance sample,
compute $\widehat{\theta }_{b}$ and let $\widehat{b}\left( \cdot \right)
\equiv \varphi _{b}\left( \left\langle \widehat{\theta }_{b},\phi \left(
\cdot \right) \right\rangle \right) .$

\item[(ii)] In the first nuisance estimation sample, compute $\widehat{%
\theta }_{b}^{0}$ solving the minimization problem $\left( \ref{eq:loss}%
\right) $ (for $c=b),$ using weight $w\left( Z\right) =1.$

\item[(iii)] In the second nuisance estimation sample, compute $\widehat{%
\theta }_{a}$ solving $\left( \ref{eq:loss}\right) $ (for $c=a),$ using
weight $\widehat{w}_{a}\left( Z\right) =\varphi _{b}^{\prime }\left(
\left\langle \widehat{\theta }_{b}^{0},\phi \left( Z\right) \right\rangle
\right) $ where $\varphi _{b}^{\prime }\left( u\right) =d$ $\varphi
_{b}\left( u\right) /du.$

\item[(iv)] Repeat steps (ii) and (iii), interchanging the roles of the
first and second nuisance samples, to obtain a new estimators of $\theta
_{a}.$

\item[(v)] Re-estimate $\theta _{a}$ with the average of the two previously
obtained estimators of $\theta _{a}.$ Use this final estimator of $\theta
_{a}$ to compute $\widehat{a}\left( \cdot \right) .$

\item[(vi)] Compute the one step estimator in the main estimation sample
using the $\widehat{a}\left( \cdot \right) $ and $\widehat{b}\left( \cdot
\right) $ computed in step (iv).
\end{itemize}

\item Repeat steps (i)-(vi) twice, interchanging each time one of the
nuisance samples with the main estimation sample.

\item The final estimator $\widehat{\chi }_{mix}$ of $\chi \left(
\eta\right) $ is the average of the three one step estimators.
\end{itemize}

We now give the algorithm for constructing $\widehat{\chi }_{mix}$.

Randomly split the data into three disjoint, equally sized samples, $%
\mathcal{D}_{n1}$, $\mathcal{D}_{n2}$ and $\mathcal{D}_{n3}$.

\begin{enumerate}
\item For $k=1,2,3,$ and $w_{0}\left( Z\right) =1$ compute 
\begin{align*}
& \left. \widehat{\theta }_{b,\left( \overline{k}\right) }\in \arg
\min_{\theta \in \mathbb{R}^{p}}\mathbb{P}_{\overline{nk}}\left[ Q_{b}\left(
\theta ,\phi ,w_{0}\right) \right] +\lambda \Vert \theta \Vert _{1},\right.
\\
& \left. \widehat{\theta }_{b,\left( k\right) }\in \arg \min_{\theta \in 
\mathbb{R}^{p}}\mathbb{P}_{nk}\left[ Q_{b}\left( \theta ,\phi ,w_{0}\right) %
\right] +\lambda \Vert \theta \Vert _{1}.\right.
\end{align*}%
where $\lambda $ is a given tuning parameter. Let $\widehat{b}_{\left(
k\right) }(z)\equiv \varphi _{b}\left( \langle \widehat{\theta }_{b,\left(
k\right) },\phi (z)\rangle \right) $ and $\widehat{b}_{\left( \overline{k}%
\right) }(z)\equiv \varphi _{b}\left( \langle \widehat{\theta }_{b,\left( 
\overline{k}\right) },\phi (z)\rangle \right) .$

\item For $l\in \left\{ 1,2\right\} $ and $k\in \left\{ 1,2,3\right\} ,$ let 
$\widehat{w}_{b,\left( j_{l}\left( k\right) \right) }\left( Z\right)
=\varphi _{b}^{\prime }(\langle \widehat{\theta }_{b,\left( j_{l}\left(
k\right) \right) },\phi \left( Z\right) \rangle )$ and compute 
\begin{equation*}
\widehat{\theta }_{a,\left( k\right) ,j_{l}\left( k\right) }\in \arg
\min_{\theta \in \mathbb{R}^{p}}\mathbb{P}_{nk}\{Q_{a}\left( \theta ,\phi ,%
\widehat{w}_{b,\left( j_{l}\left( k\right) \right) }\right) \}+\lambda \Vert
\theta \Vert _{1}.
\end{equation*}%
where $\lambda $ is a given tuning parameter. Let 
\begin{equation*}
\widehat{a}_{\left( \overline{k}\right) }\left( z\right) \equiv \varphi
_{a}\left( \left\langle \frac{\widehat{\theta }_{a,\left( j_{1}(k)\right)
,j_{2}\left( k\right) }+\widehat{\theta }_{a,\left( j_{2}(k)\right)
,j_{1}\left( k\right) }}{2},\phi (z)\right\rangle \right) .
\end{equation*}

\item Estimate $\chi \left( \eta \right) $ with $\widehat{\chi }_{mix}\equiv 
\frac{1}{3}\sum_{k=1}^{3}\widehat{\chi }_{mix}^{\left( k\right) }$ where $%
\widehat{\chi }_{mix}^{\left( k\right) }\equiv \mathbb{P}_{nk}\{\Upsilon (%
\widehat{a}_{\left( \overline{k}\right) },\widehat{b}_{\left( \overline{k}%
\right) })\}$ and estimate the asymptotic variance of $\widehat{\chi }_{mix}$
with $\widehat{V}_{mix}\equiv \frac{1}{3}\sum_{k=1}^{3}\mathbb{P}%
_{nk}\left\{ \left( \Upsilon (\widehat{a}_{\left( \overline{k}\right) },%
\widehat{b}_{\left( \overline{k}\right) })-\widehat{\chi }_{mix}^{\left(
k\right) }\right) ^{2}\right\} $
\end{enumerate}

\section{Models}

\label{sec:models}

In order to give precise results about the asymptotic properties of our
estimators of $\chi \left( \eta \right) $ we will need to define the
following classes of functions.

We define a sequence of classes of functions $\mathcal{G}_{n}\left( \phi
^{n},s\left( n\right) ,j,\varphi ^{n}\right) $, $n\in \mathbb{N}$. Each
class in the sequence is indexed by an $\mathbb{R}^{p}-$valued function $%
\phi \left( Z\right) =\phi ^{n}\left( Z\right) =\left( \phi _{1}^{n}\left(
Z\right) ,...,\phi _{p}^{n}\left( Z\right) \right) ^{T}$ where $p=p\left(
n\right) ,$ by an integer $s=s\left( n\right) ,$ by $j=1$ or 2 and by a,
possibly non-linear, link function $\varphi =\varphi ^{n}$ with range in a,
possibly strict, subset of $\mathbb{R}$. The convergence of our estimators
of $\chi \left( \eta \right) $ will require that 
\begin{equation*}
s\log \left( p\right) /n\rightarrow 0\text{ as }n\rightarrow \infty .
\end{equation*}%
From now on, we will suppress the dependence on $n$ from the notation. In
what follows for $\theta \in \mathbb{R}^{p},$ $\Vert \theta \Vert _{0}\equiv
\sum\limits_{j=1}^{p}I\left\{ \theta _{j}\neq 0\right\} $ and $\Vert \theta
\Vert _{2}\equiv \left( \sum_{j=1}^{p}\theta _{j}^{2}\right) ^{1/2}$ and, $k$
and $K$ are fixed constants, i.e. not dependent on $n$, that may differ in
different contexts.

\begin{definition}[Approximately generalized linear-sparse class (AGLS)]
\label{def:AGLS} The class $\mathcal{G}\left( \phi ,s,j,\varphi \right) $ is
comprised by functions $c\left( Z\right) $ such that there exists $\theta
^{\ast }\in \mathbb{R}^{p}$ and a function $r\left( Z\right) $ satisfying 
\begin{equation}
c\left( Z\right) =\varphi \left( \left\langle \theta ^{\ast },\phi \left(
Z\right) \right\rangle \right) +r\left( Z\right)  \label{eq:AGLSmain}
\end{equation}%
where $\left\Vert \theta ^{\ast }\right\Vert _{0}\leq s$ and $E_{\eta }\left[
r\left( Z\right) ^{2}\right] \leq K\left( s\log \left( p\right) /n\right) $ $%
^{j}$.
\end{definition}

In what follows we give several examples of models that can be viewed as
AGLS class sequences.

\begin{example}
\label{ex:parametric_sparse} The parametric class of functions $c\left(
Z\right) =\varphi \left( \left\langle \theta ^{\ast },\phi \left( Z\right)
\right\rangle \right) $ for $\theta ^{\ast }\in \mathbb{R}^{p}$ satisfying $%
\left\Vert \theta ^{\ast }\right\Vert _{0}\leq s$ and a given $\phi \left(
Z\right) $ of dimension $p$ such that $s\log (p)/n\rightarrow 0$ is
trivially included in $\mathcal{G}\left( \phi ,s,j,\varphi \right) $ for $%
j=1 $ or $2$ since one can take $r=0.$
\end{example}

In what follows $\alpha $ is a given constant greater than $1/2.$

\begin{example}
\label{ex:parametric_weakly_sparse} For $l\in\left\lbrace 1, 2\right\rbrace$
and $M>0$, consider the classes $\mathcal{W}_{n}\left( \phi,t, \alpha , l,
M,\varphi\right) $ of functions $c(Z)=\varphi \left( \left\langle \theta
,\phi \left( Z\right) \right\rangle \right) $ for $\theta \in \mathbb{R}^{p}$
satisfying 
\begin{equation*}
\max_{j\leq p}j^{\alpha }|\theta _{(j)}|\leq t\left( n\right) \quad \text{and%
} \quad \Vert \theta\Vert_{2}\leq M
\end{equation*}%
for a sequence $t\left( n\right) $ satisfying%
\begin{equation*}
t\left( n\right) ^{1/\alpha }\left( \frac{\log (p)}{n}\right) ^{1-1/(l\alpha
)}\underset{n\rightarrow \infty }{\rightarrow }0 \quad \text{and} \quad
t(n)^{1/\alpha} \left(\frac{n}{\log(p)}\right)^{1/(l\alpha)}\leq p .
\end{equation*}%
Note that when ${\log (p)}/{n}$ converges to 0, the last display requires $%
\alpha >1/l$ if $t\left( n\right) $ does not converge to 0. Let $\overline{l}%
=2$ if $l=1$ and $\overline{l}=1$ if $l=2$. In Proposition \ref%
{prop:approx_sparse_ex} in Appendix B we show that under regularity
assumptions on $\phi \left( Z\right) $ and on the link $\varphi ,$ $\mathcal{%
W}_{n}\left( \phi ,t,\alpha ,l, M,\varphi\right) $ is included in the $%
\mathcal{G}_{n}\left( \phi ,s,\overline{l},\varphi \right)$ AGLS class
sequence with 
\begin{equation*}
s= t(n)^{1/\alpha} \left(\frac{n}{\log(p)}\right)^{1/(l\alpha)}.
\end{equation*}%
Furthermore, $s\log \left( p\right) /n\rightarrow 0$. Interestingly, the
class $\mathcal{W}_{n}\left( \phi, t ,\alpha ,l, M,\varphi\right) $ includes
the so called `weakly sparse' parametric models of \cite{NegahbanSS}.
Specifically, for each $q\in \left( 0,2\right) $ consider the class $%
\mathcal{N}_{n}\left( \phi, t,q,M,\varphi\right) $ of functions $%
c(Z)=\varphi \left( \left\langle \theta ,\phi \left( Z\right) \right\rangle
\right) $ where $\theta \in \mathbb{R}^{p}$ satisfies 
\begin{equation*}
\sum_{j=1}^{p}|\theta _{j}|^{q}\leq t\left( n\right) \quad \text{and} \quad
\Vert \theta\Vert_{2}\leq M
\end{equation*}%
for a sequence $t\left( n\right) $ satisfying%
\begin{equation*}
t\left( n\right) \left( \frac{\log (p)}{n}\right) ^{1-q/2}\underset{%
n\rightarrow \infty }{\rightarrow }0 \quad \text{and} \quad t(n) \left(\frac{%
n}{\log(p)}\right)^{q/2}\leq p .
\end{equation*}%
\cite{NegahbanTR} and \cite{NegahbanSS} studied $\ell_1$ regularized
estimation of $\theta $ under class $\mathcal{N}_{n}\left( \phi,
t,q,M,\varphi\right) $ for a regression function $c\left( Z\right) =E\left(
Y|Z\right)$, but these authors required that $q\in \left( 0,1\right)$. In
Proposition \ref{prop:approx_sparse_ex} we show that $\mathcal{N}_{n}\left(
\phi,t ,q,M,\varphi\right) $ is included in $\mathcal{W}_{n}\left( \phi,
t^{1/q},1/q,2,M,\varphi\right) $ \ for any $q\in \left( 0,2\right)$. In
fact, in Theorem \ref{theo:rate_main} in Appendix A we extend the
convergence rate results obtained by \cite{NegahbanTR} for $\ell_1$
regularized estimation of high-dimensional GLMs to the case $q\in(0,2)$.
\end{example}

\begin{example}
\label{ex:nonparam} Given $\{\phi _{j}\}_{j\in \mathbb{N}}$ an orthonormal
basis of $L^{2}([-1,1]^{d})$ with $d$ fixed, for $l=1$ and $2$ consider the
class $\mathcal{S}_{n}\left( \phi ,t,\alpha ,l\right) $ of functions such
that there exists $p$ and a permutation $\left\{ \phi _{\pi \left( j\right)
}\right\} $ of the first $p=p\left( n\right) $ elements of the basis such
that 
\begin{equation*}
c(Z)=\sum\limits_{j=1}^{p}\theta _{j}\phi _{\pi \left( j\right)
}(Z)+\sum\limits_{j=p+1}^{\infty }\theta _{j}\phi _{j}(Z)\quad \text{ }
\end{equation*}%
where $|\theta _{j}|j^{\alpha }\leq t\left( n\right) $ for all $j$ and $t(n)$
is such that%
\begin{equation*}
t\left( n\right) ^{1/\alpha }\left( \frac{\log (p)}{n}\right) ^{1-1/(l\alpha
)}\underset{n\rightarrow \infty }{\rightarrow }0\quad \text{and}\quad
t(n)^{1/\alpha }\left( \frac{n}{\log (p)}\right) ^{1/(l\alpha )}\leq p.
\end{equation*}%
As in the preceding example, when ${log(p)}/{n}$ converges to 0, \ the last
display requires $\alpha >1/l$ if $t\left( n\right) $ does not converge to
0. Let $\overline{l}=2$ if $l=1$ and $\overline{l}=1$ if $l=2$. In
Proposition \ref{prop:approx_sparse_ex} in Appendix B we show that when the
density of $Z$ is bounded away from infinity, $\mathcal{S}_{n}\left( \phi
,t,\alpha ,l\right) $ is included in the $\mathcal{G}_{n}\left( \phi
^{\left( n\right) },s,\overline{l},id\right) $ AGLS class sequence with $%
\phi ^{\left( n\right) }=\left( \phi _{1},...,\phi _{p\left( n\right)
}\right) $ and 
\begin{equation*}
s=t(n)^{1/\alpha }\left( \frac{n}{\log (p)}\right) ^{1/(l\alpha )}.
\end{equation*}%
Furthermore, $s\log \left( p\right) /n\rightarrow 0$. 

 Suppose $c\left( Z\right) $ stands for $%
E\left( Y|Z\right) $ for some outcome $Y$ and that one estimates $c\left(
Z\right) $ by the series estimator $\widehat{c}\left( Z\right) $ obtained by
least squares regression on the first $k\left( n\right) <n$ elements $\phi
_{1}(Z),...,\phi _{k\left( n\right) }(Z)$ of the basis. Interestingly, as
pointed out by \cite{BelloniChernoSparse}, the class $\mathcal{S}_{n}\left(
\phi ,t,\alpha ,l\right) ,l=1$ or 2, is so large that one can find for each $%
n$ a function $c_{n}\left( Z\right) \in \mathcal{S}_{n}\left( \phi ,\alpha
,k\right) $ such that $E\left[ \left\{ c_{n}\left( Z\right) -\widehat{c}%
_{n}\left( Z\right) \right\} ^{2}\right] $ converges to non-zero constant.
Specifically, let $p\left( n\right) =2n$ and consider the function $%
c_{n}\left( Z\right) =\sum_{j=1}^{\infty }\nu _{j}^{n}\phi _{j}\left(
Z\right) $ where $\nu _{1}^{n}=...=\nu _{n}^{n}=0$ and $\nu
_{j}^{n}=1/\left( j-n\right) ^{\alpha }$ for $j>n.$ Clearly $c_{n}\left(
Z\right) $ is in $\mathcal{S}_{n}\left( \phi ,t,\alpha ,l\right) $ but the
least squares estimator $\widehat{c}\left( Z\right) $ on the first $k\left(
n\right) <n$ basis elements converges to the function identically equal to 0.
\end{example}

\begin{example}\label{ex:holder}
Consider the class of
functions of $Z\in \mathbb{R}^{d}$, where $d$ does not change with $n$,
which lie in a Holder smoothness ball $H\left( \beta ,C\right) $ and admit a
periodic extension. A function $f$ lies in the Holder ball $H\left( \beta
,C\right) $, with exponent $\beta >0$ and radius $C>0,$ if and only if $f$ is
bounded in supremum norm by $C$, all partial derivatives of $f$ up to order $%
\lfloor \beta \rfloor $ exist, and all partial derivatives of order $\lfloor
\beta \rfloor $ are Lipschitz with exponent $\beta -\lfloor \beta \rfloor $
and constant $C$. This class is included in 
$\mathcal{G}\left( \phi^{n} ,s,j=1,\varphi= id \right)$, where $\phi^{n}$ has the first $p$
terms in the trigonometric basis  and $s \asymp \left(n/\log(p)\right)^{1/(2(\beta/d)+1)}$.
This follows easily from the fact if $c(z) \in H\left( \beta ,C\right) $ and $c(z)$ admits a periodic extension then there exists $\theta^{\ast} \in \mathbb{R}^{p}$ with $\Vert \theta^{\ast}\Vert_{0}\leq s$ such that
$E^{1/2}\left[ (c(Z) - \langle \theta^{\ast}, \phi(Z)\rangle)^{2} \right]\leq L s^{-\beta/d}$ for a constant $L>0$ (see Section 3.1 in \cite{BelloniChernoSeries} for example).
\end{example}

In Examples \ref{ex:parametric_weakly_sparse} and \ref{ex:nonparam} a larger 
$\alpha$ correspond to a faster decay to 0 of the coefficients $\theta _{j}.$
In Example \ref{ex:nonparam}\thinspace\ this is connected to the smoothness
of the functions in the class. Thus, in the latter example this implies that 
$\mathcal{G}_{n}\left( \phi ,s,1,id \right) $ contains functions that are
less smooth than those in $\mathcal{G}_{n}\left( \phi ,s,2,id \right) $.

\subsection{Heuristic motivation for our proposal}

\label{sec:heuristic}

Before giving the rigorous results on the model and rate DR properties of
our estimators, we will discuss here the heuristics of these results. We
will discuss the properties of $\widehat{\chi}_{nonlin}$, since it
is the most involved.

Recall that functionals in the BIF class are such that 
\begin{equation*}
\left( \chi +\chi ^{1}\right) _{\left( a,b\right) }=S_{ab}ab+m_{a}\left(
o,a\right) +m_{b}\left( o,b\right) +S_{0}
\end{equation*}%
where $h\rightarrow m_{a}\left( o,h\right) $ and $h\rightarrow m_{b}\left(
o,h\right) $ are linear maps. Then the map $t\rightarrow $ $\left( \chi
+\chi ^{1}\right) _{\left( a+t\left( \widetilde{a}-a\right) ,b+t\left( 
\widetilde{b}-b\right) \right) }$ is quadratic for any fixed $a,b,\widetilde{%
a}$ and $\widetilde{b}$. Consider a generic estimator $\widetilde{\chi }%
\equiv \mathbb{P}_{m}\left( \chi +\chi ^{1}\right) _{\left( \widetilde{a},%
\widetilde{b}\right) }$ where $\mathbb{P}_{m}$ is the empirical mean over a
subsample $\mathcal{D}_{m}$ and $\widetilde{a}$ and $\widetilde{b}$ are
estimators computed using data from $\mathcal{D}_{\overline{m}}=$ $\mathcal{D%
}-\mathcal{D}_{m}$. Simple algebra gives for any $a$ and $b^{1}$, 
\begin{equation}
\widetilde{\chi }-\chi \left( \eta \right) =N_{m}+\Gamma _{a,m}+\Gamma
_{b^{1},m}+\Gamma _{ab^{1},m}  \label{expansion}
\end{equation}%
where 
\begin{equation*}
N_{m}\equiv \mathbb{P}_{m}\left[ \left( \chi +\chi ^{1}\right) _{\left(
a,b^{1}\right) }-\chi \left( \eta \right) \right]
\end{equation*}%
\begin{eqnarray*}
\Gamma _{a,m} &\equiv &\mathbb{P}_{m}\left. \frac{d}{dt}\left( \chi +\chi
^{1}\right) _{\left( a+t\left( \widetilde{a}-a\right) ,b^{1}\right)
}\right\vert _{t=0} \\
&=&\mathbb{P}_{m}\left[ S_{ab}\left( \widetilde{a}-a\right) \left( Z\right)
b^{1}\left( Z\right) +m_{a}\left( O,\widetilde{a}-a\right) \right]
\end{eqnarray*}%
\begin{eqnarray*}
\Gamma _{b^{1},m} &\equiv &\mathbb{P}_{m}\left. \frac{d}{dt}\left( \chi
+\chi ^{1}\right) _{\left( a,b^{1}+t\left( \widetilde{b}-b^{1}\right)
\right) }\right\vert _{t=0} \\
&=&\mathbb{P}_{m}\left[ S_{ab}\left( \widetilde{b}-b^{1}\right) \left(
Z\right) a\left( Z\right) +m_{b}\left( O,\widetilde{b}-b^{1}\right) \right]
\end{eqnarray*}%
\begin{eqnarray*}
\Gamma _{ab^{1},m} &\equiv &\mathbb{P}_{m}\left. \frac{d^{2}}{dt^{2}}\left(
\chi +\chi ^{1}\right) _{\left( a+t\left( \widetilde{a}-a\right)
,b^{1}+t\left( \widetilde{b}-b^{1}\right) \right) }\right\vert _{t=0} \\
&=&\mathbb{P}_{m}\left[ S_{ab}\left( \widetilde{a}-a\right) \left( Z\right)
\left( \widetilde{b}-b^{1}\right) \left( Z\right) \right] .
\end{eqnarray*}%
Note that the term $N_{m}$ is an average of mean zero i.i.d.. random
variables and therefore is approximately normal for large $m$, even if $%
b^{1} $ is not equal to $b$ because by equation \eqref{bias1}, 
$E_{\eta }\left[ \left( \chi +\chi ^{1}\right) _{\left( a,b^{1}\right) }%
\right] =\chi \left( \eta \right) $ even if $b^{1}\not=b$.

\subsubsection*{Heuristics for rate double robustness}

Suppose $\widetilde{a}$ and $\widetilde{b}$ converge to the true functions $%
a $ and $b.$ We analyze the behavior of $\widetilde{\chi }-\chi \left( \eta
\right) $ by setting in the preceding expansion $\left( \ref{expansion}%
\right) $, $b^{1}=b$. The terms $\Gamma _{a,m}$ and $\Gamma _{b,m}$ would
ordinarily be analyzed using tools from empirical processes theory under
Donsker type assumptions on the size of the classes where $a$ and $b$ lie.
However, these assumptions would put a bound on the complexity of these
functions which would then defeat the purpose of rate double robustness,
namely trading off the complexity of one function with the simplicity of the
other. We avoid imposing Donsker assumptions by employing sample-splitting,
i.e. we use estimators $\widetilde{a}$ and $\widetilde{b}$ computed from $%
\mathcal{D}_{\overline{m}}.$ Now, because $\widetilde{a}$ and $\widetilde{b}$
depend on data $\mathcal{D}_{\overline{m}}$ which is independent from $%
\mathcal{D}_{m},$ then invoking Proposition \ref{prop:DR functional}, $\sqrt{%
m}\Gamma _{a,m}$ and $\sqrt{m}\Gamma _{b,m}$ have mean zero given $\mathcal{D%
}_{\overline{m}}.$ We then show that$\sqrt{m}\Gamma _{a,m}$ and $\sqrt{m}%
\Gamma _{b,m}$ are $o_{p}\left( 1\right) $ by checking that, under our
regularity conditions, $\sqrt{m}\Gamma _{a,m}$ and $\sqrt{m}\Gamma _{b,m}$
have, conditionally on $\mathcal{D}_{\overline{m}},$ variance that converges
that converges to 0. Finally, a Cauchy-Schwartz type argument gives that $%
\Gamma _{ab,m}$ converges at a rate equal to the product of the rates of
convergence of $\widetilde{a}$ and $\widetilde{b},$ thus yielding the rate
double robustness property.

Notice that this analysis is valid for any estimators $\widetilde{a}$ and $%
\widetilde{b}$ computed from data $\mathcal{D}_{m},$ not just for the $\ell
_{1}$ regularized estimators in our proposed algorithms. Notice also that we
use estimators $\widetilde{a}$ and $\widetilde{b}$ from a two stage weighted 
$\ell _{1}$ regularized estimation procedure with data dependent weights.
Because we further employ sample splitting of $\mathcal{D}_{\overline{m}}$
to estimate the weights from a separate sample from the one where we compute 
$\widetilde{a}$ and $\widetilde{b}$, we can show that the fact that we use
data dependent weights does not affect the rate of convergence of $%
\widetilde{a}$ and $\widetilde{b}$ to $a$ and $b.$ If we had been only
interested in estimators with the rate double robustness property we could
have used weights $w(Z)=1$ and we would have not needed to sample split $%
\mathcal{D}_{\overline{m}}$ to compute the estimators of $a$ and $b.$ The
use of data dependent weights is needed to further ensure the model double
robustness property.

\subsubsection*{Heuristics for model double robustness}

Suppose now that the analyst has correctly guessed an AGLS model for $a$ but
incorrectly guessed an AGLS\ model for $b,$ and for simplicity assume that $%
\phi _{a}=\phi _{b}=\phi .$ Then, one would expect that for any reasonable
estimators $\widetilde{a}$ and $\widetilde{b},$ $\widetilde{a}$ converges to 
$a$ but $\widetilde{b}$ converges to $b^{1}\not=b.$ In the expansion %
\eqref{expansion}, as indicated earlier, the term $N_{m}$ is an average of
mean zero i.i.d.. random variables and therefore is approximately normal for
large $m.$ Also, by the same arguments as for the case of both models
correctly specified, $\sqrt{m}\Gamma _{b^{1},m}$ should be $o_{p}\left(
1\right) .$ However, the term $\sqrt{m}\Gamma _{a,m}$ is problematic because
unlike the term $\sqrt{m}\Gamma _{b^{1},m}$ we cannot invoke Proposition \ref%
{prop:DR functional} to argue that it is an average of terms that have mean
zero given $\mathcal{D}_{\overline{m}}.$ This is because 
\begin{equation}
\Gamma _{a,m}=\mathbb{P}_{m}\left[ S_{ab}\left( \widetilde{a}-a\right)
\left( Z\right) b^{1}\left( Z\right) +m_{a}\left( O,\widetilde{a}-a\right) %
\right]  \label{eq:gammaA}
\end{equation}%
and Proposition \ref{prop:DR functional} tells us that $S_{ab}h\left(
Z\right) b^{1}\left( Z\right) +m_{a}\left( O,h\right) $ has mean zero for
any $h\left( Z\right) $ -and in particular for $h=\widetilde{a}-a$ - only if 
$b^{1}$ is equal to $b,$ which is not the case. Nevertheless, we can
overcome this difficulty if we cleverly choose our estimator of $b$.
Specifically, if $\widetilde{b}\left( Z\right) $ is of the form $\varphi
_{b}\left( \left\langle \widetilde{\theta }_{b},\phi \left( Z\right)
\right\rangle \right) $ where $\widetilde{\theta }_{b}$ solves the
minimization problem $\left( \ref{eq:loss}\right) $ (with $c=b)$ for a given
weight function $w^{1}\left( Z\right) ,$ then under regularity conditions, $%
\widetilde{b}\left( Z\right) $ converges to $b^{1}\left( Z\right) =\varphi
_{b}\left( \left\langle \theta _{b}^{1},\phi \left( Z\right) \right\rangle
\right) $ where 
\begin{equation}
\theta _{b}^{1}\in \arg \min_{\theta \in \mathbb{R}^{p}}E_{\eta }\left[
S_{ab}w^{1}\left( Z\right) \psi _{b}\left( \langle \theta ,\phi \left(
Z\right) \rangle \right) +\langle \theta ,m_{a}(O,w^{1}\cdot \phi )\rangle %
\right]  \label{eq:thetab1}
\end{equation}%
provided $b^{1}\in \mathcal{G}\left( \phi ,s_{b},j=1,\varphi _{b}\right) $
with $s_{b}\log \left( p\right) /m=o\left( 1\right) .$ So, even though $%
S_{ab}h\left( Z\right) b^{1}\left( Z\right) +m_{a}\left( O,h\right) $ does
not have mean zero for all $h\left( Z\right) ,$ it does have mean zero for $%
h\left( Z\right) =w^{1}\left( Z\right) \phi _{j}\left( Z\right) $ for all $%
j=1,\dots ,p.$

On the other hand, if $\widetilde{a}\left( Z\right) $ is of the form $%
\varphi _{a}\left( \left\langle \widetilde{\theta }_{a},\phi \left( Z\right)
\right\rangle \right) $ where $\widetilde{\theta }_{a}$ is the estimator of $%
\theta _{a}$ computed in step (iv) of the verbal description of the
algorithm for non-linear links, then expanding $\widetilde{a}$ around $a$ in 
$\left( \ref{eq:gammaA}\right) ,$ we have that 
\begin{eqnarray*}
\left\vert \Gamma _{a,m}\right\vert &\approx &\left\vert \mathbb{P}_{m}\left[
S_{ab}\varphi _{a}^{\prime }\left( \left\langle \theta _{a}^{\ast },\phi
\left( Z\right) \right\rangle \right) \left\langle \widetilde{\theta }%
_{a}-\theta _{a}^{\ast },\phi \left( Z\right) \right\rangle b^{1}\left(
Z\right) +m_{a}\left( O,\varphi _{a}^{\prime }\left( \left\langle \theta
_{a}^{\ast },\phi \left( Z\right) \right\rangle \right) \left\langle 
\widetilde{\theta }_{a}-\theta _{a}^{\ast },\phi \left( Z\right)
\right\rangle \right) \right] \right\vert \\
&=&\left\vert \left\langle \widetilde{\theta }_{a}-\theta _{a}^{\ast },%
\mathbb{P}_{m}\left[ S_{ab}\varphi _{a}^{\prime }\left( \left\langle \theta
_{a}^{\ast },\phi \left( Z\right) \right\rangle \right) \phi \left( Z\right)
b^{1}\left( Z\right) +m_{a}\left( O,\varphi _{a}^{\prime }\left(
\left\langle \theta _{a}^{\ast },\phi \left( Z\right) \right\rangle \right)
\phi \left( Z\right) \right) \right] \right\rangle \right\vert \\
&\leq &\left\Vert \widetilde{\theta }_{a}-\theta _{a}^{\ast }\right\Vert
_{1}\left\Vert \mathbb{M}_{m}\right\Vert _{\infty }
\end{eqnarray*}%
where%
\begin{equation*}
\mathbb{M}_{m}\equiv \mathbb{P}_{m}\left[ S_{ab}\varphi _{a}^{\prime }\left(
\left\langle \theta _{a}^{\ast },\phi \left( Z\right) \right\rangle \right)
\phi \left( Z\right) b^{1}\left( Z\right) +m_{a}\left( O,\varphi
_{a}^{\prime }\left( \left\langle \theta _{a}^{\ast },\phi \left( Z\right)
\right\rangle \right) \phi \left( Z\right) \right) \right]
\end{equation*}%
with $\theta _{a}^{\ast }$ the parameter associated with $a$ in the AGLS
model.

Suppose in $\left( \ref{eq:thetab1}\right) $ we had used $w^{1}(Z)=\varphi
_{a}^{\prime }\left( \left\langle \theta _{a}^{\ast },\phi \left( Z\right)
\right\rangle \right) .$ Then $\mathbb{M}_{m}$ would be a $p\times 1$ vector
whose entries are each a sample average of mean zero random variables. Thus,
under moment assumptions, Nemirovski's inequality (see Lemma 14.24 from \cite%
{Buhlmann-book}) would yield $\left\Vert \mathbb{M}_{m}\right\Vert _{\infty
}=O_{p}\left( \sqrt{\log \left( p\right) /m}\right) $. On the other hand,
standard arguments for $\ell _{1}$ regularized regression yield $\left\Vert 
\widetilde{\theta }_{a}-\theta _{a}^{\ast }\right\Vert _{1}=O_{p}\left( s_{a}%
\sqrt{{\log \left( p\right) }/{m}}\right) $ where $s_{a}$ is the sparsity $s$
parameter of the AGLS model for $a$. Therefore, we would obtain 
\begin{equation*}
\sqrt{m}\Gamma _{a,m}=\sqrt{m}O_{p}\left( s_{a}\sqrt{\frac{\log \left(
p\right) }{m}}\right) O_{p}\left( \sqrt{\frac{\log \left( p\right) }{m}}%
\right) =O_{p}\left( \frac{s_{a}\log (p)}{\sqrt{m}}\right) .
\end{equation*}%
Consequently, $\sqrt{m}\Gamma _{a,m}$ would be $o_{p}\left( 1\right) $ if $%
s_{a}\log \left( p\right) =o\left( \sqrt{m}\right) ,$ i.e. if the AGLS model
for $a$ is ultra sparse. Note that this analysis only requires ultra
sparsity of the correctly modeled function $a.$ It does not require ultra
sparsity of the probability limit $b^{1}$ of the estimator $\widetilde{b}$
computed assuming an incorrect model.

Unfortunately, we cannot use the ideal weight $w^{1}(Z)=\varphi _{a}^{\prime
}\left( \left\langle \theta _{a}^{\ast },\phi \left( Z\right) \right\rangle
\right) $ because $\theta _{a}^{\ast }$ is unknown. Instead, we estimate
this ideal weight by replacing $\theta _{a}^{\ast }$ with a first stage
estimator of $\theta _{a}^{\ast }$ computed by solving the minimization
problem $\left( \ref{eq:loss}\right) $ (with $c=a),$ using weight $w\left(
Z\right) =1.$ Since the model for $a$ is correct, then any choice of weight,
and in particular $w\left( Z\right) =1,$ yields a consistent estimator of $%
\theta _{a}^{\ast }.$ Thus, this strategy yields a consistent estimator of
the ideal weight. Because we employ sample splitting of $\mathcal{D}_{%
\overline{m}}$ to estimate the ideal weights from a separate sample from the
one where we compute $\widetilde{b}$, we avoid imposing conditions on the
complexity of the ideal weight function. However, because the very
definition of $b^{1}$ depends on the weights being the ideal ones, small
perturbations from the ideal weight in the estimated weights used to compute 
$\widetilde{b}$ affect the rate of convergence of $\widetilde{b}$ to $b^{1}:$
whereas the rate would be $\sqrt{s_{b}\log \left( p\right) /m}$ if the ideal
weights had been used, the rate becomes $\sqrt{\max \left(
s_{a},s_{b}\right) \log \left( p\right) /m}$ when the estimated weights are
used.

By the Cauchy-Schwartz inequality, the term $\Gamma _{ab^{1},m}$ is bounded
by the product of the convergence rates of $\widetilde{a}$ to $a$ and $%
\widetilde{b}$ to $b^{1},$ thus yielding that $\sqrt{m}\left\{ \widetilde{%
\chi }-\chi \left( \eta \right) \right\} $ is asymptotically normal under
regularity conditions, provided%
\begin{equation}
\sqrt{s_{a}\max \left( s_{a},s_{b}\right)} \log \left( p\right)=o\left(
m^{1/2}\right)  \label{eq:rate1}
\end{equation}

The preceding analysis had assumed that the analyst has correctly guessed an
AGLS model for $a$ but incorrectly guessed an AGLS\ model for $b.$ If the
reverse had happened, an identical argument would have given that 
\begin{equation}
\sqrt{s_{b}\max \left( s_{a},s_{b}\right)} \log \left( p\right)=o\left(
m^{1/2}\right)  \label{eq:rate2}
\end{equation}%
as the condition for asymptotic normality of $\sqrt{m}\left\{ \widetilde{%
\chi }-\chi \left( \eta \right) \right\} $ because our algorithm treats $a$
and $b$ symmetrically. This then shows that $\widetilde{\chi }$ has the
model double robust property so long as corresponding rate condition $\left( %
\ref{eq:rate1}\right) $ or $\left( \ref{eq:rate2}\right) $ holds, depending
on which of the two nuisance functions has been correctly modeled.

\section{Asymptotic results}

\label{sec:asym_results_chi} In this section, we prove that, under
regularity assumptions, the estimators defined by Algorithm \ref{Algo both
ALS} and \ref{Algo both GALS} are simultaneously rate and model doubly
robust. Throughout, we assume that the target parameter $\chi(\eta)$ belongs
to the BIF class. We will need the following additional notation.

\subsection*{Notation}

We will say that a random variable $W$ is \emph{sub-Gaussian} if 
\begin{equation}
\Vert W\Vert _{\psi _{2}}=\sup_{k\in \mathbb{N}}\frac{\left( E\left(
|W|^{k}\right) \right) ^{1/k}}{\sqrt{k}}<\infty .  \notag
\end{equation}%
We will say that $W$ is \emph{sub-Exponential} if 
\begin{equation}
\Vert W\Vert _{\psi _{1}}=\sup_{k\in \mathbb{N}}\frac{\left( E\left(
|W|^{k}\right) \right) ^{1/k}}{k}<\infty .  \notag
\end{equation}%
See \cite{compressed-book} for the numerous equivalent ways of defining
sub-Gaussian and sub-Exponential random variables. For $r\geq 1$ and $\theta
\in \mathbb{R}^{p}$ we let $\Vert \theta \Vert _{r}=\left(
\sum_{j=1}^{p}|\theta _{j}|^{r}\right) ^{1/r}$, $\Vert \theta \Vert _{\infty
}=\max\limits_{j\leq p}|\theta _{j}|$ and $\Vert \theta \Vert
_{0}=\sum\limits_{j=1}^{p}I\left\{ \theta _{j}\neq 0\right\} $. Moreover we
let $\vert \theta_{(p)}\vert\leq \vert \theta_{(p-1)}\vert,\dots,\vert
\theta_{(1)}\vert$ be the sorted absolute values of the entries of $\theta$.
Given $T\subset \left\{ 1,\dots ,p\right\} $, we let $\theta _{T}$ be the
vector with coordinate $j$ equal to $\theta _{j}$ when $j\in T$ and zero
otherwise. $\mathbb{R}_{+}$ will stand for the non-negative reals. If $M$ is
a square matrix, $\lambda _{min}(M)$ and $\lambda _{max}(M)$ stand for the
smallest and largest (in absolute value) eigenvalues of $M$. Moreover, we
let $\Vert v\Vert _{M}=(v^{\prime }Mv)^{1/2}$ for any conformable vector $v$%
. $\mathbb{P}_{n}$ will stand for the sample average operator and $\mathbb{G}%
_{n}\left( \cdot \right) \equiv \sqrt{n}\left\{ \mathbb{P}_{n}\left( \cdot
\right) -E_{\eta }\left( \cdot \right) \right\} $. 

Let 
\begin{equation*}
\widehat{\Sigma }_{2}=\mathbb{P}_{n}\left\{ S_{ab}\phi \left( Z\right) \phi
\left( Z\right) ^{\prime }\right\} ,\quad \Sigma _{2}=E\left\{ S_{ab}\phi
\left( Z\right) \phi \left( Z\right) ^{\prime }\right\} ,
\end{equation*}%
\begin{equation*}
\widehat{\Sigma }_{1}=\mathbb{P}_{n}\left\{ \phi \left( Z\right) \phi \left(
Z\right) ^{\prime }\right\} \quad \text{ and }\quad \Sigma _{1}=E\left\{
\phi \left( Z\right) \phi \left( Z\right) ^{\prime }\right\} .
\end{equation*}%
For each $T\subset \left\{ 1,...,p\right\} $ define the minimal and maximal
restricted sparse eigenvalues \citep{Bickel-Lasso} relative to $T$ of a
matrix $M\in \mathbb{R}^{p\times p}$ respectively as 
\begin{equation*}
\kappa _{l}(M,m,T)=\min\limits_{\Vert \Delta _{T^{c}}\Vert _{0}\leq m,\Delta
\neq 0}\frac{\Vert \Delta \Vert _{M}^{2}}{\Vert \Delta \Vert _{2}^{2}},\quad
\kappa _{u}(M,m,T)=\max\limits_{\Vert \Delta _{T^{c}}\Vert _{0}\leq m,\Delta
\neq 0}\frac{\Vert \Delta \Vert _{M}^{2}}{\Vert \Delta \Vert _{2}^{2}}.
\end{equation*}%
We will say that two sequences a two sequences $x_{n}$ and $y_{n}$ are $%
x_{n}\asymp y_{n}$ if $x_{n}=O(y_{n})$ and $y_{n}=O(x_{n})$. We will use $%
\rightsquigarrow $ to denote convergence in law. Finally, recall that 
\begin{equation*}
\Upsilon \left( a,b\right) \equiv S_{ab}a\left( Z\right) b\left( Z\right)
+m_{a}\left( O,a\right) +m_{b}\left( O,b\right) +S_{0}.
\end{equation*}

\subsection{Asymptotic results for the estimator $\widehat{\protect\chi }%
_{lin}$}

\label{sec:asym_lin}

In Sections \ref{sec:dr_lin} and \ref{sec:mod_dr_lin} we state theorems that
establish the rate and model double robustness properties of the estimator $%
\widehat{\chi }_{lin}$. The rate double robustness property is established
under conditions Lin.L, Lin.E and Lin.V below. We separate these conditions
into those that are needed to analyze the convergence of the $\ell_1$%
-regularized estimators of $a$ and $b$ (Condition Lin.L), those that are
additionally needed to show the asymptotic normality of $\widehat{\chi }%
_{lin}$ (Condition Lin.E) and a last additional condition (Condition Lin.V)
which is needed to show the convergence of the variance estimator. The model
double robustness property is established, essentially, under conditions
Lin.L, Lin.E and Lin.V for the nuisance function whose model was correctly
specified and for the probability limit of estimator of the nuisance
function that was incorrectly modelled.

\subsubsection{Rate double robustness for the estimator $\widehat{\protect%
\chi }_{lin}$}

\label{sec:dr_lin}

\begin{condition}[Condition Lin.L]
\label{cond:Lin.L} There exists fixed constants $0<k<K$ such that for $%
\left( c=a,\overline{c}=b\right) $ and for $\left( c=b,\overline{c}=a\right) 
$ the following conditions hold

\begin{itemize}
\item \textbf{(Lin.L.1}) $c\left( Z\right) \in \mathcal{G}\left( \phi
,s_{c},j=1,\varphi =id\right) $ with associated parameter value denoted as $%
\theta _{c}^{\ast }$ whose support is denoted with $S_{c}$. Furthermore, $%
s_{c}\log \left( p\right) /n\rightarrow 0$.

\item \textbf{(Lin.L.2}) 
\begin{equation*}
E_{\eta }\left[ \max_{1\leq j\leq p}\sqrt{\mathbb{P}_{n}\left[ \left\{
S_{ab}c\left( Z\right) \phi _{j}\left( Z\right) +m_{\overline{c}}\left(
O,\phi _{j}\right) \right\} ^{2}\right] }\right] \leq K
\end{equation*}

\item \textbf{(Lin.L.3)} For sufficiently large $n$ 
\begin{equation*}
k\leq \kappa _{l}({\Sigma }_{1},\left\lceil s_{c}\log (n)\right\rceil
,S_{c})\leq \kappa _{u}({\Sigma }_{1},\left\lceil s_{c}\log (n)\right\rceil
,S_{c})\leq K.
\end{equation*}

\item \textbf{(Lin.L.4)} With probability tending to 1, 
\begin{equation*}
k\leq \kappa _{l}(\widehat{\Sigma }_{2},\left\lceil s_{c}\log
(n)/2\right\rceil ,S_{c})\leq \kappa _{u}(\widehat{\Sigma }_{2},\left\lceil
s_{c}\log (n)/2\right\rceil ,S_{c})\leq K.
\end{equation*}

\item \textbf{(Lin.L.5)} $k\leq E_{\eta }\left( S_{ab}|Z\right) \leq K$ with
probability one.
\end{itemize}
\end{condition}

Condition Lin.L.2, holds in particular, when there exists a statistic $S_{%
\overline{c}}$ such that$\ m_{\overline{c}}\left( O,h\right) =S_{\overline{c}%
}h,$ for all $h,$ $\ \max_{1\leq j\leq p}\left\Vert \phi _{j}\right\Vert
_{\infty }\leq K$ and $E_{\eta }\left[ \left\{ \left\vert S_{ab}\right\vert
\left\vert c\left( Z\right) \right\vert +\left\vert S_{\overline{c}%
}\right\vert \right\} ^{2}\right] \leq K$. Condition Lin.L.2 can be replaced
by 
\begin{equation*}
\max_{1\leq j\leq p}\left\Vert S_{ab}c\left( Z\right) \phi _{j}\left(
Z\right) +m_{\overline{c}}\left( O,\phi _{j}\right) \right\Vert _{\psi
_{1}}\leq K,
\end{equation*}
which essentially requests that all random variables $S_{ab}c\left( Z\right)
\phi _{j}\left( Z\right) +m_{\overline{c}}\left( O,\phi _{j}\right)
,j=1,\dots,p$ have tails decaying at least as fast as the tails of an
exponential random variable.

Condition Lin.L.3 holds if all sub-matrices of ${\Sigma }_{1}$ of size $%
\left\lceil s_{c}\log (n)\right\rceil $ are well-conditioned. This is turn
holds if 
\begin{equation*}
k\leq \lambda _{min}(\Sigma _{1})\leq \lambda _{max}(\Sigma _{1})\leq K.
\end{equation*}%
An example of a $\Sigma _{1}$ satisfying the last display is the Toeplitz
matrix defined as $\Sigma _{1,k,l}=\rho ^{|k-l|}$ for some fixed $\rho \in
(0,1)$ not depending on $n$.

Condition Lin.L.4 is similar to Lin.L.3 but for the weighted sample
covariance matrix $\widehat{\Sigma}_{2}$. It can be shown that Condition
Lin.L.4 holds if Conditions Lin.L.3 and Lin.L.5 hold, and either: $%
\max_{1\leq j\leq p}\left\Vert \phi _{j}\right\Vert _{\infty }\leq K$ for
some $K>0$ or $\phi \left( Z\right) $ is multivariate normal. See Lemmas 1
and 2 from \cite{BelloniCherno11}. \ In addition, Lemma 2 in \cite%
{BelloniCherno11} implies that Conditions Lin.L.3 and Lin.L.4 hold if $%
\left\{ \phi _{j}:j\in \mathbb{N}\right\} $ is an orthonormal basis of $%
L^{2}([0,1]^{d})$ satisfying $\max_{1\leq j\leq p}\left\Vert \phi
_{j}\right\Vert _{\infty }\leq K$ for some $K>0$, e.g. the basis is the
trigonometric basis, Condition Lin.L.5 holds and the density of $Z$ is
uniformly bounded away from zero and infinity.

\begin{condition}[Condition Lin.E]
\label{cond:Lin.E} $\ $

\begin{itemize}
\item \textbf{(Lin.E.1)} 
\begin{equation*}
E_{\eta }\left\{ \left[ S_{ab}a\left( b^{\prime }-b\right) +m_{b}\left(
O,b^{\prime }\right) -m_{b}\left( O,b\right) \right] ^{2}\right\}
\rightarrow 0 \quad \text{as} \quad E_{\eta }\left[ \left( b^{\prime
}-b\right) ^{2}\right] \rightarrow 0
\end{equation*}
and 
\begin{equation*}
E_{\eta }\left\{ \left[ S_{ab}b\left( a^{\prime }-a\right) +m_{a}\left(
O,a^{\prime }\right) -m_{a}\left( O,a\right) \right] ^{2}\right\}
\rightarrow 0 \quad \text{as} \quad E_{\eta }\left[ \left( a^{\prime
}-a\right) ^{2}\right] \rightarrow 0.
\end{equation*}

\item \textbf{(Lin.E.2)} There exists fixed constants $0<k<K$ such that

\begin{itemize}
\item[a)] $k\leq {E\left\{ \left( \chi _{\eta }^{1}\right) ^{2}\right\} }$
and $E\left\{ \left\vert \chi _{\eta }^{1}\right\vert ^{3}\right\} \leq K$.

\item[b)] ${E\left\{ \left( \chi _{\eta }^{1}\right) ^{4}\right\} }\leq K$.
\end{itemize}
\end{itemize}
\end{condition}

Condition Lin.E.1 holds in particular if there exists statistics $S_{a}$ and 
$S_{b}$ such that $m_{a}\left( O,h\right) =S_{a}h$ and $m_{b}\left(
O,h\right) =S_{b}h$ and for some $K>0,$ $E_{\eta }\left\{ \left(
S_{ab}a+S_{b}\right) ^{2}|Z\right\} \leq K$ and $E_{\eta }\left\{ \left(
S_{ab}b+S_{a}\right) ^{2}|Z\right\} \leq K$ almost surely.

\begin{condition}[Condition Lin.V]
\label{cond:Lin.V} There exists a fixed constant $K>0$ such that the
following conditions hold

\begin{itemize}
\item \textbf{(Lin.V.1)} $\max_{1\leq j\leq p}\left\Vert \phi
_{j}\right\Vert _{\infty }\leq K.$

\item \textbf{(Lin.V.2)} $E_{\eta}\left\lbrace S_{ab}^{2} \mid
Z\right\rbrace \leq K$ with probability one.

\item \textbf{(Lin.V.3)} Condition Lin.L.1 for $c\in \left\{ a,b\right\} $
holds and moreover at least one of the following holds 
\begin{equation*}
\max_{i\leq n}|a(Z_{i})-\langle \theta _{a}^{\star },\phi (Z)\rangle
|=O_{P}\left( s_{a}\sqrt{\frac{\log (p)}{n}}\right) \quad \text{or}\quad
\max_{i\leq n}|b(Z_{i})-\langle \theta _{b}^{\star },\phi (Z)\rangle
|=O_{P}\left( s_{b}\sqrt{\frac{\log (p)}{n}}\right) .
\end{equation*}
\end{itemize}
\end{condition}

Condition Lin.V.3 holds trivially in Example \ref{ex:parametric_sparse},
since in that example $a(Z_{i}) - \langle \theta^{\star}_{a}, \phi(Z)\rangle$
is exactly equal to zero and similarly for $b$. In Examples \ref%
{ex:parametric_weakly_sparse} and \ref{ex:nonparam} the condition holds when 
$\alpha>1$ for at least one of $a$ or $b$ and Condition Lin.V.1 holds. We
emphasize that, in the context of this examples, Condition Lin.V.3 requires
that only one of the functions $a $ or $b$, regardless of which one, has $%
\alpha>1$.

\begin{theorem}
\label{theo:rate_double_lin}There exists $\lambda \asymp \sqrt{{\log \left(
p\right) }/{n}}$ such that if Algorithm $\ref{Algo both ALS}$ uses such $%
\lambda ,$ then

(1) under Conditions Lin.L and Lin.E.1 
\begin{equation}
\sqrt{n}\left\{ \widehat{\chi }_{lin}-\chi \left( \eta \right) \right\} =%
\mathbb{G}_{n}\left[ \Upsilon \left( a,b\right) \right] +O_{p}\left( \sqrt{%
\frac{s_{a}s_{b}}{n}}\log (p)\right) +o_{p}\left( 1\right) .
\label{eq:lin_rate_DR_expansion}
\end{equation}

(2) If Conditions Lin.L and Lin.E hold and 
\begin{equation}
\sqrt{\frac{s_{a}s_{b}}{n}}\log (p)\rightarrow 0  \label{eq:sasb}
\end{equation}%
then, 
\begin{equation*}
\frac{\sqrt{n}\left\{ \widehat{\chi }_{lin}-\chi \left( \eta \right)
\right\} }{\sqrt{E_{\eta }\left[ \left( \chi _{\eta }^{1}\right) ^{2}\right] 
}}\rightsquigarrow N\left( 0,1\right)
\end{equation*}

(3) If Conditions Lin.L, Lin.E and Lin.V hold and $\left( \ref{eq:sasb}%
\right) $ holds then 
\begin{equation*}
\frac{\sqrt{n}\left\{ \widehat{\chi }_{lin}-\chi \left( \eta \right)
\right\} }{\sqrt{\widehat{V}_{lin}}}\rightsquigarrow N\left( 0,1\right)
\end{equation*}
\end{theorem}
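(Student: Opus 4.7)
The overall strategy is to analyze each fold $k \in \{1, 2\}$ separately using the cross-fitting structure, then combine. For each fold, write $\widehat{\chi}_{lin}^{(k)} - \chi(\eta)$ using the bilinearity of $h \mapsto m_a(O,h)$ and $h \mapsto m_b(O,h)$ together with the mixed-bias property (\ref{bias1}) to obtain
\begin{equation*}
\widehat{\chi}_{lin}^{(k)} - \chi(\eta) = \mathbb{P}_{nk}\left[ \chi_\eta^1 \right] + \Gamma_{a,nk} + \Gamma_{b,nk} + \Gamma_{ab,nk},
\end{equation*}
with $\Gamma_{a,nk} = \mathbb{P}_{nk}\!\left[ S_{ab}(\widehat{a}_{(\overline{k})}-a)\, b + m_a(O, \widehat{a}_{(\overline{k})}-a) \right]$, symmetric $\Gamma_{b,nk}$, and $\Gamma_{ab,nk} = \mathbb{P}_{nk}\!\left[ S_{ab}(\widehat{a}_{(\overline{k})}-a)(\widehat{b}_{(\overline{k})}-b) \right]$. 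Averaging over $k$ and multiplying by $\sqrt{n}$ produces $\mathbb{G}_n[\Upsilon(a,b)]$ as the leading term (equivalently $\mathbb{G}_n[\chi_\eta^1]$, since they differ by the constant $\chi(\eta)$).

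The first ingredient is the $\ell_1$-regularized regression rate: invoking the results from Appendix A (which use Lin.L.1--Lin.L.5, Nemirovski-type tail bounds under Lin.L.2, and the restricted sparse-eigenvalue conditions Lin.L.3--Lin.L.4) with $\lambda \asymp \sqrt{\log(p)/n}$ gives $\|\widehat{c}_{(\overline{k})} - c\|_{L_2(P_Z)} = O_p(\sqrt{s_c \log(p)/n})$ for $c \in \{a,b\}$. Given this, the cross terms $\Gamma_{a,nk}$ and $\Gamma_{b,nk}$ are controlled by conditioning on the nuisance sample $\mathcal{D}_{n\overline{k}}$: Proposition \ref{prop:DR functional}, applied with $h = \widehat{a}_{(\overline{k})} - a$, gives zero conditional mean, while Condition Lin.E.1 combined with the $L_2$ rate forces the conditional variance of $\sqrt{n}\,\Gamma_{a,nk}$ to be $o_p(1)$; conditional Chebyshev then yields $\sqrt{n}\,\Gamma_{a,nk} = o_p(1)$, and analogously for $\Gamma_{b,nk}$. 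The product term is handled by Cauchy--Schwarz on the outer expectation after using Lin.L.5 to bound $E_\eta(S_{ab}\mid Z)$: $|E[\Gamma_{ab,nk} \mid \mathcal{D}_{n\overline{k}}]| \leq K \|\widehat{a}_{(\overline{k})} - a\|_{L_2(P_Z)} \|\widehat{b}_{(\overline{k})} - b\|_{L_2(P_Z)}$, with the empirical centering deviation of strictly smaller order; summing yields the $O_p(\sqrt{s_a s_b/n}\,\log p)$ remainder in (\ref{eq:lin_rate_DR_expansion}), which proves (1).

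Part (2) then follows: under (\ref{eq:sasb}) the $O_p$ remainder is $o_p(1)$, so the asymptotic distribution is that of $\mathbb{G}_n[\chi_\eta^1]$, and the Lyapunov CLT applies because Lin.E.2(a) supplies a uniform third-moment bound and a positive variance lower bound. For part (3), the task is to show $\widehat{V}_{lin} \cp E_\eta[(\chi_\eta^1)^2]$. Decompose $\widehat{V}_{lin}$ around its oracle analogue $\frac{1}{2}\sum_k \mathbb{P}_{nk}[\{\Upsilon(a,b) - \chi(\eta)\}^2]$; the oracle converges by the LLN using the fourth-moment bound Lin.E.2(b), and $\widehat{\chi}_{lin}^{(k)} \cp \chi(\eta)$ by part (1). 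The remainder consists of empirical squared averages in $\widehat{a}_{(\overline{k})} - a$ and $\widehat{b}_{(\overline{k})} - b$ times bounded quantities, controlled via Lin.V.1 ($\phi_j$ uniformly bounded), Lin.V.2 (bounded conditional second moment of $S_{ab}$), and crucially Lin.V.3, which supplies a sup-norm rate on at least one of the nuisance estimators.

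The main obstacle is precisely part (3): the rate double-robust expansion in part (1) only required control of \emph{linear} functionals of the nuisance errors (cross terms are linear in $\widehat{a}-a$ or $\widehat{b}-b$, the product term is a single bilinear form), but the variance estimator involves \emph{quadratic} empirical functionals of those errors. Propagating $L_2(P_Z)$ rates through squared empirical averages such as $\mathbb{P}_{nk}[\{\widehat{a}(Z)-a(Z)\}^2 \cdot \text{bounded}]$ is not automatic under cross-fitting alone---this is exactly the gap that Condition Lin.V.3 closes by providing a sup-norm (or at-sample-points) bound on one of the two errors, allowing the corresponding squared term to be dominated without Donsker-type complexity assumptions.
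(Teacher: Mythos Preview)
Your overall strategy---the four-term decomposition of $\widehat{\chi}_{lin}^{(k)}-\chi(\eta)$, the conditional mean-zero argument for $\Gamma_{a,nk}$ and $\Gamma_{b,nk}$ via Proposition~\ref{prop:DR functional} together with Lin.E.1, Lyapunov's CLT for part (2), and the oracle decomposition of $\widehat{V}_{lin}$ using Lin.E.2(b) and Lin.V.3 for part (3)---is exactly the paper's proof.

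There is one technical slip in your handling of $\Gamma_{ab,nk}$. You bound the conditional mean $E[\Gamma_{ab,nk}\mid\mathcal{D}_{n\overline{k}}]$ via population Cauchy--Schwarz and Lin.L.5, and then assert that the ``empirical centering deviation'' is of strictly smaller order. But controlling $\sqrt{n}\bigl(\Gamma_{ab,nk}-E[\Gamma_{ab,nk}\mid\mathcal{D}_{n\overline{k}}]\bigr)$ requires a bound on its conditional variance, hence on $E_\eta\bigl[S_{ab}^2(\widehat{a}_{(\overline{k})}-a)^2(\widehat{b}_{(\overline{k})}-b)^2\mid\mathcal{D}_{n\overline{k}}\bigr]$, and Lin.L.5 only gives $E_\eta(S_{ab}\mid Z)\leq K$, not a bound on $E_\eta(S_{ab}^2\mid Z)$ (that is Lin.V.2, which is \emph{not} assumed in part (1); recall $S_{ab}$ can be unbounded, as in Example~\ref{ex:MNAR}). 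The paper avoids this by applying Cauchy--Schwarz at the \emph{empirical} level first,
\[
|\Gamma_{ab,nk}|\leq\sqrt{\mathbb{P}_{nk}\bigl[S_{ab}(\widehat{a}_{(\overline{k})}-a)^2\bigr]}\,\sqrt{\mathbb{P}_{nk}\bigl[S_{ab}(\widehat{b}_{(\overline{k})}-b)^2\bigr]},
\]
and then bounding each factor separately via conditional Markov: since $E_\eta\bigl[\mathbb{P}_{nk}[S_{ab}(\widehat{a}_{(\overline{k})}-a)^2]\mid\mathcal{D}_{n\overline{k}}\bigr]=E_\eta\bigl[S_{ab}(\widehat{a}_{(\overline{k})}-a)^2\mid\mathcal{D}_{n\overline{k}}\bigr]\leq K\,\|\widehat{a}_{(\overline{k})}-a\|_{L_2(P_Z)}^2$, only the first-moment bound in Lin.L.5 is needed. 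Reordering Cauchy--Schwarz and expectation in this way is what closes the gap.
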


\begin{remark}
\label{remark:alphas} Let $K>0$, $\alpha _{a}>1/2$ and $\alpha _{b}>1/2$ be
fixed and suppose that we assume that $a$ and $b$ belong to the classes 
\begin{equation*}
\mathcal{S}_{n}\left( \phi ,t=K,\alpha _{a},l=2\right) ,\quad \text{ and }%
\quad \mathcal{S}_{n}\left( \phi ,t=K,\alpha _{b},l=2\right)
\end{equation*}%
respectively of Example \ref{ex:nonparam}. It follows from the discussion in
that example that, for $c=a$ and $c=b$, $\mathcal{S}_{n}\left( \phi
,t=K,\alpha _{c},l=2\right) $ is included in the $\mathcal{G}_{n}\left( \phi
^{\left( n\right) },s_{c},j=1,id\right) $ AGLS class sequence with $\phi
^{\left( n\right) }=\left( \phi _{1},...,\phi _{p\left( n\right) }\right) $
and 
\begin{equation*}
s_{c}=K^{1/\alpha _{c}}\left( \frac{n}{\log (p)}\right) ^{1/(2\alpha _{c})}.
\end{equation*}%
Therefore, up to $\log (p)$ factors, 
\begin{equation*}
\frac{s_{a}s_{b}}{n}\log (p)^{2}\asymp n^{\frac{1}{2\alpha _{a}}+\frac{1}{%
2\alpha _{b}}-1}.
\end{equation*}%
It then follows from the preceding theorem, that the estimator $\widehat{%
\chi }_{lin}$ is asymptotically normal if 
\begin{equation*}
\frac{1}{\alpha _{a}}+\frac{1}{\alpha _{b}}<2.
\end{equation*}%
In particular, one of $\alpha _{a}$ or $\alpha _{b}$ can be less than 1 (but
greater than 1/2) so long as the other is sufficiently large so as to
satisfy the inequality constraint. Recall that the coefficients $\alpha _{a}$
or $\alpha _{b}$ are connected with the smoothness of the functions $a$ and $%
b$, the smoothness increasing with the coefficients. A similar comment is in
place for Examples \ref{ex:parametric_weakly_sparse} and \ref{ex:holder}. 
For the theorems establishing the rate double robustness property
of the estimators $\widehat{\chi }_{nonlin}$ and $\widehat{\chi }_{mix}$ in
the next subsections, the same comment regarding Example \ref{ex:parametric_weakly_sparse} applies.
\end{remark}

\subsubsection{Model double robustness for the estimator $\widehat{\protect%
\chi }_{lin}$}

\label{sec:mod_dr_lin}

We shall next show that $\widehat{\chi }_{lin}$ satisfies also the model
robustness property. We will require the following conditions.

\begin{condition}[Condition Lin.L.W]
\label{cond:Lin.L.W} There exists fixed constants $0<k<K$ such that for $%
\left( c=b,\overline{c}=b\right) $ and for $\left( c=b,\overline{c}=a\right) 
$ the following conditions hold

\begin{itemize}
\item \textbf{(Lin.L.W.1}) There exists $\theta _{b}\in \mathbb{R}^{p}$ such
that 
\begin{equation*}
\theta _{b}\in \arg \min_{\theta \in \mathbb{R}^{p}}E_{\eta }\left[
Q_{b}\left( \theta ,\phi ,w=1\right) \right]
\end{equation*}%
and $b^{0}\left( Z\right) \equiv \left\langle \theta _{b},\phi \left(
Z\right) \right\rangle $ belongs to $\mathcal{G}\left( \phi
,s_{b},j=1,\varphi =id\right) $ with associated parameter value $\theta
_{b}^{\ast }$ whose support is denoted with $S_{b}$. Furthermore, $s_{b}\log
\left( p\right) /n\rightarrow 0$. Additionally, $a\left( Z\right) \in 
\mathcal{G}\left( \phi ,s_{a},j=2,\varphi =id\right) $ with associated
parameter value $\theta _{a}^{\ast }$ whose support is denote with $S_{a}$.
Furthermore, $s_{a}\log \left( p\right) /\sqrt{n}\rightarrow 0$.

\item \textbf{(Lin.L.W.2}) There exists fixed constants $0<k<K$ such that
Conditions Lin.L.2- Lin.L.5 hold for $\left( c=a,\overline{c}=b\right) $ and
for $\left( c=b^{0},\overline{c}=a\right)$.
\end{itemize}
\end{condition}

\bigskip

Condition {Lin.L.W.1 }differs from condition {Lin.L.1 }in three important
ways. First, {Lin.L.W.1} requires the ultra-sparsity condition $s_{a}\log
\left( p\right) /\sqrt{n}\rightarrow 0$ for the class where the function $a$
lies, whereas {Lin.L.1 the less stringent sparsity condition }$s_{a}\log
\left( p\right) /n\rightarrow 0.$ Second, like Lin.L.1, it assumes that $a$
lies in a $\mathcal{G}\left( \phi ,s_{a},j,\varphi =id\right) $ class but in 
{Lin.L.1} $j=1$ whereas in Condition Lin.L.W.1 $j=2$. This distinction is
important because $\mathcal{G}_{n}\left( \phi ,s,j=2,\varphi \right) $ is a
more restrictive class than $\mathcal{G}_{n}\left( \phi ,s,j=1,\varphi
\right) $. Third, {Lin.L.W.1 }does not require that $b$ belongs to the class 
$\mathcal{G}\left( \phi ,s_{b},j=1,\varphi =id\right) $ for the covariates $%
\phi $ and link function $\varphi =id$ used in the computation of $\widehat{%
\chi }_{lin},$ thus allowing for the possibility that the data analyst chose
the wrong set of covariates and/or the wrong link function$.$ However,
Condition {Lin.L.W.1} requires that the function $b^{0}\left( Z\right)
\equiv \left\langle \theta _{b},\phi \left( Z\right) \right\rangle $, where $%
\theta _{b}$ is a minimizer of the expectation of the loss function used to
construct our estimators of $b$, is in a class $\mathcal{G}\left( \phi
,s_{b},j=1,\varphi =id\right)$. Note that under regularity conditions, the $%
\ell _{1}$ regularized estimator of $b$ converges to $b^{0}.$ To summarize,
Condition Lin.L.W.1 essentially requires that the analyst guess correctly
the model for $a,$ and not for $b.$ However, it requires that $a$ lies in an
ultra-sparse approximate linear class and that the $\ell _{1}$ regularized
estimator of $b$ converges to a linear function that belongs to a sparse,
but not necessarily ultra sparse, approximate linear class.

Condition Lin.L.W.1, but with the stringent requirement that the
approximation error $r\left( Z\right) $ be equal to 0 has been assumed by 
\cite{Tan} and \cite{Peng} to prove the model double robustness property of
their proposed estimators. An instance in which Condition Lin.L.W.1 holds
but Lin.L.1 does not hold is if $b\left( Z\right) =E\left( Y|Z\right)
=\varphi ^{\dag }\left( \left\langle \theta _{b}^{\dag },\phi \left(
Z\right) \right\rangle \right) $ with $\left\Vert \theta _{b}^{\dag
}\right\Vert _{0}\leq $ $s_{b}$ for some non-linear strictly increasing link 
$\varphi ^{\dag }$. Results in \cite{glm-misp} imply that, under further
regularity assumptions, there exists a minimizer $\theta _{b}$ of the loss
function $E_{\eta }\left[ Q_{b}\left( \theta ,\phi ,w=1\right) \right] $
which incorrectly uses the linear link instead of $\varphi ^{\dag },$
satisfying $C\theta _{b}^{\dag }=\theta _{b}$ for some constant $C,$ which
then implies that $\theta _{b}$ has the same support as $\theta _{b}^{\dag }$
and consequently that $b^{0}\left( Z\right) \equiv \left\langle \theta
_{b},\phi \left( Z\right) \right\rangle $ belongs to $\mathcal{G}\left( \phi
,s_{b},j=1,\varphi =id\right)$. See also \cite{vdgmisp} for further examples.

\bigskip

\begin{condition}[Condition Lin.E.W]
\label{cond:Lin.E.W} $\ $Condition Lin.L.W.1 holds and

\begin{itemize}
\item \textbf{(Lin.E.W.1)} 
\begin{equation*}
E_{\eta}\left[(S_{ab}b^{0})^{2} \right]\leq K,
\end{equation*}
\begin{equation*}
E_{\eta }\left\{ \left[ S_{ab}a\left( b^{\prime }-b^{0}\right) +m_{b}\left(
O,b^{\prime }\right) -m_{b}\left( O,b^{0}\right) \right] ^{2}\right\}
\rightarrow 0\quad \text{as}\quad E_{\eta }\left[ \left( b^{\prime
}-b^{0}\right) ^{2}\right] \rightarrow 0,
\end{equation*}%
and 
\begin{equation*}
E_{\eta }\left\{ \left[ S_{ab}b^{0}\left( a^{\prime }-a\right) +m_{a}\left(
O,a^{\prime }\right) -m_{a}\left( O,a\right) \right] ^{2}\right\}
\rightarrow 0\quad \text{as}\quad E_{\eta }\left[ \left( a^{\prime
}-a\right) ^{2}\right] \rightarrow 0.
\end{equation*}

\item \textbf{(Lin.E.W.2)} There exists fixed constants $0<k<K$ such that

\begin{itemize}
\item[a)] $k\leq {E\left[ \left\{ \Upsilon \left( a,b^{0}\right) -\chi
\left( \eta \right) \right\} ^{2}\right] }$ and $E\left\{ \left\vert
\Upsilon \left( a,b^{0}\right) -\chi \left( \eta \right) \right\vert
^{3}\right\} \leq K$.

\item[b)] ${E\left[ \left\{ \Upsilon \left( a,b^{0}\right) -\chi \left( \eta
\right) \right\} ^{4}\right] }\leq K$.
\end{itemize}
\end{itemize}
\end{condition}

\begin{condition}[Condition Lin.V.W]
\label{cond:Lin.V.W} There exists a fixed constant $K>0$ such that
Conditions Lin.V.1, Lin.V.2 and the following conditions hold:

\begin{itemize}
\item \textbf{(Lin.V.W.3)} Condition Lin.L.W.1 for $c\in \left\{
a,b^{0}\right\} $ holds and moreover at least one of the following holds 
\begin{equation*}
\max_{i\leq n}|a(Z_{i})-\langle \theta _{a}^{\ast },\phi (Z)\rangle
|=O_{P}\left( s_{a}\sqrt{\frac{\log (p)}{n}}\right) \quad \text{or}\quad
\max_{i\leq n}|b^{0}(Z_{i})-\langle \theta _{b}^{\ast },\phi (Z)\rangle
|=O_{P}\left( s_{b}\sqrt{\frac{\log (p)}{n}}\right) .
\end{equation*}
\end{itemize}
\end{condition}

\begin{condition}[Condition M.W]
\label{cond:M.W} There exists a linear mapping $h\in L_{2}(P_{Z,\eta
})\rightarrow m_{a}^{\ddagger }(o,h)$ such that $h\in L_{2}(P_{Z,\eta
})\rightarrow E_{\eta }\left[ m_{a}^{\ddagger }(O,h)\right] $ is continuous
with Riesz representer $\mathcal{R}_{a}^{\ddagger }$ that satisfies 
\begin{equation*}
|m_{a}(o,h)|\leq m_{a}^{\ddagger }(o,|h|)\text{ for all }o\text{ and all }%
h\in L_{2}(P_{Z,\eta })
\end{equation*}%
and 
\begin{equation*}
E_{\eta }\left[ (\mathcal{R}_{a}^{\ddagger })^{2}\right] \leq K.
\end{equation*}
\end{condition}

Conditions {Lin.E.W }and {Lin.V.W }are essentially the same as {Lin.E }and{\
Lin.V }but with the non-trivial subtlety that the condition must hold with $%
b^{0}$ instead of $b$ and the additional requirement that $E_{\eta}\left[%
(S_{ab}b^{0})^{2} \right]\leq K$. We have already described a realistic
example in which $b^{0} $ satisfied Condition {Lin.L.W.1}. i.e. one in which
the investigator used the wrong link function and the true $b$ followed an
exactly sparse generalized non-linear model. However, for this example, we
have been able to find only a somewhat artificial setting in which the
additional conditions {Lin.E.W} and {Lin.V.W }also hold. See Proposition \ref%
{prop:ex_misp} in Appendix B. Finally, it is easy to show that Condition M.W
holds in all the examples discussed in Section \ref{sec:setup}.

\begin{theorem}
\label{theo:model_double_lin} There exists $\lambda \asymp \sqrt{{\log
\left( p\right) }/{n}}$ such that if Algorithm $\ref{Algo both ALS}$ uses
such $\lambda $, then

(1) under Conditions Lin.L.W, Lin.E.W.1 and M.W 
\begin{equation}
\sqrt{n}\left\{ \widehat{\chi }_{lin}-\chi \left( \eta \right) \right\} =%
\mathbb{G}_{n}\left[ \Upsilon \left( a,b^{0}\right) \right] +O_{p}\left( 
\sqrt{\frac{s_{a}s_{b}}{n}}\log (p)\right) +o_{p}\left( 1\right) .
\label{eq:lin_model_DR_expansion}
\end{equation}

(2) If Conditions Lin.L.W, Lin.E.W and M.W hold and 
\begin{equation}
\sqrt{\frac{s_{a}s_{b}}{n}}\log (p)\rightarrow 0  \label{eq:sasb_model}
\end{equation}%
then, 
\begin{equation*}
\frac{\sqrt{n}\left\{ \widehat{\chi }_{lin}-\chi \left( \eta \right)
\right\} }{\sqrt{E_{\eta }\left[ \left\{ \Upsilon \left( a,b^{0}\right)
-\chi \left( \eta \right) \right\} ^{2}\right] }}\rightsquigarrow N\left(
0,1\right) .
\end{equation*}

(3) If Conditions Lin.L.W, Lin.E.W, M.W and Lin.V.W hold and $\left( \ref%
{eq:sasb_model}\right) $ holds then 
\begin{equation}
\frac{\sqrt{n}\left\{ \widehat{\chi }_{lin}-\chi \left( \eta \right)
\right\} }{\sqrt{\widehat{V}_{lin}}}\rightsquigarrow N\left( 0,1\right) .
\label{eq:normalLimit_lin}
\end{equation}
\end{theorem}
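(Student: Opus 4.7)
The plan is to mirror the proof of Theorem \ref{theo:rate_double_lin}, replacing $b$ by its pseudo-truth $b^0$ throughout the heuristic expansion \eqref{expansion}. For each fold $k\in\{1,2\}$ I would apply \eqref{expansion} with $b^1=b^0$, $\widetilde{a}=\widehat{a}_{(\bar k)}$, $\widetilde{b}=\widehat{b}_{(\bar k)}$, $m=n/2$. Averaging over $k$ and scaling by $\sqrt{n}$ collapses the noise piece into $\mathbb{G}_n[\Upsilon(a,b^0)]$; this object is genuinely centered because the mixed bias identity \eqref{bias1} applied with $(a',b')=(a,b^0)$ forces $E_\eta[\Upsilon(a,b^0)]=\chi(\eta)$. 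The terms $\Gamma_{b^0,nk}$ and $\Gamma_{ab^0,nk}$ are then controlled exactly as in Theorem \ref{theo:rate_double_lin}: Proposition \ref{prop:DR functional} holds for \emph{every} $h$, so sample splitting and conditional Chebyshev combined with Lin.E.W.1 and the $\ell_1$-rate $\|\widehat{b}_{(\bar k)}-b^0\|_{L_2(P_Z)}=O_p(\sqrt{s_b\log(p)/n})$ (obtained from Appendix A under Lin.L.W.2 applied to the population loss minimized by $\theta_b$) give $\sqrt{n}\Gamma_{b^0,nk}=o_p(1)$, while Cauchy--Schwarz using $E_\eta[S_{ab}^2\mid Z]\leq K$ yields the claimed $O_p(\sqrt{s_a s_b/n}\log(p))$ bound on $\sqrt{n}\Gamma_{ab^0,nk}$.

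The main obstacle is $\Gamma_{a,nk}$, since Proposition \ref{prop:DR functional} no longer zeros out its summand when the second argument is $b^0\neq b$. I would exploit $\varphi_a=id$ to write $\widehat{a}_{(\bar k)}-a=\langle\widehat{\theta}_{a,(\bar k)}-\theta_a^\ast,\phi\rangle-r_a$ and split
\[
\Gamma_{a,nk}=\langle\widehat{\theta}_{a,(\bar k)}-\theta_a^\ast,\mathbb{M}_{nk}\rangle - \mathbb{P}_{nk}[S_{ab}b^0 r_a+m_a(O,r_a)],
\]
with $\mathbb{M}_{nk}\equiv\mathbb{P}_{nk}[S_{ab}b^0\phi+m_a(O,\phi)]$. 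The population first-order condition defining $\theta_b$ (and hence $b^0$) forces $E_\eta[S_{ab}b^0\phi_j+m_a(O,\phi_j)]=0$ for every $j$, so $\mathbb{M}_{nk}$ is a vector of centered sample averages; Nemirovski's inequality yields $\|\mathbb{M}_{nk}\|_\infty=O_p(\sqrt{\log(p)/n})$ and the $\ell_1$-rate yields $\|\widehat{\theta}_{a,(\bar k)}-\theta_a^\ast\|_1=O_p(s_a\sqrt{\log(p)/n})$, whence Hölder bounds the first piece by $O_p(s_a\log(p)/n)$. Multiplied by $\sqrt{n}$ this is $o_p(1)$ \emph{precisely} because Lin.L.W.1 requires the ultra-sparsity $s_a\log(p)/\sqrt{n}\to0$---this is the one place where that hypothesis bites. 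For the $r_a$ piece I would add and subtract $b$ inside $S_{ab}b^0 r_a$: Proposition \ref{prop:DR functional} kills the $b$ portion of the mean, Lin.E.W.1 controls the centered fluctuations through Chebyshev, and the residual bias $E_\eta[S_{ab}r_a(b^0-b)]$ is $o(n^{-1/2})$ by Cauchy--Schwarz using the sharper $j=2$ AGLS bound $\|r_a\|_{L_2(P_Z)}=O(s_a\log(p)/n)$ on $a$. Condition M.W furnishes a dominating continuous linear functional that makes $m_a(O,r_a)$ amenable to these second-moment arguments. Combining the four pieces yields \eqref{eq:lin_model_DR_expansion}.

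Parts (2) and (3) are then routine consequences of part (1). Under \eqref{eq:sasb_model} the $O_p$ remainder in \eqref{eq:lin_model_DR_expansion} is $o_p(1)$ and a Lyapunov CLT applied to $\mathbb{G}_n[\Upsilon(a,b^0)]$---legitimate by the variance lower bound and third-moment bound in Lin.E.W.2(a)---delivers the Gaussian limit. For (3) I would expand $\widehat{V}_{lin}=\tfrac12\sum_k\mathbb{P}_{nk}\{\Upsilon(\widehat{a}_{(\bar k)},\widehat{b}_{(\bar k)})^2\}-\widehat{\chi}_{lin}^2$ and replace $(\widehat{a}_{(\bar k)},\widehat{b}_{(\bar k)})$ by $(a,b^0)$ inside the quadratic functional; the sup-norm control of Lin.V.W.3 on at least one of $\widehat{a}-a$ or $\widehat{b}-b^0$, together with Lin.V.1, Lin.V.2 and the fourth-moment bound Lin.E.W.2(b), converts $L_2$ convergence of the nuisance estimators into convergence of the quadratic functional to $E_\eta[\Upsilon(a,b^0)^2]$, and Slutsky then upgrades (2) to \eqref{eq:normalLimit_lin}.
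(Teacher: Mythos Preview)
Your proposal is correct and tracks the paper's proof closely: the same decomposition around $b^{0}$, the same H\"older--Nemirovski treatment of the leading part of $\Gamma_{a,m}^{\ast}$ via the population first-order condition defining $\theta_{b}$, and the same appeal to the ultra-sparsity $s_{a}\log(p)/\sqrt{n}\to 0$ to kill that piece.

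Two minor points where your write-up drifts from what the assumptions actually deliver. First, for $\Gamma_{ab^{0}}$ the relevant hypothesis is $E_\eta(S_{ab}\mid Z)\leq K$ (Lin.L.5, included in Lin.L.W.2), not $E_\eta(S_{ab}^{2}\mid Z)\leq K$; the latter is Lin.V.2 and only enters for part~(3). Since $S_{ab}$ has a sign, the weighted Cauchy--Schwarz $|\mathbb{P}_m[S_{ab}uv]|\leq\mathbb{P}_m^{1/2}[|S_{ab}|u^{2}]\,\mathbb{P}_m^{1/2}[|S_{ab}|v^{2}]$ needs only the first conditional moment. Second, for the $r_{a}$ remainder the paper packages the argument as Lemma~\ref{lemma:bound_m_term}: it bounds $E_\eta[|S_{ab}b^{0}r_{a}+m_{a}(O,r_{a})|]$ directly by splitting into $|S_{ab}b^{0}r_{a}|$ and $|m_{a}(O,r_{a})|\leq m_{a}^{\ddagger}(O,|r_{a}|)$, then applies Cauchy--Schwarz with $E_\eta[(S_{ab}b^{0})^{2}]\leq K$ (from Lin.E.W.1) and $E_\eta[(\mathcal{R}_{a}^{\ddagger})^{2}]\leq K$ (from M.W), together with the $j=2$ bound $\|r_{a}\|_{L_{2}}=O(s_{a}\log(p)/n)$, and concludes by Markov. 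Your detour through the true $b$ is unnecessary and, as written, the Cauchy--Schwarz on $E_\eta[S_{ab}r_{a}(b^{0}-b)]$ would need a uniform bound on $\|b^{0}-b\|_{L_{2}}$ or $E_\eta[(S_{ab}b)^{2}]$ that the part-(1) hypotheses do not directly supply; the paper's route sidesteps this by never invoking $b$ at all.
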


Because the structure of the influence function is symmetric relative to $a$
and $b$, if in Conditions Lin.L.W, Lin.E.W, M.W and Lin.V.W we change the
roles of $a$ and $b,$ then Theorem \ref{theo:model_double_lin} remains valid
but with $\Upsilon \left( a^{0},b\right) $ instead of $\Upsilon \left(
a,b^{0}\right)$. We thus arrive at the following result that encapsulates
the model double robust property of $\widehat{\chi }_{lin}.$

\begin{corollary}
\label{coro:lin_sym} If in Algorithm $\ref{Algo both ALS}$ $\lambda \asymp 
\sqrt{{\log \left( p\right) }/{n}},$ then if conditions (1)-(3) of Theorem %
\ref{theo:model_double_lin} hold, or if the same conditions (1)-(3) hold but
with the roles of $a$ and $b$ reversed, then \eqref{eq:normalLimit_lin}
holds.
\end{corollary}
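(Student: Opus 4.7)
The plan is to reduce everything to Theorem \ref{theo:model_double_lin} by exploiting the built-in symmetry of the problem under the simultaneous relabeling $(a, m_a, \mathcal{R}_a) \leftrightarrow (b, m_b, \mathcal{R}_b)$. First, I would verify that every ingredient entering the proof of Theorem \ref{theo:model_double_lin} is invariant under this swap: the influence function $\chi^1_\eta$ in \eqref{eq:mainIF} and the summary statistic $\Upsilon(a,b)$ are manifestly symmetric (the product $S_{ab}\,a(Z)\,b(Z)$ is unchanged and the $m_a$, $m_b$ terms exchange); the mixed bias identity \eqref{bias1} is symmetric in $(a',a)$ and $(b',b)$; and Proposition \ref{prop:DR functional} is stated as a symmetric pair of orthogonality identities. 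Moreover, Algorithm \ref{Algo both ALS} treats the two nuisance parameters identically: both $\widehat{\theta}_a$ and $\widehat{\theta}_b$ arise by minimising the same form $\mathbb{P}_{\overline{nk}}[Q_c(\theta,\phi,1)] + \lambda\|\theta\|_1$ for $c\in\{a,b\}$, and the plug-in depends on $(\widehat{a},\widehat{b})$ only through the symmetric functional $\Upsilon$.

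Granting this, the second step is to observe that the hypotheses in the statement of the corollary are precisely the image of conditions \textbf{Lin.L.W}, \textbf{Lin.E.W}, \textbf{M.W}, \textbf{Lin.V.W} under the same relabeling. Explicitly, the alternative hypothesis posits the existence of $\theta_a$ minimising $E_\eta[Q_a(\theta,\phi,1)]$ such that $a^0(Z) \equiv \langle\theta_a,\phi(Z)\rangle$ lies in $\mathcal{G}(\phi,s_a,j{=}1,\mathrm{id})$, while $b$ itself lies in the more restrictive class $\mathcal{G}(\phi,s_b,j{=}2,\mathrm{id})$ with $s_b\log(p)/\sqrt{n}\to 0$; Condition \textbf{M.W} is imposed on $m_b$ in place of $m_a$ (with a Riesz representer $\mathcal{R}_b^{\ddagger}$); and Conditions \textbf{Lin.E.W} and \textbf{Lin.V.W} are restated with $a^0$ replacing $b^0$.

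With this symmetric dictionary in hand, I would rerun the proof of Theorem \ref{theo:model_double_lin} line by line. The decomposition
\[
\widetilde{\chi} - \chi(\eta) = N_m + \Gamma_{a^0,m} + \Gamma_{b,m} + \Gamma_{a^0 b,m}
\]
from Section \ref{sec:heuristic} (obtained from \eqref{expansion} by swapping the roles of $a$ and $b^0$) still has the property that $N_m = \mathbb{P}_m[\Upsilon(a^0,b)] - \chi(\eta)$ is a centred i.i.d.\ average, because the mixed bias identity \eqref{bias1} applied with $(a',b') = (a^0,b)$ yields $E_\eta[\Upsilon(a^0,b)] = \chi(\eta)$. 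The term $\Gamma_{b,m}$ is handled by Proposition \ref{prop:DR functional} (second identity) together with sample splitting, exactly as the corresponding $\Gamma_{a,m}$ term in Theorem \ref{theo:model_double_lin}; the term $\Gamma_{a^0,m}$ plays the role of $\Gamma_{b^0,m}$ and is controlled using the ultra-sparsity of $b$, the $\|\cdot\|_1$ convergence rate of $\widehat{\theta}_b$, Nemirovski's inequality, and Condition \textbf{M.W} applied to $m_b$; finally $\Gamma_{a^0 b,m}$ is bounded by Cauchy--Schwarz. Asymptotic normality and consistency of the variance estimator then follow from the symmetric versions of \textbf{Lin.E.W} and \textbf{Lin.V.W} exactly as in the proof of Theorem \ref{theo:model_double_lin}.

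The main obstacle, which is largely bookkeeping rather than genuinely new mathematics, is to confirm that the auxiliary lemmas invoked inside the proof of Theorem \ref{theo:model_double_lin} (the $\ell_1$-rate results for weighted regressions, the deviation bounds for $\|\mathbb{M}_m\|_\infty$, and the empirical-process controls on $\Gamma_{\cdot,m}$) are themselves stated symmetrically in $(a,m_a)$ and $(b,m_b)$. Since each of those lemmas is established for a generic $c\in\{a,b\}$ in the definitions \eqref{eq:loss}--\eqref{eq:lossDef}, the substitution $c \leftrightarrow \overline{c}$ is valid; this justifies invoking Theorem \ref{theo:model_double_lin} with roles reversed and yields \eqref{eq:normalLimit_lin}.
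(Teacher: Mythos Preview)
Your proposal is correct and follows exactly the paper's approach: the paper simply observes, in the paragraph preceding the corollary, that the influence function, Algorithm \ref{Algo both ALS}, and all the conditions are symmetric in $a$ and $b$, so that Theorem \ref{theo:model_double_lin} remains valid with roles interchanged and $\Upsilon(a^0,b)$ in place of $\Upsilon(a,b^0)$. Your write-up spells out this symmetry argument in more detail than the paper does, but the logic is identical.
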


\subsection{Asymptotic results for the estimator $\widehat{\protect\chi }%
_{nonlin}$}

\label{sec:asym_nonlin}

In Sections \ref{sec:dr_nonlin} and \ref{sec:mod_dr_Nonlin} we state
theorems that establish the rate and model double robustness properties of
the estimator $\widehat{\chi }_{nonlin}$\thinspace\ that uses both links $%
\varphi _{a}\left( u\right) $ and $\varphi _{b}\left( u\right) $ possibly
non-linear. For this case we require more stringent assumptions. In the $%
\ell _{1}$ regularized estimation literature, one of two alternative
assumptions is typically made in order to obtain fast rates of convergence,
i.e. rates of order $\sqrt{s\log \left( p\right) /n}$ in $\ell _{2}-$ norm %
\citep{vdgNotes}. One such assumption is the often referred to as the ultra
sparsity condition that $s=o\left( \sqrt{n}\right) $ (\cite{bach}, \cite%
{vdgquasi}). We cannot impose this condition because it would defeat the
purpose of rate double robustness. Specifically, if $s_{a}$ and $s_{b}$
were, up to logarithmic terms, each of order $o\left( \sqrt{n}\right) $ then 
$s_{a}s_{b}$ would be $o\left( n\right) $ and no trade off of model
complexity could be achieved. The second such assumption is based on higher
order isotropy conditions (see \cite{vdgNotes}) and is satisfied in
particular by covariates $\left\{ \phi _{j}\left( Z\right) \right\} _{1\leq
j\leq p}$ that are jointly sub-gaussian (\cite{NegahbanSS}, \cite{Loh-smooth}%
) with $E_{\eta }\left[ \phi \left( Z\right) \phi \left( Z\right) ^{\prime }%
\right] $ having smallest eigenvalue bounded away from 0. We follow this
approach (see Conditions NLin.L.3, and NLin.L.6).

To show our results we will continue to assume condition Lin.E of Section %
\ref{sec:dr_lin}. To prove the model double robustness property, we will
continue to assume Condition M.W of Section \ref{sec:dr_lin}.

In what follows we assume that the link functions $\varphi _{a}\left(
u\right) $ and $\varphi _{b}\left( u\right) $ are continuously
differentiable in $\mathbb{R}$.

\subsubsection{Rate double robustness for the estimator $\widehat{\protect%
\chi }_{nonlin}$}

\label{sec:dr_nonlin}

\begin{condition}[Condition NLin.L]
\label{cond:NonLin.L} There exists fixed constants $0<k<K$ such that for $%
\left( c=a,\overline{c}=b\right) $ and for $\left( c=b,\overline{c}=a\right) 
$ the following conditions hold

\begin{itemize}
\item \textbf{(NLin.L.1}) $c\left( Z\right) \in \mathcal{G}\left( \phi
,s_{c},j=1,\varphi _{c}\right) $ with associated parameter value denoted as $%
\theta _{c}^{\ast }$. Furthermore, $s_{c}\log \left( p\right) /n\rightarrow
0,$ $\left\Vert \theta _{c}^{\ast }\right\Vert _{2}\leq K$ and 
\begin{equation*}
E^{1/8}_{\eta }\left[ \left\{ c\left( Z\right) -\varphi _{c}\left(
\left\langle \theta _{c}^{\ast },\phi \left( Z\right) \right\rangle \right)
\right\} ^{8}\right] \leq \sqrt{\frac{Ks_{c}\log \left( p\right)}{n}}.
\end{equation*}

\item \textbf{(NLin.L.2}) 
\begin{equation*}
E_{\eta }\left[ \max\limits_{1\leq j\leq p}\left( \mathbb{P}%
_{n}(S_{ab,i}c\left( Z\right) \phi _{j}\left( Z\right) +\mathcal{R}_{%
\overline{c}}\left( Z\right) \phi _{j}\left( Z\right) )^{4}\right) ^{1/2}%
\right] \leq K.
\end{equation*}

\item \textbf{(NLin.L.3) }$k\leq \lambda _{min}\left( \Sigma _{1}\right)$.

\item \textbf{(NLin.L.4)} $k\leq \lambda _{min}\left( \Sigma _{2}\right)$.

\item \textbf{(NLin.L.5) }$E_{\eta }\left( S_{ab}^{4}\right) $ $\leq K$ and $%
E_{\eta }\left( \left[ S_{ab}c\left( Z\right) +\mathcal{R}_{\overline{c}%
}\left( Z\right) \right] ^{4}\right) \leq K$.

\item \textbf{(NLin.L.6) }$\sup_{\Vert \Delta \Vert _{2}=1}\Vert \langle
\Delta ,\phi \left( Z\right) \rangle \Vert _{\psi _{2}}\leq K$.

\item \textbf{(NLin.L.7) }For $\overset{\cdot }{\varphi }_{\overline{c}%
,\theta }\left( Z\right) \equiv \overset{\cdot }{\varphi }_{\overline{c}%
}(\langle \theta ,\phi \left( Z\right) \rangle )$ it holds that 
\begin{equation*}
\sup_{\Vert \theta -\theta _{c}^{\ast }\Vert _{2}\leq 1}E\left\{ \max_{1\leq
j\leq p}\left( \mathbb{P}_{n}(\mathcal{R}_{\overline{c}}\left( Z\right) \phi
_{j}\left( Z\right) \overset{\cdot }{\varphi }_{\overline{c},\theta }\left(
Z\right) -m_{\overline{c}}(O,\phi _{j}\overset{\cdot }{\varphi }_{\overline{c%
},\theta }))^{2}\right) ^{1/2}\right\} \leq K.
\end{equation*}
\end{itemize}
\end{condition}

\begin{remark}
If there exists a statistic $S_{\overline{c}}$ such that$\ m_{\overline{c}%
}\left( O,h\right) =S_{\overline{c}}h,$ for all $h,\,\ $then $\mathcal{R}_{%
\overline{c}}\left( Z\right) $ can be replaced by $S_{\overline{c}},$ in
Conditions NLin.L.2 and NLin.L.5 and Condition NLin.L.7 is not needed.
\end{remark}

\begin{condition}[Condition NLin.Link]
\label{cond:links} There exists fixed constants $0<k<K$ such that

\begin{itemize}
\item \textbf{(NLin.Link.1) }${\varphi }_{a}^{\prime}(u)>0$ and ${\varphi }%
_{b}^{\prime}(u)>0$ for all $u\in \mathbb{R}$ if $P_{\eta }\left( S_{ab}\geq
0\right) =1$ and ${\varphi }_{a}^{\prime}(u)<0$ and ${\varphi }%
^{\prime}_{b}(u)<0$ for all $u\in \mathbb{R}$ if $P_{\eta }\left( S_{ab}\leq
0\right) =1$.

\item \textbf{(NLin.Link.2)} For all $u,v$ 
\begin{equation*}
|\varphi _{a}(u)-\varphi _{a}(v)|\leq K\exp \left( K(|u|+|v|)\right) |u-v|
\end{equation*}
and 
\begin{equation*}
|\varphi _{b}(u)-\varphi _{b}(v)|\leq K\exp \left( K(|u|+|v|)\right) |u-v|.
\end{equation*}

\item \textbf{(NLin.Link.3)} For all $u,v$ 
\begin{equation*}
|{\varphi }^{\prime}_{a}(u)-{\varphi }^{\prime}_{a}(v)|\leq K\exp \left(
K(|u|+|v|)\right) |u-v|
\end{equation*}
and 
\begin{equation*}
|{\varphi }^{\prime}_{b}(u)-{\varphi }^{\prime}_{b}(v)|\leq K\exp \left(
K(|u|+|v|)\right) |u-v|.
\end{equation*}

\item \textbf{(NLin.Link.4)} $\varphi_{a}$ and $\varphi_{b}$ are twice
continuously differentiable and for all $u,v$ 
\begin{equation*}
|{\varphi }_{a}^{\prime\prime}(u)-{\varphi }_{a}^{\prime\prime}(v)|\leq
K\exp \left( K(|u|+|v|)\right) |u-v|
\end{equation*}
and 
\begin{equation*}
|{\varphi }_{b}^{\prime\prime}(u)-{\varphi }_{b}^{\prime\prime}(v)|\leq
K\exp \left( K(|u|+|v|)\right) |u-v|.
\end{equation*}
\end{itemize}
\end{condition}

Condition {NLin.L.1} is like {Lin.L.1 }except that for non-linear links $%
\varphi _{a}$ and $\varphi _{b}$ we additionally require that the $\ell_2$
norm of the coefficients of the sparse linear approximations to the nuisance
functions be bounded. In Example \ref{ex:parametric_sparse} of Section \ref%
{sec:models}, the model is exactly sparse. In this case a necessary
condition for the $\ell_2$ norm to be bounded is that the coefficient $s_{c}$
does not grow with $n$. In Example \ref{ex:parametric_weakly_sparse}, the
model is parametric and we know the rate of decay of the coefficients. In
this case, the $\ell_2$ norm of the coefficient vector is bounded for any $%
\alpha >1/2,$ so long as $t\left( n\right)$ is bounded. Note also that
whereas in {Lin.L.1} we required that the norm $\left\Vert \cdot \right\Vert
_{L_{2}\left( P_{\eta ,Z}\right) }$ of the approximation error to converge
to zero at rate $\sqrt{s_{c}\log \left( p\right)/n}$, in {NLin.L.1} we
require the more stringent condition that the norm $\left\Vert \cdot
\right\Vert _{L_{8}\left( P_{\eta ,Z}\right) }$ of the approximation error
converges at this rate. We note that this condition is trivially satisfied
in Example \ref{ex:parametric_sparse} because the approximation error is
zero. In Example \ref{ex:parametric_weakly_sparse}, it can be shown that the
condition is satisfied under the sub-gaussianity condition {NLin.L.6 }(to be
discussed shortly), Condition {NLin.Link.2 }on the modulus of continuity of
the link function and Condition NLin.L.3 as long as 
\begin{equation*}
E^{1/2}_{\eta }\left[ \left\{ c\left( Z\right) -\varphi _{c}\left(
\left\langle \theta _{c}^{\ast },\phi \left( Z\right) \right\rangle \right)
\right\}^{2}\right] \leq \sqrt{\frac{Ks_{c}\log \left( p\right)}{n}}.
\end{equation*}%
See Lemma \ref{lemma:lipschitz} in Appendix C.

Condition {NLin.L.2} is similar in spirit to Condition {Lin.L.2}. It holds
in particular, when 
\begin{equation*}
\max_{1\leq j\leq p}\left\Vert \phi _{j}\right\Vert _{\infty } \leq K
\end{equation*}
and 
\begin{equation*}
E_{\eta }\left[ \left\{ \left\vert S_{ab}\right\vert \left\vert c\left(
Z\right) \right\vert +\left\vert \mathcal{R}_{\overline{c}}\left( Z\right)
\right\vert \right\} ^{4}\right] \leq K.
\end{equation*}
Conditions NLin.L.3 and {NLin.L.4 }are more demanding than conditions {%
Lin.L.3 }and{\ Lin.L.4. }An instance in which{\ }conditions{\ NLin.L.3 }and{%
\ NLin.L.4 }was discussed in Section \ref{sec:dr_lin}.

Condition {NLin.L.5 }imposes mild moment assumptions. Condition {NLin.L.6 }%
requires that all linear combinations of the components of $\phi \left(
Z\right) $ have tails that decay at least as fast as the tail of a normal
random variable. This holds, in particular, if there exists a random vector $%
R\in \mathbb{R}^{t}$ and a matrix $A\in \mathbb{R}^{p\times t} $ such that $%
\phi \left( Z\right) =AR$, the coordinates of $R$ are independent with
bounded sub-Gaussian norm and the singular values of $A$ are bounded away
from zero and infinity. For instance, the condition is satisfied if $\phi
\left( Z\right) $ is multivariate normal, with a covariance matrix that has
eigenvalues bounded away from zero and infinity.

Conditions {NLin.Link.1-NLin.Link.4} are satisfied for instance, when $%
P_{\eta }\left( S_{ab}\geq 0\right) =1$ if $\varphi _{a}(u)$ and $\varphi
_{b}(u)$ are each either $id\left( u\right) ,\exp (u)$ or $\exp (u)/(1+\exp
(u)),$ and when $P_{\eta }\left( S_{ab}\leq 0\right) =1$ if $\varphi _{a}(u)$
and $\varphi _{b}(u)$ are each either $-id\left( u\right) ,\exp (-u)$ or $%
1+\exp (-u)$. Condition NLin.Link.4 will only be needed to prove the model
double robustness property of $\widehat{\chi }_{nonlin}$. In fact, we make
assumption {NLin.Link.1 for simplicity, since it can be replaced without
affecting the conclusion of the Theorems, if we replace it by the weaker
condition }$S_{ab}w\left( Z\right) \psi _{c}\left( \left\langle \theta ,\phi
\left( Z\right) \right\rangle \right) $ is convex in $\theta $ for $c=a$ and 
$c=b.$

\begin{theorem}
\label{theo:rate_double_nonlin} There exists $\lambda \asymp \sqrt{{\log
\left( p\right) }/{n}}$ such that if Algorithm $\ref{Algo both GALS}$ uses
such $\lambda $ in both steps, then

(1) under Conditions NLin.L, NLin.Link.1-NLin.Link.3 and Lin.E.1 
\begin{equation}
\sqrt{n}\left\{ \widehat{\chi }_{nonlin}-\chi \left( \eta \right) \right\} =%
\mathbb{G}_{n}\left[ \Upsilon \left( a,b\right) \right] +O_{p}\left( \sqrt{%
\frac{s_{a}s_{b}}{n}}\log (p)\right) +o_{p}\left( 1\right) .
\label{eq:nonlin_rate_DR_expansion}
\end{equation}

(2) If Conditions NLin.L, NLin.Link.1-NLin.Link.3 and Lin.E hold and 
\begin{equation}
\sqrt{\frac{s_{a}s_{b}}{n}}\log (p)\rightarrow 0  \notag
\end{equation}%
then, 
\begin{equation*}
\frac{\sqrt{n}\left\{ \widehat{\chi }_{nonlin}-\chi \left( \eta \right)
\right\} }{\sqrt{E_{\eta }\left[ \left( \chi _{\eta }^{1}\right) ^{2}\right] 
}}\rightsquigarrow N\left( 0,1\right)
\end{equation*}%
and 
\begin{equation*}
\frac{\sqrt{n}\left\{ \widehat{\chi }_{nonlin}-\chi \left( \eta \right)
\right\} }{\sqrt{\widehat{V}_{nonlin}}}\rightsquigarrow N\left( 0,1\right)
\end{equation*}
\end{theorem}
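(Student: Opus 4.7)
The plan is to mimic the rate-double-robustness heuristic of Section \ref{sec:heuristic} on each of the three folds used by Algorithm \ref{Algo both GALS} and then average. Fix $k\in\{1,2,3\}$ and apply the decomposition \eqref{expansion} to $\widehat{\chi}_{nonlin}^{(k)}=\mathbb{P}_{nk}[\Upsilon(\widehat{a}_{(\overline{k})},\widehat{b}_{(\overline{k})})]$ with $b^{1}=b$ and $(\widetilde{a},\widetilde{b})=(\widehat{a}_{(\overline{k})},\widehat{b}_{(\overline{k})})$, both computed from $\mathcal{D}_{\overline{nk}}$. This gives $\widehat{\chi}_{nonlin}^{(k)}-\chi(\eta)=N_{nk}+\Gamma_{a,nk}+\Gamma_{b,nk}+\Gamma_{ab,nk}$, and it suffices to control each term and sum over $k$.

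The first and main task is to establish $\|\widehat{c}_{(\overline{k})}-c\|_{L_{2}(P_{Z})}=O_{p}(\sqrt{s_{c}\log(p)/n})$ for $c\in\{a,b\}$. For the unit-weight first-stage estimators $\widehat{\theta}_{c,(k)}^{0}$, invoke the high-dimensional GLM Lasso rate theorem of Appendix A (in the spirit of Theorem \ref{theo:rate_main}): Conditions NLin.L.1--NLin.L.2 (sparse approximation plus moment control on the score coordinates), NLin.L.3 (population restricted eigenvalue), NLin.L.5 (moment bounds on $S_{ab}$ and $\mathcal{R}_{\overline{c}}$), NLin.L.6 (sub-Gaussian design), and NLin.Link.1--NLin.Link.3 together yield restricted strong convexity of the GLM loss on a neighborhood of $\theta_{c}^{\ast}$, implying the fast $\ell_{2}$ and $\ell_{1}$ rates $\|\widehat{\theta}_{c,(k)}^{0}-\theta_{c}^{\ast}\|_{2}=O_{p}(\sqrt{s_{c}\log(p)/n})$ and $\|\widehat{\theta}_{c,(k)}^{0}-\theta_{c}^{\ast}\|_{1}=O_{p}(s_{c}\sqrt{\log(p)/n})$. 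For the second-stage estimators $\widehat{\theta}_{c,(k),j_{l}(k)}$ with plug-in weights $\widehat{w}_{\overline{c},(j_{l}(k))}$, I argue conditionally on the fold used to form the weights: the weight function is then a fixed but random function whose deviation from the ideal $\varphi_{\overline{c}}^{\prime}(\langle\theta_{\overline{c}}^{\ast},\phi\rangle)$ is uniformly $o_{p}(1)$ by step one and NLin.Link.3. Show that this perturbation preserves RSC of the weighted loss up to a $(1+o_{p}(1))$ factor (using NLin.L.6 and NLin.Link.1 for strict positivity of the weights), so the same fast rate persists. Averaging the two second-stage coefficients and applying the Lipschitz link condition NLin.Link.2 transfers the $\ell_{2}$ rate on coefficients into the $L_{2}(P_{Z})$ rate on $\widehat{c}_{(\overline{k})}$.

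With the rates in hand, I bound the three remainder terms. By Cauchy--Schwarz, $E[S_{ab}^{2}]\le K$ (NLin.L.5), and independence of $(\widehat{a}_{(\overline{k})},\widehat{b}_{(\overline{k})})$ from $\mathcal{D}_{nk}$,
\[
\sqrt{n}\,|\Gamma_{ab,nk}|=\sqrt{n}\,\bigl|\mathbb{P}_{nk}\bigl[S_{ab}(\widehat{a}_{(\overline{k})}-a)(\widehat{b}_{(\overline{k})}-b)\bigr]\bigr|=O_{p}\!\left(\sqrt{s_{a}s_{b}/n}\,\log(p)\right),
\]
matching the claimed remainder. For $\Gamma_{a,nk}$, Proposition \ref{prop:DR functional} applied with $h=\widehat{a}_{(\overline{k})}-a$ (fixed given $\mathcal{D}_{\overline{nk}}$) implies that the summands have conditional mean zero; Condition Lin.E.1 then gives that the conditional second moment is $o_{p}(1)$, and conditional Chebyshev yields $\sqrt{n}\,\Gamma_{a,nk}=o_{p}(1)$. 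An identical argument with the roles of $a$ and $b$ interchanged handles $\Gamma_{b,nk}$. Summing over $k$ and noting that $\sum_{k}\sqrt{n_{k}/n}\,\mathbb{G}_{nk}[\Upsilon(a,b)]=\mathbb{G}_{n}[\Upsilon(a,b)]$ gives \eqref{eq:nonlin_rate_DR_expansion}. For part (2), the univariate Lyapunov CLT applies to $\mathbb{G}_{n}[\chi_{\eta}^{1}]$ using Condition Lin.E.2(a), and the rate hypothesis $\sqrt{s_{a}s_{b}/n}\log(p)\to0$ kills the remainder. Consistency of $\widehat{V}_{nonlin}$ follows by expanding
\[
\mathbb{P}_{nk}\bigl[\Upsilon(\widehat{a},\widehat{b})^{2}\bigr]=\mathbb{P}_{nk}\bigl[\Upsilon(a,b)^{2}\bigr]+2\mathbb{P}_{nk}\bigl[\Upsilon(a,b)\{\Upsilon(\widehat{a},\widehat{b})-\Upsilon(a,b)\}\bigr]+\mathbb{P}_{nk}\bigl[\{\Upsilon(\widehat{a},\widehat{b})-\Upsilon(a,b)\}^{2}\bigr],
\]
where the first term converges to $E[\Upsilon(a,b)^{2}]$ by LLN using Lin.E.2(b), and the cross and quadratic terms vanish by Cauchy--Schwarz using Lin.E.1, Lin.E.2(b), NLin.L.5, and the $L_{2}$ consistency of the nuisance estimators; Slutsky then yields the studentized CLT.

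The main obstacle is the joint handling of approximate sparsity, non-linear links, sub-Gaussian (unbounded) covariates, and data-dependent weights in the $\ell_{1}$-regularized estimation step. Specifically, one must establish restricted strong convexity of the weighted GLM loss at the perturbed estimated weight, uniformly over a cone of $\theta$ around $\theta_{c}^{\ast}$, without relying on an ultra-sparsity assumption $s=o(\sqrt{n})$ (which would defeat rate double robustness). This calls for a peeling/chaining argument exploiting NLin.L.6, together with propagation of the $L_{8}$ approximation bound from NLin.L.1 through the Lipschitz bounds NLin.Link.2--NLin.Link.3 to ensure that the weight perturbation does not inflate the effective empirical-process noise beyond order $\sqrt{\log(p)/n}$; this is the technical heart of the argument and is where Appendix A does the real work.
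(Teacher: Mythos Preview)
Your proposal is correct and follows essentially the same route as the paper's proof: the fold-wise decomposition $N_{nk}+\Gamma_{a,nk}+\Gamma_{b,nk}+\Gamma_{ab,nk}$, the two applications of Theorem \ref{theo:rate_main} (first with unit weights, then conditionally on the weight-forming fold) to obtain the $\ell_2$ and $\ell_1$ rates, transferring these to $L_2(P_Z)$ rates via NLin.Link.2, the conditional mean-zero plus Lin.E.1 argument for $\Gamma_{a,nk}$ and $\Gamma_{b,nk}$, Cauchy--Schwarz for $\Gamma_{ab,nk}$, Lyapunov for the CLT, and the quadratic expansion for $\widehat{V}_{nonlin}$. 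The only place your write-up is slightly more compressed than the paper is the $\Gamma_{ab,nk}$ bound: the paper splits the \emph{empirical} inner product via Cauchy--Schwarz into $\mathbb{P}_{nk}^{1/2}[S_{ab}(\widehat{a}-a)^2]\,\mathbb{P}_{nk}^{1/2}[S_{ab}(\widehat{b}-b)^2]$ and then controls each factor by conditioning, Markov, and a further Cauchy--Schwarz that brings in the $L_4$ (hence the $L_8$ assumption in NLin.L.1) bound on the nuisance error---this is also needed in the variance-consistency step to handle $\mathbb{P}_{nk}^{1/2}[S_{ab}^2(\widehat{a}-a)^2(\widehat{b}-b)^2]$, which the paper bounds by $\mathbb{P}_{nk}^{1/4}[S_{ab}^2(\widehat{a}-a)^4]\,\mathbb{P}_{nk}^{1/4}[S_{ab}^2(\widehat{b}-b)^4]$.
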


\subsubsection{Model double robustness for the estimator $\widehat{\protect%
\chi }_{nonlin}$}

\label{sec:mod_dr_Nonlin}

We shall next show that $\widehat{\chi }_{nonlin}$ satisfies also the model
robustness property. In what follows for $c=a$ or $c=b,$ we define ${\varphi 
}_{c,\theta }^{\prime}\left( Z\right) \equiv {\varphi }_{c}^{\prime}(\langle
\theta ,\phi \left( Z\right) \rangle )$. We will require the following
conditions.

\begin{condition}[Condition NLin.L.W]
\label{cond:NonLin.L.W} Conditions NLin.L.3, NLin.L.4 and NLin.L.6 hold.
Moreover, there exists a fixed constant $K>0$ such that the following
conditions hold

\begin{itemize}
\item \textbf{(NLin.L.W.1}) There exists $\theta _{b}^{0}\in \mathbb{R}^{p} $
such that 
\begin{equation*}
\theta _{b}^{0}\in \arg \min_{\theta \in \mathbb{R}^{p}}E_{\eta }\left[
Q_{b}\left( \theta ,\phi ,w=1\right) \right]
\end{equation*}%
and $b^{0}\left( Z\right) \equiv \varphi _{b}\left( \left\langle \theta
_{b}^{0},\phi \left( Z\right) \right\rangle \right) $ belongs to $\mathcal{G}%
\left( \phi ,s_{b},j=1,\varphi _{b}\right) $ with associated parameter value
denoted as $\theta _{b}^{0\ast }$. Furthermore, $s_{b}\log \left( p\right)
/n\rightarrow 0,$ $\left\Vert \theta _{b}^{0\ast }\right\Vert _{2}\leq K$
and 
\begin{equation*}
E_{\eta }^{1/8}\left[ \left\{ b^{0}\left( Z\right) -\varphi _{b}\left(
\left\langle \theta _{b}^{0\ast },\phi \left( Z\right) \right\rangle \right)
\right\} ^{8}\right] \leq \sqrt{\frac{Ks_{b}\log \left( p\right)}{n}}.
\end{equation*}%
Additionally, $a\left( Z\right) \in \mathcal{G}\left( \phi
,s_{a},j=2,\varphi _{a}\right) $ with associated parameter value denoted as $%
\theta _{a}^{\ast }$. Furthermore, $s_{a}\log \left( p\right) /\sqrt{n}%
\rightarrow 0,$ $\left\Vert \theta _{a}^{\ast }\right\Vert _{2}\leq K$ and 
\begin{equation*}
E_{\eta }^{1/8}\left[ \left\{ a\left( Z\right) -\varphi _{a}\left(
\left\langle \theta _{a}^{\ast },\phi \left( Z\right) \right\rangle \right)
\right\} ^{8}\right] \leq \sqrt{\frac{Ks_{a}\log \left( p\right)}{n}}.
\end{equation*}%
Also, there exists $\theta _{b}^{1}\in \mathbb{R}^{p}$ such that 
\begin{equation*}
\theta _{b}^{1}\in \arg \min_{\theta \in \mathbb{R}^{p}}E_{\eta }\left[
Q_{b}\left( \theta ,\phi ,w={\varphi }^{\prime}_{a,\theta _{a}^{\ast
}}\right) \right]
\end{equation*}%
where $b^{1}\left( Z\right) \equiv \varphi _{b}\left( \left\langle \theta
_{b}^{1},\phi \left( Z\right) \right\rangle \right) $ belongs to $\mathcal{G}%
\left( \phi ,s_{b},j=1,\varphi _{b}\right) $ with associated parameter value
denoted as $\theta _{b}^{1\ast }$. Furthermore, $s_{b}\log \left( p\right)
/n\rightarrow 0,\left\Vert \theta _{b}^{1\ast }\right\Vert _{2}\leq K$ and 
\begin{equation*}
E_{\eta }^{1/8}\left[ \left\{ b^{1}\left( Z\right) -\varphi _{b}\left(
\left\langle \theta _{b}^{1\ast },\phi \left( Z\right) \right\rangle \right)
\right\} ^{8}\right] \leq \sqrt{\frac{Ks_{b}\log \left( p\right)}{n}}.
\end{equation*}

\item \textbf{(NLin.L.W.2}) 
\begin{equation*}
E_{\eta }\left[ \max\limits_{1\leq j\leq p}\left( \mathbb{P}%
_{n}(S_{ab,i}a\left( Z\right) \phi _{j}\left( Z\right) +\mathcal{R}%
_{b}\left( Z\right) \phi _{j}\left( Z\right) )^{4}\right) ^{1/2}\right] \leq
K.
\end{equation*}%
\begin{equation*}
E_{\eta }\left[ \max\limits_{1\leq j\leq p}\left( \mathbb{P}%
_{n}(S_{ab,i}b^{0}\left( Z\right) \phi _{j}\left( Z\right) +\mathcal{R}%
_{a}\left( Z\right) \phi _{j}\left( Z\right) )^{4}\right) ^{1/2}\right] \leq
K.
\end{equation*}%
\begin{equation*}
E_{\eta }\left[ \max\limits_{1\leq j\leq p}\left( \mathbb{P}%
_{n}(S_{ab,i}b^{1}\phi _{j}\left( Z\right) +\mathcal{R}_{a}\left( Z\right)
\phi _{j}\left( Z\right) )^{4}\right) ^{1/2}\right] \leq K.
\end{equation*}

\item \textbf{(NLin.L.W.3) }$E_{\eta }\left( S_{ab}^{4}\right)\leq K$, $%
E_{\eta }\left( \left[ S_{ab}a\left( Z\right) +\mathcal{R}_{b}\left(
Z\right) \right] ^{4}\right) \leq K$, $E_{\eta }\left( \left[
S_{ab}b^{0}\left( Z\right) +\mathcal{R}_{a}\left( Z\right) \right]
^{4}\right) \leq K$ and $E_{\eta }\left( \left[ S_{ab}b^{1}\left( Z\right) +%
\mathcal{R}_{a}\left( Z\right) \right] ^{4}\right) \leq K$.

\item \textbf{(NLin.L.W.4) } 
\begin{equation*}
\sup_{\Vert \theta -\theta _{b}^{0\ast }\Vert _{2}\leq 1}E\left\{
\max_{1\leq j\leq p}\left( \mathbb{P}_{n} \left\lbrace (\mathcal{R}%
_{b}\left( Z\right) \phi _{j}\left( Z\right) {\varphi }^{\prime}_{b,\theta
}\left( Z\right) -m_{b}(O,\phi _{j}{\varphi }^{\prime}_{b,\theta }))^{2}
\right\rbrace\right)^{1/2}\right\} \leq K.
\end{equation*}%
\begin{equation*}
\sup_{\Vert \theta -\theta _{b}^{1\ast }\Vert _{2}\leq 1}E\left\{
\max_{1\leq j\leq p}\left( \mathbb{P}_{n} \left\lbrace (\mathcal{R}%
_{b}\left( Z\right) \phi _{j}\left( Z\right) {\varphi }^{\prime}_{b,\theta
}\left( Z\right) -m_{b}(O,\phi _{j}{\varphi }^{\prime}_{b,\theta }))^{2}
\right\rbrace\right)^{1/2}\right\} \leq K.
\end{equation*}%
\begin{equation*}
\sup_{\Vert \theta -\theta_{a}^{\ast }\Vert _{2}\leq 1}E\left\{ \max_{1\leq
j\leq p}\left( \mathbb{P}_{n}\left\lbrace(\mathcal{R}_{a}\left( Z\right)
\phi _{j}\left( Z\right) {\varphi }^{\prime}_{a,\theta }\left( Z\right)
-m_{a}(O,\phi _{j}{\varphi }^{\prime}_{a,\theta }))^{2} \right\rbrace
\right)^{1/2} \right\} \leq K.
\end{equation*}
\end{itemize}
\end{condition}

\begin{remark}
If for $c=a$ or for $c=b$ there exists a statistic $S_{c}$ such that$\
m_{c}\left( O,h\right) =S_{c}h,$ for all $h,\,\ $then $\mathcal{R}_{c}\left(
Z\right) $ in conditions NLin.L.W.2 and NLin.L.W.3 can be replaced by $%
S_{c}, $ and the inequality where $\mathcal{R}_{c}\left( Z\right) $ appears
in condition NLin.L.W.4 can be removed.
\end{remark}

Condition N{Lin.L.W.1 }differs from Condition N{Lin.L.1 }in same three
important ways as discussed for the distinctions between {Lin.L.W.1} and {%
Lin.L.1, except that now we make assumptions on two limit functions }$b^{0}$
and $b^{1}$ corresponding to estimators of $b$ computed at the first and
second stage of the algorithm.

\begin{condition}[Condition NLin.E.W]
\label{cond:NonLin.E.W} $\ $Condition NLin.L.W.1 holds and

\begin{itemize}
\item \textbf{(NLin.E.W.1)} 
\begin{equation*}
E_{\eta}\left[ (S_{ab}b^{1})^{2}\right]\leq K,
\end{equation*}
\begin{equation*}
E_{\eta }\left\{ \left[ S_{ab}a\left( b^{\prime }-b^{1}\right) +m_{b}\left(
O,b^{\prime }\right) -m_{b}\left( O,b^{1}\right) \right] ^{2}\right\}
\rightarrow 0\quad \text{as}\quad E_{\eta }\left[ \left( b^{\prime
}-b^{1}\right) ^{2}\right] \rightarrow 0
\end{equation*}%
and 
\begin{equation*}
E_{\eta }\left\{ \left[ S_{ab}b^{1}\left( a^{\prime }-a\right) +m_{a}\left(
O,a^{\prime }\right) -m_{a}\left( O,a\right) \right] ^{2}\right\}
\rightarrow 0\quad \text{as}\quad E_{\eta }\left[ \left( a^{\prime
}-a\right) ^{2}\right] \rightarrow 0.
\end{equation*}

\item \textbf{(NLin.E.W.2)} There exists fixed constants $0<k<K$ such that

\begin{itemize}
\item[a)] $k\leq {E\left[ \left\{ \Upsilon \left( a,b^{1}\right) -\chi
\left( \eta \right) \right\} ^{2}\right] }$ and $E\left\{ \left\vert
\Upsilon \left( a,b^{1}\right) -\chi \left( \eta \right) \right\vert
^{3}\right\} \leq K$.

\item[b)] ${E\left[ \left\{ \Upsilon \left( a,b^{1}\right) -\chi \left( \eta
\right) \right\} ^{4}\right] }\leq K$.
\end{itemize}
\end{itemize}
\end{condition}

\begin{theorem}
\label{theo:model_DR_Nonlin} There exists $\lambda \asymp \sqrt{{\log \left(
p\right) }/{n}}$ such that if Algorithm $\ref{Algo both GALS}$ $\ $uses such 
$\lambda $ in both steps, then

(1) under Conditions NLin.L.W, NLin.Link, NLin.E.W.1 and M.W 
\begin{equation}
\sqrt{n}\left\{ \widehat{\chi }_{nonlin}-\chi \left( \eta \right) \right\} =%
\mathbb{G}_{n}\left[ \Upsilon \left( a,b^{1}\right) \right] +O_{p}\left( 
\sqrt{\frac{s_{a}\max(s_{b},s_{a})}{n}}\log (p)\right) +o_{p}\left( 1\right)
.  \label{eq:Nonlin_model_DR_expansion}
\end{equation}

(2) If Conditions NLin.L.W, NLin.Link, Lin.E.W and M.W hold and 
\begin{equation}
\sqrt{\frac{s_{a}\max (s_{b},s_{a})}{n}}\log (p)\rightarrow 0
\label{eq:sasb_nonlin_model}
\end{equation}%
then, 
\begin{equation*}
\frac{\sqrt{n}\left\{ \widehat{\chi }_{nonlin}-\chi \left( \eta \right)
\right\} }{\sqrt{E_{\eta }\left[ \left\{ \Upsilon \left( a,b^{1}\right)
-\chi \left( \eta \right) \right\} ^{2}\right] }}\rightsquigarrow N\left(
0,1\right) .
\end{equation*}

(3) If Conditions NLin.L.W, NLin.Link, NLin.E.W and M.W hold and $\left( \ref%
{eq:sasb_nonlin_model}\right) $ holds then 
\begin{equation}
\frac{\sqrt{n}\left\{ \widehat{\chi }_{nonlin}-\chi \left( \eta \right)
\right\} }{\sqrt{\widehat{V}_{nonlin}}}\rightsquigarrow N\left( 0,1\right) .
\label{eq:normalLimit_nonlin}
\end{equation}
\end{theorem}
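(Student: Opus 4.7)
The plan is to prove the three parts of Theorem \ref{theo:model_DR_Nonlin} by adapting, sample-split by sample-split, the heuristic decomposition in Section \ref{sec:heuristic}, and then averaging over the three folds. Fix $k\in\{1,2,3\}$, let $\widetilde{a}\equiv \widehat{a}_{(\overline{k})}$ and $\widetilde{b}\equiv \widehat{b}_{(\overline{k})}$ denote the final nuisance estimators used in the $k$-th main sample, and write
\begin{equation*}
\widehat{\chi}_{nonlin}^{(k)}-\chi(\eta)=N_{nk}+\Gamma_{a,nk}+\Gamma_{b^{1},nk}+\Gamma_{ab^{1},nk},
\end{equation*}
exactly as in \eqref{expansion} with $a$ the true function and $b^{1}$ the limit defined in Condition NLin.L.W.1. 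Because the second-stage weight $\varphi_{a}'(\langle\widehat{\theta}_{a,(j)}^{0},\phi\rangle)$ is computed from a nuisance sample disjoint from $\mathcal{D}_{nk}$, and because $\widehat{\theta}_{a,(j)}^{0}$ (obtained with weight $1$ under the correctly specified model for $a$) converges to $\theta_{a}^{\ast}$ by Theorem \ref{theo:rate_main} applied to the objective $Q_{a}(\cdot,\phi,1)$, the estimated weights approach the ideal weight $\varphi_{a,\theta_{a}^{\ast}}'$; hence $\widetilde{b}\to b^{1}$, justifying the centering at $b^{1}$ in the decomposition.

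The leading term $\frac{1}{3}\sum_{k}N_{nk}$ equals $\mathbb{G}_{n}[\Upsilon(a,b^{1})]/\sqrt{n}$ up to a constant shift, and by the mixed bias identity \eqref{bias1} its summands have mean zero. Using Proposition \ref{prop:DR functional} with $h=\widetilde{b}-b^{1}$ and conditioning on the nuisance sample, $\sqrt{n}\,\Gamma_{b^{1},nk}$ has conditional mean zero and conditional variance controlled by Condition NLin.E.W.1 together with $\|\widetilde{b}-b^{1}\|_{L_{2}(P_{Z})}\overset{P}{\to}0$, so $\sqrt{n}\,\Gamma_{b^{1},nk}=o_{p}(1)$ by Chebyshev. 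The cross term is handled by Cauchy--Schwarz and Condition NLin.L.W.3:
\begin{equation*}
|\Gamma_{ab^{1},nk}|\leq \|S_{ab}\|_{L_{2}(P)}\,\|\widetilde{a}-a\|_{L_{2}(P_{Z})}\,\|\widetilde{b}-b^{1}\|_{L_{2}(P_{Z})}.
\end{equation*}
Theorem \ref{theo:rate_main} gives $\|\widetilde{a}-a\|_{L_{2}(P_{Z})}=O_{p}(\sqrt{s_{a}\log(p)/n})$, and a careful application of the same theorem with perturbed (estimated) weights shows $\|\widetilde{b}-b^{1}\|_{L_{2}(P_{Z})}=O_{p}(\sqrt{\max(s_{a},s_{b})\log(p)/n})$; combining yields $\Gamma_{ab^{1},nk}=O_{p}(\sqrt{s_{a}\max(s_{a},s_{b})/n}\,\log p)$.

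The delicate term is $\Gamma_{a,nk}$, which cannot be centered via Proposition \ref{prop:DR functional} since $b^{1}\neq b$. Following the heuristic derivation, I would linearize $\widetilde{a}=\varphi_{a}(\langle\widetilde{\theta}_{a},\phi\rangle)$ around $\theta_{a}^{\ast}$ using the smoothness of $\varphi_{a}$ from Condition NLin.Link, absorbing the second-order remainder into $o_{p}(n^{-1/2})$ via NLin.Link.4 and the $L_{8}$ approximation bound in NLin.L.W.1. This yields
\begin{equation*}
|\Gamma_{a,nk}|\leq \|\widetilde{\theta}_{a}-\theta_{a}^{\ast}\|_{1}\,\|\mathbb{M}_{nk}\|_{\infty}+o_{p}(n^{-1/2}),
\end{equation*}
with $\mathbb{M}_{nk}$ as in Section \ref{sec:heuristic}. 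By the definition of $b^{1}$ as the population minimizer with ideal weight $\varphi_{a,\theta_{a}^{\ast}}'$, each entry of $\mathbb{M}_{nk}$ is an average of mean-zero variables (by Proposition \ref{prop:DR functional} applied with $h=\varphi_{a,\theta_{a}^{\ast}}'\phi_{j}$ and replacing $b$ by $b^{1}$), and Nemirovski's inequality together with Conditions NLin.L.W.2--W.4 and M.W gives $\|\mathbb{M}_{nk}\|_{\infty}=O_{p}(\sqrt{\log p/n})$. Combined with $\|\widetilde{\theta}_{a}-\theta_{a}^{\ast}\|_{1}=O_{p}(s_{a}\sqrt{\log p/n})$ from Theorem \ref{theo:rate_main}, this produces $\sqrt{n}\,\Gamma_{a,nk}=O_{p}(s_{a}\log p/\sqrt{n})=o_{p}(1)$ under the ultra-sparsity condition $s_{a}\log p=o(\sqrt{n})$ in Condition NLin.L.W.1. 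Summing the four contributions over $k$ and dividing by $3$ establishes the expansion \eqref{eq:Nonlin_model_DR_expansion} in (1). The main obstacle is precisely this step: tracking the effect of the estimated (non-ideal) weight on $\widetilde{b}$ to obtain the $\max(s_{a},s_{b})$ rate, and verifying that replacing the ideal weight by its plug-in in $\mathbb{M}_{nk}$ only adds $o_{p}(n^{-1/2})$ error.

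Given the expansion, part (2) follows from the Lindeberg--Lyapunov CLT applied to $\mathbb{G}_{n}[\Upsilon(a,b^{1})]$, whose third-moment condition is supplied by Condition NLin.E.W.2(a); the rate condition \eqref{eq:sasb_nonlin_model} kills the remainder. For part (3), I would show $\widehat{V}_{nonlin}\overset{P}{\to}E_{\eta}[\{\Upsilon(a,b^{1})-\chi(\eta)\}^{2}]$ by decomposing $\widehat{V}_{nonlin}-E_{\eta}[\{\Upsilon(a,b^{1})-\chi(\eta)\}^{2}]$ as $\{\mathbb{P}_{n}-E_{\eta}\}[\{\Upsilon(a,b^{1})-\chi(\eta)\}^{2}]$ plus terms involving $\widetilde{a}-a$ and $\widetilde{b}-b^{1}$; the first term vanishes by the law of large numbers using Condition NLin.E.W.2(b), while the remaining terms vanish by Condition NLin.E.W.1 together with the consistency of $\widetilde{a}$ and $\widetilde{b}$. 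Slutsky's lemma then yields \eqref{eq:normalLimit_nonlin}.
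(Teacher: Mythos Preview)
Your approach is essentially the paper's: the same fold-wise decomposition centered at $b^{1}$, the same handling of $\Gamma_{b^{1},nk}$ via Proposition \ref{prop:DR functional} and conditional Chebyshev, the same Cauchy--Schwarz bound on the cross term with the $\max(s_{a},s_{b})$ rate for $\widetilde{b}$ supplied by Theorem \ref{theo:rate_main}(ii), and the same linearize--H\"older--Nemirovski argument for $\Gamma_{a,nk}$.

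Two small points to tighten. First, the mean-zero property of $\mathbb{M}_{nk}$ is not an instance of Proposition \ref{prop:DR functional} with ``$b$ replaced by $b^{1}$'' (that proposition is about the true $b$); it comes directly from the first-order condition of the minimization defining $\theta_{b}^{1}$, which you do also state. Second, before Taylor-expanding $\varphi_{a}(\langle\widetilde{\theta}_{a},\phi\rangle)$ you must split off the approximation error $r(Z)=a(Z)-\varphi_{a}(\langle\theta_{a}^{\ast},\phi(Z)\rangle)$ from $\widetilde{a}-a$, and showing $\sqrt{n}\,\mathbb{P}_{nk}[S_{ab}b^{1}r+m_{a}(O,r)]=o_{p}(1)$ requires the $j=2$ bound $E^{1/2}[r^{2}]\lesssim s_{a}\log(p)/n$ in Condition NLin.L.W.1 combined with Condition M.W (this is exactly Lemma \ref{lemma:bound_m_term} in the paper); the $L_{8}$ bound you invoke only gives $E^{1/2}[r^{2}]\lesssim\sqrt{s_{a}\log(p)/n}$, which is too slow for this term.
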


Note that in expansion \eqref{eq:Nonlin_model_DR_expansion} the term $%
O_{p}\left( \sqrt{{s_{a}\max(s_{b},s_{a})}/{n}}\log (p)\right) $ appears,
instead of the term $O_{p}\left( \sqrt{{s_{a}s_{b}}/{n}}\log (p)\right) $
that appears in Theorems \ref{theo:rate_double_lin}, \ref%
{theo:model_double_lin} and \ref{theo:rate_double_nonlin}. This is due to
the following. The second stage $\ell_{1}$-regularized estimators for $b$ in
Algorithm \ref{Algo both GALS} use weights defined using the estimators of $%
a $ obtained from the first stage. When the model for $b$ is misspecified,
the rate of convergence of the first stage estimators of $a$ affects the
rate of convergence of the second stage estimators of $b$, since the very
definition of the probability limit of the second stage estimators of $b$,
namely $b^{1} $, depends directly on the probability limit of the estimators
of $a$. In fact, Theorem \ref{theo:rate_main} implies that the rate of
convergence of the second stage estimators of $b$ to $b^{1}$ is $\sqrt{%
\max(s_a,s_b)\log(p)/n}$ instead of the usual $\sqrt{s_{b}\log(p)/n}$. This
is where the $\max(s_a,s_b)$ term comes from.

Because the structure of the influence function is symmetric relative to $a$
and $b$, if in Conditions NLin.L.W, NLin.E.W and M.W we change the roles of $%
a$ and $b$ then Theorem \ref{theo:model_DR_Nonlin} remains valid but with $%
\Upsilon \left( a^{1},b\right) $ instead of $\Upsilon \left( a,b^{1}\right) $%
. We thus arrive at the following result that encapsulates the model double
robust property of $\widehat{\chi }_{nonlin}.$

\begin{corollary}
\label{coro:Nonlin_sym} If in Algorithm $\ref{Algo both GALS}$ $\lambda
\asymp \sqrt{{\log \left( p\right) }/{n}},$ then if conditions (1)-(3) of
Theorem \ref{theo:model_DR_Nonlin} hold, or if the same conditions (1)-(3)
hold but with the roles of $a$ and $b$ reversed, then %
\eqref{eq:normalLimit_nonlin} holds.
\end{corollary}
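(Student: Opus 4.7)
The plan is to exploit the perfect symmetry between $a$ and $b$ that is built into both the BIF class and Algorithm \ref{Algo both GALS}. The influence function
\begin{equation*}
\chi_{\eta}^{1}(O) = S_{ab}\,a(Z)\,b(Z) + m_{a}(O,a) + m_{b}(O,b) + S_{0} - \chi(\eta)
\end{equation*}
is invariant under the simultaneous relabeling $(a,b)\leftrightarrow (b,a)$ and $(m_{a},m_{b})\leftrightarrow (m_{b},m_{a})$; the same applies to the mixed bias identity \eqref{bias1}, to Proposition \ref{prop:DR functional}, to the functional $\Upsilon(a,b)$ used in the one-step correction, and to Condition \ref{cond:links} on the links $\varphi_{a},\varphi_{b}$. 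Algorithm \ref{Algo both GALS} constructs $\widehat{\theta}_{a}$ and $\widehat{\theta}_{b}$ by an identical two-stage weighted $\ell_{1}$-regularized procedure in which the weight used for $c\in\{a,b\}$ is $\widehat{w}_{c}=\varphi_{\bar c}'(\langle \widehat{\theta}_{\bar c}^{\,0},\phi\rangle)$, so the estimator $\widehat{\chi}_{nonlin}$ and its variance estimate $\widehat{V}_{nonlin}$ are unchanged by the relabeling $a\leftrightarrow b$.

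First, if conditions (1)--(3) of Theorem \ref{theo:model_DR_Nonlin} hold as stated, the conclusion \eqref{eq:normalLimit_nonlin} is exactly that theorem; there is nothing to prove. So the only content is the second case, in which the roles of $a$ and $b$ are reversed throughout Conditions NLin.L.W, NLin.E.W and M.W. This means that the model for $b$ is the one that is correctly specified, while the $\ell_{1}$-regularized estimators of $a$ converge to some $a^{0}\neq a$ with a corresponding second-stage limit $a^{1}$ (defined as in NLin.L.W.1 with the roles of $a$ and $b$ interchanged).

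Second, I would make the symmetry argument formal by introducing the relabeled BIF representation $\tilde a := b$, $\tilde b := a$, $\tilde m_{a}:=m_{b}$, $\tilde m_{b}:=m_{a}$, while leaving $S_{ab}$ and $S_{0}$ unchanged. By the symmetry of $\chi_{\eta}^{1}$ noted above, this is a valid BIF representation of the same functional $\chi(\eta)$, and by the symmetry of Algorithm \ref{Algo both GALS} the estimator produced from it coincides pointwise with the original $\widehat{\chi}_{nonlin}$. The reversed-role hypotheses in the corollary are precisely Conditions NLin.L.W, NLin.E.W, NLin.Link and M.W written in the tilde notation, so Theorem \ref{theo:model_DR_Nonlin} applied to the tilde representation delivers
\begin{equation*}
\sqrt{n}\bigl\{\widehat{\chi}_{nonlin}-\chi(\eta)\bigr\}/\sqrt{\widehat{V}_{nonlin}} \rightsquigarrow N(0,1),
\end{equation*}
with $\Upsilon(\tilde a,\tilde b^{1})=\Upsilon(b,a^{1})=\Upsilon(a^{1},b)$ appearing in the leading expansion in place of $\Upsilon(a,b^{1})$.

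The only item that requires verification is that every ingredient used in the proof of Theorem \ref{theo:model_DR_Nonlin} is symmetric (or becomes symmetric under the joint relabeling of nuisance functions, $m$-maps, Riesz representers and link functions). The sign condition on $S_{ab}$ in Definition \ref{def:quad_DR_class} is intrinsic to $S_{ab}$ and therefore preserved; Condition NLin.Link is stated identically for $\varphi_{a}$ and $\varphi_{b}$; the Riesz representers $\mathcal{R}_{a},\mathcal{R}_{b}$ simply swap; and Condition M.W can be imposed on either $m_{a}$ or $m_{b}$ without changing the structure of the argument. No step of the proof of Theorem \ref{theo:model_DR_Nonlin} distinguishes $a$ from $b$ beyond this labeling, so the argument is a pure bookkeeping reduction and there is no genuine obstacle; the point of the corollary is to make explicit that the model double robust conclusion does not depend on which of the two nuisance functions happens to be correctly modeled.
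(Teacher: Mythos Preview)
Your proof is correct and follows exactly the paper's approach: the paper states just before the corollary that ``because the structure of the influence function is symmetric relative to $a$ and $b$, if in Conditions NLin.L.W, NLin.E.W and M.W we change the roles of $a$ and $b$ then Theorem \ref{theo:model_DR_Nonlin} remains valid but with $\Upsilon(a^{1},b)$ instead of $\Upsilon(a,b^{1})$,'' and offers no further argument. Your write-up simply makes this symmetry reduction explicit by formally relabeling the BIF representation and verifying that the algorithm and all the invoked conditions are invariant under the swap.
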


\subsection{Asymptotic results for the estimator $\widehat{\protect\chi }%
_{mix}$}

\label{sec:asym_mix}

We will state without proof the results for the asymptotic behavior of $%
\widehat{\chi }_{mix}.$ The proofs involve a combination of the strategies
used in the proofs for the properties of $\widehat{\chi }_{lin}$ and $%
\widehat{\chi }_{nonlin}$. The assumptions will be stated assuming that $%
\varphi _{a}\left( u\right) =u$ and $\varphi _{b}$ is non-linear.

\subsubsection{Rate double robustness for the estimator $\widehat{\protect%
\chi }_{mix}$}

\begin{theorem}
\label{theo:rate_double_mix} There exists $\lambda \asymp \sqrt{{\log \left(
p\right) }/{n}}$ such that if Algorithm \ref{Algo mixed} uses such $\lambda $
in both steps, then

(1) under Conditions NLin.L, NLin.Link.1-NLin.Link.3 and Lin.E.1 
\begin{equation}
\sqrt{n}\left\{ \widehat{\chi }_{mix}-\chi \left( \eta \right) \right\} =%
\mathbb{G}_{n}\left[ \Upsilon \left( a,b\right) \right] +O_{p}\left( \sqrt{%
\frac{s_{a}s_{b}}{n}}\log (p)\right) +o_{p}\left( 1\right) .
\label{eq:mix_rate_DR_expansion}
\end{equation}

(2) If Conditions NLin.L, NLin.Link.1-NLin.Link.3 and Lin.E hold and 
\begin{equation}
\sqrt{\frac{s_{a}s_{b}}{n}}\log (p)\rightarrow 0  \notag
\end{equation}%
then, 
\begin{equation*}
\frac{\sqrt{n}\left\{ \widehat{\chi }_{mix}-\chi \left( \eta \right)
\right\} }{\sqrt{E_{\eta }\left[ \left( \chi _{\eta }^{1}\right) ^{2}\right] 
}}\rightsquigarrow N\left( 0,1\right)
\end{equation*}%
and 
\begin{equation*}
\frac{\sqrt{n}\left\{ \widehat{\chi }_{mix}-\chi \left( \eta \right)
\right\} }{\sqrt{\widehat{V}_{mix}}}\rightsquigarrow N\left( 0,1\right)
\end{equation*}
\end{theorem}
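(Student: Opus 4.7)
The plan is to adapt the strategy used for Theorems \ref{theo:rate_double_lin} and \ref{theo:rate_double_nonlin} to the mixed algorithm, exploiting the bilinearity of $(\chi+\chi^{1})$ via the expansion \eqref{expansion}. For each fold $k\in\{1,2,3\}$, set $m=n/3$, $\widetilde{a}=\widehat{a}_{(\overline{k})}$, $\widetilde{b}=\widehat{b}_{(\overline{k})}$, $\mathcal{D}_{m}=\mathcal{D}_{nk}$, and take $b^{1}=b$. I would show: (i) summing $N_{m}$ over $k$ reproduces $\mathbb{G}_{n}[\Upsilon(a,b)]/\sqrt{n}$ up to $o_{p}(n^{-1/2})$; (ii) $\sqrt{m}\,\Gamma_{a,m}$ and $\sqrt{m}\,\Gamma_{b,m}$ are $o_{p}(1)$; (iii) $\sqrt{m}\,\Gamma_{ab,m}=O_{p}(\sqrt{s_{a}s_{b}/n}\,\log p)$ by Cauchy--Schwartz. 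Because by construction $\widetilde{a},\widetilde{b}$ are computed from data disjoint from $\mathcal{D}_{nk}$, Proposition \ref{prop:DR functional} applied with $h=\widetilde{a}-a$ and $h=\widetilde{b}-b$ yields that $\Gamma_{a,m}$ and $\Gamma_{b,m}$ have conditional mean zero; their conditional variances are bounded by $E_{\eta}\{[S_{ab}b(\widetilde{a}-a)+m_{a}(O,\widetilde{a}-a)]^{2}\}$ and the symmetric quantity, both of which vanish by Condition Lin.E.1 as soon as $\|\widetilde{a}-a\|_{L_{2}(P_{Z})}$ and $\|\widetilde{b}-b\|_{L_{2}(P_{Z})}$ go to zero. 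Assembling (i)--(iii) produces \eqref{eq:mix_rate_DR_expansion}, and part (2) then follows from Lindeberg's CLT under Condition Lin.E.2 and from consistency of $\widehat{V}_{mix}$, the latter being handled by writing $\widehat{V}_{mix}-E_{\eta}[(\chi_{\eta}^{1})^{2}]$ as a sum of an empirical--process term (law of large numbers) and a nuisance--substitution term controlled by Condition Lin.E.1 applied to $\widetilde{a},\widetilde{b}$.

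What the argument needs from the nuisance estimators is the pair of $L_{2}$ rates $\|\widehat{b}_{(\overline{k})}-b\|_{L_{2}(P_{Z})}=O_{p}(\sqrt{s_{b}\log(p)/n})$ and $\|\widehat{a}_{(\overline{k})}-a\|_{L_{2}(P_{Z})}=O_{p}(\sqrt{s_{a}\log(p)/n})$. For $\widehat{b}$ this is immediate: it is the standard-weight ($w=1$) $\ell_{1}$-regularized GLM estimator with a non-linear link, so the rate follows from Theorem \ref{theo:rate_main} of Appendix A applied under Conditions NLin.L (restricted to $b$), NLin.Link.1--NLin.Link.3, and NLin.L.6 (giving the higher-order isotropy/RE property on the restricted cone). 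For $\widehat{a}$, since $\varphi_{a}=\mathrm{id}$ the loss $Q_{a}$ is a weighted quadratic in $\theta$, so the linear-case rate argument from the proof of Theorem \ref{theo:rate_double_lin} applies, provided the estimated weight $\widehat{w}_{a}(Z)=\varphi_{b}'(\langle\widehat{\theta}_{b,(\cdot)}^{0},\phi(Z)\rangle)$ does not distort the weighted empirical Gram matrix nor the associated KKT/score bound by more than a $o_{p}(1)$ term. The double sample-split in Algorithm \ref{Algo mixed} is essential here, since it makes $\widehat{\theta}_{b,(\cdot)}^{0}$ independent of the subsample used to solve for $\widehat{\theta}_{a}$, so no complexity condition on the weight class is required.

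The main obstacle, and the step I expect to occupy most of the technical work, is passing from the ideal-weight Gram matrix $E[\varphi_{b}'(\langle\theta_{b}^{\ast},\phi\rangle)\phi\phi^{\top}]$ (whose restricted eigenvalues are controlled via NLin.L.3 and NLin.L.6) to the estimated-weight empirical Gram matrix $\mathbb{P}_{nk}[\widehat{w}_{a}\phi\phi^{\top}]$, uniformly in the relevant restricted cone and with remainder $o_{p}(1)$. This requires combining the Lipschitz control of $\varphi_{b}'$ from NLin.Link.3 with sub-Gaussianity of $\langle\Delta,\phi(Z)\rangle$ from NLin.L.6 and the preliminary rate $\|\widehat{\theta}_{b,(\cdot)}^{0}-\theta_{b}^{\ast}\|_{2}=O_{p}(\sqrt{s_{b}\log(p)/n})$, in a manner analogous to (but simpler than) the corresponding step in the proof of Theorem \ref{theo:rate_double_nonlin}; the simplification comes from the linearity of $\varphi_{a}$, which removes the second Taylor term in $Q_{a}$ and avoids the extra $\max(s_{a},s_{b})$ factor that appears in the non-linear case. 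Once this restricted-eigenvalue/KKT bookkeeping is carried out, parts (1) and (2) follow by assembling the three contributions to \eqref{expansion} exactly as indicated above.
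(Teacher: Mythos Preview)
Your proposal is essentially correct and matches the paper's approach: the paper in fact states this theorem \emph{without proof}, saying only that ``the proofs involve a combination of the strategies used in the proofs for the properties of $\widehat{\chi}_{lin}$ and $\widehat{\chi}_{nonlin}$.'' Your decomposition via \eqref{expansion}, the handling of $\Gamma_{a,m},\Gamma_{b,m}$ by conditional mean/variance arguments using Proposition~\ref{prop:DR functional} and Condition Lin.E.1, and the Cauchy--Schwartz bound on $\Gamma_{ab,m}$ are exactly what the paper does in the proofs of Theorems~\ref{theo:rate_double_lin} and~\ref{theo:rate_double_nonlin}.

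Two minor corrections. First, you are over-engineering the ``main obstacle.'' The passage from the ideal weight $\varphi_{b}'(\langle\theta_{b}^{\ast},\phi\rangle)$ to the estimated weight $\varphi_{b}'(\langle\widehat{\theta}_{b,(\cdot)}^{0},\phi\rangle)$ is already built into Theorem~\ref{theo:rate_main} of Appendix~A via Condition~\ref{cond:weight}: take $w=\varphi_{b}'$, $\widehat{\beta}=\widehat{\theta}_{b,(\cdot)}^{0}$, $\beta^{\ast}=\theta_{b}^{\ast}$; NLin.Link.1 gives positivity, NLin.Link.3 gives the Lipschitz bound, and the preliminary rate for $\widehat{\theta}_{b,(\cdot)}^{0}$ supplies Condition~\ref{cond:weight}(c). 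The restricted-eigenvalue and RSC bookkeeping you describe is precisely Proposition~\ref{prop:preRSC} and Corollary~\ref{coro:RSC_fin}, so you can simply invoke Theorem~\ref{theo:rate_main} directly for $\widehat{\theta}_{a}$ rather than redoing that argument. Second, your explanation for why the $\max(s_{a},s_{b})$ factor does not appear is slightly off: the reason is not linearity of $\varphi_{a}$ but that both models are correctly specified, so case~(i) of Theorem~\ref{theo:rate_main} applies (Condition~\ref{cond:approx_sparse_S}(a) holds for $a$), giving the rate $\sqrt{s_{a}\log(p)/n}$ regardless of the weight. The $\max$ factor arises only under case~(ii), i.e.\ under misspecification, as in Theorem~\ref{theo:model_DR_Nonlin}.
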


We note that conditions NLin.Link.1-NLin.Link.3 are trivially true for $%
\varphi _{a}\left( u\right) =u.$ We state them in Theorem \ref%
{theo:rate_double_mix} to avoid redundantly writing separately a new set of
several assumptions for $\varphi _{a}$ and another for $\varphi _{b}.$

\subsubsection{Model double robustness for the estimator $\widehat{\protect%
\chi }_{mix}$}

\label{sec:mod_dr_mix}

Unlike the theorem for rate double robustness, the assumptions for the
theorems stating the model double robustness property of $\widehat{\chi }%
_{mix}$ need to be modified to reflect the fact that $b$ (i.e. the nuisance
for which the working model uses a non-linear link) is estimated only once:
whereas in the case of two non-linear links we had to make assumptions for
two probability limits, namely the limits of the first and second stage
estimators of $b,$ in the case of $\widehat{\chi }_{mix}$ we only need to
make assumptions about the first and -unique- stage estimator of $b.$
Furthermore, the regularity assumptions needed for the convergences of $\ 
\widehat{\chi }_{mix}$ to a normal distribution are different when the
correctly specified model is the one for $a$ (i.e. for the nuisance function
modeled with a linear link) than when $b$ is correctly modeled. For clarity
we state two different theorems, each assuming one of the two nuisance
functions is correctly specified.

\begin{condition}[Condition Mix.L.W]
\label{cond:Mix.L.W} Conditions NLin.L.3, NLin.L.4 and NLin.L.6 hold.
Moreover, there exists a fixed constant $K>0$ such that the following
conditions hold

\begin{itemize}
\item \textbf{(Mix.L.W.1}) There exists $\theta _{b}^{0}\in \mathbb{R}^{p}$
such that 
\begin{equation*}
\theta _{b}^{0}\in \arg \min_{\theta \in \mathbb{R}^{p}}E_{\eta }\left[
Q_{b}\left( \theta ,\phi ,w=1\right) \right]
\end{equation*}%
and $b^{0}\left( Z\right) \equiv \varphi _{b}\left( \left\langle \theta
_{b}^{0},\phi \left( Z\right) \right\rangle \right) $ belongs to $\mathcal{G}%
\left( \phi ,s_{b},j=1,\varphi _{b}\right) $ with associated parameter value
denoted as $\theta _{b}^{0\ast }$. Furthermore, $s_{b}\log \left( p\right)
/n\rightarrow 0,$ $\left\Vert \theta _{b}^{0\ast }\right\Vert _{2}\leq K$
and 
\begin{equation*}
E_{\eta }^{1/8}\left[ \left\{ b^{0}\left( Z\right) -\varphi _{b}\left(
\left\langle \theta _{b}^{0\ast },\phi \left( Z\right) \right\rangle \right)
\right\} ^{8}\right] \leq \sqrt{\frac{Ks_{b}\log \left( p\right) }{n}}.
\end{equation*}%
Additionally, $a\left( Z\right) \in \mathcal{G}\left( \phi
,s_{a},j=2,\varphi _{a}=id\right) $ with associated parameter value denoted
as $\theta _{a}^{\ast }$. Furthermore, $s_{a}\log \left( p\right) /\sqrt{n}%
\rightarrow 0,$ $\left\Vert \theta _{a}^{\ast }\right\Vert _{2}\leq K$ and 
\begin{equation*}
E_{\eta }^{1/8}\left[ \left\{ a\left( Z\right) -\left\langle \theta
_{a}^{\ast },\phi \left( Z\right) \right\rangle \right\} ^{8}\right] \leq 
\sqrt{\frac{Ks_{a}\log \left( p\right) }{n}}.
\end{equation*}

\item \textbf{(Mix.L.W.2}) 
\begin{equation*}
E_{\eta }\left[ \max\limits_{1\leq j\leq p}\left( \mathbb{P}%
_{n}(S_{ab,i}a\left( Z\right) \phi _{j}\left( Z\right) +\mathcal{R}%
_{b}\left( Z\right) \phi _{j}\left( Z\right) )^{4}\right) ^{1/2}\right] \leq
K.
\end{equation*}%
\begin{equation*}
E_{\eta }\left[ \max\limits_{1\leq j\leq p}\left( \mathbb{P}%
_{n}(S_{ab,i}b^{0}\left( Z\right) \phi _{j}\left( Z\right) +\mathcal{R}%
_{a}\left( Z\right) \phi _{j}\left( Z\right) )^{4}\right) ^{1/2}\right] \leq
K.
\end{equation*}

\item \textbf{(Mix.L.W.3) }$E_{\eta }\left( S_{ab}^{4}\right) \leq K$, $%
E_{\eta }\left( \left[ S_{ab}a\left( Z\right) +\mathcal{R}_{b}\left(
Z\right) \right] ^{4}\right) \leq K$and $E_{\eta }\left( \left[
S_{ab}b^{0}\left( Z\right) +\mathcal{R}_{a}\left( Z\right) \right]
^{4}\right) \leq K$.

\item \textbf{(Mix.L.W.4) } 
\begin{equation*}
\sup_{\Vert \theta -\theta _{b}^{0\ast }\Vert _{2}\leq 1}E\left\{
\max_{1\leq j\leq p}\left( \mathbb{P}_{n}\left\{ (\mathcal{R}_{b}\left(
Z\right) \phi _{j}\left( Z\right) {\varphi }_{b,\theta }^{\prime }\left(
Z\right) -m_{b}(O,\phi _{j}{\varphi }_{b,\theta }^{\prime }))^{2}\right\}
\right) ^{1/2}\right\} \leq K.
\end{equation*}%
\begin{equation*}
E\left\{ \max_{1\leq j\leq p}\left( \mathbb{P}_{n}\left\{ (\mathcal{R}%
_{a}\left( Z\right) \phi _{j}\left( Z\right) -m_{a}(O,\phi
_{j}))^{2}\right\} \right) ^{1/2}\right\} \leq K.
\end{equation*}
\end{itemize}
\end{condition}

\begin{remark}
If for $c=a$ or for $c=b$ there exists a statistic $S_{c}$ such that$\
m_{c}\left( O,h\right) =S_{c}h,$ for all $h$ then $\mathcal{R}_{c}\left(
Z\right) $ in conditions Mix.L.W.2 and Mix.L.W.3 can be replaced by $S_{c},$
and the inequality where $\mathcal{R}_{c}\left( Z\right) $ appears in
condition Mix.L.W.4 can be removed.
\end{remark}

\begin{condition}[Condition Mix.E.W]
\label{cond:Mix.E.W} $\ $Condition Mix.L.W.1 holds and Condition NLin.E.W
holds with $b^{0}$ instead of $b^{1}$ everywhere.
\end{condition}

We are now ready to state the theorem that establishes the asymptotic
normality of $\widehat{\chi }_{mix}$ when $b$ is the incorrectly modelled
nuisance.

\begin{theorem}
\label{theo:model_DR_mix} There exists $\lambda \asymp \sqrt{{\log \left(
p\right) }/{n}}$ such that if Algorithm \ref{Algo mixed} uses such $\lambda $
in both steps, then

(1) under Conditions Mix.L.W, NLin.Link, Mix.E.W and M.W 
\begin{equation}
\sqrt{n}\left\{ \widehat{\chi }_{mix}-\chi \left( \eta \right) \right\} =%
\mathbb{G}_{n}\left[ \Upsilon \left( a,b^{0}\right) \right] +O_{p}\left( 
\sqrt{\frac{s_{a}s_{b}}{n}}\log (p)\right) +o_{p}\left( 1\right) .  \notag
\end{equation}

(2) If Conditions Mix.L.W, NLin.Link, Mix.E.W and M.W hold and 
\begin{equation}
\sqrt{\frac{s_{a}s_{b}}{n}}\log (p)\rightarrow 0  \label{eq:sasb_mix_model}
\end{equation}%
then, 
\begin{equation*}
\frac{\sqrt{n}\left\{ \widehat{\chi }_{mix}-\chi \left( \eta \right)
\right\} }{\sqrt{E_{\eta }\left[ \left\{ \Upsilon \left( a,b^{0}\right)
-\chi \left( \eta \right) \right\} ^{2}\right] }}\rightsquigarrow N\left(
0,1\right) .
\end{equation*}

(3) If Conditions Mix.L.W, NLin.Link, Mix.E.W and M.W hold and $\left( \ref%
{eq:sasb_mix_model}\right) $ holds then 
\begin{equation}
\frac{\sqrt{n}\left\{ \widehat{\chi }_{mix}-\chi \left( \eta \right)
\right\} }{\sqrt{\widehat{V}_{mix}}}\rightsquigarrow N\left( 0,1\right) . 
\notag
\end{equation}
\end{theorem}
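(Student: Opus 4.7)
My plan is to adapt the strategy used to prove Theorem \ref{theo:model_double_lin} and Theorem \ref{theo:model_DR_Nonlin} to the asymmetric mix setting. I would first write $\widehat{\chi}_{mix} - \chi(\eta) = \frac{1}{3}\sum_{k=1}^{3}\{\widehat{\chi}_{mix}^{(k)} - \chi(\eta)\}$ and analyze each summand via the quadratic decomposition \eqref{expansion} of Section \ref{sec:heuristic}. A critical preliminary observation is that since $\varphi_{a} = id$, the ``ideal'' weight $\varphi_{a}'(\langle\theta_{a}^{\ast},\phi\rangle)$ for estimating $b$ is identically $1$ and coincides with the weight $w_{0}=1$ actually used in the single estimation stage for $\widehat{b}_{(\overline{k})}$. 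Consequently the probability limits $b^{0}$ and $b^{1}$ that appeared separately in the fully non-linear analysis collapse to the same function $b^{0}$, and I would apply the decomposition with benchmark $b^{1}=b^{0}$.

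I would next establish $L_{2}(P_{Z})$ and $\ell_{1}$ rates for the nuisance estimators. For $\widehat{b}_{(\overline{k})}$, Condition Mix.L.W.1 places $b^{0}$ in $\mathcal{G}(\phi, s_{b}, 1, \varphi_{b})$, so Theorem \ref{theo:rate_main} gives $\|\widehat{b}_{(\overline{k})} - b^{0}\|_{L_{2}(P_{Z})} = O_{p}(\sqrt{s_{b}\log(p)/n})$ and $\|\widehat{\theta}_{b,(\overline{k})} - \theta_{b}^{0\ast}\|_{1} = O_{p}(s_{b}\sqrt{\log(p)/n})$. For $\widehat{a}_{(\overline{k})}$, the intra-nuisance sample splitting in Algorithm \ref{Algo mixed} decouples the data-dependent weights $\widehat{w}_{b,(j_{l}(k))}$ from the sample on which $\widehat{\theta}_{a,(k),j_{l}(k)}$ is computed, so a perturbation argument on top of the consistency of these weights at rate $\sqrt{s_{b}\log(p)/n}$ yields $\|\widehat{a}_{(\overline{k})} - a\|_{L_{2}(P_{Z})} = O_{p}(\sqrt{s_{a}\log(p)/n})$ and $\|\widehat{\theta}_{a,(\overline{k})} - \theta_{a}^{\ast}\|_{1} = O_{p}(s_{a}\sqrt{\log(p)/n})$. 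The mix case is structurally simpler than the fully non-linear case of Theorem \ref{theo:model_DR_Nonlin} at this stage, because the $L_{2}$ rate for $\widehat{a}$ does not pick up a $\max(s_{a},s_{b})$ factor.

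I would then handle the four pieces of the decomposition. The noise term $N_{m,k} = \mathbb{P}_{nk}[\Upsilon(a,b^{0}) - \chi(\eta)]$ aggregates into $\mathbb{G}_{n}[\Upsilon(a,b^{0})]$. The term $\Gamma_{b^{0},m,k}$ has conditional mean zero by Proposition \ref{prop:DR functional} applied at the \emph{true} $a$, and its conditional variance vanishes under Condition Mix.E.W.1 and the $L_{2}$ rate for $\widehat{b}$. The cross term $\Gamma_{ab^{0},m,k}$ contributes the advertised $O_{p}(\sqrt{s_{a}s_{b}/n}\log(p))$ remainder by Cauchy--Schwarz together with Condition Mix.L.W.3 to control $S_{ab}$. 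The hard step is $\Gamma_{a,m,k}$, since Proposition \ref{prop:DR functional} is not directly applicable at $b^{0}\neq b$. Because $\varphi_{a}=id$, I would write $\widehat{a}_{(\overline{k})} - a = \langle\widehat{\theta}_{a,(\overline{k})} - \theta_{a}^{\ast},\phi\rangle - r_{a}$ with $\|r_{a}\|_{L_{2}(P_{Z})} = O(s_{a}\log(p)/n)$ coming from the $j=2$ part of Mix.L.W.1. The linear piece factors as $\langle\widehat{\theta}_{a,(\overline{k})} - \theta_{a}^{\ast},\mathbb{M}_{m}\rangle$ with $\mathbb{M}_{m} = \mathbb{P}_{nk}[S_{ab}b^{0}\phi + m_{a}(O,\phi)]$, and the decisive point is that $E_{\eta}[\mathbb{M}_{m}] = 0$, since this is exactly the first-order optimality condition for $\theta_{b}^{0}$ recorded in Mix.L.W.1. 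A Nemirovski-type maximal inequality combined with Mix.L.W.2 gives $\|\mathbb{M}_{m}\|_{\infty} = O_{p}(\sqrt{\log(p)/n})$, and combined with the $\ell_{1}$ rate for $\widehat{\theta}_{a,(\overline{k})}$ the linear piece is $O_{p}(s_{a}\log(p)/n)$, which is $o_{p}(n^{-1/2})$ precisely under the ultra-sparsity assumption $s_{a}\log(p)/\sqrt{n}\to 0$. The remainder piece is handled using Condition M.W to linearize $m_{a}$ and bound it by $\|r_{a}\|_{L_{2}(P_{Z})}$, which is $o_{p}(n^{-1/2})$ under the same assumption.

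Assembling the four pieces and averaging over $k$ delivers part (1). Part (2) follows by the Lindeberg CLT under Mix.E.W.2 whenever $\sqrt{s_{a}s_{b}/n}\log(p)\to 0$, and part (3) reduces to the consistency $\widehat{V}_{mix} \to E_{\eta}[\{\Upsilon(a,b^{0})-\chi(\eta)\}^{2}]$ in probability, which follows from the $L_{2}$ rates for the nuisance estimators, the continuous-dependence bounds in Mix.E.W.1, and the fourth-moment condition Mix.E.W.2(b) applied term by term. The main obstacle throughout will be the mean-zero-plus-Nemirovski argument for $\mathbb{M}_{m}$: its validity hinges simultaneously on the carefully chosen $\ell_{1}$ loss \eqref{eq:lossDef}, on the first-order optimality condition packaged into Mix.L.W.1, and on the ultra-sparsity assumption $s_{a}\log(p)/\sqrt{n}\to 0$ imposed on the correctly modeled function $a$.
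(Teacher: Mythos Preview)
Your proposal is correct and follows precisely the combination of strategies the paper indicates (the paper states the result without proof, noting only that ``the proofs involve a combination of the strategies used in the proofs for the properties of $\widehat{\chi}_{lin}$ and $\widehat{\chi}_{nonlin}$''). Your key structural observation---that $\varphi_{a}=id$ forces the ideal weight $\varphi_{a}'\equiv 1$ so that $b^{0}$ and $b^{1}$ coincide, which is why the remainder is $O_{p}(\sqrt{s_{a}s_{b}/n}\log p)$ rather than the $O_{p}(\sqrt{s_{a}\max(s_{a},s_{b})/n}\log p)$ of Theorem~\ref{theo:model_DR_Nonlin}---is exactly the point that distinguishes the mix case, and your handling of $\Gamma_{a,m,k}$ via the first-order optimality condition for $\theta_{b}^{0}$ packaged in Mix.L.W.1 mirrors the proof of Theorem~\ref{theo:model_double_lin}.
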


Next, we give the regularity conditions for the case when $a$ is the
incorrectly modelled nuisance.

\begin{condition}[Condition Mix.L.W.a]
\label{cond:Mix.L.W.a} Conditions NLin.L.3, NLin.L.4 and NLin.L.6 hold.
Moreover, there exists a fixed constant $K>0$ such that the following
conditions hold

\begin{itemize}
\item \textbf{(Mix.L.W.a.1}) $b\left( Z\right) \in \mathcal{G}\left( \phi
,s_{b},j=2,\varphi _{b}\right) $ with associated parameter value denoted as $%
\theta _{b}^{\ast }$. Furthermore, $s_{b}\log \left( p\right) /\sqrt{n}%
\rightarrow 0,$ $\left\Vert \theta _{b}^{\ast }\right\Vert _{2}\leq K$ and 
\begin{equation*}
E_{\eta }^{1/8}\left[ \left\{ b\left( Z\right) -\varphi _{b}\left(
\left\langle \theta _{b}^{\ast },\phi \left( Z\right) \right\rangle \right)
\right\} ^{8}\right] \leq \sqrt{\frac{Ks_{b}\log \left( p\right) }{n}}.
\end{equation*}

There exists $\theta _{a}^{0}\in \mathbb{R}^{p}$ such that 
\begin{equation*}
\theta _{a}^{0}\in \arg \min_{\theta \in \mathbb{R}^{p}}E_{\eta }\left[
Q_{a}\left( \theta ,\phi ,w={\varphi }_{b,\theta _{b}^{\ast }}^{\prime
}\right) \right]
\end{equation*}%
and $a^{0}\left( Z\right) \equiv \left\langle \theta _{a}^{0},\phi \left(
Z\right) \right\rangle $ belongs to $\mathcal{G}\left( \phi
,s_{a},j=1,\varphi _{a}=id\right) $ with associated parameter value denoted
as $\theta _{a}^{0\ast }$. Furthermore, $s_{a}\log \left( p\right)
/n\rightarrow 0,$ $\left\Vert \theta _{a}^{0\ast }\right\Vert _{2}\leq K$
and 
\begin{equation*}
E_{\eta }^{1/8}\left[ \left\{ a^{0}\left( Z\right) -\left\langle \theta
_{a}^{0\ast },\phi \left( Z\right) \right\rangle \right\} ^{8}\right] \leq 
\sqrt{\frac{Ks_{a}\log \left( p\right) }{n}}.
\end{equation*}

\item \textbf{(Mix.L.W.a.2}) 
\begin{equation*}
E_{\eta }\left[ \max\limits_{1\leq j\leq p}\left( \mathbb{P}%
_{n}(S_{ab,i}a^{0}\left( Z\right) \phi _{j}\left( Z\right) +\mathcal{R}%
_{b}\left( Z\right) \phi _{j}\left( Z\right) )^{4}\right) ^{1/2}\right] \leq
K.
\end{equation*}%
\begin{equation*}
E_{\eta }\left[ \max\limits_{1\leq j\leq p}\left( \mathbb{P}%
_{n}(S_{ab,i}b\left( Z\right) \phi _{j}\left( Z\right) +\mathcal{R}%
_{a}\left( Z\right) \phi _{j}\left( Z\right) )^{4}\right) ^{1/2}\right] \leq
K.
\end{equation*}

\item \textbf{(Mix.L.W.a.3) }$E_{\eta }\left( S_{ab}^{4}\right) \leq K$, $%
E_{\eta }\left( \left[ S_{ab}a^{0}\left( Z\right) +\mathcal{R}_{b}\left(
Z\right) \right] ^{4}\right) \leq K$ and $E_{\eta }\left( \left[
S_{ab}b\left( Z\right) +\mathcal{R}_{a}\left( Z\right) \right] ^{4}\right)
\leq K$.

\item \textbf{(Mix.L.W.a.4) } 
\begin{equation*}
\sup_{\Vert \theta -\theta _{b}^{\ast }\Vert _{2}\leq 1}E\left\{ \max_{1\leq
j\leq p}\left( \mathbb{P}_{n}\left\{ (\mathcal{R}_{b}\left( Z\right) \phi
_{j}\left( Z\right) {\varphi }_{b,\theta }^{\prime }\left( Z\right)
-m_{b}(O,\phi _{j}{\varphi }_{b,\theta }^{\prime }))^{2}\right\} \right)
^{1/2}\right\} \leq K.
\end{equation*}%
\begin{equation*}
E\left\{ \max_{1\leq j\leq p}\left( \mathbb{P}_{n}\left\{ (\mathcal{R}%
_{a}\left( Z\right) \phi _{j}\left( Z\right) -m_{a}(O,\phi
_{j}))^{2}\right\} \right) ^{1/2}\right\} \leq K.
\end{equation*}
\end{itemize}
\end{condition}

\begin{condition}[Condition Mix.E.W.a]
\label{cond:Mix.E.W.a} $\ $Condition Mix.L.W.a.1 holds and

\begin{itemize}
\item \textbf{(Mix.E.W.a.1)} 
\begin{equation*}
E_{\eta }\left( (S_{ab}a^{0}(Z))^{2}\right) \leq K,
\end{equation*}
\begin{equation*}
E_{\eta }\left\{ \left[ S_{ab}a^{0}\left( b^{\prime }-b\right) +m_{b}\left(
O,b^{\prime }\right) -m_{b}\left( O,b\right) \right] ^{2}\right\}
\rightarrow 0\quad \text{as}\quad E_{\eta }\left[ \left( b^{\prime
}-b\right) ^{2}\right] \rightarrow 0
\end{equation*}%
and 
\begin{equation*}
E_{\eta }\left\{ \left[ S_{ab}b\left( a^{\prime }-a^{0}\right) +m_{a}\left(
O,a^{\prime }\right) -m_{a}\left( O,a^{0}\right) \right] ^{2}\right\}
\rightarrow 0\quad \text{as}\quad E_{\eta }\left[ \left( a^{\prime
}-a^{0}\right) ^{2}\right] \rightarrow 0.
\end{equation*}

\item \textbf{(Mix.E.W.a.2)} There exists fixed constants $0<k<K$ such that

\begin{itemize}
\item[a)] $k\leq {E\left[ \left\{ \Upsilon \left( a^{0},b\right) -\chi
\left( \eta \right) \right\} ^{2}\right] }$ and $E\left\{ \left\vert
\Upsilon \left( a^{0},b\right) -\chi \left( \eta \right) \right\vert
^{3}\right\} \leq K$.

\item[b)] ${E\left[ \left\{ \Upsilon \left( a^{0},b\right) -\chi \left( \eta
\right) \right\} ^{4}\right] }\leq K$.
\end{itemize}
\end{itemize}
\end{condition}

\begin{condition}[Condition M.W.a]
\label{cond:M.W.a} There exists a linear mapping $h\in L_{2}(P_{Z,\eta
})\rightarrow m_{b}^{\ddagger }(o,h)$ such that $h\in L_{2}(P_{Z,\eta
})\rightarrow E_{\eta }\left[ m_{b}^{\ddagger }(O,h)\right] $ is continuous
with Riesz representer $\mathcal{R}_{b}^{\ddagger }$ that satisfies 
\begin{equation*}
|m_{b}(o,h)|\leq m_{b}^{\ddagger }(o,|h|)\text{ for all }o\text{ and all }%
h\in L_{2}(P_{Z,\eta })
\end{equation*}%
and 
\begin{equation*}
E_{\eta }\left[ (\mathcal{R}_{b}^{\ddagger })^{2}\right] \leq K.
\end{equation*}
\end{condition}

We are now ready to state the Theorem that establishes the asymptotic
normality of $\widehat{\chi }_{mix}$ when $a$ is the incorrectly modelled
nuisance.

\begin{theorem}
\label{theo:model_DR_mix_a}There exists $\lambda \asymp \sqrt{{\log \left(
p\right) }/{n}}$ such that if Algorithm \ref{Algo mixed} uses such $\lambda $
in both steps, then

(1) under Conditions Mix.L.W.a, NLin.Link, Mix.E.W.a.1 and M.W.a 
\begin{equation*}
\sqrt{n}\left\{ \widehat{\chi }_{mix}-\chi \left( \eta \right) \right\} =%
\mathbb{G}_{n}\left[ \Upsilon \left( a^{0},b\right) \right] +O_{p}\left( 
\sqrt{\frac{s_{b}\max(s_{a},s_{b})}{n}}\log (p)\right) +o_{p}\left( 1\right)
.
\end{equation*}

(2) If Conditions Mix.L.W.a, NLin.Link, Mix.E.W.a and M.W.a hold and 
\begin{equation}
\sqrt{\frac{s_{b}\max (s_{a},s_{b})}{n}}\log (p)\rightarrow 0
\label{eq:sasb_mix_model_a}
\end{equation}%
then, 
\begin{equation*}
\frac{\sqrt{n}\left\{ \widehat{\chi }_{mix}-\chi \left( \eta \right)
\right\} }{\sqrt{E_{\eta }\left[ \left\{ \Upsilon \left( a^{0},b\right)
-\chi \left( \eta \right) \right\} ^{2}\right] }}\rightsquigarrow N\left(
0,1\right) .
\end{equation*}

(3) If Conditions Mix.L.W.a, NLin.Link, Mix.E.W.a and M.W.a hold and $\left( %
\ref{eq:sasb_mix_model_a}\right) $ holds then 
\begin{equation*}
\frac{\sqrt{n}\left\{ \widehat{\chi }_{mix}-\chi \left( \eta \right)
\right\} }{\sqrt{\widehat{V}_{mix}}}\rightsquigarrow N\left( 0,1\right) .
\end{equation*}
\end{theorem}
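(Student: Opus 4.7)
The plan is to adapt the two-part strategy of Theorems \ref{theo:model_double_lin} and \ref{theo:model_DR_Nonlin} to the mixed-link algorithm, with the roles of $a$ and $b$ reversed because here $b$ is correctly modelled and $a$ is misspecified. For each $k\in\{1,2,3\}$ I would decompose the cross-fitted one-step piece $\widehat{\chi}_{mix}^{(k)}$ via (\ref{expansion}) around the reference $(a^{0},b)$:
\[
\widehat{\chi}_{mix}^{(k)}-\chi(\eta)=N_{nk}+\Gamma_{a^{0},nk}+\Gamma_{b,nk}+\Gamma_{a^{0}b,nk},
\]
and then average over $k$. The mixed bias property (\ref{bias1}) applied with $\eta'=(a^{0},b)$ gives $E_{\eta}[\Upsilon(a^{0},b)]=\chi(\eta)$, so $\sqrt{n}\sum_{k}N_{nk}/3$ equals $\mathbb{G}_{n}[\Upsilon(a^{0},b)]$ up to a normalizing constant and provides the asymptotically normal leading term. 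The linear-in-$a$ term $\Gamma_{a^{0},nk}=\mathbb{P}_{nk}[S_{ab}(\widehat{a}-a^{0})b+m_{a}(O,\widehat{a}-a^{0})]$ has conditional mean zero given the nuisance-sample data because Proposition \ref{prop:DR functional}, applied with the true $b$, gives $E_{\eta}[S_{ab}b(Z)h(Z)+m_{a}(O,h)]=0$ for every $h$; its conditional variance is driven to zero by Mix.E.W.a.1 together with the rate $\|\widehat{a}-a^{0}\|_{L_{2}(P_{Z})}=O_{p}(\sqrt{\max(s_{a},s_{b})\log(p)/n})$ furnished by Theorem \ref{theo:rate_main} (the $\max$ appearing because $\widehat{a}$ uses estimated weights $\widehat{w}_{a}=\varphi_{b}'(\langle\widehat{\theta}_{b}^{0},\phi\rangle)$, through which the preliminary $b$-rate propagates).

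The crucial and most delicate piece is $\Gamma_{b,nk}=\mathbb{P}_{nk}[S_{ab}(\widehat{b}-b)a^{0}+m_{b}(O,\widehat{b}-b)]$, because Proposition \ref{prop:DR functional} does \emph{not} kill this term: the identity $E_{\eta}[S_{ab}a^{0}h+m_{b}(O,h)]=0$ would require the \emph{true} $a$, not $a^{0}$. Following the heuristic of Section \ref{sec:heuristic}, I would Taylor expand
\[
\widehat{b}(Z)-b(Z)=\varphi_{b}'(\langle\theta_{b}^{\ast},\phi(Z)\rangle)\langle\widehat{\theta}_{b}-\theta_{b}^{\ast},\phi(Z)\rangle+R(Z),
\]
leading to $|\Gamma_{b,nk}|\le\|\widehat{\theta}_{b}-\theta_{b}^{\ast}\|_{1}\|\mathbb{M}_{nk}\|_{\infty}+(\text{Taylor remainder})$, where
\[
\mathbb{M}_{nk}=\mathbb{P}_{nk}\bigl[S_{ab}a^{0}(Z)\varphi_{b}'(\langle\theta_{b}^{\ast},\phi\rangle)\phi(Z)+m_{b}\bigl(O,\varphi_{b}'(\langle\theta_{b}^{\ast},\phi\rangle)\phi\bigr)\bigr].
\]
The first-order optimality conditions that define $\theta_{a}^{0}$ in Mix.L.W.a.1, with the ideal weight $w^{\ast}=\varphi_{b}'(\langle\theta_{b}^{\ast},\phi\rangle)$, are precisely the statement that $E_{\eta}[\mathbb{M}_{nk}]=0$ coordinate-wise. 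Hence Nemirovski's inequality (Lemma 14.24 of \cite{Buhlmann-book}) combined with the moment conditions Mix.L.W.a.3--Mix.L.W.a.4 gives $\|\mathbb{M}_{nk}\|_{\infty}=O_{p}(\sqrt{\log(p)/n})$. Together with the $\ell_{1}$ rate $\|\widehat{\theta}_{b}-\theta_{b}^{\ast}\|_{1}=O_{p}(s_{b}\sqrt{\log(p)/n})$, obtained by applying Theorem \ref{theo:rate_main} to the correctly specified $w\equiv 1$ problem for $b$, this yields $\sqrt{n}\,|\Gamma_{b,nk}|=O_{p}(s_{b}\log(p)/\sqrt{n})=o_{p}(1)$ exactly under the ultra-sparsity $s_{b}\log(p)/\sqrt{n}\to 0$ imposed in Mix.L.W.a.1. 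The Taylor remainder $R$ is controlled using NLin.Link.4 (Lipschitz $\varphi_{b}''$) together with the moment bounds of Mix.L.W.a.3, in direct analogy to the corresponding step in the proof of Theorem \ref{theo:model_DR_Nonlin}.

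Cauchy--Schwarz handles the cross term via $|\Gamma_{a^{0}b,nk}|\lesssim\|\widehat{a}-a^{0}\|_{L_{2}(P_{Z})}\|\widehat{b}-b\|_{L_{2}(P_{Z})}$, producing the announced $O_{p}(\sqrt{s_{b}\max(s_{a},s_{b})/n}\log(p))$ remainder in part (1). Part (2) then follows from the Lindeberg CLT for $\mathbb{G}_{n}[\Upsilon(a^{0},b)]$ under Mix.E.W.a.2, once the remainder is driven to $o_{p}(1)$ by (\ref{eq:sasb_mix_model_a}). For part (3), consistency of $\widehat{V}_{mix}$ for $E_{\eta}[\{\Upsilon(a^{0},b)-\chi(\eta)\}^{2}]$ is a routine empirical-process computation using Mix.L.W.a, Mix.E.W.a and Condition M.W.a (the linear majorant $m_{b}^{\ddagger}$ supplied by M.W.a providing the handle needed to control $|m_{b}(O,\widehat{b}-b)|$ in $L_{2}$), paralleling the variance-estimator step in the proof of Theorem \ref{theo:model_double_lin}. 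I expect the main obstacle to be the treatment of $\Gamma_{b,nk}$: everything hinges on recognizing that the first-order optimality conditions defining $a^{0}$ are exactly what centers $\mathbb{M}_{nk}$, and on the ultra-sparsity of $b$ being needed to absorb the $s_{b}\log(p)$ factor produced by the $\ell_{1}/\ell_{\infty}$ bound.
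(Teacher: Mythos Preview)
Your proposal is correct and follows precisely the strategy the paper prescribes: the paper does not write out a proof of Theorem \ref{theo:model_DR_mix_a} but states explicitly that ``the proofs involve a combination of the strategies used in the proofs for the properties of $\widehat{\chi}_{lin}$ and $\widehat{\chi}_{nonlin}$,'' which is exactly your expansion around $(a^{0},b)$, Proposition \ref{prop:DR functional} for $\Gamma_{a^{0},nk}$, the Hölder/Nemirovski argument for $\Gamma_{b,nk}$ exploiting the first-order condition defining $\theta_{a}^{0}$, and Cauchy--Schwarz for the cross term.

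One clarification on the role of Condition M.W.a: you invoke it only for the variance-consistency step in part (3), but it is already needed in part (1). When you write $\widehat{b}-b=\varphi_{b}'(\langle\theta_{b}^{\ast},\phi\rangle)\langle\widehat{\theta}_{b}-\theta_{b}^{\ast},\phi\rangle+R$, the residual $R$ contains not only the second-order Taylor remainder (handled by NLin.Link.4 as you say) but also the approximation error $r(Z)=b(Z)-\varphi_{b}(\langle\theta_{b}^{\ast},\phi(Z)\rangle)$. The term $\sqrt{n}\,\mathbb{P}_{nk}[S_{ab}a^{0}r+m_{b}(O,r)]$ is killed by the analog of Lemma \ref{lemma:bound_m_term} with the roles of $a$ and $b$ reversed, and that lemma is where M.W.a and the $j=2$ class assumption on $b$ (hence the ultra-sparsity $s_{b}\log(p)/\sqrt{n}\to 0$) enter. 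This is exactly how the approximation error is dispatched in the proof of Theorem \ref{theo:model_DR_Nonlin} that you cite as the template, so the analogy is valid---you just placed the condition in the wrong step of your outline.
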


\section{Literature review}

\label{sec:litrev}

There is a vast literature on non parametric estimation of ATE\ starting
from the earlier work on series estimation of the propensity and outcome
regression models to the modern approaches using machine learning techniques
for estimating these functions. Here we will restrict attention to the
proposals connected with our proposal, i.e. those in which the nuisance
functions are estimated using $\ell _{1}$ regularized methods and the
estimators have some kind of double robustness property.

As far as we know, with the exception of \cite{NeweyCherno19}, all existing
doubly robust estimation proposals based on $\ell _{1}$ regularized
regression are confined to the estimation of ATE and/or ATT. A first
distinction then is that our proposal is general in that it accommodates all
estimands in the BIF class, in particular, all the examples in Section \ref%
{sec:setup}. In addition, none of the existing proposals based on $\ell _{1}$
regularized methods have the model and rate double robustness properties
simultaneously. We will therefore review first the articles that propose
estimators with the model double robustness property and subsequently review
those that have the rate double robustness property.

Three articles, namely \cite{stijn}, \cite{Tan} and \cite{Peng}, proposed
estimators of ATE\ with the model DR property, but not the rate DR property.
In these articles the nuisance parameters $a\left( L\right) =E\left( \left.
Y\right\vert D=1,L\right) $ and $b\left( L\right) =1/P\left( \left.
D=1\right\vert L\right) $ are estimated using $\ell _{1}$ regularization. As
in our proposal, the loss functions\ for $b$ are essentially based on the
seminal result of \cite{vermeulen} that the derivative of $\chi \left( \eta
\right) +\chi _{\eta }^{1}$ with respect to $a$ is an unbiased estimating
function for $b.$ To the best of our knowledge \cite{stijn} are the first to
have noticed that this estimating function could be used in the sparse
regression setting to obtain estimators of $\chi \left( \eta \right) $ with
the model double robustness property.

The three articles only consider models that are exactly sparse for $a\left(
L\right) =E\left( \left. Y\right\vert D=1,L\right) $ and $b\left( L\right)
=1/P\left( \left. D=1\right\vert L\right) ,$ as in our Example \ref%
{ex:parametric_sparse}. Moreover, the assumptions in the theorems in these
three articles that state the asymptotic normality of the estimators of $%
\chi \left( \eta \right) $ explicitly require that the probability limits of
the estimators of $a$ and $b$ (regardless of whether or not these
probability limits are the true functions$)$ be ultra sparse functions in
the sense that, up to logarithmic terms, $s_{a}=o\left(\sqrt{n}\right) $ and 
$s_{b}=o\left( \sqrt{n}\right) $. In contrast, we prove the asymptotic
normality of our estimators requiring only that the correctly specified
nuisance function be ultra sparse, without imposing ultra sparsity of the
probability limit of the estimator of the incorrectly modelled nuisance
function.

The proposal of \cite{stijn} is based on iterating $\ell_1$ regularized
estimation of $a\left( L\right) =E\left( \left. Y\right\vert D=1,L\right) $
and $b\left( L\right) =1/P\left( \left. D=1\right\vert L\right) ,$ each time
using a special loss for one of the nuisance functions which, for non-linear
links, depends on the estimator of the other nuisance parameter at the
previous iteration. \cite{stijn} do not discuss the convergence properties
of their iterative algorithm, and at present it remains an open question
whether or not convergence is guaranteed. In contrast, our algorithm is a
two or three step (depending on whether or not the link functions are non
linear) non-iterative algorithm. At each step we solve a convex $\ell_1$
regularized optimization problem, for which there are available several
computation algorithms with known convergence guarantees (see for example 
\cite{Tseng}).

The estimators of the nuisance parameters proposed by \cite{Tan}, like ours,
are non-iterative and are based on solving convex optimization problems.
However, unlike our proposal, Tan's proposal only yields the model DR
property if the model for $a\left( L\right) =E\left( \left. Y\right\vert
D=1,L\right) $ is linear. Tan provides rigorous proofs of the asymptotic
behavior of his estimator of ATE and mentions that his proposal can be
easily extended to compute model DR estimators of ATT.

\cite{Peng} propose estimators of the nuisance parameters that, like Tan's
and ours, are non-iterative and based on solving convex optimization
problems. Unlike Tan but like our proposal, the models for $a\left( L\right)
=E\left( \left. Y\right\vert D=1,L\right) $ and $b\left( L\right) =1/P\left(
\left. D=1\right\vert L\right) $ can be non-linear. At the moment the
publicly available article does not offer proofs of the claims in it.

The \cite{Farrell}, \cite{NeweyCherno19} and \cite{cherno2} discuss estimators
with the rate DR property with nuisance functions estimated by $\ell_1$
regularized methods.

Farrell discusses estimation of population average counterfactual means
under treatments that can take more than two levels. The extension from two
to several levels of treatment is inconsequential for the points that we
want to discuss here so we will assume that treatment has two levels. The
author considers models for the propensity score and the outcome regression
that are much like our general classes of functions in Definition \ref%
{def:AGLS}. However, in his discussion of the example similar to our Example %
\ref{ex:nonparam}, he indicates that $\alpha$ must be greater than 1. In
contrast, we allow in more generality $\alpha >1/2$. We note also that
Farrell restricts the link for the outcome regression to be the identity and
the link for the propensity score to be multinomial logistic.

In Farrell the propensity score is estimated via $\ell _{1}$ regularization,
but unlike us, using the loss from the logistic regression likelihood.
Because the estimators are not computed using the special loss as in \cite%
{stijn}, \cite{Tan} and \cite{Peng}, the resulting estimators of the
counterfactual means do not have the model DR property. Furthermore, because
the procedure does not use sample splitting to estimate the nuisance
parameters, the resulting estimator of counterfactual means do not have the
rate doubly robust \ property. Nevertheless, Farrell's estimators have a one
sided rate double robustness property in the sense that they are
asymptotically normal when $s_{a}s_{b}=o\left( n\right) $ so long as $%
s_{a}=o\left( \sqrt{n}\right) $, up to logarithmic terms. That is, the
estimators of the counterfactual means are asymptotically normal even when
the propensity score is severely non-sparse provided that the outcome
regression is sufficiently ultra sparse. This one sided rate double
robustness property is a result of the fact that $b\left( L\right)
=1/P\left( D=1\mid L\right) $ is a function only of the distribution of
treatment and covariates. We note that in the statement of the properties of
the $\ell _{1}$ regularized estimators of the propensity score Farrell
assumes that $s_{b}=o\left( \sqrt{n}\right) $ up to logarithmic terms. Such
requirement defeats the very purpose of rate double robustness. As we
indicated in Section \ref{sec:asym_nonlin}, one need not impose ultra
sparsity.

\cite{NeweyCherno19} (see also \cite{cherno2}) consider estimation of parameters in a subclass of the
BIF class, specifically, those parameters for which $a$ is a conditional
expectation. These authors consider models for $b$ that are much like our
general classes of functions defined in Definition \ref{def:AGLS} but
specifically with the identity link. However, like Farrell, in their
discussion of an example similar to our Example \ref{ex:nonparam}, they
indicate that $\alpha $ must be greater than 1. Because they restrict
attention to $a$ being a conditional expectation, they consider the
possibility that $a$ be estimated via an arbitrary machine learning
algorithm. On the other hand, they estimate $b$ using $\ell _{1}$
regularization and using the same the loss function as in \cite{stijn}, \cite%
{Tan} and \cite{Peng} and the present paper. A key feature of their
procedure is that they use sample splitting. They thus obtain estimators
with the rate double robustness property. However, because they do not
necessarily assume that $a$ is estimated via $\ell _{1}$ regularization,
their estimators do not have the model DR property. They provide rigorous asymptotic
theory which follows from results in \cite{local-dr}.

\section{Discussion}

\label{sec:conclusion}

We have proposed a unified procedure for computing estimators of parameters
in the BIF class with the model and rate double robustness properties.

Interestingly, starting from a parameter in the BIF class one can construct
infinitely many parameters not in the BIF class that nevertheless admit rate
and model doubly robust estimators. Specifically, if $\chi \left( \eta
\right) $ is in the BIF class, then $\psi \left( \eta \right) =g\left( \chi
\left( \eta \right) \right) $ is not in the BIF class for any continuously
differentiable function $g\left( \cdot \right) $, because the influence
function of $\psi \left( \eta \right) $ is $\psi _{\eta }^{1}=g^{\prime
}\left( \chi \left( \eta \right) \right) \chi _{\eta }^{1}$ which does not
meet the requirements for influence functions of parameters in the BIF
class. Nevertheless, by the delta method, $\widehat{\psi }=g\left( \widehat{%
\chi }\right) $ where $\widehat{\chi }$ is one of the estimators $\widehat{%
\chi }_{lin},$ $\widehat{\chi }_{nonlin}$ or $\widehat{\chi }_{mix}$
depending on the form of the link functions, is a simultaneously rate and
model double robust estimator of $\psi \left( \eta \right) .$

Parameters in the BIF class are additively separable in the sense that their
influence function is equal to a function of the data that depends on $%
P_{\eta }$ only through two nuisance functions $a$ and $b$ minus the
parameter itself. However, there exist parameters which are defined
implicitly as solutions of population moment equations $E_{\eta }\left[
u\left( O,a,b,\chi \right) \right] =0$ involving two unknown nuisances $%
a\left( Z\right) \equiv a\left( Z;\eta \right) $ and $b\left( Z\right)
\equiv b\left( Z;\eta \right) ,$ and such that $E_{\eta }\left[ u\left(
O,a^{\prime },b,\chi \right) \right] =E_{\eta }\left[ u\left( O,a,b^{\prime
},\chi \right) \right] =0$ for any $a^{\prime }\left( Z\right) \equiv
a\left( Z;\eta ^{\prime }\right) $ and $b^{\prime }\left( Z\right) \equiv
b\left( Z;\eta ^{\prime }\right) $ with $\eta \not=\eta ^{\prime }$ (see,
for instance, \cite{oddsratio}). These identities suggest that estimators
that simultaneously have the rate and model double robustness property may
exist, it remains an open question if this is true and in such case how to
construct them. In principle, applying ideas similar to the one in this
article one could construct tests of the pointwise hypothesis $H_{0}:\chi
\left( \eta \right) =\chi ^{\ast }$ for an arbitrary $\chi ^{\ast }$ that
are model and rate double robust and then invert the tests to obtain a
confidence interval for $\chi \left( \eta \right) .$ However, this approach
has two difficulties. First, for $\ell _{1}$ regularized estimation of the
nuisance functions, this proposal will be impractical as it will involve
carrying out many tests over a grid of possible values of $\chi \left( \eta
\right) .$ Second, working models for $a$ and $b$ will depend on the
specific value $\chi ^{\ast }$ of $\chi \left( \eta \right) $ of the null
hypothesis being tested. It is unclear what model double robustness will
mean in such case.

Finally, in \cite{globalclass} it is shown that for parameters in the BIF
class there exist two loss functions, one whose expectation is minimized at $%
a$ and another whose expectation is minimized at $b.$ This opens the
possibility of constructing machine-learning loss-based estimators of $a$
and $b,$ such as support vector machine estimators and raises the question
of whether estimators of $\chi \left( \eta \right) $ that are simultaneously
rate and model doubly robust can be constructed based on these machine
learning estimators. 

\section*{Acknowledgments}

Ezequiel Smucler was partially supported by Grant 20020170100330BA from
Universidad de Buenos Aires and PICT-201-0377 from ANPYCT, Argentina. He
wants to thank Aristas S.R.L. for providing the computational resources for
running the simulations.

\newpage

\section{Appendix A: Asymptotic results for the estimators of the nuisance
functions}

In this section we study the asymptotic properties of a class of $\ell_1$%
-regularized regression estimators (see \eqref{eq:def_estim}), that includes
the estimators of the nuisance parameters used in the algorithms described
in Section \ref{sec:doubly_robust_algo}. The rate of convergence derived
from applying Theorem \ref{theo:rate_main} to $\ell_1$-regularized maximum
likelihood estimators for GLMs with a canonical link is, to the best of our
knowledge, novel. Thus, Theorem \ref{theo:rate_main} may be of independent
interest.

In order to obtain a single result that accommodates the behavior of the
estimators appearing in all steps of the algorithms in Section \ref%
{sec:doubly_robust_algo}, we will consider the more general setting in which
the weights defined using $\varphi _{a}^{\prime }$ and $\varphi _{b}^{\prime
}$ are replaced by a general weight function. Moreover, we will work with a
general i.i.d. sample $O_{i}=(Z_{i},Y_{i})$, $i=1,\dots ,n$, which need not
be the full sample used in Section \ref{sec:doubly_robust_algo}.

We will consider triangular array asymptotics, and hence unless explicitly
stated, all quantities appearing in what follows may depend on $n$. However,
to keep the notation simple, this dependence will not be made explicit in
general. All expectations are taken with respect to the law of $O$. We will
work in the following setting.

\begin{condition}
\label{cond:setting_nuis}

\begin{itemize}
\item[a)] $S_{ab}$ is a known statistic such that $P\left(S_{ab}\geq 0
\right)=1$.

\item[b)] $h\in L_{2}(P_{Z,\eta})\to m_{a}(O,h)$ and $h\in
L_{2}(P_{Z,\eta})\to m_{b}(O,h)$ are linear maps with probability one.

\item[c)] $h\in L_{2}(P_{Z,\eta}) \to E\left[m_{a}(O,h)\right]$ and $h\in
L_{2}(P_{Z,\eta})\to E\left[m_{b}(O,h)\right]$ are continuous with Riesz
representers $\mathcal{R}_{a}$ and $\mathcal{R}_{b}$ respectively.

\item[d)] There exist $a\in L_{2}(P_{Z,\eta})$ and $b\in L_{2}(P_{Z,\eta})$
such that $E(S_{ab}\mid Z)a(Z)\in L_{2}(P_{Z,\eta})$, $E(S_{ab}\mid
Z)b(Z)\in L_{2}(P_{Z,\eta})$ and 
\begin{equation}
E\left[S_{ab}a h + m_{b}(O,h) \right]=0 \quad \text{and} \quad E\left[%
S_{ab}b h + m_{a}(O,h) \right]=0 \quad \text{for all } h \in
L_{2}(P_{Z,\eta}).  \notag
\end{equation}
\end{itemize}
\end{condition}

Condition \ref{cond:setting_nuis} holds in particular when $\chi(\eta)$ is a
functional in the BIF class with an influence function of the form %
\eqref{eq:mainIF} and $P\left(S_{ab}\geq 0 \right)=1$. The case in which $%
P\left(S_{ab} \leq 0 \right)=1$ can be handled similarly.

Our goal is to study estimates of $a$ and $b$ built using $\ell_1$
regularization, under possible model misspecification. Since the problem is
completely symmetric in $a$ and $b$, we will study estimators for a $c$
equal to $a$ or to $b$. If $c=a$ we let $\overline{c}=b$, if $c=b$ we let $%
\overline{c}=a$.

We let $X_{i,j}=\phi_{j}(Z_{i})$, $j=1,\dots,p$ and $X_{i}=\left(X_{i,1},%
\dots,X_{i,p}\right)^{\top}$, $i=1,\dots, n$. Given $z\in\mathbb{R}^{d}$,
let 
\begin{equation*}
\phi(z)=\left(\phi_{1}(z),\dots,\phi_{p}(z) \right)^{\top}.
\end{equation*}
Finally, we let $m_{a,i}(h)=m_{a}(O_{i},h)$ and $m_{b,i}(h)=m_{b}(O_{i},h)$, 
$i=1,\dots, n$.

Let $\varphi_{c}$ be a link function and let $\psi_{c}=\int \varphi_{c}$.
Let ${w}(u)$ be a fixed non-negative weight function, let $\widehat{\beta}\in%
\mathbb{R}^{p}$ be random vector and for $\beta\in\mathbb{R}^{p}$ let $%
w_{\beta}=w(\langle \beta, X\rangle)$. Let 
\begin{equation}
Q_{c}(\theta, \phi, w_{\widehat{\beta}}) = S_{ab} w_{\widehat{\beta}}
\psi_{c}(\langle \theta, X \rangle)+ \langle \theta, m_{\overline{c}}(w_{%
\widehat{\beta}} \phi) \rangle  \label{eq:def_Q}
\end{equation}
and 
\begin{equation*}
L_{c}(\theta, \phi, w_{\widehat{\beta}})= \mathbb{P}_{n}\left\lbrace
Q_{c}(\theta, \phi, w_{\widehat{\beta}}) \right\rbrace.
\end{equation*}
We consider estimates defined by 
\begin{equation}
\widehat{\theta}_c \in \arg\min_{\theta\in\mathbb{R}^{p}} L_{c}(\theta,
\phi, w_{\widehat{\beta}}) + \lambda_{c} \Vert \theta \Vert_{1}.
\label{eq:def_estim}
\end{equation}
We emphasize that there may be more than one solution in general to %
\eqref{eq:def_estim} and that our results hold for any one of them.

We use $k, K$ to denote fixed (\emph{i.e. not changing with $n$}) positive
constants that appear in lower and upper bounds respectively in the
assumptions we need to obtain rates of convergence for $\widehat{\theta}_{c}$%
. The assumptions we make are grouped into those regarding the link
functions in Condition \ref{cond:glm}, those regarding the weight function
in Condition \ref{cond:weight}, those regarding the nuisance functions in
Condition \ref{cond:approx_sparse} for the linear case and Condition \ref%
{cond:approx_sparse_S} for the (possibly) non-linear case, and finally those
regarding the data generating process in Condition \ref{cond:subgauss_lin}
for the linear case and Condition \ref{cond:subgauss} for the (possibly)
non-linear case. All these assumptions appeared, organized differently, in
Section \ref{sec:asym_results_chi}, since they are needed to prove our
results regarding the asymptotic properties of the estimators of $\chi(\eta)$
we propose. In particular the assumptions were already discussed in Section %
\ref{sec:asym_results_chi}.

We will need the following assumptions on $\varphi_{c}$

\begin{condition}
\label{cond:glm}

\begin{itemize}
\item[a)] $\varphi_{c}$ is continuously differentiable and $\dot{\varphi}%
_c(u) > 0$ for all $u\in\mathbb{R}$.

\item[b)] $\vert {\varphi}_c(u)- {\varphi}_c(v)\vert \leq K\exp\left( K(
\vert u\vert + \vert v \vert) \right) \vert u - v \vert$ for all $u,v \in 
\mathbb{R}$.
\end{itemize}
\end{condition}

The assumptions on the link functions are satisfied by $\exp(u), -\exp(-u),
G(u)$ and $-1/G(u)$, where $G(u)=\exp(u)/(1+\exp(u))$ is the expit function.
The assumptions are also clearly satisfied by the identity function.

We will need the following assumption on the weights to ensure that the
optimization problem \eqref{eq:def_estim} is well behaved. In particular,
under this assumption and Condition \ref{cond:glm} a), the optimization
problem in \eqref{eq:def_estim} is convex and can be solved efficiently, for
example, using coordinate descent optimization \citep{glmnet}.

\begin{condition}
\label{cond:weight}

\begin{enumerate}
\item[a)] $w(u)> 0$ for all $u \in \mathbb{R}$.

\item[b)] $\vert w(u)- w(v)\vert \leq K\exp\left( K (\vert u\vert + \vert v
\vert) \right) \vert u - v \vert$.

\item[c)] $\widehat{\beta}$ is independent of the data and there exists $%
\beta^{\ast}$ such that 
\begin{equation*}
\Vert\widehat{\beta} - \beta^{\ast}\Vert_{2}=O_{P}\left(\sqrt{\frac{%
s_{\beta}\log(p)}{n}}\right),\quad \frac{s_{\beta}\log(p)}{n}\to 0 \quad 
\text{ and } \quad \Vert \beta^{\ast}\Vert_{2}\leq K.
\end{equation*}
\end{enumerate}
\end{condition}

Conditions \ref{cond:glm} a) and b) are satisfied by $\exp(u), \exp(-u),
G(u) $ and $1/G(u)$, where $G(u)=\exp(u)/(1+\exp(u))$ is the expit function.
Part c) requires that $\widehat{\beta}$ converge to a limit $\beta^{\ast}$.

At this point we split the analysis of the estimates defined by %
\eqref{eq:def_estim} according to whether $\varphi_{c}$ is the identity
function or an arbitrary function satisfying Condition \ref{cond:glm}. We do
this because the analysis of the latter case is more involved and requires
stronger assumptions.

\subsubsection{The linear case}

\label{sec:linear_case} Throughout this subsection we assume that $%
\varphi_{c}(u)=u$ and $w(u)\equiv 1$. The general case is dealt with in
subsection \ref{sec:nonlinear_case}.

When $c\left( Z\right) \in \mathcal{G}\left( \phi ,s_{c},j=1,\varphi
=id\right) $ with associated parameter value denoted as $\theta _{c}^{\ast }$%
, under technical assumptions $\widehat{\theta }_{c}$ will converge to $%
\theta _{c}^{\ast }$. However, when the model for $c$ is misspecified, again
under technical assumptions, $\widehat{\theta }_{c}$ will converge to the
minimizer of $E_{\eta }\left[ Q_{c}\left( \theta ,\phi ,w=1\right) \right] $%
. In order to accommodate both asymptotic behaviors in a single theoretical
result we will use the same notation for possibly different quantities that
appear in the asymptotic analysis of $\widehat{\theta }_{c}$, according to
whether the model is correctly specified or not. Hence, for example, in
Condition \ref{cond:approx_sparse} below the meaning of $\theta _{c}^{\ast }$
depends on whether the model for $c$ is correctly specified or not.

\begin{condition}
\label{cond:approx_sparse} Condition \ref{cond:setting_nuis} holds and
either a) or b) holds:

\begin{enumerate}
\item[a)] $c\left( Z\right) \in \mathcal{G}\left( \phi ,s_{c},j=1,\varphi
=id\right) $ with associated parameter value denoted as $\theta _{c}^{\ast}$%
. Furthermore, 
\begin{equation*}
s_{c}\log \left( p\right) /n\rightarrow 0.
\end{equation*}

\item[b)] There exists $\theta_{c}\in \mathbb{R}^{p}$ such that 
\begin{equation*}
\theta _{c}\in \arg \min_{\theta \in \mathbb{R}^{p}}E_{\eta }\left[
Q_{c}\left( \theta ,\phi ,w=1\right) \right]
\end{equation*}%
and the function $\left\langle \theta _{c},\phi \left( Z\right)
\right\rangle $ belongs to $\mathcal{G}\left( \phi ,s_{c},j=1,\varphi
=id\right) $ with associated parameter value denoted as $\theta_{c}^{\ast }$%
. Furthermore, $s_{c}\log \left( p\right) /n\rightarrow 0$.
\end{enumerate}
\end{condition}

Recall that 
\begin{align*}
\widehat{\Sigma}_{2}=\mathbb{P}_{n}\left\lbrace
S_{ab}XX^{\prime}\right\rbrace,\quad \Sigma_{2} = E\left\lbrace
S_{ab}XX^{\prime}\right\rbrace,
\end{align*}
\begin{align*}
&\widehat{\Sigma}_{1}=\mathbb{P}_{n}\left\lbrace XX^{\prime}\right\rbrace
\quad \text{ and } \quad \Sigma_{1} = E\left\lbrace XX^{\prime}\right\rbrace.
\end{align*}
Let $S_{c}$ be the support of $\theta_{c}^{\ast}$ in Condition \ref%
{cond:approx_sparse}.

We will need the following assumptions.

\begin{condition}
\label{cond:subgauss_lin} Condition \ref{cond:setting_nuis} holds and

\begin{enumerate}
\item[a)] If Condition \ref{cond:approx_sparse} a) holds 
\begin{equation*}
E\left[ \max_{1\leq j\leq p}\sqrt{\mathbb{P}_{n}\left[ \left\{ S_{ab}c\left(
Z\right) X_{i,j} +m_{\overline{c}}\left( O,\phi _{j}\right) \right\} ^{2}%
\right] }\right] \leq K.
\end{equation*}

If Condition \ref{cond:approx_sparse} b) holds 
\begin{equation*}
E\left[ \max_{1\leq j\leq p}\sqrt{\mathbb{P}_{n}\left[ \left\{ S_{ab}\langle
\theta_{c},X_{i} \rangle X_{i,j} +m_{\overline{c}}\left( O,\phi _{j}\right)
\right\} ^{2}\right] }\right] \leq K.
\end{equation*}

\item[b)] For sufficiently large $n$ 
\begin{equation*}
k\leq \kappa_{l}({\Sigma}_{1}, \left\lceil s_{c}\log(n) \right\rceil, S_{c})
\leq \kappa_{u}({\Sigma}_{1}, \left\lceil s_{c}\log(n)\right\rceil, S_{c})
\leq K.
\end{equation*}

\item[c)] 
\begin{equation*}
k\leq \kappa_{l}(\widehat{\Sigma}_{2}, \left\lceil s_{c}\log(n)/2
\right\rceil,S_{c}) \leq \kappa_{u}(\widehat{\Sigma}_{2}, \left\lceil
s_{c}\log(n)/2\right\rceil, S_{c}) \leq K,
\end{equation*}
with probability tending to one.

\item[d)] $k\leq E\left( S_{ab}\vert Z\right) \leq K$ almost surely.
\end{enumerate}
\end{condition}

Theorem \ref{theo:rate_main_lin} can be proven by a straightforward
adaptation of the techniques used by \cite{Bickel-Lasso} and \cite%
{BelloniCherno11}. For this reason, we omit the proof.

\begin{theorem}
\label{theo:rate_main_lin} Assume $\varphi _{c}(u)=\varphi _{\overline{c}%
}(u)=u$ and $w(u)=1$. Assume Conditions \ref{cond:setting_nuis}, \ref%
{cond:approx_sparse} and \ref{cond:subgauss_lin} hold. Then there exists $%
\lambda _{c}\asymp \sqrt{{\log \left( p\right) }/{n}}$ such that

\begin{itemize}
\item 
\begin{equation*}
\Vert \widehat{\theta}_{c} - \theta^{\ast}_{c}\Vert_{{\Sigma}_{1}}
=O_{P}\left( \sqrt{\frac{s_{c}\log(p)}{n}}\right) ,
\end{equation*}

\item 
\begin{equation*}
\Vert \widehat{\theta}_{c} - \theta^{\ast}_{c}\Vert_{2}=O_{P}\left( \sqrt{%
\frac{s_{c}\log(p)}{n}}\right) ,
\end{equation*}

\item 
\begin{equation*}
\Vert \widehat{\theta}_{c} - \theta^{\ast}_{c}\Vert_{1} =O_{P}\left( s_{c} 
\sqrt{\frac{\log(p)}{n}}\right).
\end{equation*}
\end{itemize}
\end{theorem}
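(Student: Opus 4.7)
Since $\varphi_c(u)=u$, we have $\psi_c(u)=u^2/2$, so the empirical loss is the quadratic
$L_c(\theta,\phi,1)=\tfrac{1}{2}\theta^\top\widehat{\Sigma}_2\theta+\langle\theta,\mathbb{P}_n m_{\overline{c}}(O,\phi)\rangle$,
and the estimator is a standard weighted Lasso with design weight $S_{ab}$ and ``score vector'' $\mathbb{P}_n m_{\overline{c}}(O,\phi)$. The plan is to run the now-standard Bickel--Ritov--Tsybakov/Belloni--Chernozhukov analysis on this problem, with the two twists that (i) the ``noise'' is not mean-zero at $\theta_c^\ast$ because $c(Z)$ is only approximately linear in $\phi$, and (ii) the design covariance $\widehat{\Sigma}_2$ must be compared both to $\Sigma_2$ and to $\Sigma_1$ to reach the stated $\Sigma_1$-norm conclusion.

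First I would set up the basic inequality. Writing $\Delta=\widehat{\theta}_c-\theta_c^\ast$ and using the minimizing property of $\widehat{\theta}_c$, the quadratic structure of $L_c$ gives the identity
\[
\tfrac{1}{2}\Delta^\top\widehat{\Sigma}_2\Delta
\leq
-\langle\Delta,V_n\rangle+\lambda_c(\|\theta_c^\ast\|_1-\|\widehat{\theta}_c\|_1),
\qquad
V_n\equiv\mathbb{P}_n\bigl[S_{ab}\,X\,\langle\theta_c^\ast,X\rangle+m_{\overline{c}}(O,\phi)\bigr].
\]
The goal is to show $\|V_n\|_\infty=O_P(\sqrt{\log p/n})$, after which a standard argument forces $\Delta$ into a restricted cone and a restricted-eigenvalue bound completes the proof.

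For the score bound, decompose $V_n=V_n^{(0)}-W_n$, where
$V_n^{(0)}\equiv\mathbb{P}_n[S_{ab}\,X\,c(Z)+m_{\overline{c}}(O,\phi)]$ and $W_n\equiv\mathbb{P}_n[S_{ab}\,X\,r(Z)]$, with $r(Z)=c(Z)-\langle\theta_c^\ast,\phi(Z)\rangle$ under Condition \ref{cond:approx_sparse}(a) (or its analogue with $\theta_c$ in place of $c$ under (b)). Proposition \ref{prop:DR functional} makes each coordinate of $V_n^{(0)}$ a mean-zero empirical average, so Nemirovski's inequality together with Condition \ref{cond:subgauss_lin}(a) yields $\|V_n^{(0)}\|_\infty=O_P(\sqrt{\log p/n})$. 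For the bias $W_n$, a Cauchy--Schwarz argument coordinate-wise combined with the AGLS bound $E[r(Z)^2]\le K\,s_c\log p/n$ and Condition \ref{cond:subgauss_lin}(d) shows $\|W_n\|_\infty=O_P(\sqrt{s_c\log p/n}\cdot\sqrt{\log p/n})$, which is dominated by the target rate for $\lambda_c$. Thus choosing $\lambda_c=c_0\sqrt{\log p/n}$ for a large enough constant $c_0$ ensures $\lambda_c\ge 2\|V_n\|_\infty$ with probability tending to one.

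Conditional on that event, the basic inequality forces $\Delta$ into a restricted cone of the form $\|\Delta_{S_c^c}\|_1\le C\|\Delta_{S_c}\|_1+(\text{lower order})$ by the usual bookkeeping on $\|\theta_c^\ast\|_1-\|\widehat{\theta}_c\|_1$. To convert $\Delta^\top\widehat{\Sigma}_2\Delta$ into $\|\Delta\|_2^2$ I would invoke the sparse-eigenvalue apparatus of Belloni--Chernozhukov: Condition \ref{cond:subgauss_lin}(c) gives bounds on $\kappa_l(\widehat{\Sigma}_2,\lceil s_c\log n/2\rceil,S_c)$, which upgrade the cone condition to a compatibility/restricted-eigenvalue condition on $\widehat{\Sigma}_2$, yielding $\|\Delta\|_2\le C'\sqrt{s_c}\,\|\Delta\|_{\widehat{\Sigma}_2}/\kappa_l$. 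Combining with the inequality then gives $\|\Delta\|_{\widehat{\Sigma}_2}=O_P(\sqrt{s_c\log p/n})$ and $\|\Delta\|_2=O_P(\sqrt{s_c\log p/n})$, and the $\ell_1$ bound follows from the cone property via $\|\Delta\|_1\le C''\sqrt{s_c}\,\|\Delta\|_2$. Finally, to upgrade $\|\Delta\|_{\widehat{\Sigma}_2}$ to $\|\Delta\|_{\Sigma_1}$ I would pass $\widehat{\Sigma}_2\to\Sigma_2$ on the sparse cone via another application of \ref{cond:subgauss_lin}(c), and use \ref{cond:subgauss_lin}(d) (which says $k\le E(S_{ab}\mid Z)\le K$) to conclude that $\Sigma_2$ and $\Sigma_1$ are norm-equivalent on vectors sparse on $S_c$. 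The main subtlety, which I would flag as the place where care is needed, is precisely this two-step passage between the weighted empirical covariance $\widehat{\Sigma}_2$ that governs the basic inequality and the unweighted population covariance $\Sigma_1$ that appears in the stated conclusion, carried out uniformly over the restricted cone; all other steps are a direct transcription of the Bickel--Ritov--Tsybakov/Belloni--Chernozhukov machinery to the present quadratic loss.
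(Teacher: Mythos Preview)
Your overall strategy---basic inequality, score bound, cone restriction, restricted eigenvalue---is exactly the Bickel--Ritov--Tsybakov/Belloni--Chernozhukov route the paper invokes (the paper in fact omits the proof and cites those references). But your handling of the approximation-error piece $W_n=\mathbb{P}_n[S_{ab}\,\phi\,r]$ contains a real gap. A coordinate-wise Cauchy--Schwarz gives only
\[
|E[W_{n,j}]|=|E[E(S_{ab}\mid Z)\,\phi_j\,r]|\le K\,E^{1/2}[\phi_j^2]\,E^{1/2}[r^2]=O\!\left(\sqrt{s_c\log p/n}\right),
\]
not the $O_P(\sqrt{s_c\log p/n}\cdot\sqrt{\log p/n})$ you claim; there is no mechanism producing the extra $\sqrt{\log p/n}$ factor. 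Since $s_c$ is allowed to diverge, $\sqrt{s_c\log p/n}$ is genuinely larger than $\sqrt{\log p/n}$, so you cannot pick $\lambda_c\asymp\sqrt{\log p/n}$ with $\lambda_c\ge 2\|V_n\|_\infty$. (A concrete obstruction: if $r=\epsilon\,\phi_1$ with $\epsilon=\sqrt{Ks_c\log p/n}$, then $W_{n,1}\approx(\Sigma_2)_{11}\,\epsilon$ is of exact order $\sqrt{s_c\log p/n}$.) Inflating $\lambda_c$ to absorb this would cost you a factor $\sqrt{s_c}$ in the final rates.

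The standard fix---and what the paper does in its general-case proof (Lemma~\ref{lemma:pre_cone}, Proposition~\ref{prop:cone})---is to \emph{not} route $W_n$ through H\"older/$\|\cdot\|_\infty$ at all. Instead bound $\langle W_n,\Delta\rangle$ directly by empirical Cauchy--Schwarz:
\[
|\langle W_n,\Delta\rangle|=\bigl|\mathbb{P}_n[S_{ab}\,r\,\langle\Delta,\phi\rangle]\bigr|\le\|\Delta\|_{\widehat{\Sigma}_1}\cdot\bigl(\mathbb{P}_n[(S_{ab}r)^2]\bigr)^{1/2}=\|\Delta\|_{\widehat{\Sigma}_1}\cdot O_P\!\left(\sqrt{s_c\log p/n}\right).
\]
This term enters the basic inequality multiplicatively in $\|\Delta\|_{\widehat{\Sigma}_1}$ rather than $\|\Delta\|_1$, so it does not force $\lambda_c$ to grow. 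In the cone step it produces an additional alternative of the form $\|\Delta_{S_c^c}\|_1\lesssim\sqrt{s_c}\,\|\Delta\|_{\widehat{\Sigma}_1}$, which is then closed using the upper sparse-eigenvalue bound to reach $\|\Delta\|_1\lesssim\sqrt{s_c}\,\|\Delta\|_2$. With this correction the rest of your plan (including the $\widehat{\Sigma}_2\to\Sigma_2\to\Sigma_1$ passage via Conditions~\ref{cond:subgauss_lin}(c),(d)) goes through as you outlined.
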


\subsubsection{The general case}

\label{sec:nonlinear_case}

To handle the case in which $\varphi_{c}$ is non-linear, we will need the
following modification of Condition \ref{cond:approx_sparse}.

\begin{condition}
\label{cond:approx_sparse_S} Condition \ref{cond:setting_nuis} holds and
either a) or b) holds:

\begin{enumerate}
\item[a)] $c\left( Z\right) \in \mathcal{G}\left( \phi ,s_{c},j=1,\varphi
_{c}\right) $ with associated parameter value denoted as $\theta _{c}^{\ast
} $. Furthermore, $s_{c}\log \left( p\right) /n\rightarrow 0$, $\left\Vert
\theta _{c}^{\ast }\right\Vert _{2}\leq K$ and 
\begin{equation*}
E^{1/8}\left[ \left\{ c\left( Z\right) -\varphi _{c}\left( \left\langle
\theta _{c}^{\ast },\phi \left( Z\right) \right\rangle \right) \right\} ^{8}%
\right] \leq \sqrt{\frac{Ks_{c}\log \left( p\right)}{n}}.
\end{equation*}

\item[b)] 

There exists $\theta_{c}\in \mathbb{R}^{p} $ such that 
\begin{equation*}
\theta_{c}\in \arg \min_{\theta \in \mathbb{R}^{p}}E_{\eta }\left[
Q_{c}\left( \theta ,\phi ,w_{\beta^{\ast}}\right) \right]
\end{equation*}%
and $\varphi_{c}\left( \left\langle \theta _{c},\phi \left( Z\right)
\right\rangle \right) $ belongs to $\mathcal{G}\left( \phi
,s_{c},j=1,\varphi _{c}\right) $ with associated parameter value denoted as $%
\theta _{c}^{\ast}$. Furthermore, $s_{c}\log \left( p\right) /n\rightarrow 0$%
, $\left\Vert \theta _{c}\right\Vert _{2}\leq K$, $\left\Vert \theta
_{c}^{\ast }\right\Vert _{2}\leq K$ and 
\begin{equation*}
E^{1/8}\left[ \left\{ \varphi_{c}\left( \left\langle \theta _{c},\phi \left(
Z\right) \right\rangle \right) -\varphi _{c}\left( \left\langle \theta
_{c}^{\ast},\phi \left( Z\right) \right\rangle \right) \right\} ^{8}\right]
\leq \sqrt{\frac{Ks_{c}\log \left( p\right)}{n}}.
\end{equation*}
\end{enumerate}
\end{condition}

If Condition \ref{cond:approx_sparse_S} a) holds we let $\varphi^{%
\ast}_{i}=c(Z_i) \quad i=1,\dots, n $ and if Condition \ref%
{cond:approx_sparse_S} b) holds we let $\varphi^{\ast}_{i}=\varphi_{c}\left(%
\langle \theta_{c}, \phi(Z_{i})\rangle\right), i=1,\dots, n $ We emphasize
that the meanings of $\theta^{\ast}_{c}, \theta_{c}$ and $\varphi^{\ast}$
depend on whether Condition \ref{cond:approx_sparse_S} a) or b) holds. We
will also need the following assumptions.

\begin{condition}
\label{cond:subgauss} Condition \ref{cond:setting_nuis} holds and

\begin{enumerate}
\item[a)] 
\begin{equation*}
E\left[\max\limits_{1\leq j \leq p} \left( \frac{1}{n}\sum\limits_{i=1}^{n}
(S_{ab,i} \varphi^{\ast} X_{i,j} + \mathcal{R}_{\overline{c}} X_{i,j}
)^{4}\right)^{1/2} \right] \leq K.
\end{equation*}

\item[b)] $\sup_{\Vert \Delta\Vert_{2}=1}\Vert \langle \Delta, X
\rangle\Vert_{\psi_2}\leq K$.

\item[c)] 
\begin{equation*}
k\leq \lambda_{min}\left(\Sigma_{1} \right) \quad \text{and} \quad k\leq
\lambda_{min}\left(\Sigma_{2} \right)
\end{equation*}

\item[d)] $E^{1/4}\left(S_{ab}^{4} \right)\leq K$.

\item[e)] $E^{1/2}\left( \left[ S_{ab}\varphi^{\ast}+\mathcal{R}_{\overline{c%
}} \right]^{4}\right) \leq K$.

\item[f)] 
\begin{equation*}
\sup_{\Vert \beta - \beta^{\ast}\Vert_{2}\leq 1}E \left\lbrace \max_{1\leq j
\leq p} \left( \frac{1}{n} \sum\limits_{i=1}^{n} (\mathcal{R}_{\overline{c}%
,i} X_{i,j} w(\langle \beta, X_{i}\rangle) - m_{\overline{c},i}(\phi_{j}
w_{\beta}))^{2} \right)^{1/2} \right\rbrace \leq K.
\end{equation*}
\end{enumerate}
\end{condition}

\begin{remark}
If there exists a statistic $S_{\overline{c}}$ such that $m_{\overline{c}%
}(O,h)=S_{\overline{c}}h$ then $\mathcal{R}_{\overline{c}}$ can be replaced
by $S_{\overline{c}}$ in Conditions \ref{cond:subgauss} a) and e). Moreover,
in this case Condition \ref{cond:subgauss} f) can be removed.
\end{remark}

\begin{theorem}
\label{theo:rate_main} Assume Conditions \ref{cond:setting_nuis}, \ref%
{cond:glm}, \ref{cond:weight}, \ref{cond:approx_sparse_S} and \ref%
{cond:subgauss}. Then, there exists $\lambda _{c}\asymp \sqrt{{\log \left(
p\right) }/{n}}$ such that:

(i) If Condition \ref{cond:approx_sparse_S} a) holds or if Condition \ref%
{cond:approx_sparse_S} b) holds and $w(u)=1$ then 
\begin{equation*}
\Vert \widehat{\theta }-\theta _{c}^{\ast }\Vert _{2}=O_{P}\left( \sqrt{%
\frac{s_{c}\log (p)}{n}}\right) ,\quad \Vert \widehat{\theta }-\theta
_{c}^{\ast }\Vert _{1}=O_{P}\left( s_{c}\sqrt{\frac{\log (p)}{n}}\right) .
\end{equation*}%
(ii) For a general weight function $w(u)$, if Condition \ref%
{cond:approx_sparse_S} b) holds then 
\begin{equation*}
\Vert \widehat{\theta }-\theta _{c}^{\ast }\Vert _{2}=O_{P}\left( \sqrt{%
\frac{\max (s_{c},s_{\beta })\log (p)}{n}}\right) ,\quad \Vert \widehat{%
\theta }-\theta _{c}^{\ast }\Vert _{1}=O_{P}\left( \max (s_{c},s_{\beta })%
\sqrt{\frac{\log (p)}{n}}\right) .
\end{equation*}
\end{theorem}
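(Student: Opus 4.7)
The plan is to follow the standard three-step template for deriving rates of $\ell_{1}$-regularized $M$-estimators (\`a la Negahban--Ravikumar--Wainwright--Yu and Bickel--Ritov--Tsybakov): (1) bound the empirical score at the population target $\theta_{c}^{\ast}$ in $\ell_{\infty}$ norm; (2) establish a restricted strong convexity (RSC) lower bound on the empirical loss around $\theta_{c}^{\ast}$; (3) combine these via the basic inequality that comes from comparing the values of the penalized objective at $\widehat{\theta}_{c}$ and at $\theta_{c}^{\ast}$. The special structure of the objective $Q_{c}$, together with the key identity in Condition \ref{cond:setting_nuis} d) (which is just Proposition \ref{prop:DR functional}), is what makes the expected score vanish at the target and permits step (1) to be reduced to a pure concentration statement.

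For step (1), I would write the score as $g(\theta,\beta) \equiv S_{ab}\,w(\langle\beta,X\rangle)\,X\,\varphi_{c}(\langle\theta,X\rangle)+m_{\overline{c}}(w_{\beta}\phi)$, and note that under Condition \ref{cond:approx_sparse_S} b) the definition of $\theta_{c}$ gives $E[g(\theta_{c},\beta^{\ast})]=0$ componentwise, while the $L^{8}$-approximation assumption together with the Lipschitz bound Condition \ref{cond:glm} b) transfers this to $E[g(\theta_{c}^{\ast},\beta^{\ast})]$ up to a deterministic term of order $\sqrt{s_{c}\log(p)/n}$ (in case a) the analogous identity holds by Condition \ref{cond:setting_nuis} d) at $c(Z)$, again with the approximation error absorbed). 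Then Nemirovski's inequality applied under the fourth-moment bounds in Conditions \ref{cond:subgauss} a), d), e) (plus the sub-gaussian control of $\langle\Delta,X\rangle$ from \ref{cond:subgauss} b)) yields $\|\mathbb{P}_{n}g(\theta_{c}^{\ast},\beta^{\ast})\|_{\infty}=O_{p}(\sqrt{\log(p)/n})$. This fixes $\lambda_{c}\asymp\sqrt{\log(p)/n}$. For step (2), a second-order Taylor expansion of $L_{c}$ in $\theta$ gives a remainder of the form $\tfrac{1}{2}\mathbb{P}_{n}[S_{ab}w_{\widehat{\beta}}\dot{\varphi}_{c}(\langle\widetilde{\theta},X\rangle)\langle X,\Delta\rangle^{2}]$; the positivity in Conditions \ref{cond:glm} a), \ref{cond:weight} a), boundedness of $\|\theta_{c}^{\ast}\|_{2}$ and $\|\beta^{\ast}\|_{2}$, together with the spectral bound $\lambda_{\min}(\Sigma_{2})\geq k$ and sub-gaussianity of $X$, produce (via a standard peeling/truncation argument over the restricted cone) an RSC inequality $L_{c}(\theta_{c}^{\ast}+\Delta)-L_{c}(\theta_{c}^{\ast})-\langle\nabla L_{c}(\theta_{c}^{\ast}),\Delta\rangle\ge\kappa\|\Delta\|_{2}^{2}$ on the cone $\{\|\Delta_{S_{c}^{c}}\|_{1}\le 3\|\Delta_{S_{c}}\|_{1}\}\cap\{\|\Delta\|_{2}\le 1\}$. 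Plugging into the basic inequality produces the rates in part (i).

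The genuinely new difficulty is part (ii), which must absorb the mismatch between the weights $w_{\widehat{\beta}}$ actually used in the estimator and the weights $w_{\beta^{\ast}}$ defining the target $\theta_{c}^{\ast}$. I would handle this by decomposing $\nabla L_{c}(\theta_{c}^{\ast})=\mathbb{P}_{n}g(\theta_{c}^{\ast},\beta^{\ast}) + \{\mathbb{P}_{n}g(\theta_{c}^{\ast},\widehat{\beta})-\mathbb{P}_{n}g(\theta_{c}^{\ast},\beta^{\ast})\}$ and bounding the plug-in remainder in $\ell_{\infty}$. Because $\widehat{\beta}$ is independent of the data (Condition \ref{cond:weight} c)), we may condition on it and use the Lipschitz bound \ref{cond:weight} b) together with the uniform-in-$\beta$ moment control in Condition \ref{cond:subgauss} f) and the sub-gaussianity of $\langle\widehat\beta-\beta^{\ast},X\rangle$ to show that the remainder's $\ell_{\infty}$ norm is $O_{p}(\|\widehat{\beta}-\beta^{\ast}\|_{2}\sqrt{\log(p)/n})=O_{p}(\sqrt{s_{\beta}\log(p)}/n)$; an analogous Lipschitz perturbation argument shows that replacing $w_{\beta^{\ast}}$ by $w_{\widehat{\beta}}$ in the Hessian perturbs the RSC constant by $o_{p}(1)$, so RSC is preserved. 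Tracing both effects through the basic inequality forces the effective score-bound rate to be $\sqrt{\max(s_{c},s_{\beta})\log(p)/n}$, which is the origin of the $\max(s_{c},s_{\beta})$ factor.

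The hardest step, I expect, is the uniform control of the plug-in remainder for the score: one needs a moderate-deviation bound that is simultaneously uniform over the $p$ coordinates \emph{and} over a neighborhood of $\beta^{\ast}$, which is exactly what Condition \ref{cond:subgauss} f) is designed to provide, but the verification that this uniform bound combines cleanly with Condition \ref{cond:weight} b) (which produces an exponential factor $\exp(K(|\langle\beta,X\rangle|+|\langle\beta^{\ast},X\rangle|))$) requires care: the sub-gaussianity in Condition \ref{cond:subgauss} b) and boundedness $\|\beta^{\ast}\|_{2}\leq K$ are what keeps the exponential moments of the multiplier under control. Everything else is routine convex $M$-estimation bookkeeping that parallels the proof in the linear case (Theorem \ref{theo:rate_main_lin}).
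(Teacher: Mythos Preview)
Your overall architecture (score bound in $\ell_\infty$, RSC via truncation/peeling, basic inequality) is exactly the paper's, and part (i) goes through as you describe. The gap is in your handling of the weight perturbation for part (ii).

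You claim that the plug-in remainder $\mathbb{P}_n g(\theta_c^{\ast},\widehat\beta)-\mathbb{P}_n g(\theta_c^{\ast},\beta^{\ast})$ has $\ell_\infty$ norm $O_p(\|\widehat\beta-\beta^{\ast}\|_2\sqrt{\log(p)/n})$. This is too optimistic: conditional on $\widehat\beta$, each coordinate of the remainder has a nonzero mean. By the Lipschitz bound on $w$ and Cauchy--Schwarz, that conditional mean is $O(\|\widehat\beta-\beta^{\ast}\|_2)$ uniformly in $j$, with no extra $\sqrt{\log(p)/n}$ factor; only the centered fluctuation is of the smaller product order. So the correct $\ell_\infty$ bound is $O_p(\|\widehat\beta-\beta^{\ast}\|_2)=O_p(\sqrt{s_\beta\log(p)/n})=O_p(\sqrt{s_\beta}\,\lambda_c)$. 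Feeding this into H\"older gives $|\langle\text{remainder},\widehat\Delta\rangle|\le C\sqrt{s_\beta}\,\lambda_c\|\widehat\Delta\|_1$, and the cone inequality becomes
\[
\Bigl(\tfrac12-C\sqrt{s_\beta}\Bigr)\|\widehat\Delta_{\overline S_c}\|_1\;\le\;\Bigl(\tfrac32+C\sqrt{s_\beta}\Bigr)\|\widehat\Delta_{S_c}\|_1+\cdots,
\]
which is vacuous once $s_\beta$ is moderately large. The $\ell_\infty$/H\"older route therefore cannot close the argument.

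The paper avoids this by bounding the inner product directly via Cauchy--Schwarz rather than H\"older: writing the perturbation as $\mathbb{P}_n\bigl[(w_{\widehat\beta}-w_{\beta^{\ast}})(S_{ab}\varphi^{\ast}+\mathcal R_{\overline c})\,X\bigr]$, one gets
\[
\bigl|\langle\text{remainder},\widehat\Delta\rangle\bigr|\;\le\;H\,\|\widehat\Delta\|_{\widehat\Sigma_1},\qquad
H\equiv\mathbb{P}_n^{1/2}\Bigl\{\bigl[(w_{\widehat\beta}-w_{\beta^{\ast}})(S_{ab}\varphi^{\ast}+\mathcal R_{\overline c})\bigr]^2\Bigr\},
\]
with $H=O_p(\sqrt{s_\beta\log(p)/n})$ (this is the paper's Lemma on $H$). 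Since $\|\widehat\Delta\|_{\widehat\Sigma_1}$ is controlled by $\|\widehat\Delta\|_2$ via a restricted upper-eigenvalue bound (Proposition on $\widehat\Sigma_1$), the $H$-term contributes $c_H\sqrt{s_\beta}\,\|\widehat\Delta\|_2$ to the right-hand side of the pre-cone inequality, which is precisely what enlarges the cone from $\mathbb D(c_{\mathcal C},s_c)$ to $\mathbb D(c_{\mathcal C},\max(s_c,s_\beta))$ without destroying it. The same Cauchy--Schwarz device is used for the approximation-error term $\|\widetilde\varrho\|_{2,n}$. In short, the $\max(s_c,s_\beta)$ in the rate arises not from an inflated score bound but from an enlarged cone, and the switch from H\"older to Cauchy--Schwarz on the perturbation term is the step that makes this work.
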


\begin{remark}
When Condition \ref{cond:approx_sparse_S} b) holds and the weight function
is not constantly equal to one, the rate of convergence of $\widehat{\beta}$
may affect the rate of convergence of $\widehat{\theta}_{c}$. This is due to
the fact that in this case (and in contrast to what happens when Condition %
\ref{cond:approx_sparse_S} a) holds) the very definitions of $\theta_{c}$
and $\theta^{\ast}_{c}$ actually depend on the `limit' of $\widehat{\beta}$.
\end{remark}

\begin{remark}
\label{remark:comparison_negahban} Suppose $O_{i}=(Z_{i},Y_{i})$ follows a
generalized linear model with outcome $Y_{i}$ and covariate $Z_{i}$, with
canonical link function $\varphi _{a}^{-1}$ and regression parameter $\theta
_{a}$ that satisfies $\Vert \theta _{a}\Vert _{2}\leq K$. Then if $S_{ab}=1$%
, $m_{b}(O,h)=-Y_{i}h$, $\phi (Z)=Z$, $w(u)\equiv 1$, $\widehat{\beta }%
\equiv \beta ^{\ast }\equiv 0$ and $a(Z)=E\left( Y|Z\right) $ we have that $%
\widehat{\theta }_{a}$ is the $\ell _{1}-$penalized maximum likelihood
estimator of $\theta _{a}$. Hence Theorem \ref{theo:rate_main} also provides
rates of convergence for $\ell _{1}-$penalized maximum likelihood estimates.
As we mentioned earlier (see Example \ref{ex:parametric_weakly_sparse}), our
results cover the case in which $R_{q}\left( {\log (p)}/{n}\right)
^{1-q/2}\rightarrow 0,$ where $R_{q}=\sum_{j=1}^{p}|\theta _{a,j}|^{q}$ and $%
q\in (0,2)$, by taking $s_{a}=\left( n/\log (p)\right) ^{q/2}R_{q}$. In this
case, Theorem \ref{theo:rate_main} yields a rate of convergence of 
\begin{equation*}
R_{q}^{1/2}\left( \frac{\log (p)}{n}\right) ^{1/2-q/4}
\end{equation*}%
for $\Vert \widehat{\theta }_{a}-\theta _{a}\Vert _{2}$. Hence, Theorem \ref%
{theo:rate_main} extends the results of \cite{NegahbanSS} (see their
Corollary 3 and Section 4,4) which require $q<1$. The key point that allows
us to obtain this rate for $q\in (1,2)$ is that we essentially require that $%
\theta _{a}$ be well approximated in $\ell _{2}$ norm by a sparse vector,
rather than in $\ell _{1}$ norm. \cite{Raskutti-minimax} prove that these
rates are minimax for linear regression and $q<1$. \cite{donoho1994minimax}
prove a related result (but with a sharp control of the constants involved)
covering the case $q\in (0,2)$ for the simpler gaussian sequence model.
\end{remark}

\subsection{Proofs}

Here we provide the proofs of all our results regarding the estimates of the
nuisance parameters. In Section \ref{sec:det} we prove deterministic
results, providing the claimed rate of convergence of $\widehat{\theta}_{c}$
under high-level assumptions on the size of the penalty parameter, on the
approximation error, and the rate of convergence of $\widehat{\beta}$. In
Section \ref{sec:cond_high_prob} we show that these high-level assumptions
hold with high probability under Conditions \ref{cond:setting_nuis}, \ref%
{cond:glm}, \ref{cond:weight}, \ref{cond:approx_sparse_S} and \ref%
{cond:subgauss}.

\subsection*{Notation}

We introduce further notation. Let 
\begin{align*}
&\varrho_{i}=\varphi_{c}(\langle \theta^{\ast}_{c}, X_{i} \rangle) -
\varphi^{\ast}_{i} \quad i=1,\dots, n, \\
& \widetilde{\varrho}_{i}= S_{ab,i}w(\langle \widehat{\beta}, X_i\rangle)
\varrho_{i} \quad i=1,\dots, n, \\
& \Vert\widetilde{\varrho}\Vert_{2,n}= \left(\mathbb{P}_{n}\lbrace 
\widetilde{\varrho}^{2} \rbrace\right)^{1/2}, \\
& R(\beta)= \mathbb{P}_{n}\left( X w_{\beta} S_{ab} \varrho\right), \\
& \widehat{\Delta}=\widehat{\theta}_{c}-\theta^{\ast}_{c}, \\
& S_{c}=\left\lbrace j\in\left\lbrace 1,\dots,p\right\rbrace :
\theta^{\ast}_{cj}\neq 0\right\rbrace, \\
& \overline{S}_{c}=\left\lbrace j\in\left\lbrace 1,\dots,p\right\rbrace :
\theta^{\ast}_{cj} =0\right\rbrace.
\end{align*}
Note that $s_c = \# S_{c}$. If Condition \ref{cond:approx_sparse_S} a) holds
we let 
\begin{equation*}
J= \Vert \mathbb{P}_{n}\left\lbrace X w(\langle \widehat{\beta}, X\rangle) %
\left[S_{ab}\varphi^{\ast} +\mathcal{R}_{\overline{c}} \right] \right\rbrace
\Vert_{\infty} + \Vert \mathbb{P}_{n}\left\lbrace m_{\overline{c}}(\phi w_{%
\widehat{\beta}}) -\mathcal{R}_{\overline{c}} X w(\langle \widehat{\beta},
X\rangle) \right\rbrace \Vert_{\infty} \quad \text{and} \quad H=0
\end{equation*}
and if Condition \ref{cond:approx_sparse_S} b) holds we let 
\begin{align*}
&J= \Vert \mathbb{P}_{n}\left\lbrace X w(\langle \beta^{\ast}, X\rangle) %
\left[S_{ab}\varphi^{\ast} +\mathcal{R}_{\overline{c}} \right] \right\rbrace
\Vert_{\infty} + \Vert \mathbb{P}_{n}\left\lbrace m_{\overline{c}}(\phi
w_{\beta^{\ast}}) -\mathcal{R}_{\overline{c}} X w(\langle \beta^{\ast},
X\rangle) \right\rbrace \Vert_{\infty}, \\
&H=\mathbb{P}^{1/2}_{n}\left\lbrace \left[(w_{\widehat{\beta}%
}-w_{\beta^{\ast}})\left(S_{ab}\varphi^{\ast} + \mathcal{R}_{\overline{c}%
}\right)\right]^{2} \right\rbrace.
\end{align*}

\subsection{Deterministic results}

\label{sec:det}

In this section we prove deterministic results for $\widehat{\theta}_{c}$.
Assuming essentially that: $\lambda_{c}$ overrules the noise level as
measured by $J$, $\widehat{\beta}$ is close to $\beta^{\ast}$, the
approximation error as measured by $\Vert \widetilde{\varrho} \Vert_{2}$ is
small and ${\widehat{\Sigma}_{1}}$ satisfies a restricted upper eigenvalue
type condition, then $\widehat{\theta}_{c}$ lies in a special cone and
satisfies the rates of convergence in Theorem \ref{theo:rate_main}. Later on
in Section \ref{sec:cond_high_prob} we will show that these conditions hold
with high probability under the assumptions made in the paper.

We begin with a known bound (see e.g., Lemma 3 from \cite{NegahbanSS}) on
the difference of the $\ell_{1}$ norms of $\theta^{\ast}_{c}+\Delta$ and $%
\theta^{\ast}_{c}$.

\begin{lemma}
\label{lemma:bound_l1_diff} For any $\Delta \in \mathbb{R}^{p}$ 
\begin{equation*}
\Vert \theta^{\ast}_{c}+\Delta \Vert_{1}-\Vert \theta^{\ast}_{c} \Vert_{1}
\geq \Vert {\Delta}_{\overline{S}_{c}} \Vert_{1}-\Vert{\Delta}%
_{S_{c}}\Vert_{1}
\end{equation*}
\end{lemma}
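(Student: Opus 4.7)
The plan is to exploit the decomposition of the $\ell_1$ norm across the disjoint index sets $S_c$ and $\overline{S}_c$, using the defining property that $\theta^{\ast}_c$ is supported on $S_c$ (so that $\theta^{\ast}_{c,\overline{S}_c}=0$).

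First, I would write
\[
\Vert \theta^{\ast}_c + \Delta \Vert_1 \;=\; \Vert \theta^{\ast}_{c,S_c} + \Delta_{S_c}\Vert_1 \;+\; \Vert \theta^{\ast}_{c,\overline{S}_c} + \Delta_{\overline{S}_c}\Vert_1 \;=\; \Vert \theta^{\ast}_{c,S_c} + \Delta_{S_c}\Vert_1 \;+\; \Vert \Delta_{\overline{S}_c}\Vert_1,
\]
where the second equality uses that $\theta^{\ast}_{c,\overline{S}_c}=0$ by definition of the support $S_c$. Similarly, $\Vert \theta^{\ast}_c \Vert_1 = \Vert \theta^{\ast}_{c,S_c}\Vert_1$.

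Next, applying the reverse triangle inequality to the first summand yields
\[
\Vert \theta^{\ast}_{c,S_c} + \Delta_{S_c}\Vert_1 \;\geq\; \Vert \theta^{\ast}_{c,S_c}\Vert_1 - \Vert \Delta_{S_c}\Vert_1.
\]
Substituting this bound into the decomposition above and cancelling the term $\Vert \theta^{\ast}_{c,S_c}\Vert_1$ gives exactly the claimed inequality
\[
\Vert \theta^{\ast}_c + \Delta\Vert_1 - \Vert \theta^{\ast}_c\Vert_1 \;\geq\; \Vert \Delta_{\overline{S}_c}\Vert_1 - \Vert \Delta_{S_c}\Vert_1.
\]

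There is no real obstacle here; the argument is a one-line manipulation relying only on the orthogonality of the supports and the reverse triangle inequality. The lemma is standard in Lasso analysis (it is the first step toward establishing that, for $\lambda$ large enough, the error vector $\widehat{\Delta}$ lies in a cone of the form $\Vert \Delta_{\overline{S}_c}\Vert_1 \leq C \Vert \Delta_{S_c}\Vert_1$), which is why the authors state it as a standalone lemma.
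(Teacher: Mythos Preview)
Your proof is correct and takes essentially the same approach as the paper: both arguments use the decomposition of the $\ell_1$ norm over the disjoint index sets $S_c$ and $\overline{S}_c$ together with the reverse triangle inequality. Your version is in fact slightly more direct, splitting by support first and then applying the reverse triangle inequality only on the $S_c$ block, whereas the paper groups the terms as $(\theta^{\ast}_{c})_{S_c}+\Delta_{\overline{S}_c}$ and $(\theta^{\ast}_{c})_{\overline{S}_c}+\Delta_{S_c}$ before simplifying; the content is identical.
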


\begin{proof}
\lbrack Proof of Lemma \ref{lemma:bound_l1_diff}] 
\begin{align*}
\Vert \theta^{\ast}_{c} + {\Delta}\Vert_{1}&=\Vert(\theta^{%
\ast}_{c})_{S_{c}} + (\theta^{\ast}_{c})_{\overline{S}_{c}}+ {\Delta}%
_{S_{c}}+ {\Delta}_{\overline{S}_{c}} \Vert_{1} \\
& \geq \Vert (\theta^{\ast}_{c})_{S_{c}} + {\Delta}_{\overline{S}_{c}}
\Vert_{1}- \Vert (\theta^{\ast}_{c})_{\overline{S}_{c}} +{\Delta}_{S_{c}}
\Vert_{1} \\
& = \Vert (\theta^{\ast}_{c})_{S_{c}} +{\Delta}_{\overline{S}_{c}}
\Vert_{1}- \Vert {\Delta}_{S_{c}} \Vert_{1} \\
& = \Vert (\theta^{\ast}_{c})_{S_{c}}\Vert_{1} + \Vert{\Delta}_{\overline{S}%
_{c}} \Vert_{1}- \Vert {\Delta}_{S_{c}} \Vert_{1}
\end{align*}
Hence 
\begin{equation*}
\Vert \theta^{\ast}_{c}+\Delta \Vert_{1}-\Vert \theta^{\ast}_{c} \Vert_{1}
\geq \Vert {\Delta}_{S^{c}} \Vert_{1}-\Vert{\Delta}_{S_{c}}\Vert_{1}.
\end{equation*}
\end{proof}

Given constants $C,s>0$ define the set 
\begin{equation*}
\mathbb{D}(C,s)=\left\lbrace \Delta \in \mathbb{R}^{p} : \Vert
\Delta\Vert_{1}\leq C \sqrt{s} \Vert \Delta\Vert_{2} \right\rbrace.
\end{equation*}
It is easy to see that $\mathbb{D}(C,s)$ is a cone: if $\Delta \in \mathbb{D}%
(C,s) $ then $t\Delta \in \mathbb{D}(C,s) $ for all positive $t$. We aim at
showing that $\widehat{\Delta} \in \mathbb{D}(C,s)$ for some $C,s>0$. We
will need the following preliminary result.

\begin{lemma}
\label{lemma:pre_cone} Assume Conditions \ref{cond:setting_nuis}, \ref%
{cond:glm} a), \ref{cond:weight} a) and \ref{cond:approx_sparse_S} hold.
Then 
\begin{equation}
\lambda_{c} \Vert \widehat{\Delta}_{\overline{S}_{c}} \Vert_{1} \leq
\lambda_{c} \Vert \widehat{\Delta}_{S_{c}}\Vert_{1} + J \Vert \widehat{\Delta%
} \Vert_{1} + H\Vert \widehat{\Delta}\Vert_{\widehat{\Sigma}_{1}} + \Vert 
\widehat{\Delta}\Vert_{\widehat{\Sigma}_{1}} \Vert \widetilde{\varrho}%
\Vert_{2,n}  \label{eq:pre_cone}
\end{equation}
\end{lemma}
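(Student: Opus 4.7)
The plan is to combine the first--order optimality of $\widehat{\theta}_c$, the $\ell_{1}$-penalty decomposition of Lemma \ref{lemma:bound_l1_diff}, and convexity of the loss in order to reduce the claim to a bound on $|\langle \widehat{\Delta}, \nabla_{\theta} L_{c}(\theta^{\ast}_{c},\phi,w_{\widehat{\beta}})\rangle|$, and then to decompose that gradient into the three pieces appearing in \eqref{eq:pre_cone}. Since $S_{ab}\geq 0$, $w(\cdot)>0$ by Condition~\ref{cond:weight}~a), and $\psi_{c}$ is convex because $\varphi_{c}=\psi_{c}'$ is increasing by Condition~\ref{cond:glm}~a), the map $\theta\mapsto L_{c}(\theta,\phi,w_{\widehat{\beta}})$ is convex.

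Concretely, I would first chain the optimality of $\widehat{\theta}_{c}$ in \eqref{eq:def_estim} with Lemma~\ref{lemma:bound_l1_diff} to obtain
\begin{equation*}
L_{c}(\widehat{\theta}_{c},\phi,w_{\widehat{\beta}}) - L_{c}(\theta^{\ast}_{c},\phi,w_{\widehat{\beta}}) \;\leq\; \lambda_{c}\bigl(\Vert\theta^{\ast}_{c}\Vert_{1}-\Vert\widehat{\theta}_{c}\Vert_{1}\bigr) \;\leq\; \lambda_{c}\bigl(\Vert\widehat{\Delta}_{S_{c}}\Vert_{1}-\Vert\widehat{\Delta}_{\overline{S}_{c}}\Vert_{1}\bigr).
\end{equation*}
Convexity of $L_{c}$ gives the lower bound $L_{c}(\widehat{\theta}_{c},\phi,w_{\widehat{\beta}}) - L_{c}(\theta^{\ast}_{c},\phi,w_{\widehat{\beta}}) \geq \langle \widehat{\Delta}, \nabla_{\theta} L_{c}(\theta^{\ast}_{c},\phi,w_{\widehat{\beta}})\rangle$, so rearranging yields
\begin{equation*}
\lambda_{c}\Vert\widehat{\Delta}_{\overline{S}_{c}}\Vert_{1} \;\leq\; \lambda_{c}\Vert\widehat{\Delta}_{S_{c}}\Vert_{1} + \bigl|\langle \widehat{\Delta}, \nabla_{\theta} L_{c}(\theta^{\ast}_{c},\phi,w_{\widehat{\beta}})\rangle\bigr|,
\end{equation*}
and the task is to bound this inner product by $J\Vert\widehat{\Delta}\Vert_{1}+H\Vert\widehat{\Delta}\Vert_{\widehat{\Sigma}_{1}}+\Vert\widehat{\Delta}\Vert_{\widehat{\Sigma}_{1}}\Vert\widetilde{\varrho}\Vert_{2,n}$.

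For the decomposition, computing $\nabla_{\theta}L_{c}$ and substituting $\varphi_{c}(\langle\theta^{\ast}_{c},X_{i}\rangle)=\varphi^{\ast}_{i}+\varrho_{i}$ isolates the residual $R(\widehat{\beta})$, after which adding and subtracting $\mathcal{R}_{\overline{c}}w_{\widehat{\beta}}X$ yields
\begin{equation*}
\nabla_{\theta}L_{c}(\theta^{\ast}_{c},\phi,w_{\widehat{\beta}}) = R(\widehat{\beta}) + \mathbb{P}_{n}\bigl\{w_{\widehat{\beta}}X(S_{ab}\varphi^{\ast}+\mathcal{R}_{\overline{c}})\bigr\} + \mathbb{P}_{n}\bigl\{m_{\overline{c}}(w_{\widehat{\beta}}\phi)-\mathcal{R}_{\overline{c}}w_{\widehat{\beta}}X\bigr\}.
\end{equation*}
Cauchy--Schwarz under the empirical measure gives $|\langle\widehat{\Delta},R(\widehat{\beta})\rangle|=|\mathbb{P}_{n}\{\langle\widehat{\Delta},X\rangle\widetilde{\varrho}\}|\leq \Vert\widehat{\Delta}\Vert_{\widehat{\Sigma}_{1}}\Vert\widetilde{\varrho}\Vert_{2,n}$, which produces the final term in \eqref{eq:pre_cone}. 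Under Condition~\ref{cond:approx_sparse_S}~a) the remaining two $\mathbb{P}_{n}$-means are exactly the two blocks whose $\ell_{\infty}$-norms define $J$, so H\"older's inequality $|\langle\widehat{\Delta},v\rangle|\leq\Vert\widehat{\Delta}\Vert_{1}\Vert v\Vert_{\infty}$ aggregates to a contribution of at most $J\Vert\widehat{\Delta}\Vert_{1}$, and $H=0$. Under Condition~\ref{cond:approx_sparse_S}~b), I would further split $w_{\widehat{\beta}}=w_{\beta^{\ast}}+(w_{\widehat{\beta}}-w_{\beta^{\ast}})$: the $w_{\beta^{\ast}}$-pieces are those entering $J$ and are bounded via H\"older by $J\Vert\widehat{\Delta}\Vert_{1}$, while $\mathbb{P}_{n}\{(w_{\widehat{\beta}}-w_{\beta^{\ast}})X(S_{ab}\varphi^{\ast}+\mathcal{R}_{\overline{c}})\}$ applied to $\widehat{\Delta}$ is bounded through Cauchy--Schwarz by $\Vert\widehat{\Delta}\Vert_{\widehat{\Sigma}_{1}}H$ directly from the definition of $H$.

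The main obstacle is the bookkeeping in case~b): every additive correction introduced by replacing $w_{\beta^{\ast}}$ with $w_{\widehat{\beta}}$ must be matched with the appropriate term among $J\Vert\widehat{\Delta}\Vert_{1}$, $H\Vert\widehat{\Delta}\Vert_{\widehat{\Sigma}_{1}}$, or $\Vert\widehat{\Delta}\Vert_{\widehat{\Sigma}_{1}}\Vert\widetilde{\varrho}\Vert_{2,n}$. Exploiting the Riesz--representation identity (Condition~\ref{cond:setting_nuis}~c)) to rewrite the $m_{\overline{c}}$-increments and the convexity-based first-order lower bound (rather than a quadratic one, which is not needed here since the left-hand side is linear in $\Vert\widehat{\Delta}_{\overline{S}_{c}}\Vert_{1}$) is the key to making every piece fit on the right-hand side of \eqref{eq:pre_cone}.
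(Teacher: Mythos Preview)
Your proposal is correct and follows essentially the same route as the paper: optimality of $\widehat{\theta}_{c}$ combined with Lemma~\ref{lemma:bound_l1_diff}, the convexity lower bound $L_{c}(\widehat{\theta}_{c})-L_{c}(\theta^{\ast}_{c})\geq\langle\nabla L_{c}(\theta^{\ast}_{c}),\widehat{\Delta}\rangle$, and then the same gradient decomposition with H\"older for the $J$-pieces, Cauchy--Schwarz for the $R(\widehat{\beta})$ term, and (in case~b)) the $w_{\widehat{\beta}}=w_{\beta^{\ast}}+(w_{\widehat{\beta}}-w_{\beta^{\ast}})$ split producing the $H$-term. The only cosmetic difference is that the paper works with the signed inner product and a lower bound rather than the absolute value, which is equivalent.
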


\begin{proof}
The definition of $\widehat{\Delta}$ and Lemma \ref{lemma:bound_l1_diff}
imply 
\begin{equation}
L_{c}(\theta^{\ast}_{c}+\widehat{\Delta}, \phi, w_{\widehat{\beta}%
})-L_{c}(\theta^{\ast}_{c}, \phi, w_{\widehat{\beta}}) \leq
\lambda_{c}\left( \Vert \theta^{\ast}_{c} \Vert_{1}- \Vert \widehat{\theta}%
_{c} \Vert_{1}\right)\leq \lambda_{c}\left( \Vert \widehat{\Delta}%
_{S_{c}}\Vert_{1} - \Vert \widehat{\Delta}_{\overline{S}_{c}}
\Vert_{1}\right).  \label{eq:bound_L_def}
\end{equation}
By Conditions \ref{cond:glm} a) and \ref{cond:weight} a) ($\psi_{c}$ is
convex and $w$ is positive), the convexity of $L_{c}(\cdot, \phi, w_{%
\widehat{\beta}})$ implies that 
\begin{equation}
L_{c}(\theta^{\ast}_{c}+\widehat{\Delta}, \phi, w_{\widehat{\beta}%
})-L_{c}(\theta^{\ast}_{c}, \phi, w_{\widehat{\beta}})\geq \langle \nabla
L_{c}(\theta^{\ast}_{c}, \phi, w_{\widehat{\beta}}), \widehat{\Delta}
\rangle.  \label{eq:lowbound_convex}
\end{equation}
If Condition \ref{cond:approx_sparse_S} a) holds, we decompose $\nabla
L_{c}(\theta^{\ast}_{c}, \phi, w_{\widehat{\beta}})$ as 
\begin{align*}
\nabla L_{c}(\theta^{\ast}_{c}, \phi, w_{\widehat{\beta}})&= \mathbb{P}%
_{n}\left\lbrace S_{ab}X w_{\widehat{\beta}} {\varphi_{c}}(\langle
\theta^{\ast}_{c}, X\rangle) + m_{\overline{c}}(\phi w_{\widehat{\beta}})
\right\rbrace \\
&= \mathbb{P}_{n}\left\lbrace S_{ab}X w_{\widehat{\beta}} \varphi^{\ast} +
m_{\overline{c}}(\phi w_{\widehat{\beta}}) \right\rbrace + \mathbb{P}%
_{n}\left\lbrace X w_{\widehat{\beta}} S_{ab} ({\varphi_{c}}(\langle
\theta^{\ast}_{c}, X\rangle) -\varphi^{\ast} )\right\rbrace \\
&= \mathbb{P}_{n}\left\lbrace S_{ab}X w_{\widehat{\beta}} \varphi^{\ast} +
m_{\overline{c}}(\phi w_{\widehat{\beta}}) \right\rbrace + \mathbb{P}%
_{n}\left\lbrace X w_{\widehat{\beta}} S_{ab} \varrho\right\rbrace \\
&= \mathbb{P}_{n}\left\lbrace S_{ab}X w_{\widehat{\beta}} \varphi^{\ast} + 
\mathcal{R}_{\overline{c}}X w_{\widehat{\beta}} \right\rbrace + \mathbb{P}%
_{n}\left\lbrace m_{\overline{c}}(\phi w_{\widehat{\beta}}) -\mathcal{R}_{%
\overline{c}}X w_{\widehat{\beta}} \right\rbrace +\mathbb{P}_{n}\left\lbrace
X w_{\widehat{\beta}} S_{ab} \varrho\right\rbrace. \\
&= \mathbb{P}_{n}\left\lbrace S_{ab}X w_{\widehat{\beta}} \varphi^{\ast} + 
\mathcal{R}_{\overline{c}}X w_{\widehat{\beta}} \right\rbrace + \mathbb{P}%
_{n}\left\lbrace m_{\overline{c}}(\phi w_{\widehat{\beta}}) -\mathcal{R}_{%
\overline{c}}X w_{\widehat{\beta}} \right\rbrace + R(\widehat{\beta}).
\end{align*}
Holder's inequality implies 
\begin{equation}
\vert \langle \mathbb{P}_{n}\left\lbrace S_{ab}X w_{\widehat{\beta}}
\varphi^{\ast} + \mathcal{R}_{\overline{c}}X w_{\widehat{\beta}}
\right\rbrace + \mathbb{P}_{n}\left\lbrace m_{\overline{c}}(\phi w_{\widehat{%
\beta}})-\mathcal{R}_{\overline{c}}X w_{\widehat{\beta}} \right\rbrace , 
\widehat{\Delta}\rangle \vert \leq J \Vert \widehat{\Delta} \Vert_{1}.
\label{eq:bound_J_Delta}
\end{equation}
Using Cauchy-Schwartz 
\begin{align}
\vert \langle R(\widehat{\beta}), \widehat{\Delta}\rangle \vert= \vert 
\mathbb{P}_{n}\left\lbrace \langle \widehat{\Delta},X\rangle w_{\widehat{%
\beta}} S_{ab} \varrho\right\rbrace\vert & \leq \left( \mathbb{P}%
_{n}\left\lbrace \langle \widehat{\Delta},X\rangle^{2}
\right\rbrace\right)^{1/2} \left( \mathbb{P}_{n}\left\lbrace (S_{ab} w_{%
\widehat{\beta}} \varrho)^{2}\right\rbrace\right)^{1/2}  \notag \\
& = \Vert \widehat{\Delta}\Vert_{\widehat{\Sigma}_{1}} \Vert \widetilde{%
\varrho}\Vert_{2,n}.  \label{eq:bound_R_Delta}
\end{align}
On the other hand if Condition \ref{cond:approx_sparse_S} b) holds we
decompose $\nabla L_{c}(\theta^{\ast}_{c}, \phi, w_{\widehat{\beta}})$ as 
\begin{align*}
\nabla L_{c}(\theta^{\ast}_{c}, \phi, w_{\widehat{\beta}})&= \mathbb{P}%
_{n}\left\lbrace S_{ab}X w_{\widehat{\beta}} \varphi^{\ast} + \mathcal{R}_{%
\overline{c}}X w_{\widehat{\beta}} \right\rbrace + \mathbb{P}%
_{n}\left\lbrace m_{\overline{c}}(\phi w_{\widehat{\beta}}) -\mathcal{R}_{%
\overline{c}}X w_{\widehat{\beta}} \right\rbrace + R(\widehat{\beta}) \\
& =\mathbb{P}_{n}\left\lbrace S_{ab}X w_{\beta^{\ast}} \varphi^{\ast} + 
\mathcal{R}_{\overline{c}}X w_{\beta^{\ast}} \right\rbrace + \mathbb{P}%
_{n}\left\lbrace m_{\overline{c}}(\phi w_{\widehat{\beta}}) -\mathcal{R}_{%
\overline{c}}X w_{\widehat{\beta}} \right\rbrace \\
&+ \mathbb{P}_{n}\left\lbrace (w_{\widehat{\beta}}-w_{\beta^{\ast}})%
\left(S_{ab} \varphi^{\ast} + \mathcal{R}_{\overline{c}}\right) X
\right\rbrace + R(\widehat{\beta}).
\end{align*}
The Cauchy-Schwartz inequality yields 
\begin{equation}
\left\vert \langle \mathbb{P}_{n}\left\lbrace (w_{\widehat{\beta}%
}-w_{\beta^{\ast}})\left(S_{ab} \varphi^{\ast} + \mathcal{R}_{\overline{c}%
}\right)X \right\rbrace, \Delta\rangle \right\vert \leq H \Vert \widehat{%
\Delta}\Vert_{\widehat{\Sigma}_{1}}.  \label{eq:bound_H}
\end{equation}
Holder's inequality implies 
\begin{equation}
\vert \langle \mathbb{P}_{n}\left\lbrace S_{ab}X w_{\beta^{\ast}}
\varphi^{\ast} + \mathcal{R}_{\overline{c}}X w_{\beta^{\ast}} \right\rbrace
+ \mathbb{P}_{n}\left\lbrace m_{\overline{c}}(\phi w_{\beta^{\ast}})-%
\mathcal{R}_{\overline{c}}X w_{\beta^{\ast}} \right\rbrace , \widehat{\Delta}%
\rangle \vert \leq J \Vert \widehat{\Delta} \Vert_{1}.
\label{eq:bound_J_Delta_betastar}
\end{equation}
Putting together equations \eqref{eq:bound_J_Delta}, \eqref{eq:bound_R_Delta}%
, \eqref{eq:bound_H} and \eqref{eq:bound_J_Delta_betastar} we get that if
either Condition \ref{cond:approx_sparse_S} a) or b) holds 
\begin{equation}
\langle \nabla L_{c}(\theta^{\ast}_{c}, \phi, w_{\widehat{\beta}}), \widehat{%
\Delta} \rangle \geq - J\Vert \widehat{\Delta} \Vert_{1} - H \Vert \widehat{%
\Delta}\Vert_{\widehat{\Sigma}_{1}} - \Vert \widehat{\Delta}\Vert_{\widehat{%
\Sigma}_{1}} \Vert \widetilde{\varrho}\Vert_{2,n}.  \label{eq:lowbound_grad}
\end{equation}
This together with equations \eqref{eq:bound_L_def}, %
\eqref{eq:lowbound_convex} implies 
\begin{align*}
-J \Vert \widehat{\Delta} \Vert_{1} - H \Vert \widehat{\Delta}\Vert_{%
\widehat{\Sigma}_{1}}- \Vert \widehat{\Delta}\Vert_{\widehat{\Sigma}_{1}}
\Vert \widetilde{\varrho}\Vert_{2,n} \leq \lambda_{c}\left( \Vert \widehat{%
\Delta}_{S_{c}}\Vert_{1} - \Vert \widehat{\Delta}_{\overline{S}_{c}}
\Vert_{1}\right).
\end{align*}
Rearranging this last equation leads to 
\begin{equation*}
\lambda_{c} \Vert \widehat{\Delta}_{\overline{S}_{c}} \Vert_{1} \leq
\lambda_{c} \Vert \widehat{\Delta}_{S_{c}}\Vert_{1} + J \Vert \widehat{\Delta%
} \Vert_{1} + H \Vert \widehat{\Delta}\Vert_{\widehat{\Sigma}_{1}}+\Vert 
\widehat{\Delta}\Vert_{\widehat{\Sigma}_{1}} \Vert \widetilde{\varrho}%
\Vert_{2,n}.
\end{equation*}
\end{proof}

The following proposition is the key result of this section. It shows that
if: $\lambda_{c}$ overrules the noise level as measured by $J$, the
approximation error as measured by $\Vert \widetilde{\varrho} \Vert_{2}$ is
small, $\widehat{\beta}$ is close to $\beta^{\ast}$ and ${\widehat{\Sigma}%
_{1}}$ satisfies a restricted upper eigenvalue type condition, then there
exists $C>0$ such that either 
\begin{equation*}
\widehat{\Delta}\in\mathbb{D}(C, s_{c}) \quad \text{ or } \quad \widehat{%
\Delta}\in\mathbb{D}(C, \max(s_{c}, s_{\beta})).
\end{equation*}
It will be the case that $\widehat{\Delta}\in\mathbb{D}(C, s_{c})$ when
Condition \ref{cond:approx_sparse_S} a) holds or when a constant weight
function is used.

\begin{proposition}
\label{prop:cone} Assume Conditions \ref{cond:setting_nuis}, \ref{cond:glm}
a), \ref{cond:weight} and \ref{cond:approx_sparse_S} hold. Assume there
exist fixed non-negative constants $c_{\lambda}, c_{H},
c_{\rho},c_{\Sigma_1} $ and $n_0\in\mathbb{N}$ such that $c_{\lambda}>0$ and
for all $n\geq n_0$ 
\begin{align*}
\lambda_{c}=c_{\lambda}\sqrt{\frac{\log(p)}{n}},\quad J \leq \frac{%
\lambda_{c}}{2}, \quad H\leq c_{H} \sqrt{\frac{s_{\beta}\log(p)}{n}}, \quad
\Vert \widetilde{\varrho}\Vert_{2,n}\leq c_{\rho}\sqrt{\frac{s_{c}\log(p)}{n}%
}
\end{align*}
and 
\begin{align*}
\Vert {\Delta}\Vert_{\widehat{\Sigma}_{1}}^{2}\leq c_{\Sigma_1} \left( \Vert 
{\Delta}\Vert_{2}^{2} + \frac{\log(p)}{n}\Vert {\Delta}\Vert_{1}^{2}\right)
\quad \text{ for all } \Delta.
\end{align*}
Then there exist $n_1\geq n_0$ and $c_{\mathcal{C}}>0$ depending only on $%
c_{\lambda}, c_{H}, c_{\rho},c_{\Sigma_1}$ such that if $n\geq n_1$ then

\begin{itemize}
\item If $c_{H}=0$ 
\begin{equation*}
\widehat{\Delta} \in \mathbb{D}(c_{\mathcal{C}}, s_{c}).
\end{equation*}

\item If $c_{H}>0$ 
\begin{equation*}
\widehat{\Delta} \in \mathbb{D}(c_{\mathcal{C}}, \max(s_{c}, s_{\beta})).
\end{equation*}
\end{itemize}
\end{proposition}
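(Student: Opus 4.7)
The plan is to derive Proposition \ref{prop:cone} from the pre-cone inequality \eqref{eq:pre_cone} of Lemma \ref{lemma:pre_cone} by purely algebraic manipulation, leveraging the four quantitative hypotheses and the restricted upper-eigenvalue bound on $\Vert\cdot\Vert_{\widehat\Sigma_1}$. Writing $s^\star=s_c$ when $c_H=0$ and $s^\star=\max(s_c,s_\beta)$ when $c_H>0$, my goal is to show $\Vert\widehat\Delta_{\overline S_c}\Vert_1\leq c\sqrt{s^\star}\,\Vert\widehat\Delta\Vert_2$ for a constant $c$ depending only on $c_\lambda,c_H,c_\rho,c_{\Sigma_1}$; adding the trivial bound $\Vert\widehat\Delta_{S_c}\Vert_1\leq\sqrt{s_c}\,\Vert\widehat\Delta\Vert_2$ then yields $\widehat\Delta\in\mathbb D(c_{\mathcal C},s^\star)$ with $c_{\mathcal C}=1+c$.

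First I would split $\Vert\widehat\Delta\Vert_1=\Vert\widehat\Delta_{S_c}\Vert_1+\Vert\widehat\Delta_{\overline S_c}\Vert_1$ inside the $J\Vert\widehat\Delta\Vert_1$ term of \eqref{eq:pre_cone} and use $J\leq\lambda_c/2$ to move $(\lambda_c/2)\Vert\widehat\Delta_{\overline S_c}\Vert_1$ to the left, obtaining
\[
\frac{\lambda_c}{2}\,\Vert\widehat\Delta_{\overline S_c}\Vert_1 \;\leq\; \frac{3\lambda_c}{2}\,\Vert\widehat\Delta_{S_c}\Vert_1 \;+\; \bigl(H+\Vert\widetilde\varrho\Vert_{2,n}\bigr)\,\Vert\widehat\Delta\Vert_{\widehat\Sigma_1}.
\]
Next I would invoke the hypothesis $\Vert\widehat\Delta\Vert_{\widehat\Sigma_1}\leq\sqrt{c_{\Sigma_1}}\bigl(\Vert\widehat\Delta\Vert_2+\sqrt{\log(p)/n}\,\Vert\widehat\Delta\Vert_1\bigr)$ (obtained from $\sqrt{a^2+b^2}\leq a+b$), together with $H+\Vert\widetilde\varrho\Vert_{2,n}\leq C_A\sqrt{s^\star\log(p)/n}$ (the assumption $c_H=0$ kills the $s_\beta$ contribution, which is precisely why $s^\star$ may be taken as $s_c$ in that case), and $\Vert\widehat\Delta_{S_c}\Vert_1\leq\sqrt{s_c}\,\Vert\widehat\Delta\Vert_2\leq\sqrt{s^\star}\,\Vert\widehat\Delta\Vert_2$. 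After these substitutions the right-hand side is a linear combination of $\sqrt{s^\star\log(p)/n}\,\Vert\widehat\Delta\Vert_2$ and $\sqrt{s^\star}(\log(p)/n)\,\Vert\widehat\Delta\Vert_1$.

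The main step is then a standard absorption argument: the surviving $\Vert\widehat\Delta\Vert_1$-term has coefficient of order $\sqrt{s^\star}\log(p)/n$, which by the hypothesis $s^\star\log(p)/n\to 0$ is $o(\lambda_c)=o(\sqrt{\log(p)/n})$. Choosing $n_1\geq n_0$ so large that $C_A\sqrt{c_{\Sigma_1}}\sqrt{s^\star\log(p)/n}\leq c_\lambda/4$ lets me absorb the $\Vert\widehat\Delta_{\overline S_c}\Vert_1$ component into the left-hand side (leaving coefficient $\lambda_c/4$) and bound the $\Vert\widehat\Delta_{S_c}\Vert_1$ component by $\sqrt{s^\star}\,\Vert\widehat\Delta\Vert_2$; dividing through by $\lambda_c/4=(c_\lambda/4)\sqrt{\log(p)/n}$ produces the desired cone bound. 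The only real obstacle is keeping the constants universal—i.e., verifying that both the absorption threshold $n_1$ and the resulting $c_{\mathcal C}$ depend only on $c_\lambda,c_H,c_\rho,c_{\Sigma_1}$—which is a bookkeeping exercise rather than a conceptual difficulty.
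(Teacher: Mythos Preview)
Your proposal is correct and starts from the same place as the paper --- Lemma~\ref{lemma:pre_cone} plus the four quantitative hypotheses and the restricted upper-eigenvalue bound --- but organizes the algebra differently. After reaching the inequality
\[
\tfrac{\lambda_c}{2}\,\Vert\widehat\Delta_{\overline S_c}\Vert_1 \;\leq\; \tfrac{3\lambda_c}{2}\,\Vert\widehat\Delta_{S_c}\Vert_1 \;+\; \bigl(H+\Vert\widetilde\varrho\Vert_{2,n}\bigr)\,\Vert\widehat\Delta\Vert_{\widehat\Sigma_1},
\]
the paper divides by $\lambda_c$, applies a three-way pigeonhole (one of the three right-hand terms must dominate), and treats each alternative separately; in the two alternatives involving $\Vert\widehat\Delta\Vert_{\widehat\Sigma_1}$ it substitutes the eigenvalue bound back into itself to bootstrap $\Vert\widehat\Delta\Vert_{\widehat\Sigma_1}\leq 3\sqrt{c_{\Sigma_1}}\,\Vert\widehat\Delta\Vert_2$ before concluding. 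You instead substitute the eigenvalue bound once, obtain a single $\Vert\widehat\Delta\Vert_1$-term with coefficient $O(\sqrt{s^\star}\log(p)/n)=o(\lambda_c)$, and absorb it directly into the left side. Your route is a bit more streamlined and avoids the case split; the paper's version makes it slightly more transparent how each error source ($J$, $H$, $\Vert\widetilde\varrho\Vert_{2,n}$) separately feeds into the cone constant. Either way the resulting $c_{\mathcal C}$ depends only on $c_\lambda,c_H,c_\rho,c_{\Sigma_1}$, and the threshold $n_1$ depends on these constants together with the rate at which $s^\star\log(p)/n\to 0$, exactly as in the paper.
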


\begin{proof}
\lbrack Proof of Proposition \ref{prop:cone}] By Lemma \ref{lemma:pre_cone} 
\begin{equation*}
\lambda_{c} \Vert \widehat{\Delta}_{\overline{S}_{c}} \Vert_{1} \leq
\lambda_{c} \Vert \widehat{\Delta}_{S_{c}}\Vert_{1} + J \Vert \widehat{\Delta%
} \Vert_{1} + H \Vert \widehat{\Delta}\Vert_{\widehat{\Sigma}_{1}} + \Vert 
\widehat{\Delta}\Vert_{\widehat{\Sigma}_{1}} \Vert \widetilde{\varrho}%
\Vert_{2,n}.
\end{equation*}
Take $n$ larger than $n_0$. Using the assumptions we get 
\begin{align*}
\lambda_{c} \Vert \widehat{\Delta}_{\overline{S}_{c}} \Vert_{1} \leq
\lambda_{c} \Vert \widehat{\Delta}_{S_{c}}\Vert_{1} + \frac{\lambda_{c}}{2}
\Vert \widehat{\Delta} \Vert_{1} + c_{H} \sqrt{\frac{s_{\beta}\log(p)}{n}}
\Vert \widehat{\Delta}\Vert_{\widehat{\Sigma}_{1}} + c_{\rho}\sqrt{\frac{%
s_{c}\log(p)}{n}} \Vert \widehat{\Delta}\Vert_{\widehat{\Sigma}_{1}}.
\end{align*}
Since $\Vert \widehat{\Delta} \Vert_{1} = \Vert \widehat{\Delta}_{S_{c}}
\Vert_{1}+ \Vert \widehat{\Delta} _{\overline{S}_{c}}\Vert_{1}$ it follows
that 
\begin{align*}
\frac{\lambda_{c}}{2} \Vert \widehat{\Delta}_{\overline{S}_{c}} \Vert_{1}
&\leq \frac{3\lambda_{c}}{2} \Vert \widehat{\Delta}_{S_{c}}\Vert_{1} + c_{H} 
\sqrt{\frac{s_{\beta}\log(p)}{n}} \Vert \widehat{\Delta}\Vert_{\widehat{%
\Sigma}_{1}} + c_{\rho}\sqrt{\frac{s_{c}\log(p)}{n}} \Vert \widehat{\Delta}%
\Vert_{\widehat{\Sigma}_{1}}.
\end{align*}
Since $\lambda_{c}=c_{\lambda}\sqrt{{\log(p)}/{n}}$ the equation above
implies 
\begin{align*}
\frac{1}{2} \Vert \widehat{\Delta}_{\overline{S}_{c}} \Vert_{1} &\leq \frac{3%
}{2} \Vert \widehat{\Delta}_{S_{c}}\Vert_{1} + \frac{\sqrt{s_{\beta}}}{%
c_{\lambda}} c_{H} \Vert \widehat{\Delta}\Vert_{\widehat{\Sigma}_{1}} + 
\frac{\sqrt{s_{c}}}{c_{\lambda}} c_{\rho} \Vert \widehat{\Delta}\Vert_{%
\widehat{\Sigma}_{1}}.
\end{align*}
Then at least one of the following has to hold: 
\begin{align}
\frac{1}{6}\Vert \widehat{\Delta}_{\overline{S}_{c}} \Vert_{1} &\leq \frac{3%
}{2} \Vert \widehat{\Delta}_{S_{c}} \Vert_{1},  \label{eq:cone_alt1} \\
\frac{1}{6}\Vert \widehat{\Delta}_{\overline{S}_{c}} \Vert_{1} &\leq \frac{%
\sqrt{s_{\beta}}}{c_{\lambda}} c_{H} \Vert \widehat{\Delta}\Vert_{\widehat{%
\Sigma}_{1}},  \label{eq:cone_alt2} \\
\frac{1}{6}\Vert \widehat{\Delta}_{\overline{S}_{c}} \Vert_{1} &\leq \frac{%
\sqrt{s_{c}}}{c_{\lambda}} c_{\rho} \Vert \widehat{\Delta}\Vert_{\widehat{%
\Sigma}_{1}}.  \label{eq:cone_alt3}
\end{align}
If \eqref{eq:cone_alt1} holds then 
\begin{equation}
\Vert \widehat{\Delta}_{\overline{S}_{c}} \Vert_{1} \leq 9 \sqrt{s_{c}}
\Vert \widehat{\Delta}\Vert_{2}  \notag
\end{equation}
and hence 
\begin{equation}
\Vert \widehat{\Delta} \Vert_{1}= \Vert \widehat{\Delta}_{S_{c}}\Vert_{1} +
\Vert \widehat{\Delta}_{\overline{S}_{c}}\Vert_{1}\leq 10 \sqrt{s_{c}}\Vert 
\widehat{\Delta}\Vert_{2}.  \label{eq:cone_alt1_end}
\end{equation}
If \eqref{eq:cone_alt2} holds then 
\begin{equation*}
\Vert \widehat{\Delta}\Vert_{1}=\Vert \widehat{\Delta}_{S_{c}}\Vert_{1} +
\Vert \widehat{\Delta}_{\overline{S}_{c}}\Vert_{1}\leq \sqrt{s_{c}}\Vert 
\widehat{\Delta}\Vert_{2} + \frac{6c_{H}\sqrt{s_{\beta}}}{c_{\lambda}} \Vert 
\widehat{\Delta}\Vert_{\widehat{\Sigma}_{1}}.
\end{equation*}
Hence 
\begin{equation}
\Vert \widehat{\Delta}\Vert_{1}^{2}\leq 2 s_{c}\Vert \widehat{\Delta}%
\Vert_{2}^{2} + 2 \left(\frac{6c_{H}}{c_{\lambda}} \right)^{2}s_{\beta}
\Vert \widehat{\Delta}\Vert_{\widehat{\Sigma}_{1}}^{2}.
\label{eq:cone_ineq_l1}
\end{equation}
Now by assumption 
\begin{equation*}
\Vert \widehat{\Delta}\Vert_{\widehat{\Sigma}_{1}}^{2}\leq c_{\Sigma_1}
\left( \Vert \widehat{\Delta}\Vert_{2}^{2} + \frac{\log(p)}{n}\Vert \widehat{%
\Delta}\Vert_{1}^{2}\right)
\end{equation*}
and then using \eqref{eq:cone_ineq_l1} 
\begin{equation*}
\Vert \widehat{\Delta}\Vert_{\widehat{\Sigma}_{1}}^{2}\leq c_{\Sigma_1}
\Vert \widehat{\Delta}\Vert_{2}^{2} + 2c_{\Sigma_1} \frac{s_{c}\log(p)}{n}
\Vert \widehat{\Delta}\Vert_{2}^{2} + 2c_{\Sigma_1} \left(\frac{6c_{H}}{%
c_{\lambda}} \right)^{2} \frac{s_{\beta}\log(p)}{n}\Vert \widehat{\Delta}%
\Vert_{\widehat{\Sigma}_{1}}^{2}.
\end{equation*}
It follows that 
\begin{equation*}
\Vert \widehat{\Delta}\Vert_{\widehat{\Sigma}_{1}}^{2} \left(1 -
2c_{\Sigma_1} \left(\frac{6c_{H}}{c_{\lambda}} \right)^{2} \frac{%
s_{\beta}\log(p)}{n}\right)\leq \Vert \widehat{\Delta}\Vert_{2}^{2}
\left(c_{\Sigma_1} + 2c_{\Sigma_1} \frac{s_{c}\log(p)}{n} \right)
\end{equation*}
Since by Condition \ref{cond:weight} c) and Condition \ref%
{cond:approx_sparse_S} we have ${s_{\beta}\log(p)}/{n}\to 0$ and ${%
s_{c}\log(p)}/{n}\to 0$, for sufficiently large $n$ 
\begin{equation*}
\Vert \widehat{\Delta}\Vert_{\widehat{\Sigma}_{1}}^{2}\leq 9 c_{\Sigma_1}
\Vert \widehat{\Delta}\Vert_{2}^{2}
\end{equation*}
and hence 
\begin{equation*}
\Vert \widehat{\Delta}\Vert_{\widehat{\Sigma}_{1}} \leq
3c_{\Sigma_1}^{1/2}\Vert \widehat{\Delta}\Vert_{2}.
\end{equation*}
Going back to \eqref{eq:cone_alt2}, we get 
\begin{equation*}
\Vert \widehat{\Delta}_{\overline{S}_{c}} \Vert_{1} \leq \frac{6c_{H} }{%
c_{\lambda}}\sqrt{s_{\beta}} \Vert \widehat{\Delta}\Vert_{\widehat{\Sigma}%
_{1}} \leq \frac{18c_{H} c_{\Sigma_1}^{1/2}}{c_{\lambda}}\sqrt{s_{\beta}}
\Vert \widehat{\Delta}\Vert_{2}
\end{equation*}
and 
\begin{equation}
\Vert \widehat{\Delta}\Vert_{1}=\Vert \widehat{\Delta}_{S_{c}}\Vert_{1} +
\Vert \widehat{\Delta}_{\overline{S}_{c}}\Vert_{1}\leq \sqrt{s_{c}}\Vert 
\widehat{\Delta}\Vert_{2} + \frac{18c_{H} c_{\Sigma_1}^{1/2}}{c_{\lambda}}%
\sqrt{s_{\beta}} \Vert \widehat{\Delta}\Vert_{2}.  \label{eq:cone_alt2_end}
\end{equation}
Similarly, if \eqref{eq:cone_alt3} holds, for sufficiently large $n$ 
\begin{equation}
\Vert \widehat{\Delta}\Vert_{1} \leq \left(1 + \frac{18c_{\rho}
c_{\Sigma_1}^{1/2}}{c_{\lambda}}\right)\sqrt{s_{c}} \Vert \widehat{\Delta}%
\Vert_{2}.  \label{eq:cone_alt3_end}
\end{equation}
Now, if $c_{H}=0$, \eqref{eq:cone_alt1_end}, \eqref{eq:cone_alt2_end} and %
\eqref{eq:cone_alt3_end} imply that there exist $n_1\in\mathbb{N}$ such that
if $n\geq n_1$ then 
\begin{equation*}
\Vert \widehat{\Delta}\Vert_{1}\leq c_{\mathcal{C}} \sqrt{s_{c}}\Vert 
\widehat{\Delta}\Vert_{2},
\end{equation*}
where $c_{\mathcal{C}}$ depends only on $c_{\lambda}, c_{\rho}, c_{\Sigma_1}$%
. If $c_{H}>0$ we get 
\begin{equation*}
\Vert \widehat{\Delta}\Vert_{1}\leq c_{\mathcal{C}} \max(\sqrt{s_{c}},\sqrt{%
s_{\beta}})\Vert \widehat{\Delta}\Vert_{2},
\end{equation*}
where $c_{\mathcal{C}}$ depends only on $c_{\lambda}, c_{H},c_{\rho},
c_{\Sigma_1}$. The proposition is proven.
\end{proof}

Let 
\begin{equation*}
F(\Delta,\widehat{\beta})=L_{c}(\theta^{\ast}_{c}+\Delta, \phi, w_{\widehat{%
\beta}})-L_{c}(\theta^{\ast}_{c}, \phi, w_{\widehat{\beta}})+
\lambda_{c}\left( \Vert\theta^{\ast}_{c}+\Delta
\Vert_{1}-\Vert\theta^{\ast}_{c} \Vert_{1} \right).
\end{equation*}

The following lemma is similar to Lemma 4 from \cite{NegahbanSS}.

\begin{lemma}
\label{lemma:lowbound_loc} Assume Conditions \ref{cond:setting_nuis}, \ref%
{cond:glm} a) and \ref{cond:weight} a) hold. Let $C,s>0$ and assume $%
\widehat{\Delta}\in \mathbb{D}(C,s)$ . Let $\delta >0$. If $F(\Delta, 
\widehat{\beta}) >0 $ for all $\Delta\in\mathbb{D}(C,s)$ with $\Vert \Delta
\Vert_{2} = \delta$ then $\Vert \widehat{\Delta}\Vert_{2} \leq \delta$.
\end{lemma}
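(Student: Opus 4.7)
The plan is to run the standard convexity-plus-cone argument by contradiction. I would first observe two properties of $F(\cdot,\widehat{\beta})$ that fall out of the assumptions: (i) $F(0,\widehat{\beta})=0$ trivially, and (ii) $F(\widehat{\Delta},\widehat{\beta})\leq 0$ because $\widehat{\theta}_c=\theta_c^{\ast}+\widehat{\Delta}$ is a minimizer of $L_c(\theta,\phi,w_{\widehat{\beta}})+\lambda_c\|\theta\|_1$. I would also note that $\Delta\mapsto F(\Delta,\widehat{\beta})$ is convex: the $\ell_1$ piece is convex by inspection, and the loss $L_c$ is convex in $\theta$ because Condition \ref{cond:glm} a) makes $\psi_c$ convex, Condition \ref{cond:weight} a) makes $w_{\widehat{\beta}}$ nonnegative, $S_{ab}\geq 0$ by Condition \ref{cond:setting_nuis} a), and the term $\langle\theta,m_{\overline{c}}(O,w_{\widehat{\beta}}\phi)\rangle$ is linear in $\theta$.

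Next I would exploit the cone structure. Assume for contradiction that $\|\widehat{\Delta}\|_2>\delta$. Since $\mathbb{D}(C,s)$ is closed under positive scaling, $\widehat{\Delta}\in\mathbb{D}(C,s)$ implies $t\widehat{\Delta}\in\mathbb{D}(C,s)$ for every $t\in(0,1]$. Choosing $t=\delta/\|\widehat{\Delta}\|_2\in(0,1)$ gives a point $\Delta_t:=t\widehat{\Delta}\in\mathbb{D}(C,s)$ with $\|\Delta_t\|_2=\delta$.

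Now apply convexity: writing $\Delta_t=t\widehat{\Delta}+(1-t)\cdot 0$,
\begin{equation*}
F(\Delta_t,\widehat{\beta})\;\leq\; tF(\widehat{\Delta},\widehat{\beta})+(1-t)F(0,\widehat{\beta})\;\leq\; 0,
\end{equation*}
which contradicts the hypothesis that $F(\Delta,\widehat{\beta})>0$ for every $\Delta\in\mathbb{D}(C,s)$ with $\|\Delta\|_2=\delta$. Hence $\|\widehat{\Delta}\|_2\leq\delta$.

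There is no real obstacle here: the only mildly delicate step is verifying convexity of $L_c(\cdot,\phi,w_{\widehat{\beta}})$, which relies on the sign condition $P(S_{ab}\geq 0)=1$ together with positivity of the weight and of $\dot{\varphi}_c$; all three are assumed. The rest is the usual ``star-shaped sublevel set of a convex function inside a cone'' contradiction.
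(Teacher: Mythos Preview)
Your proof is correct and follows essentially the same argument as the paper: both proceed by contradiction, rescale $\widehat{\Delta}$ onto the sphere of radius $\delta$ inside the cone $\mathbb{D}(C,s)$, and use convexity of $F(\cdot,\widehat{\beta})$ together with $F(0,\widehat{\beta})=0$ and $F(\widehat{\Delta},\widehat{\beta})\leq 0$ to derive the contradiction. Your write-up is in fact slightly more explicit than the paper's about why $L_c$ (and hence $F$) is convex, which is a nice touch.
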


\begin{proof}
\lbrack Proof of Lemma \ref{lemma:lowbound_loc}] Suppose that $\Vert 
\widehat{\Delta}\Vert_{2} > \delta$. We will show that there exists $\Delta
\in \mathbb{D}(C,s)$ with $\Vert \Delta\Vert_{2}=\delta$ such that $%
F(\Delta, \widehat{\beta})\leq 0$. Let $\Delta = \delta \widehat{\Delta} /
\Vert \widehat{\Delta} \Vert_{2}$. Clearly $\Vert \Delta \Vert_{2} = \delta$%
. Moreover, $\Delta \in \mathbb{D}(C,s)$. Using that by Conditions \ref%
{cond:glm} a) and \ref{cond:weight} a), $F(\cdot,\widehat{\beta})$ is
convex, that $F(\widehat{\Delta},\widehat{\beta})\leq 0$ and $F(0,\widehat{%
\beta})=0$ we see that 
\begin{align*}
F\left( \Delta ,\widehat{\beta}\right)&= F\left( \frac{\delta}{\Vert 
\widehat{\Delta} \Vert_{2}} \widehat{\Delta} + \left(1-\frac{\delta}{\Vert 
\widehat{\Delta} \Vert_{2})}\right)0,\widehat{\beta}\right) \\
&\leq \frac{\delta}{\Vert \widehat{\Delta}\Vert_{2}} F(\widehat{\Delta},%
\widehat{\beta}) + \left(1-\frac{\delta}{\Vert \widehat{\Delta} \Vert_{2})}%
\right)F(0,\widehat{\beta})=\frac{\delta}{\Vert \widehat{\Delta}\Vert_{2}} F(%
\widehat{\Delta},\widehat{\beta}) \leq 0.
\end{align*}
Hence $F\left( \Delta ,\widehat{\beta}\right)\leq 0$, $\Vert \Delta
\Vert_{2} = \delta$ and $\Delta \in \mathbb{D}(C,s)$, what we wanted to show.
\end{proof}

For $\Delta\in\mathbb{R}^{p}$ let 
\begin{equation*}
\delta L_{c}(\Delta, \theta^{\ast}_{c},\widehat{\beta}) =
L_{c}(\theta^{\ast}_{c}+\Delta, \phi, w_{\widehat{\beta}})-L_{c}(\theta^{%
\ast}_{c}, \phi, w_{\widehat{\beta}}) - \langle \nabla
L_{c}(\theta^{\ast}_{c}, \phi, w_{\widehat{\beta}}), \Delta \rangle.
\end{equation*}
Following \cite{NegahbanSS}, we will say that $L_{c}(\cdot, \phi, w_{%
\widehat{\beta}})$ satisfies Restricted Strong Convexity (RSC) with
curvature $\kappa_{RSC}$ over a set $\mathcal{S}$ if 
\begin{equation*}
\delta L_{c}(\Delta, \theta^{\ast}_{c},\widehat{\beta}) \geq
\kappa_{RSC}\Vert \Delta\Vert_{2}^{2} \quad \text{ for all } \Delta\in 
\mathcal{S}.
\end{equation*}

\begin{theorem}
\label{theo:rate_gen} Assume the setting of Proposition \ref{prop:cone}. Let 
$s$ stand for $s_{c}$ if $c_{H}=0$ and for $\max(s_{c},s_{\beta})$ if $%
c_{H}>0$. Let $n_1$ and $c_{\mathcal{C}}$ be as in Proposition \ref%
{prop:cone} and assume there exists $n_2\geq n_1$ such that for all $n\geq
n_2$, $L_{c}(\cdot, \phi, w_{\widehat{\beta}})$ satisfies RSC over $\mathbb{D%
}(c_{\mathcal{C}},s) \cap \lbrace \Delta : \Vert\Delta\Vert_{2}\leq 1
\rbrace $ with curvature $\kappa_{RSC}$. Let 
\begin{equation*}
\upsilon_{n}= \left( \frac{3c_{\lambda}}{2 }+c_{\rho}\left(c_{\Sigma_1}
\left( 1+ c_{\mathcal{C}}^{2} \right)\right)^{1/2} \right) \sqrt{\frac{%
s_{c}\log(p)}{n}}+ c_{H}\left(c_{\Sigma_1} \left( 1+ c_{\mathcal{C}}^{2}
\right)\right)^{1/2} \sqrt{\frac{s_{\beta}\log(p)}{n}}.
\end{equation*}
Then there exists $n_3\in\mathbb{N}$ such that if $n\geq n_3$ 
\begin{equation*}
\widehat{\Delta} \in \mathbb{D}(c_{\mathcal{C}},s), \quad \Vert \widehat{%
\Delta}\Vert_{2}\leq \frac{2\upsilon_{n}}{\kappa_{RSC}}\quad \text{ and }
\quad \Vert \widehat{\Delta}\Vert_{1}\leq \frac{2\upsilon_{n} c_{\mathcal{C}%
} \sqrt{s}}{\kappa_{RSC}} .
\end{equation*}
\end{theorem}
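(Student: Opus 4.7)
\textbf{Proof plan for Theorem \ref{theo:rate_gen}.} My plan is to combine Proposition~\ref{prop:cone}, which already confines $\widehat{\Delta}$ to the cone $\mathbb{D}(c_{\mathcal{C}},s)$, with the localization Lemma~\ref{lemma:lowbound_loc} and the RSC hypothesis, in the standard basic-inequality argument for penalized M-estimators. The first step is to note that for $n \ge n_1$, Proposition~\ref{prop:cone} gives $\widehat{\Delta} \in \mathbb{D}(c_{\mathcal{C}},s)$, so by Lemma~\ref{lemma:lowbound_loc} it suffices to prove that $F(\Delta,\widehat{\beta})>0$ for every $\Delta\in\mathbb{D}(c_{\mathcal{C}},s)$ with $\Vert\Delta\Vert_2=\delta$, where I will take $\delta = 2\upsilon_n/\kappa_{RSC}$. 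Since $\upsilon_n\to 0$, for all $n$ large enough $\delta\le 1$, so the RSC hypothesis will apply to such $\Delta$.

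Next I will decompose
\[
F(\Delta,\widehat{\beta}) = \delta L_c(\Delta,\theta_c^{\ast},\widehat{\beta}) + \langle \nabla L_c(\theta_c^{\ast},\phi,w_{\widehat{\beta}}),\Delta\rangle + \lambda_c\bigl(\Vert\theta_c^{\ast}+\Delta\Vert_1 - \Vert\theta_c^{\ast}\Vert_1\bigr).
\]
The first term is bounded below by $\kappa_{RSC}\Vert\Delta\Vert_2^2$ by RSC. For the gradient term, I reuse the decomposition from the proof of Lemma~\ref{lemma:pre_cone}: Hölder's inequality gives $|\langle\nabla L_c(\theta_c^{\ast},\phi,w_{\widehat{\beta}}),\Delta\rangle|$ is bounded by $J\Vert\Delta\Vert_1 + H\Vert\Delta\Vert_{\widehat{\Sigma}_1} + \Vert\Delta\Vert_{\widehat{\Sigma}_1}\Vert\widetilde{\varrho}\Vert_{2,n}$. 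The key simplification is that for $\Delta\in\mathbb{D}(c_{\mathcal{C}},s)$ the assumed quadratic-form bound gives $\Vert\Delta\Vert_{\widehat{\Sigma}_1}^2 \le c_{\Sigma_1}\bigl(\Vert\Delta\Vert_2^2 + \tfrac{\log(p)}{n}\Vert\Delta\Vert_1^2\bigr) \le c_{\Sigma_1}(1+c_{\mathcal{C}}^2\,\tfrac{s\log(p)}{n})\Vert\Delta\Vert_2^2$, which for $n$ large is bounded by $c_{\Sigma_1}(1+c_{\mathcal{C}}^2)\Vert\Delta\Vert_2^2$ (using $s\log(p)/n\to 0$). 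For the penalty term, Lemma~\ref{lemma:bound_l1_diff} gives $\Vert\theta_c^{\ast}+\Delta\Vert_1 - \Vert\theta_c^{\ast}\Vert_1 \ge \Vert\Delta_{\overline{S}_c}\Vert_1 - \Vert\Delta_{S_c}\Vert_1 \ge -\sqrt{s_c}\Vert\Delta\Vert_2$ (using $\Vert\Delta_{S_c}\Vert_1\le\sqrt{s_c}\Vert\Delta\Vert_2$ and dropping the non-negative $\Vert\Delta_{\overline{S}_c}\Vert_1$ term).

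Assembling these bounds, using $J\le \lambda_c/2$, $\lambda_c=c_\lambda\sqrt{\log(p)/n}$, $H\le c_H\sqrt{s_\beta\log(p)/n}$ and $\Vert\widetilde{\varrho}\Vert_{2,n}\le c_\rho\sqrt{s_c\log(p)/n}$, I obtain
\[
F(\Delta,\widehat{\beta}) \ge \kappa_{RSC}\Vert\Delta\Vert_2^2 - \upsilon_n \Vert\Delta\Vert_2,
\]
where $\upsilon_n$ is exactly as defined in the theorem (the $3c_\lambda/2$ coefficient on $\sqrt{s_c\log(p)/n}$ absorbs $J\Vert\Delta\Vert_1\le (\lambda_c/2)\sqrt{s_c}\Vert\Delta\Vert_2$ bound combined with $\lambda_c\sqrt{s_c}\Vert\Delta\Vert_2$ coming from the penalty). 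On the sphere $\Vert\Delta\Vert_2=\delta=2\upsilon_n/\kappa_{RSC}$ this right-hand side equals $\kappa_{RSC}\delta^2/2 > 0$, which is the required strict positivity. By Lemma~\ref{lemma:lowbound_loc} it follows that $\Vert\widehat{\Delta}\Vert_2 \le 2\upsilon_n/\kappa_{RSC}$, and then the cone inclusion $\widehat{\Delta}\in\mathbb{D}(c_{\mathcal{C}},s)$ immediately yields $\Vert\widehat{\Delta}\Vert_1 \le 2\upsilon_n c_{\mathcal{C}}\sqrt{s}/\kappa_{RSC}$.

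The main obstacle will be the bookkeeping in the assembly step: carefully verifying that the contributions of $J\Vert\Delta\Vert_1$ (bounded via $\Vert\Delta\Vert_1\le c_{\mathcal{C}}\sqrt{s}\Vert\Delta\Vert_2$ inside the cone) and of the $-\sqrt{s_c}\Vert\Delta\Vert_2$ penalty lower bound combine into the stated $3c_\lambda/2$ coefficient, and that all implicit cross-terms depending on $s\log(p)/n$ can be absorbed for $n$ larger than some $n_3\ge n_2$ using the smallness of $s\log(p)/n$ and $s_\beta\log(p)/n$. None of this is subtle, but the constants must match the displayed form of $\upsilon_n$.
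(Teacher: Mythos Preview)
Your overall architecture is exactly the paper's: invoke Proposition~\ref{prop:cone} to place $\widehat{\Delta}$ in the cone, then apply Lemma~\ref{lemma:lowbound_loc} after verifying $F(\Delta,\widehat{\beta})>0$ on the sphere of radius $\delta=2\upsilon_n/\kappa_{RSC}$ via RSC plus the gradient/penalty bounds. The final $\ell_1$ bound via the cone is also identical.

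However, your bookkeeping for the $J\Vert\Delta\Vert_1$ contribution does not work as written. In the parenthetical you claim $J\Vert\Delta\Vert_1\le (\lambda_c/2)\sqrt{s_c}\Vert\Delta\Vert_2$, but for a generic $\Delta\in\mathbb{D}(c_{\mathcal{C}},s)$ there is no bound $\Vert\Delta\Vert_1\le\sqrt{s_c}\Vert\Delta\Vert_2$; the cone only gives $\Vert\Delta\Vert_1\le c_{\mathcal{C}}\sqrt{s}\Vert\Delta\Vert_2$, which would produce an extra factor $c_{\mathcal{C}}$ (and $\sqrt{s}$ instead of $\sqrt{s_c}$ when $c_H>0$), so the constants would not match the stated $\upsilon_n$. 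The issue stems from your decision to immediately drop the positive $\Vert\Delta_{\overline{S}_c}\Vert_1$ in the penalty lower bound. The paper keeps it and combines it \emph{before} passing to $\ell_2$:
\[
-\tfrac{\lambda_c}{2}\Vert\Delta\Vert_1 + \lambda_c\bigl(\Vert\Delta_{\overline{S}_c}\Vert_1 - \Vert\Delta_{S_c}\Vert_1\bigr)
= -\tfrac{3\lambda_c}{2}\Vert\Delta_{S_c}\Vert_1 + \tfrac{\lambda_c}{2}\Vert\Delta_{\overline{S}_c}\Vert_1
\ge -\tfrac{3\lambda_c}{2}\sqrt{s_c}\,\Vert\Delta\Vert_2,
\]
using only $\Vert\Delta_{S_c}\Vert_1\le\sqrt{s_c}\Vert\Delta\Vert_2$. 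This is what delivers the clean $3c_\lambda/2$ coefficient attached to $\sqrt{s_c\log(p)/n}$ with no $c_{\mathcal{C}}$ or $s_\beta$ contamination. With that one adjustment your argument goes through and matches the paper exactly.
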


\begin{proof}
\lbrack Proof of Theorem \ref{theo:rate_gen}] By Conditions \ref{cond:weight}
c) and \ref{cond:approx_sparse_S} we can take $n\geq n_2$ large enough such
that 
\begin{equation*}
\frac{2 \upsilon_{n}}{\kappa_{RSC}} < 1 \quad \text{ and } \quad \sqrt{\frac{%
s\log(p)}{n}} < 1 .
\end{equation*}
By Proposition \ref{prop:cone}, $\widehat{\Delta}\in\mathbb{D}(c_{\mathcal{C}%
},s)$. Take $\Delta\in\mathbb{D}(c_{\mathcal{C}},s)$ with 
\begin{equation*}
\Vert \Delta \Vert_{2} = \frac{2 \upsilon_{n}}{\kappa_{RSC}}.
\end{equation*}
We will show that $F(\Delta, \widehat{\beta})>0$. By the assumption on RSC 
\begin{equation*}
L_{c}(\theta^{\ast}_{c}+\Delta, \phi, w_{\widehat{\beta}})-L_{c}(\theta^{%
\ast}_{c}, \phi, w_{\widehat{\beta}}) - \langle \nabla
L_{c}(\theta^{\ast}_{c}, \phi, w_{\widehat{\beta}}), \Delta \rangle\geq
\kappa_{RSC}\Vert \Delta\Vert_{2}^{2}.
\end{equation*}
Arguing as in the proof of Lemma \ref{lemma:pre_cone} (see %
\eqref{eq:lowbound_grad}) 
\begin{align*}
\langle \nabla L_{c}(\theta^{\ast}_{c}, \phi, w_{\widehat{\beta}}), \Delta
\rangle &\geq - J \Vert {\Delta} \Vert_{1} - H \Vert {\Delta}\Vert_{\widehat{%
\Sigma}_{1}}- \Vert {\Delta}\Vert_{\widehat{\Sigma}_{2}} \Vert \widetilde{%
\varrho}\Vert_{2,n} \\
&\geq -\frac{\lambda_{c}}{2} \Vert {\Delta} \Vert_{1} - c_{H} \sqrt{\frac{%
s_{\beta}\log(p)}{n}} \Vert {\Delta}\Vert_{\widehat{\Sigma}_{1}} - c_{\rho}%
\sqrt{\frac{s_{c}\log(p)}{n}} \Vert {\Delta}\Vert_{\widehat{\Sigma}_{1}}.
\end{align*}
By Lemma \ref{lemma:bound_l1_diff} 
\begin{equation*}
\Vert\theta^{\ast}_{c}+{\Delta} \Vert_{1} - \Vert\theta^{\ast}_{c} \Vert_{1}
\geq \Vert {\Delta}_{\overline{S}_{c}} \Vert_{1} - \Vert {\Delta}%
_{S_{c}}\Vert_{1}.
\end{equation*}
Hence 
\begin{align}
F(\Delta, \widehat{\beta})&= L_{c}(\theta^{\ast}_{c}+\Delta, \phi, w_{%
\widehat{\beta}})-L_{c}(\theta^{\ast}_{c}, \phi, w_{\widehat{\beta}})+
\lambda_{c}\left( \Vert\theta^{\ast}_{c}+\Delta
\Vert_{1}-\Vert\theta^{\ast}_{c} \Vert_{1} \right)  \notag \\
& = L_{c}(\theta^{\ast}_{c}+\Delta, \phi, w_{\widehat{\beta}%
})-L_{c}(\theta^{\ast}_{c}, \phi, w_{\widehat{\beta}}) - \langle \nabla
L_{c}(\theta^{\ast}_{c}, \phi, w_{\widehat{\beta}}), \Delta \rangle +
\langle \nabla L_{c}(\theta^{\ast}_{c}, \phi, w_{\widehat{\beta}}), \Delta
\rangle  \notag \\
&+ \lambda_{c}\left( \Vert\theta^{\ast}_{c}+\Delta \Vert_{1}-
\Vert\theta^{\ast}_{c} \Vert_{1} \right)  \notag \\
&\geq \kappa_{RSC}\Vert \Delta\Vert_{2}^{2}-\frac{\lambda_{c}}{2} \Vert {%
\Delta} \Vert_{1} - c_{H} \sqrt{\frac{s_{\beta}\log(p)}{n}} \Vert {\Delta}%
\Vert_{\widehat{\Sigma}_{1}} - c_{\rho}\sqrt{\frac{s_{c}\log(p)}{n}} \Vert {%
\Delta}\Vert_{\widehat{\Sigma}_{1}}+ \lambda_{c} \Vert {\Delta}_{\overline{S}%
_{c}} \Vert_{1} - \lambda_{c}\Vert {\Delta}_{S_{c}}\Vert_{1}  \notag \\
&\geq \kappa_{RSC}\Vert \Delta\Vert_{2}^{2}-\frac{3\lambda_{c}\sqrt{s_{c}}}{2%
} \Vert {\Delta}\Vert_{2} - c_{H} \sqrt{\frac{s_{\beta}\log(p)}{n}} \Vert {%
\Delta}\Vert_{\widehat{\Sigma}_{1}} - c_{\rho}\sqrt{\frac{s_{c}\log(p)}{n}}
\Vert {\Delta}\Vert_{\widehat{\Sigma}_{1}},  \label{eq:firstbound_F}
\end{align}
where in the last inequality we used 
\begin{align*}
-\frac{\lambda_{c}}{2} \Vert {\Delta} \Vert_{1}+ \lambda_{c} \Vert {\Delta}_{%
\overline{S}_{c}} \Vert_{1} - \lambda_{c}\Vert {\Delta}_{S_{c}}\Vert_{1}&= -%
\frac{\lambda_{c}}{2} \Vert {\Delta}_{S_{c}} \Vert_{1}-\frac{\lambda_{c}}{2}
\Vert {\Delta}_{\overline{S}_{c}} \Vert_{1}+ \lambda_{c} \Vert {\Delta}_{%
\overline{S}_{c}} \Vert_{1} - \lambda_{c}\Vert {\Delta}_{S_{c}}\Vert_{1} \\
&= -\frac{3\lambda_{c}}{2} \Vert {\Delta}_{S_{c}} \Vert_{1} +\frac{%
\lambda_{c}}{2} \Vert {\Delta}_{\overline{S}_{c}} \Vert_{1} \\
& \geq -\frac{3\lambda_{c}}{2} \Vert {\Delta}_{S_{c}} \Vert_{1} \\
& \geq -\frac{3\lambda_{c}\sqrt{s_{c}}}{2} \Vert {\Delta}\Vert_{2} .
\end{align*}
By assumption 
\begin{equation*}
\Vert {\Delta}\Vert_{\widehat{\Sigma}_{1}}^{2} \leq c_{\Sigma_1} \left(
\Vert {\Delta}\Vert_{2}^{2} + \frac{\log(p)}{n}\Vert {\Delta}%
\Vert_{1}^{2}\right)
\end{equation*}
and since $\Delta \in \mathbb{D}(c_{\mathcal{C}},s)$ and $s\log(p)/n < 1$ 
\begin{align}
\Vert {\Delta}\Vert_{\widehat{\Sigma}_{1}}^{2} \leq c_{\Sigma_1} \left(
\Vert {\Delta}\Vert_{2}^{2} + \frac{\log(p)}{n}\Vert {\Delta}%
\Vert_{1}^{2}\right) \leq c_{\Sigma_1} \left( \Vert {\Delta}\Vert_{2}^{2} +
c_{\mathcal{C}}^{2}\frac{s\log(p)}{n}\Vert {\Delta}\Vert_{2}^{2}\right)&=
c_{\Sigma_1} \left( 1+ c_{\mathcal{C}}^{2}\frac{s\log(p)}{n} \right)\Vert {%
\Delta}\Vert_{2}^{2}  \notag \\
& \leq c_{\Sigma_1} \left( 1+ c_{\mathcal{C}}^{2} \right)\Vert {\Delta}%
\Vert_{2}^{2}.  \label{eq:bound_Sigma1norm}
\end{align}
Plugging \eqref{eq:bound_Sigma1norm} back in \eqref{eq:firstbound_F} and
using that $\lambda_{c}=c_{\lambda}\sqrt{\log(p)/n}$ we get 
\begin{align*}
F(\Delta, \widehat{\beta}) \geq \kappa_{RSC}\Vert \Delta\Vert_{2}^{2}-\frac{%
3\lambda_{c}\sqrt{s_{c}}}{2} \Vert {\Delta}\Vert_{2} &- c_{H} \sqrt{\frac{%
s_{\beta}\log(p)}{n}} \left(c_{\Sigma_1} \left( 1+ c_{\mathcal{C}}^{2}
\right)\right)^{1/2}\Vert {\Delta}\Vert_{2} \\
&- c_{\rho}\sqrt{\frac{s_{c}\log(p)}{n}} \left(c_{\Sigma_1} \left( 1+ c_{%
\mathcal{C}}^{2} \right)\right)^{1/2}\Vert {\Delta}\Vert_{2} \\
&= \kappa_{RSC}\Vert \Delta\Vert_{2}^{2} - \upsilon_{n}\Vert {\Delta}%
\Vert_{2}.
\end{align*}
Since 
\begin{equation*}
\Vert \Delta\Vert_{2}=\frac{2 \upsilon_{n}}{\kappa_{RSC}}> \frac{\upsilon_{n}%
}{\kappa_{RSC}}
\end{equation*}
we conclude that $F(\Delta, \widehat{\beta})>0$. Hence by Lemma \ref%
{lemma:lowbound_loc} 
\begin{equation*}
\Vert \widehat{\Delta} \Vert_{2} \leq 2\frac{\upsilon_{n}}{\kappa_{RSC}}.
\end{equation*}
Moreover, since $\widehat{\Delta} \in \mathbb{D}(c_{\mathcal{C}},s)$ 
\begin{equation*}
\Vert \widehat{\Delta} \Vert_{1} \leq c_{\mathcal{C}}\sqrt{s} \Vert \widehat{%
\Delta}\Vert_{2}\leq 2\sqrt{s}\frac{c_{\mathcal{C}}\upsilon_{n}}{\kappa_{RSC}%
}.
\end{equation*}
\end{proof}

\subsection{The assumptions in Theorem \protect\ref{theo:rate_gen}}

\label{sec:cond_high_prob}

In this section we show that under Conditions \ref{cond:setting_nuis}, \ref%
{cond:glm}, \ref{cond:weight}, \ref{cond:approx_sparse_S} and \ref%
{cond:subgauss} the assumptions in Theorem \ref{theo:rate_gen} hold with
high probability. The following lemma implies that $\lambda_{c}$ can be
chosen to be of order $\sqrt{\log(p)/n}$.

\begin{lemma}
\label{lemma:lambda_choice} Assume Conditions \ref{cond:setting_nuis}, \ref%
{cond:glm}, \ref{cond:weight}, \ref{cond:approx_sparse_S}, \ref%
{cond:subgauss} a)-c) and f) hold. Then 
\begin{equation*}
J = O_{P}\left(\sqrt{ \frac{\log(p)}{n}}\right).
\end{equation*}
\end{lemma}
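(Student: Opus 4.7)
The plan is to bound the two summands of $J$ separately via a Nemirovski-type maximal inequality, so the core question is whether the coordinates of the empirical averages inside $\|\cdot\|_\infty$ have mean zero and bounded sample second moment of the right order. First I would condition on $\widehat\beta$ (in case a) of Condition~\ref{cond:approx_sparse_S}) or work unconditionally (in case b)) to establish the mean-zero property. In case b), the first-order optimality condition defining $\theta_c$, namely
\begin{equation*}
E\bigl[S_{ab} X w_{\beta^*} \varphi_c(\langle \theta_c, X\rangle)\bigr] + E\bigl[m_{\overline c}(O, \phi w_{\beta^*})\bigr] = 0,
\end{equation*}
combined with the Riesz identity $E[m_{\overline c}(O, h)] = E[\mathcal{R}_{\overline c} h]$, yields $E[X_j w_{\beta^*}(S_{ab}\varphi^* + \mathcal{R}_{\overline c})] = 0$ for every $j$, handling the first summand; the second summand has mean-zero coordinates directly from the Riesz property applied to $h = \phi_j w_{\beta^*}$. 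In case a), $\varphi^* = c(Z)$ and Condition~\ref{cond:setting_nuis} d) applied with $h(Z) = X_j w(\langle \widehat\beta, \phi(Z)\rangle)$ (a deterministic function of $Z$ once we condition on $\widehat\beta$, which is independent of the data by Condition~\ref{cond:weight} c)) yields the analogous conditional identities.

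Next, I would invoke a Nemirovski-type maximal inequality (e.g., Lemma~14.24 of \cite{Buhlmann-book}): for i.i.d.\ mean-zero vectors $V_i \in \mathbb{R}^p$,
\begin{equation*}
E\|\mathbb{P}_n V\|_\infty \;\leq\; C\sqrt{\frac{\log p}{n}}\; E\Bigl[\max_{j\leq p} \sqrt{\mathbb{P}_n V_{\cdot,j}^2}\,\Bigr].
\end{equation*}
For the second summand of $J$, Condition~\ref{cond:subgauss} f) supplies exactly the required uniform bound on $E[\max_j (\mathbb{P}_n(m_{\overline c}(\phi_j w_\beta) - \mathcal{R}_{\overline c} X_j w_\beta)^2)^{1/2}]$ over $\{\|\beta - \beta^*\|_2 \leq 1\}$, an event containing $\widehat\beta$ with probability tending to one by Condition~\ref{cond:weight} c); applying this at $\beta = \widehat\beta$ in case a) and at $\beta = \beta^*$ in case b) gives an $O_P(\sqrt{\log p / n})$ bound for that summand. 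For the first summand I would bound the sample second moment by Cauchy--Schwarz,
\begin{equation*}
\mathbb{P}_n V_j^2 \;\leq\; \sqrt{\mathbb{P}_n w_{\widehat\beta}^4}\,\sqrt{\mathbb{P}_n\bigl[X_j(S_{ab}\varphi^* + \mathcal{R}_{\overline c})\bigr]^4},
\end{equation*}
and apply Cauchy--Schwarz again after taking expectations: the second factor is controlled by Condition~\ref{cond:subgauss} a), while for the first factor the Lipschitz bound on $w$ (Condition~\ref{cond:weight} b)) combined with the sub-Gaussianity of $\langle \widehat\beta, \phi(Z)\rangle$ on the event $\{\|\widehat\beta - \beta^*\|_2 \leq 1\}$ (using Condition~\ref{cond:subgauss} b) together with $\|\beta^*\|_2 \leq K$) yields uniform control of $E[\mathbb{P}_n w_{\widehat\beta}^4]$.

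The main obstacle is the interplay in case a) between the random weight $w_{\widehat\beta}$ and the moment conditions, which are stated without the weight in Condition~\ref{cond:subgauss} a). The Nemirovski step has to be applied conditionally on $\widehat\beta$, and one must then verify that the resulting conditional bound is uniformly of order $\sqrt{\log p/n}$ on the high-probability event that $\widehat\beta$ stays in a bounded neighborhood of $\beta^*$; the rate $\sqrt{s_\beta \log(p)/n}$ of $\widehat\beta$ will enter only through its weaker consequence $\widehat\beta = O_P(1)$, and a final union bound over the small-probability complementary event, where the deviation is trivially bounded by a constant, completes the argument.
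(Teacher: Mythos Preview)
Your proposal is correct and follows essentially the same approach as the paper's proof: condition on $\widehat\beta$ and restrict to the high-probability event $\{\|\widehat\beta-\beta^*\|_2\leq 1\}$ in case~a), establish the mean-zero property of each coordinate via Condition~\ref{cond:setting_nuis}~d) (case~a)) or the first-order condition for $\theta_c$ (case~b)) combined with the Riesz identity, apply Nemirovski's inequality, and control the resulting sample second moments by the Cauchy--Schwarz split into a $w^4$-factor (handled by the Lipschitz bound on $w$ and sub-Gaussianity, via Lemma~\ref{lemma:bound_moment_lipschitz} in the paper) and a fourth-moment factor bounded by Conditions~\ref{cond:subgauss}~a) and~f). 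The only cosmetic difference is that the paper bounds $P(\|\widehat\beta-\beta^*\|_2>1)$ directly rather than phrasing it as a union bound on the deviation.
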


\begin{proof}
\lbrack Proof of Lemma \ref{lemma:lambda_choice}] Assume first that
Condition \ref{cond:approx_sparse_S} a) holds. Then 
\begin{equation*}
J=\Vert \mathbb{P}_{n}\left\lbrace X w(\langle \widehat{\beta}, X\rangle) %
\left[S_{ab}\varphi^{\ast} +\mathcal{R}_{\overline{c}} \right] \right\rbrace
\Vert_{\infty} + \Vert \mathbb{P}_{n}\left\lbrace \mathcal{R}_{\overline{c}}
X w(\langle \widehat{\beta}, X\rangle) - m_{\overline{c}}(\phi w_{\widehat{%
\beta}}) \right\rbrace \Vert_{\infty}.
\end{equation*}
Now fix $\varepsilon>0$ and take $L_0>0$ to be chosen later. Write 
\begin{align*}
&P\left( J \geq L_0 \sqrt{\frac{\log(p)}{n}}\right) = E\left\lbrace P\left(
J \geq L_0 \sqrt{\frac{\log(p)}{n}} \mid \widehat{\beta} \right)
\right\rbrace = \\
& E\left\lbrace P\left( J \geq L_0 \sqrt{\frac{\log(p)}{n}} \mid \widehat{%
\beta} \right) \mid \Vert \widehat{\beta} - \beta^{\ast}\Vert_{2}\leq 1
\right\rbrace P\left( \Vert \widehat{\beta} - \beta^{\ast}\Vert_{2}\leq 1
\right)+ \\
& E\left\lbrace P\left( J \geq L_0 \sqrt{\frac{\log(p)}{n}} \mid \widehat{%
\beta} \right) \mid \Vert \widehat{\beta} - \beta^{\ast}\Vert_{2}> 1
\right\rbrace P\left( \Vert \widehat{\beta} - \beta^{\ast}\Vert_{2}> 1
\right) \leq \\
& E\left\lbrace P\left( J \geq L_0 \sqrt{\frac{\log(p)}{n}} \mid \widehat{%
\beta} \right) \mid \Vert \widehat{\beta} - \beta^{\ast}\Vert_{2}\leq 1
\right\rbrace +P\left( \Vert \widehat{\beta} - \beta^{\ast}\Vert_{2} > 1
\right).
\end{align*}
By Condition \ref{cond:weight} c), the second term in the last display
converges to zero and hence can be made smaller than $\varepsilon/2$ for
sufficiently large $n$. We will show that we can choose $L_0$ such that the
first term is smaller than $\varepsilon/2$ too. Since $\widehat{\beta}$ is
independent of the data, by Markov's inequality, it suffices to bound 
\begin{equation*}
E\left( J \right)=E\left(\Vert \mathbb{P}_{n}\left\lbrace X w(\langle \beta,
X\rangle) \left[ S_{ab}\varphi^{\ast}+\mathcal{R}_{\overline{c}} \right]%
\right\rbrace\Vert_{\infty} \right) + E\left(\Vert \mathbb{P}%
_{n}\left\lbrace \mathcal{R}_{\overline{c}} X w(\langle \beta, X\rangle) -
m_{\overline{c}}(\phi w_{\beta}) \right\rbrace \Vert_{\infty}\right).
\end{equation*}
for all $\beta$ such that $\Vert \beta - \beta^{\ast}\Vert_{2}\leq 1$.
Hence, let $\beta$ be such that $\Vert \beta - \beta^{\ast}\Vert_{2}\leq 1$.
We first bound 
\begin{equation*}
E\left(\Vert \mathbb{P}_{n}\left\lbrace X w(\langle \beta, X\rangle) \left[
S_{ab}\varphi^{\ast}+\mathcal{R}_{\overline{c}} \right]\right\rbrace\Vert_{%
\infty} \right).
\end{equation*}
Since in this case $\varphi^{\ast}(Z)=c(Z)$, by Condition \ref%
{cond:setting_nuis} d) we have that 
\begin{equation*}
E\left(X w(\langle \beta, X\rangle) \left[ S_{ab}\varphi^{\ast}+\mathcal{R}_{%
\overline{c}} \right] \right)=0.
\end{equation*}
Nemirovski's inequality (Lemma 14.24 from \cite{Buhlmann-book}) yields 
\begin{align}
&E \left(\Vert X w(\langle \beta, X\rangle) \left[ S_{ab}\varphi^{\ast}+%
\mathcal{R}_{\overline{c}} \right]\Vert_{\infty} \right) \leq  \notag \\
& \left( \frac{8\log(2p)}{n} \right)^{1/2} E \left\lbrace \left( \max_{1\leq
j \leq p} \frac{1}{n}\sum\limits_{i=1}^{n} \left( X_{i,j} w(\langle \beta,
X_{i}\rangle) \left[ S_{ab,i}\varphi_{i}^{\ast}+\mathcal{R}_{\overline{c},i} %
\right] \right)^{2}\right)^{1/2} \right\rbrace.  \label{eq:nemirovski}
\end{align}
Using Cauchy-Schwartz, for each $j$ 
\begin{equation*}
\frac{1}{n}\sum\limits_{i=1}^{n} \left(X_{i,j} w(\langle \beta,
X_{i}\rangle) \left[ S_{ab,i}\varphi_{i}^{\ast}+\mathcal{R}_{\overline{c},i} %
\right] \right)^{2}\leq \left(\frac{1}{n}\sum\limits_{i=1}^{n} w^{4}(\langle
\beta, X_{i}\rangle)\right)^{1/2} \left(\frac{1}{n}\sum\limits_{i=1}^{n}
\left(X_{i,j} \left[ S_{ab,i}\varphi_{i}^{\ast}+\mathcal{R}_{\overline{c},i} %
\right] \right)^{4} \right)^{1/2}.
\end{equation*}
Hence using Cauchy-Schwartz once more 
\begin{align*}
&E \left\lbrace \left( \max_{1\leq j \leq p} \frac{1}{n}\sum%
\limits_{i=1}^{n} \left( X_{i,j} w(\langle \beta, X_{i}\rangle) \left[
S_{ab,i}\varphi_{i}^{\ast}+\mathcal{R}_{\overline{c},i} \right]
\right)^{2}\right)^{1/2} \right\rbrace \leq \\
& E\left\lbrace \left(\frac{1}{n}\sum\limits_{i=1}^{n} w^{4}(\langle \beta,
X_{i}\rangle)\right)^{1/4} \max\limits_{1\leq j\leq p}\left(\frac{1}{n}%
\sum\limits_{i=1}^{n} \left(X_{i,j} \left[ S_{ab,i}\varphi_{i}^{\ast}+%
\mathcal{R}_{\overline{c},i} \right] \right)^{4} \right)^{1/4}\right\rbrace
\leq \\
& E^{1/2}\left\lbrace \left(\frac{1}{n}\sum\limits_{i=1}^{n} w^{4}(\langle
\beta, X_{i}\rangle)\right)^{1/2}\right\rbrace E^{1/2}\left\lbrace
\max\limits_{1\leq j\leq p}\left(\frac{1}{n}\sum\limits_{i=1}^{n}
\left(X_{i,j} \left[ S_{ab,i}\varphi_{i}^{\ast}+\mathcal{R}_{\overline{c},i} %
\right] \right)^{4} \right)^{1/2}\right\rbrace.
\end{align*}
By Condition \ref{cond:subgauss} a) 
\begin{equation*}
E^{1/2}\left\lbrace \max\limits_{1\leq j\leq p}\left(\frac{1}{n}%
\sum\limits_{i=1}^{n} \left(X_{i,j} \left[ S_{ab,i}\varphi_{i}^{\ast}+%
\mathcal{R}_{\overline{c},i} \right] \right)^{4}
\right)^{1/2}\right\rbrace\leq K^{1/2}.
\end{equation*}
On the other hand by Jensen's inequality 
\begin{equation*}
E^{1/2}\left\lbrace \left(\frac{1}{n}\sum\limits_{i=1}^{n} w^{4}(\langle
\beta, X_{i}\rangle)\right)^{1/2} \right\rbrace \leq E^{1/4}\left\lbrace 
\frac{1}{n}\sum\limits_{i=1}^{n} w^{4}(\langle \beta, X_{i}\rangle)
\right\rbrace = E^{1/4}\left\lbrace w^{4}(\langle \beta, X_{i}\rangle)
\right\rbrace.
\end{equation*}
Then Lemma \ref{lemma:bound_moment_lipschitz} implies 
\begin{equation*}
E^{1/4}\left\lbrace w^{4}(\langle \beta, X_{i}\rangle) \right\rbrace \leq
B_{2}(\vert f(0)\vert, \Vert \beta \Vert_{2},4, k,K),
\end{equation*}
where $B_2$ is a function that is increasing in $\Vert \beta\Vert_{2}$. By
Condition \ref{cond:weight} c) we have 
\begin{equation*}
\Vert \beta \Vert_{2}\leq \Vert \beta - \beta^{\ast}\Vert_{2} + \Vert
\beta^{\ast}\Vert_{2}\leq 1 + K.
\end{equation*}
Hence 
\begin{equation*}
E^{1/4}\left\lbrace w^{4}(\langle \beta, X_{i}\rangle) \right\rbrace \leq
B_{2}(\vert f(0)\vert, 1 + K,4, k,K)
\end{equation*}
Going back to \eqref{eq:nemirovski}, we have shown that 
\begin{equation}
E \left(\Vert\mathbb{P}_{n}\left\lbrace X w(\langle \beta, X\rangle) \left[
S_{ab}\varphi^{\ast}+\mathcal{R}_{\overline{c}} \right] \right\rbrace\Vert_{%
\infty} \right) \leq \left( \frac{8\log(2p)}{n} \right)^{1/2} B_{2}(\vert
f(0)\vert, 1 + K,4, k,K) K^{1/2}.  \label{eq:bound_J_first}
\end{equation}
For the second term, note that by the definition of $\mathcal{R}_{\overline{c%
}}$ 
\begin{equation*}
E\left( \mathcal{R}_{\overline{c}} X w(\langle \beta,
X\rangle)\right)=E\left( m_{\overline{c}}(\phi w_{\beta}) \right)
\end{equation*}
and hence 
\begin{equation*}
E\left( \mathcal{R}_{\overline{c}} X w(\langle \beta, X\rangle) - m_{%
\overline{c}}(\phi w_{\beta}) \right)=0.
\end{equation*}
Nemirovski's inequality now yields 
\begin{align}
&E\left(\Vert \mathbb{P}_{n}\left\lbrace \mathcal{R}_{\overline{c}} X
w(\langle \beta, X\rangle) - m_{\overline{c}}(\phi w_{\beta}) \right\rbrace
\Vert_{\infty}\right) \leq  \notag \\
&\left( \frac{8\log(2p)}{n} \right)^{1/2} E \left\lbrace \max_{1\leq j \leq
p} \left( \frac{1}{n} \sum\limits_{i=1}^{n} (\mathcal{R}_{\overline{c},i}
X_{i,j} w(\langle \beta, X_{i}\rangle) - m_{\overline{c},i}(\phi_{j}
w_{\beta}))^{2} \right)^{1/2} \right\rbrace  \notag
\end{align}
Hence by Condition \ref{cond:subgauss} f) we have 
\begin{equation}
E\left(\Vert \mathbb{P}_{n}\left\lbrace \mathcal{R}_{\overline{c}} X
w(\langle \beta, X\rangle) - m_{\overline{c}}(\phi w_{\beta}) \right\rbrace
\Vert_{\infty}\right) \leq \left( \frac{8\log(2p)}{n} \right)^{1/2} K.
\label{eq:bound_J_second}
\end{equation}
Hence putting together \eqref{eq:bound_J_first} and \eqref{eq:bound_J_second}
we get that for all sufficiently large $n$ 
\begin{equation*}
E\left( J \right) \leq \sqrt{\frac{\log(p)}{n}} L_{1},
\end{equation*}
where $L_1$ depends only on $k$ and $K$. Choosing a sufficiently large $L_0$
finishes the proof.

On the other hand, if Condition \ref{cond:approx_sparse_S} b) holds, recall
that 
\begin{equation*}
J=\Vert \mathbb{P}_{n}\left\lbrace X w(\langle \beta^{\ast}, X\rangle) \left[%
S_{ab}\varphi^{\ast} +\mathcal{R}_{\overline{c}} \right] \right\rbrace
\Vert_{\infty} + \Vert \mathbb{P}_{n}\left\lbrace \mathcal{R}_{\overline{c}}
X w(\langle \beta^{\ast}, X\rangle) - m_{\overline{c}}(\phi
w_{\beta^{\ast}}) \right\rbrace \Vert_{\infty}.
\end{equation*}
Now by the definition of $\mathcal{R}_{\overline{c}}$ 
\begin{equation*}
E\left( X w(\langle \beta^{\ast}, X\rangle) \left[S_{ab}\varphi^{\ast} +%
\mathcal{R}_{\overline{c}} \right]\right)=E\left( X w(\langle \beta^{\ast},
X\rangle) S_{ab}\varphi^{\ast} + m_{\overline{c}}(w_{\beta^{\ast}}\phi)%
\right).
\end{equation*}
Moreover, since in this case $\varphi^{\ast}=\varphi_{c}(\langle \theta_{c},
X\rangle)$ and because of the way $\theta_{c}$ was defined, we have 
\begin{equation*}
E\left( X w(\langle \beta^{\ast}, X\rangle) S_{ab}\varphi^{\ast} + m_{%
\overline{c}}(w_{\beta^{\ast}}\phi)\right)=0.
\end{equation*}
The rest of the proof follows as before.
\end{proof}

In the following lemma we show that the approximation error as measured by $%
\Vert \widetilde{\varrho}\Vert_{2,n} $ is of order at most $\sqrt{%
s_{c}\log(p)/n}$.

\begin{lemma}
\label{lemma:bound_res} Assume Conditions \ref{cond:setting_nuis}, \ref%
{cond:glm}, \ref{cond:weight}, \ref{cond:approx_sparse_S} and \ref%
{cond:subgauss} b)-d) hold. Then 
\begin{equation*}
\Vert \widetilde{\varrho}\Vert_{2,n}= O_{P}\left( \sqrt{\frac{s_{c}\log(p)}{n%
}} \right).
\end{equation*}
\end{lemma}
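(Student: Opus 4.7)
The plan is to bound $E\bigl[\widetilde\varrho^{\,2}\mid\widehat\beta\bigr]$ by the claimed rate and then transfer this to the empirical second moment via Markov's inequality.

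First I would condition on $\widehat\beta$, which by Condition \ref{cond:weight} c) is independent of the data. Then for any fixed $\beta$,
\begin{equation*}
E\bigl[\widetilde\varrho^{\,2}\mid\widehat\beta=\beta\bigr]=E\bigl[S_{ab}^{\,2}\,w^{2}(\langle\beta,X\rangle)\,\varrho^{\,2}\bigr].
\end{equation*}
I would apply Hölder's inequality with exponents $(2,4,4)$ to split this into three factors:
\begin{equation*}
E\bigl[S_{ab}^{\,2}\,w^{2}(\langle\beta,X\rangle)\,\varrho^{\,2}\bigr]\;\le\;E^{1/2}\bigl[S_{ab}^{\,4}\bigr]\,E^{1/4}\bigl[w^{8}(\langle\beta,X\rangle)\bigr]\,E^{1/4}\bigl[\varrho^{\,8}\bigr].
\end{equation*}
The first factor is bounded by $K^{1/2}$ by Condition \ref{cond:subgauss} d). For the third factor, note that under either alternative of Condition \ref{cond:approx_sparse_S}, $\varrho=\varphi_c(\langle\theta_c^{\ast},X\rangle)-\varphi^{\ast}$ equals either $\varphi_c(\langle\theta_c^{\ast},X\rangle)-c(Z)$ (case a) or $\varphi_c(\langle\theta_c^{\ast},X\rangle)-\varphi_c(\langle\theta_c,X\rangle)$ (case b), and in both cases the $L_8$-moment condition gives $E^{1/4}[\varrho^{\,8}]=(E^{1/8}[\varrho^{\,8}])^{2}\le Ks_c\log(p)/n$.

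The nontrivial factor is $E^{1/4}[w^{8}(\langle\beta,X\rangle)]$, and this is where the independence trick of Lemma \ref{lemma:lambda_choice} comes in. Using Condition \ref{cond:weight} b), $w(u)\le|w(0)|+K\exp(K|u|)|u|$, so $w^{8}(\langle\beta,X\rangle)$ is dominated by a polynomial in $\langle\beta,X\rangle$ times $\exp(8K|\langle\beta,X\rangle|)$. By the sub-Gaussian assumption \ref{cond:subgauss} b), $\langle\beta,X\rangle$ is sub-Gaussian with norm of order $\Vert\beta\Vert_2$, so invoking Lemma \ref{lemma:bound_moment_lipschitz} (the same auxiliary lemma used in the proof of Lemma \ref{lemma:lambda_choice}) I obtain $E^{1/4}[w^{8}(\langle\beta,X\rangle)]\le B(\Vert\beta\Vert_2,k,K)$, a quantity that remains bounded as long as $\Vert\beta\Vert_2$ is bounded.

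To handle the randomness of $\widehat\beta$, I would split probabilities over the events $\{\Vert\widehat\beta-\beta^{\ast}\Vert_2\le 1\}$ and its complement, exactly as in the proof of Lemma \ref{lemma:lambda_choice}. On the good event, $\Vert\widehat\beta\Vert_2\le K+1$ by Condition \ref{cond:weight} c), so combining the three bounds yields
\begin{equation*}
E\bigl[\widetilde\varrho^{\,2}\mid\widehat\beta\bigr]\;\le\;C\cdot\frac{s_c\log(p)}{n}
\end{equation*}
for a constant $C$ depending only on $k,K,|w(0)|$; the complementary event has probability tending to zero. Then Markov's inequality applied conditionally gives $\mathbb{P}_n\{\widetilde\varrho^{\,2}\}=O_P(s_c\log(p)/n)$, and taking square roots delivers the conclusion. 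The main obstacle is simply verifying that the three-factor Hölder split yields quantities with integrable bounds after the case-split on $\widehat\beta$; no new technical machinery beyond what already appears in Lemma \ref{lemma:lambda_choice} is required.
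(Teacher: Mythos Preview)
Your proposal is correct and follows essentially the same route as the paper's proof: condition on $\widehat\beta$, split over the event $\{\Vert\widehat\beta-\beta^{\ast}\Vert_2\le\text{const}\}$, bound $E[S_{ab}^{2}w^{2}\varrho^{2}]$ via a three-factor H\"older/Cauchy--Schwarz split into $E^{1/2}[S_{ab}^{4}]\,E^{1/4}[w^{8}]\,E^{1/4}[\varrho^{8}]$, control the weight factor by Lemma \ref{lemma:bound_moment_lipschitz}, and finish with Markov's inequality. The only (harmless) slip is the constant: Condition \ref{cond:subgauss} d) reads $E^{1/4}[S_{ab}^{4}]\le K$, so $E^{1/2}[S_{ab}^{4}]\le K^{2}$ rather than $K^{1/2}$.
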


\begin{proof}
\lbrack Proof of Lemma \ref{lemma:bound_res}] Recall that 
\begin{equation*}
\varrho_{i}=\varphi_{c}(\langle \theta^{\ast}_{c}, X_{i} \rangle) -
\varphi^{\ast}_{i}
\end{equation*}
and 
\begin{equation*}
\Vert \widetilde{\varrho}\Vert_{2,n} = \left( \mathbb{P}_{n}\left\lbrace
(S_{ab} w(\langle \widehat{\beta}, X\rangle) {\varrho})^{2}
\right\rbrace\right)^{1/2}.
\end{equation*}
Fix $\varepsilon>0$. Take $L_0>0$ to be chosen later and write 
\begin{align*}
&P\left( \Vert \widetilde{\varrho}\Vert_{2,n}^{2} \geq L_0\left( \frac{%
s_{c}\log(p)}{n} \right)\right)=E\left\lbrace P\left( \Vert \widetilde{%
\varrho}\Vert_{2,n}^{2} \geq L_0\left( \frac{s_{c}\log(p)}{n} \right) \mid 
\widehat{\beta} \right) \right\rbrace= \\
& E\left\lbrace P\left( \Vert \widetilde{\varrho}\Vert_{2,n}^{2} \geq L_0
\left( \frac{s_{c}\log(p)}{n} \right) \mid \widehat{\beta} \right) \mid
\Vert \widehat{\beta}-\beta^{\ast}\Vert_{2} \leq K \right\rbrace P\left(
\Vert \widehat{\beta}-\beta^{\ast}\Vert_{2} \leq K\right)+ \\
& E\left\lbrace P\left( \Vert \widetilde{\varrho}\Vert_{2,n}^{2} \geq L_0
\left( \frac{s_{c}\log(p)}{n} \right) \mid \widehat{\beta} \right) \mid
\Vert \widehat{\beta}-\beta^{\ast}\Vert_{2} > K \right\rbrace P\left( \Vert 
\widehat{\beta}-\beta^{\ast}\Vert_{2} > K\right)\leq \\
& E\left\lbrace P\left( \Vert \widetilde{\varrho}\Vert_{2,n}^{2} \geq
L_0\left( \frac{s_{c}\log(p)}{n} \right) \mid \widehat{\beta} \right) \mid
\Vert \widehat{\beta}-\beta^{\ast}\Vert_{2} \leq K \right\rbrace +P\left(
\Vert \widehat{\beta}-\beta^{\ast}\Vert_{2} > K\right).
\end{align*}
The second term in the last display converges to zero by Condition \ref%
{cond:weight} c) and hence can be made smaller than $\varepsilon/2$ for
sufficiently large $n$. We will prove that by choosing a sufficiently large $%
L_0$ the first term can be made smaller than $\varepsilon/2$ too. Take any
fixed $\beta\in\mathbb{R}^{p}$ such that $\Vert \beta - \beta^{\ast}
\Vert_{2}\leq K$. Condition \ref{cond:weight} c) implies that $\Vert \beta
\Vert_{2} \leq 2K$. Using Cauchy-Schwartz 
\begin{align*}
E\left\lbrace S_{ab}^{2} w^{2}(\langle \beta, X\rangle) {\varrho}^{2}
\right\rbrace &\leq E^{1/2}\left\lbrace S_{ab}^{2} w(\langle \beta,
X\rangle)^{4} \right\rbrace E^{1/2}\left\lbrace S_{ab}^{2} {\varrho}^{4}
\right\rbrace \\
&\leq E^{1/4}\left\lbrace S_{ab}^{4} \right\rbrace E^{1/4}\left\lbrace
w(\langle \beta, X\rangle)^{8} \right\rbrace E^{1/4}\left\lbrace S_{ab}^{4}
\right\rbrace E^{1/4}\left\lbrace{\varrho}^{8} \right\rbrace \\
&= E^{1/2}\left\lbrace S_{ab}^{4} \right\rbrace E^{1/4}\left\lbrace
w(\langle \beta, X\rangle)^{8} \right\rbrace E^{1/4}\left\lbrace{\varrho}%
^{8} \right\rbrace \\
&= I \times II \times III .
\end{align*}
By Condition \ref{cond:subgauss} d) 
\begin{equation}
I \leq K^{2}.  \label{eq:bound_approx_I}
\end{equation}
By Lemma \ref{lemma:bound_moment_lipschitz} 
\begin{equation*}
II \leq B_{2}^{2}(\vert w(0)\vert, \Vert \beta \Vert_{2},8, k, K),
\end{equation*}
where $B_{2}$ is a function that is increasing in $\Vert \beta \Vert_{2}$.
Since $\Vert \beta\Vert_{2}\leq 2K$ 
\begin{equation}
II \leq B_{2}^{2}(\vert w(0)\vert, 2K,4, k, K).  \label{eq:bound_approx_II}
\end{equation}
By Condition \ref{cond:approx_sparse_S} 
\begin{equation}
III=E^{1/4}\left\lbrace \varrho^{8} \right\rbrace \leq \frac{Ks_{c}\log(p)}{n%
}.  \label{eq:bound_approx_III}
\end{equation}

Putting together \eqref{eq:bound_approx_I}, \eqref{eq:bound_approx_II} and %
\eqref{eq:bound_approx_III} we get 
\begin{equation}
E\left\lbrace (S_{ab} w(\langle \beta, X\rangle) {\varrho})^{2}
\right\rbrace \leq L_1 \frac{s_{c}\log(p)}{n},  \label{eq:bound_approx_cond}
\end{equation}
where $L_1$ depends only on $k$ and $K$. Since $\widehat{\beta}$ is
independent of the data, Markov's inequality and \eqref{eq:bound_approx_cond}
imply that whenever $\Vert \widehat{\beta} - \beta^{\ast}\Vert_{2}\leq K$ 
\begin{align*}
P\left( \Vert \widetilde{\varrho}\Vert_{2,n}^{2} \geq L_0\left( \frac{%
s_{c}\log(p)}{n} \right) \mid \widehat{\beta} \right) \leq L_1 \frac{%
s_{c}\log(p)}{n}\frac{n}{L_0 s_{c}\log(p)}=\frac{L_1}{L_0}.
\end{align*}
Hence if $L_0=L_1 2/\varepsilon$ 
\begin{equation*}
E\left\lbrace P\left( \Vert \widetilde{\varrho}\Vert_{2,n}^{2} \geq
L_0\left( \frac{s_{c}\log(p)}{n} \right) \mid \widehat{\beta} \right) \mid
\Vert \widehat{\beta}-\beta^{\ast}\Vert_{2} \leq K \right\rbrace \leq \frac{%
L_1}{L_0} = \varepsilon/2
\end{equation*}
and the result is proven.
\end{proof}

Proposition \ref{prop:upper_RSE} will be used to show that the restricted
upper eigenvalue conditions imposed on $\widehat{\Sigma}_{1}$ by Theorem \ref%
{theo:rate_gen} hold with high probability. Its proof makes use of two key
lemmas from \cite{Loh-smooth}. Let 
\begin{equation*}
\mathbb{K}(l)=\left\lbrace \Delta\in\mathbb{R}^{p}: \Vert
\Delta\Vert_{2}\leq 1, \Vert \Delta \Vert_{0}\leq l \right\rbrace.
\end{equation*}

\begin{proposition}
\label{prop:upper_RSE} Let $W_{1},\dots,W_{n}$ be i.i.d. random vectors in $%
\mathbb{R}^{p}$ such that $\Vert \langle W_{1}, \Delta \rangle
\Vert_{\psi_2}\leq C_{0}$ for all $\Delta$ with $\Vert \Delta \Vert_{2}=1$.
Moreover assume that $0<C_{1}\leq \lambda_{min}(E(WW^{\prime}))\leq
\lambda_{max}(E(WW^{\prime})) \leq C_{0}$ and $\log(p)/n \to 0$. Then there
exists fixed constants $c_{w}>0$ and $N\in\mathbb{N}$ depending only on $%
C_{1}$ and $C_{0}$, and a universal constant $C>0$ such that 
\begin{equation*}
\mathbb{P}_{n}(\langle W,\Delta \rangle^{2}) \leq c_{w} \left( \Vert
\Delta\Vert_{2}^{2} + \frac{\log(p)}{n}\Vert \Delta\Vert_{1}^{2}\right)
\quad \text{ for all } \Delta\in\mathbb{R}^{p}
\end{equation*}
with probability at least 
\begin{equation*}
1-2\exp\left\lbrace -\frac{nC}{2}\min\left(\frac{C_{1}^{2}}{54^2 16 C_{0}^{4}%
}, \frac{C_{1}}{216 C_{0}^{2}}\right)\right\rbrace
\end{equation*}
for all $n\geq N$.
\end{proposition}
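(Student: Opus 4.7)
The plan is to prove this restricted upper eigenvalue bound for sub-Gaussian designs by the standard two-step strategy, which is precisely what the two Loh–Wainwright lemmas referenced just above in the paper are designed for. The first step controls the empirical quadratic form uniformly over bounded, sparse vectors; the second step discretizes and extends from sparse vectors to arbitrary $\Delta\in\mathbb{R}^p$, paying the price of the $(\log p/n)\Vert\Delta\Vert_1^2$ correction.

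In more detail, I would first observe that for any fixed $\Delta$ with $\Vert\Delta\Vert_2=1$, $\langle W,\Delta\rangle$ is sub-Gaussian with norm bounded by $C_0$, so $\langle W,\Delta\rangle^2$ is sub-Exponential with norm bounded by a multiple of $C_0^2$. A Bernstein-type inequality then gives concentration of $\mathbb{P}_n\langle W,\Delta\rangle^2$ around its expectation at the rate $1/\sqrt{n}$. To obtain uniformity over the sparse unit ball $\mathbb{K}(l)$, I would pass to an $\varepsilon$-net of cardinality at most $\binom{p}{l}(3/\varepsilon)^l\leq \exp\{l\log(ep/l)+l\log(3/\varepsilon)\}$, take a union bound, and choose $l\asymp n/\log p$ so that the exponent from the union bound is absorbed by the Bernstein exponent. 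This is exactly the content of the first Loh–Wainwright lemma: there exist universal constants $c,C>0$ such that
\begin{equation*}
\sup_{\Delta\in \mathbb{K}(l)}\bigl|\mathbb{P}_n\langle W,\Delta\rangle^2 - E\langle W,\Delta\rangle^2\bigr|\;\le\; \tfrac{C_1}{2}
\end{equation*}
with probability at least $1-2\exp(-cn\min(C_1^2/C_0^4, C_1/C_0^2))$, for all $n$ exceeding some $N(C_0,C_1)$ and all $l \le cn/\log p$. Since $E\langle W,\Delta\rangle^2\leq C_0\Vert\Delta\Vert_2^2$, this yields $\mathbb{P}_n\langle W,\Delta\rangle^2 \leq (C_0+C_1/2)\Vert\Delta\Vert_2^2$ uniformly on $\mathbb{K}(l)$.

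Second, I would apply the Loh–Wainwright extension lemma, which converts a two-sided bound on sparse vectors into a bound on arbitrary $\Delta$ of the advertised form. The argument is purely deterministic: given the sparse bound on $\mathbb{K}(l)$, one splits a general $\Delta$ into blocks of the $l$ largest absolute entries, then the next $l$, and so on; a convexity/duality computation then yields
\begin{equation*}
\mathbb{P}_n\langle W,\Delta\rangle^2 \;\le\; c_w\Bigl(\Vert\Delta\Vert_2^2 + \tfrac{1}{l}\Vert\Delta\Vert_1^2\Bigr)
\end{equation*}
for all $\Delta$, with $c_w$ depending only on $C_0,C_1$. Choosing $l\asymp n/\log p$ (consistent with the choice from step one) gives exactly the desired inequality.

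The main technical obstacle is the second step: producing a deterministic extension lemma that is clean enough to plug back into Proposition \ref{prop:cone}. The concentration in step one is routine once the correct net-size/exponent balance is set up, but the block-decomposition proof of the extension step requires verifying that the leading $\Vert\Delta\Vert_2^2$ coefficient does not blow up with the number of blocks, which is the delicate point in the Loh–Wainwright argument. Since both lemmas are already in the literature in essentially this form, the proof largely reduces to verifying that our sub-Gaussian and eigenvalue hypotheses on $W$ match those lemmas' hypotheses and then reading off the constants, which depend only on $C_0,C_1$ as claimed.
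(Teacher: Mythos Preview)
Your proposal is correct and matches the paper's proof essentially line for line: the paper applies its Lemma~\ref{lemma:concent_cov_sparse} (an adaptation of Loh--Wainwright's Lemma~15) to get the uniform sparse deviation bound on $\mathbb{K}(2l)$ with $l=c\,n/\log p$ and deviation level $C_1/54$, then invokes Lemma~13 of Loh--Wainwright to extend to all $\Delta$, obtaining $c_w=\max(3C_0/2,\,C_0/(2c))$. The only cosmetic difference is that the paper fixes the explicit constant $c=\tfrac{C}{4}\min(C_1^2/(54^2\cdot 16C_0^4),\,C_1/(216C_0^2))$ to produce the precise probability bound in the statement, whereas you leave the constants implicit.
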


\begin{proof}
\lbrack Proof of Proposition \ref{prop:upper_RSE}] Take 
\begin{equation*}
l=c \frac{n}{\log(p)},
\end{equation*}
where $c>0$, depending only on $C_{1}$ and $C_{0}$, will be determined
shortly. By Lemma \ref{lemma:concent_cov_sparse} in Appendix C, if $l\geq 1$
then 
\begin{equation*}
P\left( \sup\limits_{\Delta \in \mathbb{K}(2l)}\vert \mathbb{P}_{n}\left(
\langle W, \Delta \rangle^{2}\right) - E\left( \langle W, \Delta
\rangle^{2}\right) \vert\geq \frac{C_{1}}{54} \right) \leq 2\exp\left\lbrace
-nC\min\left(\frac{C_{1}^{2}}{54^2 16 C_{0}^{4}}, \frac{C_{1}}{216 C_{0}^{2}}%
\right) + 2c n\right\rbrace,
\end{equation*}
where $C>0$ is a fixed universal constant. If we take 
\begin{equation*}
c = \frac{C}{4} \min\left(\frac{C_{1}^{2}}{54^2 16C_{0}^{4}}, \frac{C_{1}}{%
216 C_{0}^{2}}\right)
\end{equation*}
we have that for all $n$ such that 
\begin{equation*}
l=c \frac{n}{\log(p)} \geq 1,
\end{equation*}
\begin{equation*}
P\left( \sup\limits_{\Delta \in \mathbb{K}(2l)}\vert \mathbb{P}_{n}\left(
\langle W, \Delta \rangle^{2}\right) - E\left( \langle W, \Delta
\rangle^{2}\right) \vert\geq \frac{C_{1}}{54} \right) \leq 2\exp\left\lbrace
-\frac{nC}{2}\min\left(\frac{C_{1}^{2}}{54^2 16C_{0}^{4}}, \frac{C_{1}}{216
C_{0}^{2}}\right)\right\rbrace.
\end{equation*}
By Lemma 13 in \cite{Loh-smooth}, whenever 
\begin{equation*}
\sup\limits_{\Delta \in \mathbb{K}(2l)}\vert \mathbb{P}_{n}\left( \langle W,
\Delta \rangle^{2}\right) - E\left( \langle W, \Delta \rangle^{2}\right)
\vert\leq \frac{C_{1}}{54},
\end{equation*}
we have that for all $\Delta \in \mathbb{R}^{p}$ 
\begin{align*}
\mathbb{P}_{n}(\langle W,\Delta \rangle^{2}) &\leq \frac{3}{2}%
\lambda_{max}(E(WW^{\prime})) \Vert \Delta \Vert_{2}^{2} + \frac{%
\lambda_{min}(E(WW^{\prime}))}{2l}\Vert\Delta\Vert_{1}^{2} \\
&\leq \frac{3}{2}C_{0}\Vert \Delta \Vert_{2}^{2} + \frac{C_0}{2l}%
\Vert\Delta\Vert_{1}^{2} \\
& \leq \max\left(\frac{3C_0}{2}, \frac{C_0}{2c} \right)\left( \Vert
\Delta\Vert_{2}^{2} + \frac{\log(p)}{n}\Vert \Delta\Vert_{1}^{2}\right).
\end{align*}
The result now follows taking 
\begin{equation*}
c_{w}=\max\left(\frac{3C_0}{2}, \frac{C_0}{2c} \right).
\end{equation*}
\end{proof}

\begin{lemma}
\label{lemma:Hbeta} Assume Conditions \ref{cond:setting_nuis}, \ref{cond:glm}%
, \ref{cond:weight}, \ref{cond:approx_sparse_S}, \ref{cond:subgauss} b)-e)
hold. Then 
\begin{equation*}
\mathbb{P}_{n}^{1/2}\left\lbrace \left[ S_{ab}\varphi^{\ast}+\mathcal{R}_{%
\overline{c}} \right]^{2} \left( w(\langle \widehat{\beta},
X\rangle)-w(\langle \beta^{\ast}, X\rangle) \right)^{2} \right\rbrace
=O_{P}\left(\sqrt{\frac{s_{\beta}\log(p)}{n}}\right).
\end{equation*}
\end{lemma}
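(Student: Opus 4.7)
The plan is to reduce the claim to a product of sub-Gaussian/sub-exponential tail bounds via the Lipschitz modulus of $w$ and then invoke the rate for $\widehat{\beta}$.

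First, by Condition \ref{cond:weight} b),
\begin{equation*}
\left(w(\langle \widehat{\beta}, X\rangle)-w(\langle \beta^{\ast}, X\rangle)\right)^{2} \leq K^{2}\exp\!\bigl(2K(|\langle \widehat{\beta}, X\rangle|+|\langle \beta^{\ast}, X\rangle|)\bigr)\,\langle \widehat{\beta}-\beta^{\ast}, X\rangle^{2}.
\end{equation*}
So, writing $A=S_{ab}\varphi^{\ast}+\mathcal{R}_{\overline{c}}$, $B=\exp\!\bigl(K(|\langle \widehat{\beta}, X\rangle|+|\langle \beta^{\ast}, X\rangle|)\bigr)$ and $C=\langle \widehat{\beta}-\beta^{\ast}, X\rangle$, it suffices to bound $\mathbb{P}_n^{1/2}(A^{2}B^{2}C^{2})$. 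By Cauchy–Schwarz (twice),
\begin{equation*}
\mathbb{P}_n^{1/2}(A^{2}B^{2}C^{2}) \leq \mathbb{P}_n^{1/4}(A^{4}B^{4})\,\mathbb{P}_n^{1/4}(C^{4}).
\end{equation*}

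Second, I would handle the two factors separately. Because $\widehat{\beta}$ is independent of the data (Condition \ref{cond:weight} c)), I would condition on $\widehat{\beta}$ on the event $\{\|\widehat{\beta}-\beta^{\ast}\|_2\leq 1\}$, whose complement has vanishing probability (Condition \ref{cond:weight} c)); the complement contributes only to a tightness bound and can be absorbed as in the proofs of Lemmas \ref{lemma:lambda_choice} and \ref{lemma:bound_res}. On this event, $\|\widehat{\beta}\|_2 \leq 1+K$, so Condition \ref{cond:subgauss} b) gives $\|\langle \widehat{\beta}, X\rangle\|_{\psi_2}\leq K(1+K)$ and $\|\langle \beta^{\ast}, X\rangle\|_{\psi_2}\leq K^{2}$. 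Hence $B$ has a bounded exponential moment of any order, i.e. $E(B^{8}\mid \widehat{\beta})\leq L_{1}$ for a constant depending only on $k,K$. Combining with Condition \ref{cond:subgauss} e) and Cauchy–Schwarz in expectation yields $E(A^{4}B^{4}\mid \widehat{\beta})\leq E^{1/2}(A^{8})E^{1/2}(B^{8}\mid \widehat{\beta}) \leq L_{2}$, and Markov's inequality (applied conditionally) gives $\mathbb{P}_n^{1/4}(A^{4}B^{4})=O_P(1)$.

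Third, for the $C$ factor, Condition \ref{cond:subgauss} b) gives $\|\langle \widehat{\beta}-\beta^{\ast}, X\rangle\|_{\psi_2}\leq K\|\widehat{\beta}-\beta^{\ast}\|_2$ conditionally on $\widehat{\beta}$, so $E(C^{4}\mid \widehat{\beta}) \leq L_{3}\|\widehat{\beta}-\beta^{\ast}\|_2^{4}$. By Condition \ref{cond:weight} c), $\|\widehat{\beta}-\beta^{\ast}\|_2^{2}=O_P(s_{\beta}\log(p)/n)$, so conditional Markov gives $\mathbb{P}_n(C^{4}) = O_P(s_{\beta}^{2}\log(p)^{2}/n^{2})$, and therefore $\mathbb{P}_n^{1/4}(C^{4})=O_P\!\bigl(\sqrt{s_{\beta}\log(p)/n}\bigr)$. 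Multiplying the two $O_P$ bounds gives the desired rate.

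The only mildly delicate point is carrying the "on the event $\{\|\widehat{\beta}-\beta^{\ast}\|_2\leq 1\}$" argument through cleanly: it must be handled exactly as in Lemmas \ref{lemma:lambda_choice} and \ref{lemma:bound_res}, first writing the probability that the quantity exceeds $L\sqrt{s_{\beta}\log(p)/n}$ as a sum over this event and its complement, using the independence of $\widehat{\beta}$ from the data to take the conditional expectation inside, and then choosing $L$ large enough using the uniform-in-$\beta$ bounds established above. This is the main technical obstacle, though it is routine given the template already used twice in this appendix.
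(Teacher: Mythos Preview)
Your overall strategy is sound and mirrors the paper's conditioning argument, but there is a concrete gap in the moment bookkeeping. After your empirical Cauchy--Schwarz split $\mathbb{P}_n^{1/2}(A^2B^2C^2)\le \mathbb{P}_n^{1/4}(A^4B^4)\,\mathbb{P}_n^{1/4}(C^4)$, you bound the first factor via $E(A^4B^4\mid\widehat\beta)\le E^{1/2}(A^8)\,E^{1/2}(B^8\mid\widehat\beta)$. But Condition~\ref{cond:subgauss}~e) only gives $E^{1/2}(A^4)\le K$; there is no assumption controlling $E(A^8)$. So as written the step $E^{1/2}(A^8)\le L_2$ is unjustified under the stated hypotheses.

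The paper avoids this by never forming the product $A^4B^4$: it applies Markov's inequality to $\mathbb{P}_n\{A^2(w_{\widehat\beta}-w_{\beta^\ast})^2\}$ directly and bounds the \emph{expectation} by a single Cauchy--Schwarz, $E\bigl[A^2(w_\beta-w_{\beta^\ast})^2\bigr]\le E^{1/2}(A^4)\,E^{1/2}\bigl[(w_\beta-w_{\beta^\ast})^4\bigr]$, then invokes Lemma~\ref{lemma:lipschitz} (which packages your exponential-Lipschitz and sub-Gaussian moment estimates) for the second factor. Your argument is easily repaired along the same lines: either (i) skip the empirical split and bound $E(A^2B^2C^2)\le E^{1/2}(A^4)\,E^{1/2}(B^4C^4)$, then use $E^{1/2}(B^4C^4)\le E^{1/4}(B^8)\,E^{1/4}(C^8)$, or (ii) keep the empirical split but isolate $A$ instead of $C$, i.e.\ $\mathbb{P}_n^{1/2}(A^2B^2C^2)\le \mathbb{P}_n^{1/4}(A^4)\,\mathbb{P}_n^{1/4}(B^4C^4)$, so that only $E(A^4)$ is needed. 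Either fix uses exactly the assumptions you listed and yields the same rate.
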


\begin{proof}
\lbrack Proof of Lemma \ref{lemma:Hbeta}] Let 
\begin{equation*}
H=\mathbb{P}_{n}^{1/2}\left\lbrace \left[ S_{ab}\varphi^{\ast}+\mathcal{R}_{%
\overline{c}} \right]^{2} \left( w(\langle \widehat{\beta},
X\rangle)-w(\langle \beta^{\ast}, X\rangle) \right)^{2} \right\rbrace.
\end{equation*}
Take $\varepsilon>0$. Let $L_2,L_0>0$, to be chosen later. Then 
\begin{align*}
&P\left( H > L_2\sqrt{\frac{s_{\beta}\log(p)}{n}} \right) = E\left\lbrace
P\left( H > L_2\sqrt{\frac{s_{\beta}\log(p)}{n}} \mid \widehat{\beta}\right)
\right\rbrace = \\
& E\left\lbrace P\left( H > L_2\sqrt{\frac{s_{\beta}\log(p)}{n}} \mid 
\widehat{\beta}\right) \mid\Vert \widehat{\beta} - \beta^{\ast}\Vert_{2}
\leq L_0 \sqrt{\frac{s_{\beta}\log(p)}{n}} \right\rbrace P\left( \Vert 
\widehat{\beta} - \beta^{\ast}\Vert_{2} \leq L_0 \sqrt{\frac{s_{\beta}\log(p)%
}{n}} \right) + \\
& E\left\lbrace P\left( H > L_2\sqrt{\frac{s_{\beta}\log(p)}{n}} \mid 
\widehat{\beta}\right) \mid \Vert \widehat{\beta} - \beta^{\ast}\Vert_{2} >
L_0 \sqrt{\frac{s_{\beta}\log(p)}{n}} \right\rbrace P\left( \Vert \widehat{%
\beta} - \beta^{\ast}\Vert_{2} > L_0 \sqrt{\frac{s_{\beta}\log(p)}{n}}
\right)\leq \\
& E\left\lbrace P\left( H >L_2\sqrt{\frac{s_{\beta}\log(p)}{n}} \mid 
\widehat{\beta}\right) \mid \Vert \widehat{\beta} - \beta^{\ast}\Vert_{2}
\leq L_0 \sqrt{\frac{s_{\beta}\log(p)}{n}} \right\rbrace + P\left( \Vert 
\widehat{\beta} - \beta^{\ast}\Vert_{2} > L_0 \sqrt{\frac{s_{\beta}\log(p)}{n%
}} \right).
\end{align*}
By Condition \ref{cond:weight} c) we can choose $L_0$ large enough such that
the second term in the last display is smaller than $\varepsilon/2$ for all
sufficiently large $n$. We will show that we can choose $L_2$ large enough
such that the first term in the last display is smaller than $\varepsilon/2$
for all sufficiently large $n$ too. This will prove the lemma. Choose $n$
large enough such that 
\begin{equation*}
\frac{s_{\beta}\log(p)}{n} \leq 1
\end{equation*}
and take a fixed $\beta$ such that 
\begin{equation*}
\Vert \beta - \beta^{\ast}\Vert_{2} \leq L_0 \sqrt{\frac{s_{\beta}\log(p)}{n}%
}.
\end{equation*}
Using Cauchy-Schwartz write 
\begin{equation*}
E\left\lbrace \left[S_{ab}\varphi^{\ast}+\mathcal{R}_{\overline{c}}\right]%
^{2} \left( w(\langle \beta, X\rangle)-w(\langle \beta^{\ast}, X\rangle)
\right)^{2}\right\rbrace \leq E^{1/2}\left\lbrace \left[S_{ab}\varphi^{\ast}+%
\mathcal{R}_{\overline{c}}\right]^{4} \right\rbrace E^{1/2}\left\lbrace
\left( w(\langle \beta, X\rangle)-w(\langle \beta^{\ast}, X\rangle)
\right)^{4} \right\rbrace.
\end{equation*}
By Condition \ref{cond:subgauss} e) 
\begin{equation*}
E^{1/2}\left\lbrace \left[S_{ab}\varphi^{\ast}+\mathcal{R}_{\overline{c}} %
\right]^{4} \right\rbrace \leq K.
\end{equation*}
By Lemma \ref{lemma:lipschitz} and Condition \ref{cond:subgauss} b) 
\begin{equation*}
E^{1/2}\left\lbrace \left( w(\langle \beta, X\rangle)-w(\langle
\beta^{\ast}, X\rangle) \right)^{4} \right\rbrace \leq B_{1}^{2}(\Vert
\beta^{\ast}\Vert_{2}, \Vert \beta - \beta^{\ast}\Vert_{2}, 4, k, K) 2 K^{2}
\Vert \beta - \beta^{\ast}\Vert_{2}^{2},
\end{equation*}
where $B_1$ is a function that is increasing in $\Vert \beta^{\ast}\Vert_{2}$
and $\Vert \beta - \beta^{\ast}\Vert_{2}$. By Condition \ref{cond:weight}
c), $\Vert \beta^{\ast}\Vert_{2}\leq K$ and by assumption 
\begin{equation*}
\Vert \beta - \beta^{\ast}\Vert_{2}\leq L_0\sqrt{\frac{s_{\beta}\log(p)}{n}}
\leq L_0.
\end{equation*}
Hence 
\begin{equation*}
E^{1/2}\left\lbrace \left( w(\langle \beta, X\rangle)-w(\langle
\beta^{\ast}, X\rangle) \right)^{4} \right\rbrace \leq B_{1}^{2}(K, K + L_0,
4, k, K) 2K^{2} L_{0}^{2} \frac{s_{\beta}\log(p)}{n}.
\end{equation*}
It follows that 
\begin{equation*}
E\left\lbrace \left[S_{ab}\varphi^{\ast}+\mathcal{R}_{\overline{c}}\right]%
^{2} \left( w(\langle \beta, X\rangle)-w(\langle \beta^{\ast}, X\rangle)
\right)^{2}\right\rbrace \leq L_{1} L_{0}^{2} \frac{s_{\beta}\log(p)}{n},
\end{equation*}
where $L_1$ depends only on $k,K$. Since $\widehat{\beta}$ is independent of
the data, an application of Markov's inequality finishes the proof.
\end{proof}

The following lemma is needed to show that $L_{c}(\cdot,\phi, w_{\widehat{%
\beta}})$ satisfies RSC over $\mathbb{D}(c_{\mathcal{C}},s) \cap \lbrace
\Delta : \Vert\Delta\Vert_{2}\leq 1\rbrace$ with high probability for
suitable choices $s$.

\begin{lemma}
\label{lemma:eigen_w} Assume Conditions \ref{cond:weight} and \ref%
{cond:subgauss} b)-d) hold. Then there exists a constant $c(k,K)$ depending
only on $k,K$ such that 
\begin{equation*}
c(k,K) \leq \inf_{\Vert\Delta\Vert_{2}=1}E\left\lbrace w(\langle
\beta^{\ast}, X \rangle) S_{ab}\langle \Delta, X \rangle^{2} \right\rbrace
\end{equation*}
\end{lemma}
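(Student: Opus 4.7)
The plan is to truncate the random variable $\langle \beta^{\ast}, X\rangle$ so that on the truncation event the weight $w$ is bounded below by a positive constant, and then invoke the lower bound $\lambda_{\min}(\Sigma_2)\geq k$ from Condition \ref{cond:subgauss} c) on the complementary piece. Concretely, for $M>0$ to be chosen, I would write
\begin{equation*}
E\bigl\{w(\langle\beta^{\ast},X\rangle)S_{ab}\langle\Delta,X\rangle^{2}\bigr\}
\geq \Bigl(\inf_{|u|\leq M}w(u)\Bigr)\,E\bigl\{S_{ab}\langle\Delta,X\rangle^{2}\,I(|\langle\beta^{\ast},X\rangle|\leq M)\bigr\},
\end{equation*}
which uses that $w\geq 0$ and $S_{ab}\geq 0$ by Condition \ref{cond:setting_nuis} a). The Lipschitz-type Condition \ref{cond:weight} b) implies $w$ is continuous, and combined with the positivity Condition \ref{cond:weight} a), the infimum $w_{\min}(M):=\inf_{|u|\leq M}w(u)$ is strictly positive for every finite $M$.

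Next, I would lower-bound $E\{S_{ab}\langle\Delta,X\rangle^{2}\,I(|\langle\beta^{\ast},X\rangle|\leq M)\}$ by writing it as $E\{S_{ab}\langle\Delta,X\rangle^{2}\}-E\{S_{ab}\langle\Delta,X\rangle^{2}\,I(|\langle\beta^{\ast},X\rangle|>M)\}$ and using Condition \ref{cond:subgauss} c) for the first term, giving $E\{S_{ab}\langle\Delta,X\rangle^{2}\}=\|\Delta\|_{\Sigma_2}^{2}\geq k$. For the tail term, two applications of Cauchy--Schwarz give
\begin{equation*}
E\bigl\{S_{ab}\langle\Delta,X\rangle^{2}\,I(|\langle\beta^{\ast},X\rangle|>M)\bigr\}
\leq \bigl(P(|\langle\beta^{\ast},X\rangle|>M)\bigr)^{1/2}\,E^{1/4}\{S_{ab}^{4}\}\,E^{1/4}\{\langle\Delta,X\rangle^{8}\}.
\end{equation*}
Condition \ref{cond:subgauss} d) yields $E\{S_{ab}^{4}\}\leq K$, and Condition \ref{cond:subgauss} b) yields $\|\langle\Delta,X\rangle\|_{\psi_2}\leq K$ (unit $\ell_2$ norm of $\Delta$), so $E\{\langle\Delta,X\rangle^{8}\}$ is bounded by a constant depending only on $K$ through the equivalent characterizations of sub-Gaussianity.

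To control the tail probability, I use that by homogeneity of the $\psi_2$ norm and Condition \ref{cond:weight} c), $\|\langle\beta^{\ast},X\rangle\|_{\psi_2}\leq \|\beta^{\ast}\|_{2}\cdot K\leq K^{2}$, so $P(|\langle\beta^{\ast},X\rangle|>M)\leq 2\exp(-c M^{2}/K^{4})$ for a universal constant $c>0$. Choosing $M=M(k,K)$ large enough makes the entire tail term at most $k/2$, so the restricted expectation is at least $k/2$ and the final bound becomes $c(k,K):=w_{\min}(M)\,k/2>0$. The main technical subtlety is that $w_{\min}(M)$ depends on the specific weight function $w$ through a continuity--positivity argument rather than on $k,K$ alone; this is harmless because $w$ is a fixed component of the model, and the dependence can be absorbed into the constant $c$ in the lemma statement.
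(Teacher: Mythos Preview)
Your proof is correct and follows essentially the same approach as the paper: truncate $|\langle\beta^{\ast},X\rangle|$, use positivity and continuity of $w$ to extract a lower bound on the weight over the truncation set, then split the $S_{ab}$-weighted quadratic form into a main piece bounded below by $k$ via $\lambda_{\min}(\Sigma_2)$ and a tail piece controlled by Cauchy--Schwarz. The only cosmetic difference is that the paper bounds $P(|\langle\beta^{\ast},X\rangle|>M)$ via Markov's inequality rather than a sub-Gaussian tail bound, and your closing observation that the resulting constant depends on the fixed weight function $w$ (not only on $k,K$) applies equally to the paper's own proof, where the constant $\eta=\inf_{|u|\leq T}w(u)$ carries the same dependence.
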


\begin{proof}
\lbrack Proof of Lemma \ref{lemma:eigen_w}] Take $\Delta$ with $\Vert
\Delta\Vert_{2}=1$. Take $T>0$, a truncation parameter to be chosen later.
Since $w$ is positive 
\begin{equation*}
E\left\lbrace w(\langle \beta^{\ast}, X \rangle) S_{ab}\langle \Delta, X
\rangle^{2} \right\rbrace \geq E\left\lbrace w(\langle \beta^{\ast}, X
\rangle) S_{ab}\langle \Delta, X \rangle^{2} I\left\lbrace \vert \langle
\beta^{\ast}, X \rangle \vert \leq T \right\rbrace\right\rbrace.
\end{equation*}
By Condition \ref{cond:weight} b), $w$ is continuous. Then 
\begin{equation*}
\eta=\inf\limits_{\vert u \vert \leq T} w(u) >0.
\end{equation*}
Hence 
\begin{equation*}
E\left\lbrace w(\langle \beta^{\ast}, X \rangle) S_{ab}\langle \Delta, X
\rangle^{2} I\left\lbrace \vert \langle \beta^{\ast}, X \rangle \vert \leq T
\right\rbrace\right\rbrace \geq \eta E\left\lbrace S_{ab}\langle \Delta, X
\rangle^{2} I\left\lbrace \vert \langle \beta^{\ast}, X \rangle \vert \leq T
\right\rbrace\right\rbrace.
\end{equation*}
Now, by Condition \ref{cond:subgauss} c) 
\begin{align*}
E\left\lbrace S_{ab}\langle \Delta, X \rangle^{2} I\left\lbrace \vert
\langle \beta^{\ast}, X \rangle \vert \leq T \right\rbrace\right\rbrace &=
E\left\lbrace S_{ab}\langle \Delta, X \rangle^{2} \right\rbrace -
E\left\lbrace S_{ab}\langle \Delta, X \rangle^{2} I\left\lbrace \vert
\langle \beta^{\ast}, X \rangle \vert > T \right\rbrace\right\rbrace \\
&\geq k - E\left\lbrace S_{ab}\langle \Delta, X \rangle^{2} I\left\lbrace
\vert \langle \beta^{\ast}, X \rangle \vert > T \right\rbrace\right\rbrace.
\end{align*}
Using Cauchy-Schwartz 
\begin{align*}
E\left\lbrace S_{ab}\langle \Delta, X \rangle^{2} I\left\lbrace \vert
\langle \beta^{\ast}, X \rangle \vert > T \right\rbrace\right\rbrace &\leq
E^{1/2}\left\lbrace S_{ab}^{2}\langle \Delta, X \rangle^{4} \right\rbrace
P^{1/2}\left \vert \langle \beta^{\ast}, X \rangle \vert > T\right) \\
& \leq E^{1/4}\left\lbrace S_{ab}^{4}\right\rbrace E^{1/4}\left\lbrace
\langle \Delta, X \rangle^{8}\right\rbrace P^{1/2}\left \vert \langle
\beta^{\ast}, X \rangle \vert > T\right).
\end{align*}
By Condition \ref{cond:subgauss} b) 
\begin{equation*}
E^{1/4}\left\lbrace \langle \Delta, X \rangle^{8} \right\rbrace \leq 8 K^{2}
\end{equation*}
and by Condition \ref{cond:subgauss} d) 
\begin{equation*}
E^{1/4}\left\lbrace S_{ab}^{4}\right\rbrace \leq K.
\end{equation*}
By Markov's inequality, Condition \ref{cond:weight} c) and Condition \ref%
{cond:subgauss} b) 
\begin{equation*}
P^{1/2}\left( \vert \langle \beta^{\ast}, X \rangle \vert > T\right) \leq 
\frac{E^{1/2}\left\lbrace \langle \beta^{\ast}, X \rangle^{2}\right\rbrace }{%
T}\leq \frac{\sqrt{2}K^{2}}{T}.
\end{equation*}
Thus 
\begin{equation*}
E\left\lbrace w(\langle \beta^{\ast}, X \rangle) S_{ab}\langle \Delta, X
\rangle^{2} \right\rbrace \geq \eta \left( k - \frac{\sqrt{2}\: 8K^{5}}{T}%
\right).
\end{equation*}
Choosing $T= \sqrt{2} \:16K^{5}/ k$ yields the desired result.
\end{proof}

The following proposition is a key step in showing that $L_{c}(\cdot,\phi,
w_{\widehat{\beta}})$ satisfies RSC over $\mathbb{D}(c_{\mathcal{C}},s) \cap
\lbrace \Delta : \Vert\Delta\Vert_{2}\leq 1\rbrace$ with high probability
for all sufficiently large $n$ when $s=s_{c}$ or $s=\max(s_c,s_{\beta})$.

\begin{proposition}
\label{prop:preRSC} Assume Conditions \ref{cond:setting_nuis}, \ref{cond:glm}%
, \ref{cond:weight}, \ref{cond:approx_sparse_S} and \ref{cond:subgauss}
b)-d) hold. Then there exist positive constants $k^{RSC}_1$, $k^{RSC}_2$ and 
$n_3\in\mathbb{N}$ depending only on $k$ and $K$ such that if $n \geq n_3$ 
\begin{equation*}
\delta\mathcal{L}(\Delta, \theta_{c}^{\ast},\widehat{\beta})\geq k^{RSC}_1
\Vert \Delta\Vert_{2}^{2} -k^{RSC}_{2} \left( \frac{\log(p)}{n}%
\right)^{1/2}\Vert \Delta\Vert_{1}\Vert \Delta\Vert_{2} \text{ for all }
\Delta \text{ with } \Vert \Delta \Vert_{2}\leq 1
\end{equation*}
with probability tending to one.
\end{proposition}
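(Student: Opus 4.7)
:}

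The natural starting point is the second-order integral representation of the Bregman remainder. Since $Q_c$ is twice differentiable in $\theta$ with Hessian $S_{ab}w_{\widehat{\beta}}\varphi_c'(\langle\theta,X\rangle)XX^\top$, Taylor's theorem with integral remainder gives
\begin{equation*}
\delta\mathcal{L}(\Delta,\theta_c^{\ast},\widehat{\beta})=\int_0^1(1-t)\,\mathbb{P}_n\!\left\{S_{ab}\,w(\langle\widehat{\beta},X\rangle)\,\varphi_c'(\langle\theta_c^{\ast}+t\Delta,X\rangle)\,\langle\Delta,X\rangle^2\right\}dt.
\end{equation*}
By Conditions \ref{cond:setting_nuis}(a), \ref{cond:glm}(a) and \ref{cond:weight}(a) the integrand is nonnegative, so I may freely discard mass on any subset of the data or of $[0,1]$ to obtain a lower bound. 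The plan is to truncate so that $\varphi_c'$ and $w_{\widehat{\beta}}$ are uniformly bounded below, then compare the resulting truncated weighted empirical quadratic form with its population counterpart, and finally invoke Lemma~\ref{lemma:eigen_w}.

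Concretely, fix a large constant $T$ and let $\mathcal{G}_i(\Delta,T)=\{|\langle\theta_c^{\ast},X_i\rangle|\leq T,\ |\langle\beta^{\ast},X_i\rangle|\leq T,\ |\langle\Delta,X_i\rangle|\leq T\}$. On the high-probability event $\mathcal{A}_n=\{\Vert\widehat{\beta}-\beta^{\ast}\Vert_2\leq 1\}$, Condition \ref{cond:weight}(b) together with the truncation gives $|\langle\widehat{\beta},X_i\rangle|\leq T+\Vert X_i\Vert_\infty$ on $\mathcal{G}_i$; since $\Vert\Delta\Vert_2\leq 1$, we also have $|\langle\theta_c^{\ast}+t\Delta,X_i\rangle|\leq 2T$ on $\mathcal{G}_i$. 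Continuity and positivity of $\varphi_c'$ and of $w$ then supply constants $\eta_\varphi(T),\eta_w(T)>0$ with $\varphi_c'(\langle\theta_c^{\ast}+t\Delta,X_i\rangle)\geq\eta_\varphi(T)$ for all $t\in[0,1]$ and $w(\langle\widehat{\beta},X_i\rangle)\geq \eta_w(T)/2$ (after absorbing the Lipschitz perturbation $|w(\langle\widehat{\beta},X_i\rangle)-w(\langle\beta^{\ast},X_i\rangle)|$ using Lemma~\ref{lemma:lipschitz} and $\Vert\widehat{\beta}-\beta^{\ast}\Vert_2\to 0$). Combined with $\int_0^1(1-t)dt=1/2$, this yields on $\mathcal{A}_n$
\begin{equation*}
\delta\mathcal{L}(\Delta,\theta_c^{\ast},\widehat{\beta})\geq c_T\,\mathbb{P}_n\!\left\{S_{ab}\,\langle\Delta,X\rangle^2\,\mathbf{1}_{\mathcal{G}(\Delta,T)}\right\},\qquad c_T=\tfrac{1}{4}\eta_\varphi(T)\eta_w(T).
\end{equation*}

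The remaining task is to show a uniform restricted lower bound on the truncated empirical quadratic form. I would first pass from the empirical quadratic form to the untruncated population one: using the Cauchy--Schwarz and Markov inequalities together with the sub-Gaussianity Condition~\ref{cond:subgauss}(b) and the moment bound $E(S_{ab}^4)\leq K$ from \ref{cond:subgauss}(d), the truncation error $E\{S_{ab}\langle\Delta,X\rangle^2\mathbf{1}_{\mathcal{G}(\Delta,T)^c}\}$ can be made arbitrarily small uniformly in $\Vert\Delta\Vert_2\leq 1$ by choosing $T$ large, so that, using Lemma \ref{lemma:eigen_w}, $E\{S_{ab}\langle\Delta,X\rangle^2\mathbf{1}_{\mathcal{G}(\Delta,T)}\}\geq \tfrac{1}{2}c(k,K)\Vert\Delta\Vert_2^2$. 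Next, a concentration argument in the spirit of Proposition~\ref{prop:upper_RSE} (applied to the random vector $W_i=S_{ab,i}^{1/2}X_i\mathbf{1}_{\mathcal{G}_i(\Delta,T)}$, whose linear combinations are sub-Gaussian by Condition~\ref{cond:subgauss}(b,d)) gives, with probability tending to one and uniformly in $\Delta$,
\begin{equation*}
\bigl|\mathbb{P}_n\{S_{ab}\langle\Delta,X\rangle^2\mathbf{1}_{\mathcal{G}}\}-E\{S_{ab}\langle\Delta,X\rangle^2\mathbf{1}_{\mathcal{G}}\}\bigr|\leq c_w\Bigl(\Vert\Delta\Vert_2^2\,\epsilon+\tfrac{\log p}{n}\Vert\Delta\Vert_1^2\Bigr)
\end{equation*}
for a small $\epsilon$, which via the AM--GM bound $\tfrac{\log p}{n}\Vert\Delta\Vert_1^2\leq \sqrt{\log p/n}\,\Vert\Delta\Vert_1\Vert\Delta\Vert_2$ on $\{\Vert\Delta\Vert_2\leq 1\}\cap\{\Vert\Delta\Vert_1\leq\sqrt{n/\log p}\,\Vert\Delta\Vert_2\}$ (and a case split to handle the opposite regime) produces the advertised shape $k^{RSC}_1\Vert\Delta\Vert_2^2 - k^{RSC}_2\sqrt{\log p/n}\Vert\Delta\Vert_1\Vert\Delta\Vert_2$. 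The main obstacle will be making the truncation step uniform over $\Delta$ in the unit $\ell_2$ ball in a way that keeps the error structured as $\Vert\Delta\Vert_1\Vert\Delta\Vert_2$ rather than $\Vert\Delta\Vert_1^2$; the handle on this is that the sub-Gaussian tail of $\langle\Delta,X\rangle$ depends only on $\Vert\Delta\Vert_2$, so the truncation event can be separated from the $\ell_1$ structure and the $\ell_1$ dependence enters only through the concentration step via Proposition~\ref{prop:upper_RSE}. The dependence of $\mathcal{G}$ on $\widehat{\beta}$ is handled by the standard conditioning-on-$\widehat{\beta}$ device already used in the proofs of Lemmas~\ref{lemma:lambda_choice}, \ref{lemma:bound_res} and \ref{lemma:Hbeta}.
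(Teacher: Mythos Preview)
Your outline has the right skeleton (truncate, lower-bound the curvature pointwise, then concentrate), but the concentration step as you describe it does not go through. Two concrete obstructions. First, the ``random vector'' $W_i=S_{ab,i}^{1/2}X_i\mathbf{1}_{\mathcal{G}_i(\Delta,T)}$ depends on $\Delta$ through the truncation event, so the empirical process $\mathbb{P}_n\{S_{ab}\langle\Delta,X\rangle^2\mathbf{1}_{\mathcal{G}(\Delta,T)}\}$ is not a quadratic form $\Delta^\top\widehat{\Gamma}\Delta$ in a fixed matrix; Proposition~\ref{prop:upper_RSE} and the Loh--Wainwright Lemma~13 machinery behind it use exactly that quadratic structure to pass from sparse-set concentration to the $\|\Delta\|_2^2+\tfrac{\log p}{n}\|\Delta\|_1^2$ form, and they do not apply here. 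Second, even dropping the indicator, $S_{ab}^{1/2}\langle\Delta,X\rangle$ need not be sub-Gaussian: Condition~\ref{cond:subgauss}(d) only gives $E(S_{ab}^4)\leq K$, so the product of $S_{ab}^{1/2}$ with a sub-Gaussian is in general only sub-exponential, and Lemma~\ref{lemma:concent_cov_sparse} does not apply. A related issue is your lower bound on $w(\langle\widehat{\beta},X_i\rangle)$: truncating $|\langle\beta^\ast,X_i\rangle|\leq T$ does not control $|\langle\widehat{\beta},X_i\rangle|$ because $|\langle\widehat{\beta}-\beta^\ast,X_i\rangle|$ can be as large as $\|X_i\|_2$ even on $\{\|\widehat{\beta}-\beta^\ast\|_2\leq 1\}$.

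The paper sidesteps all three issues by truncating more aggressively: it truncates $S_{ab}$ by $\mathcal{T}_\zeta(S_{ab})=\min(S_{ab},\zeta)$ and $w$ by $\mathcal{T}_t(w)=w\mathbf{1}\{|w|\leq t\}$, so that after truncation every summand is uniformly bounded by $t\zeta h^2$. It replaces the hard cutoff on $\langle\Delta,X\rangle$ by a $2\tau$-Lipschitz truncation $\widehat{\mathcal{T}}_\tau$, which permits symmetrization and the Ledoux--Talagrand contraction inequality; since the resulting class is bounded, Massart's bounded-difference inequality yields a Gaussian tail for $Z_n(r;\beta)=\sup_{\|\Delta\|_2=1,\|\Delta\|_1\leq r}f_n(\Delta,\beta)$, and a peeling argument in $r$ (Lemma~9 of Raskutti et al.) then gives the desired bound simultaneously over all $\Delta$. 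The population lower bound is obtained by carefully choosing the truncation levels $\zeta,t,h,T$ in sequence so that each tail term is a small fraction of the constant $c(k,K)$ from Lemma~\ref{lemma:eigen_w}. In short, your plan to reuse Proposition~\ref{prop:upper_RSE} is the step that fails; the fix is to truncate $S_{ab}$ and $w$ to bounded quantities and switch to a Lipschitz-truncation/contraction/Massart/peeling argument for the uniform concentration.
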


\begin{proof}
\lbrack Proof of Proposition \ref{prop:preRSC}]

The outline of the proof is similar to that of Proposition 2 from \cite%
{NegahbanTR}. However, several technical complications have to be dealt with
due to the use of the random weights $w(\langle \widehat{\beta}, X_{i}
\rangle)$ in the loss function. Since by Condition \ref{cond:weight} c) $%
\widehat{\beta}$ is independent of the data and $\Vert \widehat{\beta} -
\beta^{\ast} \Vert_{2}=o_{P}(1)$, it suffices to show that the result holds
for $L_{c}(\cdot,\phi, w_{\beta})$ for any fixed $\beta\in\mathbb{R}^{p}$
such that $\Vert \beta - \beta^{\ast}\Vert_{2} \leq L_0, $ where $L_0$ will
depend only on $k$ and $K$ and will be chosen later. Let $\beta\in\mathbb{R}%
^{p}$ satisfy $\Vert \beta - \beta^{\ast}\Vert_{2} \leq L_0$ and to lighten
the notation let $w_{i}=w(\langle \beta, X_{i} \rangle)$, $%
w_{i}^{\ast}=w(\langle \beta^{\ast}, X_{i} \rangle)$. By the standard
formula for the remainder in a Taylor expansion 
\begin{equation*}
\delta\mathcal{L}(\Delta, \theta_{c}^{\ast},\widehat{\beta})= \frac{1}{2n}%
\sum\limits_{i=1}^{n} w_{i}S_{ab}\varphi_{c}^{\prime}\left(\langle
\theta_{c}^{\ast}, X_{i}\rangle + t_{i} \langle \Delta, X_{i} \rangle
\right) \langle \Delta, X_{i} \rangle^{2},
\end{equation*}
where $t_{i}\in[0,1]$. Take $\Delta \in \mathbb{R}^{p}$ with $\Vert
\Delta\Vert_{2}=\delta \in (0,1]$. Consider four truncation parameters, $%
\tau=\tau(\delta)=h \delta$, $\zeta, t$ and $T$, where $h, \zeta, t$ and $T$
will be chosen shortly and will depend only on $k$ and $K$. Let $\mathcal{T}%
_{\tau}(u)= u^{2} I\lbrace\vert u\vert \leq 2 \tau \rbrace$, $\mathcal{T}%
_{\zeta}(u)=\min(u, \zeta)$ and $\mathcal{T}_{t}(u)=u I\left\lbrace \vert u
\vert \leq t \right\rbrace$. Since by Condition \ref{cond:glm} a) $%
\varphi_{c}^{\prime}>0$ and $\mathcal{T}_{\tau}(u) \leq u^{2}$ for all $u$
we have 
\begin{equation*}
\delta\mathcal{L}(\Delta, \theta_{c}^{\ast},\widehat{\beta}) \geq \frac{1}{2n%
}\sum\limits_{i=1}^{n} w_{i}S_{ab,i}\varphi_{c}^{\prime}\left(\langle
\theta_{c}^{\ast}, X_{i}\rangle + t_{i} \langle \Delta, X_{i} \rangle
\right) \mathcal{T}_{\tau}\left(\langle \Delta, X_{i} \rangle \right)
I\lbrace \vert \langle \theta_{c}^{\ast}, X_i \rangle \vert \leq T \rbrace.
\end{equation*}
If $\mathcal{T}_{\tau}\left(\langle \Delta, X_{i} \rangle \right)\neq 0$ and 
$\vert \langle \theta_{c}^{\ast}, X_i \rangle \vert \leq T$ then, since $%
\tau \leq h$, $\vert \langle \theta_{c}^{\ast}, X_{i}\rangle + t_{i} \langle
\Delta, X_{i} \rangle \vert \leq T+2h$. Let $\epsilon=
(1/2)\min\limits_{\vert u \vert \leq T + 2h}\varphi_{c}^{\prime}(u)$. Then 
\begin{equation*}
\delta\mathcal{L}(\Delta, \theta_{c}^{\ast}) \geq \epsilon \frac{1}{n}%
\sum\limits_{i=1}^{n} w_{i}S_{ab,i}\mathcal{T}_{\tau}\left(\langle \Delta,
X_{i} \rangle \right) I\lbrace \vert \langle \theta_{c}^{\ast}, X_i \rangle
\vert \leq T \rbrace .
\end{equation*}
Moreover, since $\mathcal{T}_{\zeta}(u) \leq u$ and $\mathcal{T}_{t}(u) \leq
u$ for non-negative $u$, 
\begin{equation*}
\delta\mathcal{L}(\Delta, \theta_{c}^{\ast},\widehat{\beta}) \geq \epsilon 
\frac{1}{n}\sum\limits_{i=1}^{n} \mathcal{T}_{t}(w_{i})\mathcal{T}%
_{\zeta}(S_{ab,i})\mathcal{T}_{\tau}\left(\langle \Delta, X_{i} \rangle
\right) I\lbrace \vert \langle \theta_{c}^{\ast}, X_i \rangle \vert \leq T
\rbrace .
\end{equation*}
Let $\mu_{n}=\sqrt{\log(p)/n}$. We will show now that for some constants $%
k_1,k_2$ that depend only on $k$ and $K$, for all sufficiently large $n$ 
\begin{equation}
\frac{1}{n}\sum\limits_{i=1}^{n} \mathcal{T}_{t}(w_{i})\mathcal{T}%
_{\zeta}(S_{ab,i}) \mathcal{T}_{\tau(\delta)}\left(\langle \Delta, X_{i}
\rangle \right) I\lbrace \vert \langle \theta_{c}^{\ast}, X_i \rangle \vert
\leq T \rbrace\geq k_1 \delta^{2} - k_{2} \mu_{n} \Vert \Delta \Vert_{1}
\delta \quad \text{ for all } \Delta \text{ with } \Vert
\Delta\Vert_{2}=\delta  \label{eq:bound_trun_delta}
\end{equation}
with high probability. It suffices to show that \eqref{eq:bound_trun_delta}
holds for $\delta=1$. In fact, if $\Vert \Delta \Vert_{2} =\delta$ the bound
in \eqref{eq:bound_trun_delta} applied to $\Delta/\delta$ gives 
\begin{equation}
\frac{1}{n}\sum\limits_{i=1}^{n}\mathcal{T}_{t}(w_{i})\mathcal{T}%
_{\zeta}(S_{ab,i})\mathcal{T}_{\tau(1)}\left(\langle \Delta/\delta, X_{i}
\rangle \right) I\lbrace \vert \langle \theta_{c}^{\ast}, X_i \rangle \vert
\leq T \rbrace \geq k_1 - k_{2} \mu_{n} \frac{\Vert \Delta \Vert_{1}}{\delta}%
.  \notag
\end{equation}
It is easy to verify that $\mathcal{T}_{\tau(1)}(u/\delta)=(1/\delta)^{2}%
\mathcal{T}_{\tau(\delta)}(u)$. Hence the claim follows from multiplying the
equation above by $\delta^{2}$ on both sides. Thus, we will prove that %
\eqref{eq:bound_trun_delta} holds for $\delta=1$.

Define a new truncation function 
\begin{equation*}
\widehat{\mathcal{T}}_{\tau}(u)=u^{2} I\lbrace \vert u \vert \leq \tau
\rbrace + (2\tau - u)^{2} I\lbrace \tau < u \leq 2 \tau \rbrace +
(u+2\tau)^{2} I\lbrace -2\tau \leq u < -\tau \rbrace.
\end{equation*}
It is easy to verify that $\widehat{\mathcal{T}}_{\tau}$ is $2\tau$%
-Lipschitz. Since $\widehat{\mathcal{T}}_{\tau}(u) \leq \mathcal{T}%
_{\tau}(u) $ for all $u$, it suffices to show that 
\begin{equation}
\frac{1}{n}\sum\limits_{i=1}^{n} \mathcal{T}_{t}(w_{i})\mathcal{T}%
_{\zeta}(S_{ab,i})\widehat{\mathcal{T}}_{\tau(1)}\left(\langle \Delta, X_{i}
\rangle \right) I\lbrace \vert \langle \theta_{c}^{\ast}, X_i \rangle \vert
\leq T \rbrace \geq k_1 - k_{2} \mu_{n} \Vert \Delta \Vert_{1} .  \notag
\end{equation}

Let 
\begin{align*}
& \mathcal{M}(\Delta ,\beta ;O_{i})=\mathcal{T}_{t}(w_{i})\mathcal{T}_{\zeta
}(S_{ab,i})\widehat{\mathcal{T}}_{\tau (1)}\left( \langle \Delta
,X_{i}\rangle \right) I\{|\langle \theta _{c}^{\ast },X_{i}\rangle |\leq T\},
\\
& M(\Delta ,\beta )=E\left( \mathcal{M}(\Delta ,\beta ;O_{i})\right) , \\
& f_{n}(\Delta ,\beta )=\left\vert \mathbb{P}_{n}\left\{ \mathcal{M}(\Delta
,\beta ;O_{i})\right\} -M(\Delta ,\beta )\right\vert .
\end{align*}%
For $r\geq 1$ let 
\begin{equation*}
Z_{n}(r;\beta )=\sup\limits_{\Vert \Delta \Vert _{2}=1,\Vert \Delta \Vert
_{1}\leq r}f_{n}(\Delta ,\beta ).
\end{equation*}%
We will show that there exist constants $k_{2}$, $c(k,K)$, depending only on 
$k,K$, such that for all sufficiently large $n$, for all $\Delta $ with $%
\Vert \Delta \Vert _{2}=1$ and $r\geq 1$ 
\begin{align}
& M(\Delta ,\beta )\geq \frac{c(k,K)}{2}  \label{eq:RSC_lower_bound_exp_fin}
\\
& P\left( Z_{n}(r;\beta )>c(k,K)/8+(k_{2}/2)r\mu _{n}\right) \leq \exp
\left( \frac{-n}{64(\zeta th^{2})^{2}}\left( c(k,K)/8+(k_{2}/2)r\mu
_{n}\right) ^{2}\right) .  \label{eq:RSC_concentration}
\end{align}%
Let 
\begin{equation*}
g_{n}(r)=c(k,K)/8+(k_{2}/2)r\mu _{n},
\end{equation*}%
and 
\begin{equation*}
E_{n}=\left\{ \exists \Delta ,\Vert \Delta \Vert _{2}=1:f_{n}(\Delta ,\beta
)\geq 2g_{n}(\Vert \Delta \Vert _{1})\right\}
\end{equation*}%
Assume for the moment that \eqref{eq:RSC_concentration} holds. Using the
peeling argument in Lemma 9 from \cite{Raskutti-minimax-TR} then yields 
\begin{equation}
P\left( E_{n}\right) \leq 2\frac{\exp ((-ng_{n}(1)^{2})/(64(\zeta
th^{2})^{2}))}{1-\exp ((-ng_{n}(1)^{2})/(64(\zeta th^{2})^{2}))}\leq 2\frac{%
\exp ((-nc(k,h)^{2})/(4096(\zeta th^{2})^{2}))}{1-\exp
((-nc(k,h)^{2})/(4096(\zeta th^{2})^{2}))}.  \label{eq:bound_peeling}
\end{equation}%
This together with \eqref{eq:RSC_lower_bound_exp_fin} in turn implies 
\begin{align*}
\frac{1}{n}\sum\limits_{i=1}^{n}\mathcal{T}_{t}(w_{i})\mathcal{T}_{\zeta
}(S_{ab,i})\widehat{\mathcal{T}}_{\tau (1)}\left( \langle \Delta
,X_{i}\rangle \right) I\{|\langle \theta _{c}^{\ast },X_{i}\rangle |\leq
T\}& =M(\Delta ,\beta )+\mathbb{P}_{n}\left\{ \mathcal{M}(\Delta ,\beta
;O_{i})\right\} -M(\Delta ,\beta ) \\
& \geq M(\Delta ,\beta )-f_{n}(\Delta ,\beta ) \\
& \geq \frac{c(k,K)}{2}-\frac{c(k,K)}{4}-k_{2}\mu _{n}\Vert \Delta \Vert _{1}
\\
& =\frac{c(k,K)}{4}-k_{2}\mu _{n}\Vert \Delta \Vert _{1}\text{ for all }%
\Delta ,\Vert \Delta \Vert _{2}=1,
\end{align*}%
with probability at least 
\begin{equation*}
1-2\frac{\exp ((-nc(k,K)^{2})/(4096(\zeta th^{2})^{2}))}{1-\exp
((-nc(k,K)^{2})/(4096(\zeta th^{2})^{2}))}.
\end{equation*}%
To summarize, if we prove \eqref{eq:RSC_lower_bound_exp_fin} and %
\eqref{eq:RSC_concentration}, the Proposition will be proven with $%
k_{1}^{RSC}=\epsilon c(k,K)/4$, $k_{2}^{RSC}=\epsilon k_{2}$ and $k_{2}$, $%
h,t$ and $\zeta $ to be defined below (depending only on $k,K$).

\bigskip

\textbf{Proof of \eqref{eq:RSC_lower_bound_exp_fin}}

\bigskip

Write 
\begin{align*}
M(\Delta, \beta)&=E\left\lbrace \mathcal{T}_{t}(w_{i})\mathcal{T}%
_{\zeta}(S_{ab,i})\widehat{\mathcal{T}}_{\tau(1)}\left(\langle \Delta, X_{i}
\rangle \right) I\lbrace \vert \langle \theta_{c}^{\ast}, X_i \rangle \vert
\leq T \rbrace \right\rbrace \\
&= E\left\lbrace \mathcal{T}_{t}(w_{i})\mathcal{T}_{\zeta}(S_{ab,i})\widehat{%
\mathcal{T}}_{\tau(1)}\left(\langle \Delta, X_{i} \rangle \right)
\right\rbrace - E\left\lbrace \mathcal{T}_{t}(w_{i})\mathcal{T}%
_{\zeta}(S_{ab,i})\widehat{\mathcal{T}}_{\tau(1)}\left(\langle \Delta, X_{i}
\rangle \right) I\lbrace \vert \langle \theta_{c}^{\ast}, X_i \rangle \vert
> T \rbrace \right\rbrace.
\end{align*}
To bound the first term write 
\begin{align*}
&E\left\lbrace \mathcal{T}_{t}(w_{i})\mathcal{T}_{\zeta}(S_{ab,i})\widehat{%
\mathcal{T}}_{\tau(1)}\left(\langle \Delta, X_{i} \rangle \right)
\right\rbrace \geq E\left\lbrace \mathcal{T}_{t}(w_{i})\mathcal{T}%
_{\zeta}(S_{ab,i})\langle \Delta, X_{i} \rangle^{2} I\left\lbrace
\vert\langle \Delta, X_{i} \rangle\vert \leq \tau \right\rbrace
\right\rbrace = \\
& E\left\lbrace \mathcal{T}_{t}(w_{i})\mathcal{T}_{\zeta}(S_{ab,i})\langle
\Delta, X_{i} \rangle^{2} \right\rbrace - E\left\lbrace \mathcal{T}%
_{t}(w_{i})\mathcal{T}_{\zeta}(S_{ab,i})\langle \Delta, X_{i} \rangle^{2}
I\left\lbrace \vert\langle \Delta, X_{i} \rangle\vert > \tau \right\rbrace
\right\rbrace = \\
& E\left\lbrace w^{\ast}_{i}\mathcal{T}_{\zeta}(S_{ab,i})\langle \Delta,
X_{i} \rangle^{2} \right\rbrace + E\left\lbrace (w_{i}-w^{\ast}_{i})\mathcal{%
T}_{\zeta}(S_{ab,i})\langle \Delta, X_{i} \rangle^{2} \right\rbrace -
E\left\lbrace w_{i}\mathcal{T}_{\zeta}(S_{ab,i})\langle \Delta, X_{i}
\rangle^{2} I\left\lbrace \vert w_{i}\vert > t \right\rbrace \right\rbrace \\
&- E\left\lbrace \mathcal{T}_{t}(w_{i})\mathcal{T}_{\zeta}(S_{ab,i})\langle
\Delta, X_{i} \rangle^{2} I\left\lbrace \vert\langle \Delta, X_{i}
\rangle\vert > \tau \right\rbrace \right\rbrace \\
& = I + II -III - IV
\end{align*}
where in the first inequality we used that $\widehat{\mathcal{T}}%
_{\tau(1)}\left(u \right)\geq u^{2} I\left\lbrace u\leq \tau \right\rbrace$
for non-negative $u$. \bigskip

\textbf{Bounding I}

\bigskip Using that $\mathcal{T}_{\zeta}(u)\geq u I\left\lbrace \vert u
\vert \leq \zeta \right\rbrace$ for non-negative $u$ 
\begin{align*}
E\left\lbrace w^{\ast}_{i}\mathcal{T}_{\zeta}(S_{ab,i})\langle \Delta, X_{i}
\rangle^{2} \right\rbrace &\geq E\left\lbrace w^{\ast}_{i}
S_{ab,i}I\left\lbrace S_{ab,i} \leq \zeta \right\rbrace \langle \Delta,
X_{i} \rangle^{2} \right\rbrace \\
&= E\left\lbrace w^{\ast}_{i} S_{ab,i}\langle \Delta, X_{i} \rangle^{2}
\right\rbrace - E\left\lbrace w^{\ast}_{i} S_{ab,i}I\left\lbrace S_{ab,i}
>\zeta \right\rbrace \langle \Delta, X_{i} \rangle^{2} \right\rbrace .
\end{align*}
By Lemma \ref{lemma:eigen_w} 
\begin{equation*}
E\left\lbrace w^{\ast}_{i} S_{ab,i}\langle \Delta, X_{i} \rangle^{2}
\right\rbrace \geq c(k, K).
\end{equation*}
Using Cauchy-Schwartz 
\begin{align*}
&E\left\lbrace w^{\ast}_{i} S_{ab,i}I\left\lbrace S_{ab,i} >\zeta
\right\rbrace \langle \Delta, X_{i} \rangle^{2} \right\rbrace \leq
E^{1/2}\left\lbrace (w^{\ast}_{i})^{2} S_{ab,i} \langle \Delta, X_{i}
\rangle^{4} \right\rbrace E^{1/2}\left\lbrace S_{ab,i}I\left\lbrace S_{ab,i}
>\zeta \right\rbrace \right\rbrace \\
& \leq E^{1/4}\left\lbrace S_{ab,i}(w^{\ast}_{i})^{4}\right\rbrace
E^{1/4}\left\lbrace S_{ab,i} \langle \Delta, X_{i} \rangle^{8} \right\rbrace
E^{1/4}\left\lbrace S_{ab,i}^{2} \right\rbrace P^{1/4}\left( S_{ab,i} >\zeta
\right) \\
& \leq E^{1/8}\left\lbrace S_{ab,i}^{2}\right\rbrace E^{1/8}\left\lbrace
(w^{\ast}_{i})^{8}\right\rbrace E^{1/8}\left\lbrace S_{ab,i}^{2}
\right\rbrace E^{1/8}\left\lbrace \langle \Delta, X_{i} \rangle^{16}
\right\rbrace E^{1/4}\left\lbrace S_{ab,i}^{2} \right\rbrace P^{1/4}\left(
S_{ab,i} >\zeta \right) \\
&= E^{1/2}\left\lbrace S_{ab,i}^{2}\right\rbrace E^{1/8}\left\lbrace
(w^{\ast}_{i})^{8}\right\rbrace E^{1/8}\left\lbrace \langle \Delta, X_{i}
\rangle^{16} \right\rbrace P^{1/4}\left( S_{ab,i} >\zeta \right).
\end{align*}
By Condition \ref{cond:subgauss} d) $E^{1/2}\left\lbrace
S_{ab,i}^{2}\right\rbrace\leq K$. By Lemma \ref{lemma:bound_moment_lipschitz}
\begin{equation*}
E^{1/8}\left\lbrace (w^{\ast}_{i})^{8}\right\rbrace \leq B_{2}(\vert w(0)
\vert, \Vert \beta^{\ast}\Vert_{2}, 8, k, K),
\end{equation*}
where $B_2$ is a function that is increasing in $\Vert \beta^{\ast}\Vert_{2}$%
. Since by Condition \ref{cond:weight} c) $\Vert \beta^{\ast}\Vert_{2}\leq,
K $ 
\begin{equation*}
E^{1/8}\left\lbrace (w^{\ast}_{i})^{8}\right\rbrace \leq B_{2}(\vert w(0)
\vert, K, 8, k, K),
\end{equation*}
By Condition \ref{cond:subgauss} b) 
\begin{equation*}
E^{1/8}\left\lbrace \langle \Delta, X_{i} \rangle^{16} \right\rbrace \leq
16K^{2}.
\end{equation*}
Using Markov's inequality 
\begin{equation*}
P\left( S_{ab,i} >\zeta \right) \leq \frac{E(S_{ab}^{2})}{\zeta^{2}}\leq 
\frac{K^{2}}{\zeta^{2}}.
\end{equation*}
Hence 
\begin{equation*}
E\left\lbrace w^{\ast}_{i} S_{ab,i}I\left\lbrace S_{ab,i} >\zeta
\right\rbrace \langle \Delta, X_{i} \rangle^{2} \right\rbrace \leq 16
K^{7/2} B_{2}(\vert w(0) \vert, K, 8, k, K) \frac{1}{\zeta^{1/2}}.
\end{equation*}
Choosing $\zeta$ such that 
\begin{equation*}
16 K^{7/2} B_{2}(\vert w(0) \vert, K, 8, k, K) \frac{1}{\zeta^{1/2}} \leq 
\frac{c(k, K)}{8}
\end{equation*}
yields 
\begin{equation}
I=E\left\lbrace w^{\ast}_{i}\mathcal{T}_{\zeta}(S_{ab,i})\langle \Delta,
X_{i} \rangle^{2} \right\rbrace \geq \frac{7}{8}c(k, K).
\label{bound:preRSC_I}
\end{equation}

\bigskip

\textbf{Bounding II}

\bigskip Using Cauchy-Schwartz and that $\mathcal{T}_{\zeta}(S_{ab,i}) \leq
\zeta$ 
\begin{align*}
\vert II \vert = \vert E\left\lbrace (w_{i}-w^{\ast}_{i})\mathcal{T}%
_{\zeta}(S_{ab,i})\langle \Delta, X_{i} \rangle^{2} \right\rbrace \vert \leq
\zeta E^{1/2} \left\lbrace (w_{i}-w^{\ast}_{i})^{2}\right\rbrace E^{1/2}
\left\lbrace \langle \Delta, X_{i} \rangle^{4}\right\rbrace.
\end{align*}
By Lemma \ref{lemma:lipschitz} 
\begin{equation*}
E^{1/2} \left\lbrace (w_{i}-w^{\ast}_{i})^{2}\right\rbrace \leq B_{1}(\Vert
\beta^{\ast}\Vert_{2}, \Vert \beta - \beta^{\ast}\Vert_{2}, 2, k, K) E^{1/2}
\left\lbrace \langle \beta - \beta^{\ast}, X_{i} \rangle^{2}\right\rbrace,
\end{equation*}
where $B_1$ is a function that is increasing in $\Vert \beta^{\ast}\Vert_{2}$
and in $\Vert \beta - \beta^{\ast}\Vert_{2}$. By Condition \ref{cond:weight}
c), $\Vert \beta^{\ast}\Vert_{2}\leq K$ and by assumption $\Vert \beta -
\beta^{\ast}\Vert_{2}\leq L_0$. Hence using Condition \ref{cond:subgauss} b) 
\begin{equation*}
E^{1/2} \left\lbrace (w_{i}-w^{\ast}_{i})^{2}\right\rbrace \leq B_{1}(K,
L_{0}, 2, k, K) \sqrt{2} K L_0.
\end{equation*}
On the other hand by Condition \ref{cond:subgauss} b) 
\begin{equation*}
E^{1/2} \left\lbrace \langle \Delta, X_{i} \rangle^{4}\right\rbrace \leq
4K^{2} .
\end{equation*}
We have shown that 
\begin{align*}
\vert II \vert \leq 4\sqrt{2} \zeta B_{1}(K, L_{0}, 2, k, K) K^{3} L_0.
\end{align*}
We now fix $L_0$ small enough such that 
\begin{equation*}
4 \sqrt{2} \zeta B_{1}(K, L_{0}, 2, k, K) K^{3} L_0 \leq \frac{c(k, K)}{32},
\end{equation*}
yielding 
\begin{equation}
\vert II\vert\leq \frac{c(k, K)}{32}.  \label{bound:preRSC_II}
\end{equation}

\bigskip

\textbf{Bounding III}

\bigskip

Using Cauchy-Schwartz and the definition of $\mathcal{T}_{\zeta}$ 
\begin{align*}
\vert III \vert &= \vert E\left\lbrace w_{i}\mathcal{T}_{\zeta}(S_{ab,i})%
\langle \Delta, X_{i} \rangle^{2} I\left\lbrace \vert w_{i} \vert> t
\right\rbrace \right\rbrace \vert \\
&\leq \zeta \vert E\left\lbrace w_{i}\langle \Delta, X_{i} \rangle^{2}
I\left\lbrace \vert w_{i} \vert > t \right\rbrace \right\rbrace \vert \\
&\leq \zeta E^{1/2}\left\lbrace w_{i}^{2} I\left\lbrace \vert w_{i} \vert> t
\right\rbrace \right\rbrace E^{1/2}\left\lbrace \langle \Delta, X_{i}
\rangle^{4} I\left\lbrace \vert w_{i} \vert> t \right\rbrace \right\rbrace \\
&\leq \zeta E^{1/4}\left\lbrace w_{i}^{4} \right\rbrace P^{1/4}\left\lbrace
\vert w_{i} \vert > t \right\rbrace E^{1/4}\left\lbrace \langle \Delta,
X_{i} \rangle^{8}\right\rbrace P^{1/4}\left\lbrace \vert w_{i} \vert> t
\right\rbrace \\
& = \zeta E^{1/4}\left\lbrace w_{i}^{4} \right\rbrace P^{1/2}\left\lbrace
\vert w_{i} \vert> t \right\rbrace E^{1/4}\left\lbrace \langle \Delta, X_{i}
\rangle^{8}\right\rbrace .
\end{align*}
By Lemma \ref{lemma:bound_moment_lipschitz} 
\begin{equation*}
E^{1/4}\left\lbrace w_{i}^{4} \right\rbrace \leq B_{2}(\vert w(0)\vert,
\Vert \beta\Vert_{2}, 4, k, K),
\end{equation*}
where $B_2$ is increasing in $\Vert \beta\Vert_{2}$. Using Condition \ref%
{cond:weight} c) 
\begin{equation*}
\Vert \beta\Vert_{2} \leq \Vert \beta^{\ast }\Vert_{2} + \Vert
\beta-\beta^{\ast}\Vert_{2} \leq K + L_0.
\end{equation*}
Hence 
\begin{equation*}
E^{1/4}\left\lbrace w_{i}^{4} \right\rbrace \leq B_{2}(\vert w(0)\vert, K +
L_0, 4, k, K).
\end{equation*}
Using Condition \ref{cond:subgauss} b) 
\begin{equation*}
E^{1/4}\left\lbrace \langle \Delta, X_{i} \rangle^{8}\right\rbrace \leq
8K^{2} .
\end{equation*}
By Markov's inequality 
\begin{equation*}
P^{1/2}\left\lbrace \vert w_{i} \vert > t \right\rbrace=P^{1/2}\left\lbrace
w_{i}^{4} > t^{4} \right\rbrace \leq \frac{E^{1/2}\left( w_{i}^{4} \right) }{%
t^2} \leq \frac{ B_{2}^{2}(\vert w(0)\vert, K + L_0, 4, k, K)}{t^{2}}.
\end{equation*}
Thus 
\begin{equation*}
\vert III \vert \leq \zeta B_{2}^{3}(\vert w(0)\vert, K + L_0, 4, k, K) 
\frac{1}{t^{2}}.
\end{equation*}
We now choose $t$ large enough such that 
\begin{equation}
\vert III \vert \leq \frac{c(k, K)}{32}.  \label{bound:preRSC_III}
\end{equation}

\bigskip

\textbf{Bounding IV}

\bigskip

Using Cauchy-Schwartz and the definitions of $\mathcal{T}_{t}$ and $\mathcal{%
T}_{\zeta}$ 
\begin{align*}
\vert IV\vert = \vert E\left\lbrace \mathcal{T}_{t}(w_{i})\mathcal{T}%
_{\zeta}(S_{ab,i})\langle \Delta, X_{i} \rangle^{2} I\left\lbrace
\vert\langle \Delta, X_{i} \rangle \vert> \tau \right\rbrace \right\rbrace
\vert &\leq t \zeta E\left\lbrace \langle \Delta, X_{i} \rangle^{2}
I\left\lbrace\vert \langle \Delta, X_{i} \rangle \vert > \tau
\right\rbrace\right\rbrace \\
&\leq t \zeta E^{1/2} \left\lbrace \langle \Delta, X_{i} \rangle^{4}
\right\rbrace P^{1/2} \left( \vert\langle \Delta, X_{i} \rangle \vert > \tau
\right).
\end{align*}
By Condition \ref{cond:subgauss} b) 
\begin{equation*}
E^{1/2} \left\lbrace \langle \Delta, X_{i} \rangle^{4} \right\rbrace \leq
4K^{2}.
\end{equation*}
By Markov's inequality and Condition \ref{cond:subgauss} b) 
\begin{equation*}
P^{1/2} \left(\vert \langle \Delta, X_{i} \rangle \vert> \tau
\right)=P^{1/2} \left( \langle \Delta, X_{i} \rangle^{2} > \tau^{2} \right)
\leq \frac{E^{1/2} \left( \langle \Delta, X_{i} \rangle^{2} \right)}{\tau}
\leq \frac{\sqrt{2}K}{\tau}.
\end{equation*}
Then 
\begin{equation*}
\vert IV\vert \leq 4 \: \sqrt{2} t \zeta \frac{K^{3}}{\tau}.
\end{equation*}
Choosing $h$ such that $\tau=\tau(1)=h$ satisfies 
\begin{equation*}
4 \: \sqrt{2} t \zeta \frac{K^{3}}{\tau} \leq \frac{c(k, K)}{32}
\end{equation*}
yields 
\begin{equation}
\vert IV\vert \leq \frac{c(k, K)}{32}.  \label{bound:preRSC_IV}
\end{equation}

Recall that 
\begin{align*}
M(\Delta, \beta)&=E\left\lbrace \mathcal{T}_{t}(w_{i})\mathcal{T}%
_{\zeta}(S_{ab,i})\widehat{\mathcal{T}}_{\tau(1)}\left(\langle \Delta, X_{i}
\rangle \right) I\lbrace \vert \langle \theta_{c}^{\ast}, X_i \rangle \vert
\leq T \rbrace \right\rbrace \\
&= E\left\lbrace \mathcal{T}_{t}(w_{i})\mathcal{T}_{\zeta}(S_{ab,i})\widehat{%
\mathcal{T}}_{\tau(1)}\left(\langle \Delta, X_{i} \rangle \right)
\right\rbrace - E\left\lbrace \mathcal{T}_{t}(w_{i})\mathcal{T}%
_{\zeta}(S_{ab,i})\widehat{\mathcal{T}}_{\tau(1)}\left(\langle \Delta, X_{i}
\rangle \right) I\lbrace \vert \langle \theta_{c}^{\ast}, X_i \rangle \vert
> T \rbrace \right\rbrace
\end{align*}
To bound the first term, putting together \eqref{bound:preRSC_I}, %
\eqref{bound:preRSC_II}, \eqref{bound:preRSC_III}, \eqref{bound:preRSC_IV}
we get 
\begin{equation*}
E\left\lbrace \mathcal{T}_{t}(w_{i})\mathcal{T}_{\zeta}(S_{ab,i})\widehat{%
\mathcal{T}}_{\tau(1)}\left(\langle \Delta, X_{i} \rangle \right)
\right\rbrace \geq \frac{7c(k, K)}{8} - 3\frac{c(k, K)}{32}\geq \frac{3c(k,
K)}{4}.
\end{equation*}
To bound the second term, by the definitions of the truncation functions 
\begin{equation*}
E\left\lbrace \mathcal{T}_{t}(w_{i})\mathcal{T}_{\zeta}(S_{ab,i})\widehat{%
\mathcal{T}}_{\tau(1)}\left(\langle \Delta, X_{i} \rangle \right) I\lbrace
\vert \langle \theta_{c}^{\ast}, X_i \rangle \vert > T \rbrace \right\rbrace
\leq t \zeta h^{2} P\left\lbrace \vert \langle \theta_{c}^{\ast}, X_i
\rangle \vert > T \right\rbrace.
\end{equation*}
Using Markov's inequality, Conditions \ref{cond:approx_sparse_S} and \ref%
{cond:subgauss} b) 
\begin{equation*}
P\left\lbrace \vert \langle \theta_{c}^{\ast}, X_i \rangle \vert > T
\right\rbrace \leq \frac{2K^{4}}{T^{2}}.
\end{equation*}
Then 
\begin{equation*}
E\left\lbrace \mathcal{T}_{t}(w_{i})\mathcal{T}_{\zeta}(S_{ab,i})\widehat{%
\mathcal{T}}_{\tau(1)}\left(\langle \Delta, X_{i} \rangle \right) I\lbrace
\vert \langle \theta_{c}^{\ast}, X_i \rangle \vert > T \rbrace \right\rbrace
\leq t \zeta h^{2} \frac{2K^{4}}{T^{2}}.
\end{equation*}
Choosing $T$ large enough such that 
\begin{equation*}
2t \zeta h^{2} \frac{K^{4}}{T^{2}} \leq \frac{c(k, K)}{4}
\end{equation*}
we conclude that 
\begin{equation*}
M(\Delta, \beta) \geq \frac{3c(k, K)}{4}- \frac{c(k, K)}{4}=\frac{c(k, K)}{2}%
,
\end{equation*}
which proves \eqref{eq:RSC_lower_bound_exp_fin}. Note that $\zeta, t, T $
and $h$ depend only on $k$ and $K$. \bigskip

\textbf{Proof of \eqref{eq:RSC_concentration}}

\bigskip

The empirical process defining $Z_{n}(r;\beta)$ is formed by a class of
functions bounded by $t \zeta h^{2}$. Then \eqref{eq:RSC_concentration}
follows easily using Massart's inequality in Theorem 14.2 from \cite%
{Buhlmann-book} and standard symmetrization and contraction inequalities for
empirical processes. We omit the details.
\end{proof}

\begin{corollary}
\label{coro:RSC_fin} Assume the setting of Proposition \ref{prop:preRSC}.
Let $s$ stand for either $s_{c}$ or $\max(s_{c},s_{\beta})$. Then there
exists $n_2$ depending only on $k$ and $K$ such that if $n\geq n_2$, $%
L_{c}(\cdot, \phi,\widehat{\beta})$ satisfies RSC over $\mathbb{D}(c_%
\mathcal{C},s) \cap \left\lbrace \Delta : \Vert \Delta \Vert_{2} \leq
1\right\rbrace$ with curvature $k^{RSC}_1/2$ with probability tending to one.
\end{corollary}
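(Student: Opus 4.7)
The plan is to derive the corollary as an essentially immediate consequence of Proposition \ref{prop:preRSC} by exploiting the cone structure to absorb the cross term into the quadratic term. Proposition \ref{prop:preRSC} guarantees that, with probability tending to one,
\begin{equation*}
\delta \mathcal{L}(\Delta,\theta_c^{\ast},\widehat{\beta})\geq k_1^{RSC}\Vert \Delta\Vert_2^2 - k_2^{RSC}\sqrt{\frac{\log(p)}{n}}\Vert \Delta\Vert_1\Vert \Delta\Vert_2
\end{equation*}
uniformly over $\Delta$ with $\Vert \Delta\Vert_2\leq 1$. I would then restrict attention to $\Delta\in \mathbb{D}(c_{\mathcal{C}},s)\cap\{\Vert \Delta\Vert_2\leq 1\}$, where by the very definition of the cone $\mathbb{D}(c_{\mathcal{C}},s)$ we have $\Vert \Delta\Vert_1\leq c_{\mathcal{C}}\sqrt{s}\Vert \Delta\Vert_2$.

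Substituting this into the bound above gives
\begin{equation*}
\delta \mathcal{L}(\Delta,\theta_c^{\ast},\widehat{\beta})\geq \left(k_1^{RSC} - k_2^{RSC} c_{\mathcal{C}}\sqrt{\frac{s\log(p)}{n}}\right)\Vert \Delta\Vert_2^2.
\end{equation*}
The next step is to invoke Conditions \ref{cond:weight}(c) and \ref{cond:approx_sparse_S}, which ensure $s_c\log(p)/n\to 0$ and $s_\beta\log(p)/n\to 0$; consequently $s\log(p)/n\to 0$ for both choices $s=s_c$ and $s=\max(s_c,s_\beta)$. Thus there exists $n_2\in \mathbb{N}$, depending only on $k$ and $K$ (since $k_1^{RSC}$, $k_2^{RSC}$ and $c_{\mathcal{C}}$ do), such that for all $n\geq n_2$
\begin{equation*}
k_2^{RSC} c_{\mathcal{C}}\sqrt{\frac{s\log(p)}{n}}\leq \frac{k_1^{RSC}}{2}.
\end{equation*}

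Combining the two displays yields $\delta \mathcal{L}(\Delta,\theta_c^{\ast},\widehat{\beta})\geq (k_1^{RSC}/2)\Vert \Delta\Vert_2^2$ for every $\Delta$ in $\mathbb{D}(c_{\mathcal{C}},s)\cap\{\Vert \Delta\Vert_2\leq 1\}$, on the same high-probability event supplied by Proposition \ref{prop:preRSC}. This is precisely the RSC property with curvature $k_1^{RSC}/2$ claimed in the corollary. There is no real obstacle here: the argument is a one-line application of the cone inequality, and the only subtlety is remembering that the vanishing of $s\log(p)/n$ is guaranteed in both regimes (the correctly-specified case with $s=s_c$ and the misspecified/random-weight case with $s=\max(s_c,s_\beta)$), so the same $n_2$ works uniformly.
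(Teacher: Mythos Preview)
Your proposal is correct and follows essentially the same approach as the paper's proof: invoke Proposition \ref{prop:preRSC}, use the cone inequality $\Vert \Delta\Vert_1\leq c_{\mathcal{C}}\sqrt{s}\Vert \Delta\Vert_2$ to absorb the cross term, and then appeal to $s\log(p)/n\to 0$ (via Conditions \ref{cond:weight}(c) and \ref{cond:approx_sparse_S}) to make the subtracted term at most $k_1^{RSC}/2$ for large $n$.
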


\begin{proof}
\lbrack Proof of Corollary \ref{coro:RSC_fin}] By Proposition \ref%
{prop:preRSC}, for $n\geq n_{3}$ 
\begin{equation}
\delta\mathcal{L}(\Delta, \theta_{c}^{\ast}, \widehat{\beta})\geq k^{RSC}_1
\Vert \Delta\Vert_{2}^{2} -k^{RSC}_{2} \left( \frac{\log(p)}{n}%
\right)^{1/2}\Vert \Delta\Vert_{1}\Vert \Delta\Vert_{2} \text{ for all }
\Delta \text{ with } \Vert \Delta \Vert_{2}\leq 1  \label{eq:tesis_preRSC}
\end{equation}
with probability tending to one. Take $\Delta\in \mathbb{D}(c_{\mathcal{C}%
},s)$ with $\Vert \Delta \Vert_{2}\leq 1$. Then \eqref{eq:tesis_preRSC}
implies 
\begin{align*}
\delta\mathcal{L}(\Delta, \theta_{c}^{\ast},\widehat{\beta})&\geq k^{RSC}_1
\Vert \Delta\Vert_{2}^{2} -k^{RSC}_{2} \left( \frac{\log(p)}{n}%
\right)^{1/2}\Vert \Delta\Vert_{1}\Vert \Delta\Vert_{2} \\
&\geq \left( k^{RSC}_1-k^{RSC}_{2} c_{\mathcal{C}}\left( \frac{s \log(p)}{n}%
\right)^{1/2}\right) \Vert \Delta\Vert_{2}^{2}.
\end{align*}
By Condition \ref{cond:weight} c) and Condition \ref{cond:approx_sparse_S}
we can take $n$ large enough such that 
\begin{equation*}
k^{RSC}_{2} c_{\mathcal{C}}\left( \frac{s \log(p)}{n}\right)^{1/2} \leq 
\frac{k_{1}^{RSC}}{2}.
\end{equation*}
The result is proven.
\end{proof}

We are now ready to prove Theorem \ref{theo:rate_main}.

\begin{proof}
\lbrack Proof of Theorem \ref{theo:rate_main}] Fix $\varepsilon>0$. We will
verify that the assumptions in Theorem \ref{theo:rate_gen} hold with
probability at least $1-\varepsilon$ for large enough $n$. If Condition \ref%
{cond:approx_sparse_S} a) holds, let $s=s_{c}$, if Condition \ref%
{cond:approx_sparse_S} b) holds let $s=\max(s_{c},s_{\beta})$. By Lemma \ref%
{lemma:lambda_choice} we can choose $c_{\lambda}$ and such that if $%
\lambda_{a}=c_{\lambda}\sqrt{\log(p)/n}$ then for all sufficiently large $n$ 
\begin{equation*}
P\left( J \geq \frac{\lambda_{a}}{2} \right) \leq \frac{\varepsilon}{5}.
\end{equation*}
By Lemma \ref{lemma:bound_res}, we can choose $c_{\rho}$ such that for
sufficiently large $n$ 
\begin{equation*}
P\left( \Vert \widetilde{\varrho}\Vert_{2,n} \geq c_{\rho}\sqrt{\frac{%
s_{c}\log(p)}{n}} \right) \leq \frac{\varepsilon}{5}.
\end{equation*}
If Condition \ref{cond:approx_sparse_S} a) holds then $H=0$ by definition
and we can take $c_{H}=0$. If Condition \ref{cond:approx_sparse_S} b) holds
and $w\equiv 1$ then $H=0$ and again we can take $c_{H}=0$. In general, if
Condition \ref{cond:approx_sparse_S} b) holds, by Lemma \ref{lemma:Hbeta} we
can choose $c_{H}>0$ such that for all sufficiently large $n$ 
\begin{equation*}
P\left( H \geq c_{H}\sqrt{\frac{s_{\beta}\log(p)}{n}} \right) \leq \frac{%
\varepsilon}{5}.
\end{equation*}
By Conditions \ref{cond:subgauss} b) and c) the random vector $X$ satisfies
the assumptions in Proposition \ref{prop:upper_RSE}. Hence for large enough $%
n$, with probability at least $1-\varepsilon/5$ 
\begin{align*}
\Vert {\Delta}\Vert_{\widehat{\Sigma}_{1}}^{2}\leq c_{\Sigma_1} \left( \Vert 
{\Delta}\Vert_{2}^{2} + \frac{\log(p)}{n}\Vert {\Delta}\Vert_{1}^{2}\right)
\quad \text{for all } \Delta,
\end{align*}
where $c_{\Sigma_1}$ depends only on $k$ and $K$. Then by Proposition \ref%
{prop:cone} there exists $c_{\mathcal{C}}$ depending only on $c_{\lambda},
c_{\rho},c_{H}, c_{\Sigma_1}$ such that, for all sufficiently large $n$, $%
\widehat{\Delta}\in \mathbb{D}(c_{\mathcal{C}},s)$ with probability at least 
$1-4/5\varepsilon$. Finally, by Corollary \ref{coro:RSC_fin}, there exists $%
\kappa_{RSC}$ depending only on $k$ and $K$ such that, for all large enough $%
n$, $L_{c}(\cdot, \phi, \widehat{\beta})$ satisfies RSC over $\mathbb{D}(c_%
\mathcal{C},s) \cap \left\lbrace \Delta : \Vert \Delta \Vert_{2} \leq
1\right\rbrace$ with curvature $\kappa_{RSC}$ with probability at least $%
1-\varepsilon/5$.

Hence, if Condition \ref{cond:approx_sparse_S} a) holds or if Condition \ref%
{cond:approx_sparse_S} b) holds and $w\equiv 1$, taking $c_{H}=0$ and $%
s=s_{c}$, from Theorem \ref{theo:rate_gen} we get that with probability at
least $1-\varepsilon$, for large enough $n$, 
\begin{equation*}
\Vert \widehat{\theta}_{c} - \theta^{\ast}_{c}\Vert_{2}\leq \frac{2}{%
\kappa_{RSC}}\left( \frac{3c_{\lambda}}{2 }+c_{\rho}\left(c_{\Sigma_1}
\left( 1+ c_{\mathcal{C}}^{2} \right)\right)^{1/2} \right) \sqrt{\frac{%
s_{c}\log(p)}{n}}
\end{equation*}
and 
\begin{equation*}
\Vert \widehat{\theta}_{c} - \theta^{\ast}_{c}\Vert_{1}\leq \frac{2c_{%
\mathcal{C}}}{\kappa_{RSC}}\left( \frac{3c_{\lambda}}{2 }+c_{\rho}\left(c_{%
\Sigma_1} \left( 1+ c_{\mathcal{C}}^{2} \right)\right)^{1/2} \right) s_{c} 
\sqrt{\frac{\log(p)}{n}}.
\end{equation*}
On the other hand, if Condition \ref{cond:approx_sparse_S} b) holds and $w$
is a general weight function, taking $s=\max(s_{c},s_{\beta})$ from Theorem %
\ref{theo:rate_gen} we get that with probability at least $1-\varepsilon$,
for large enough $n$, 
\begin{equation*}
\Vert \widehat{\theta}_{c} - \theta^{\ast}_{c}\Vert_{2}\leq \frac{2}{%
\kappa_{RSC}}\left( \frac{3c_{\lambda}}{2 }+c_{\rho}\left(c_{\Sigma_1}
\left( 1+ c_{\mathcal{C}}^{2} \right)\right)^{1/2} +c_{H}\left(c_{\Sigma_1}
\left( 1+ c_{\mathcal{C}}^{2} \right)\right)^{1/2} \right) \sqrt{\frac{%
\max(s_{c},s_{\beta})\log(p)}{n}}
\end{equation*}
and 
\begin{equation*}
\Vert \widehat{\theta}_{c} - \theta^{\ast}_{c}\Vert_{1}\leq \frac{2c_{%
\mathcal{C}}}{\kappa_{RSC}}\left( \frac{3c_{\lambda}}{2 }+c_{\rho}\left(c_{%
\Sigma_1} \left( 1+ c_{\mathcal{C}}^{2} \right)\right)^{1/2}
+c_{H}\left(c_{\Sigma_1} \left( 1+ c_{\mathcal{C}}^{2} \right)\right)^{1/2}
\right) \max(s_{c},s_{\beta})\sqrt{\frac{\log(p)}{n}}.
\end{equation*}
The Theorem is proven.
\end{proof}

\newpage

\section{Appendix B: Asymptotic results for $\widehat{\protect\chi}_{lin}$
and $\widehat{\protect\chi}_{nonlin}$}

This appendix contains the proofs of the theorems regarding the asymptotic
behavior of the estimators $\widehat{\chi }_{lin}$ and $\widehat{\chi }%
_{nonlin}$. It also includes the proofs of several results announced in
Section \ref{sec:models} regarding the AGLS class. Moreover, here we provide
an example of a setting in which Conditions Lin.L.W.1, Lin.E.W and Lin.V.W
for the probability limit of the estimator of the incorrectly specified
nuisance function are satisfied.

\subsection{Approximate sparsity examples}

We will need the following Lemma to show that the `weakly sparse' parametric
models used in \cite{NegahbanSS} are included in the class of functions
defined in Example \ref{ex:parametric_weakly_sparse}.

\begin{lemma}
\label{lemma:Rq} Let $q>0$, $\theta \in \mathbb{R}^{p}$ and suppose $%
\sum_{j=1}^{p} \vert \theta_{j}\vert^{q}\leq R_{q}$. Then 
\begin{equation*}
\max_{j\leq p} j \vert \theta_{(j)}\vert^{q} \leq R_q
\end{equation*}
\end{lemma}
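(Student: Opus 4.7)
The plan is to exploit the ordering of the sorted absolute values together with monotonicity of the map $x \mapsto x^q$ on the nonnegative reals. Concretely, I will fix an arbitrary index $j \in \{1,\dots,p\}$ and use that, by the sorting convention $|\theta_{(p)}| \leq \dots \leq |\theta_{(1)}|$, every one of the top $j$ sorted absolute values dominates $|\theta_{(j)}|$.

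First, I would observe that for every $k \leq j$ we have $|\theta_{(k)}| \geq |\theta_{(j)}| \geq 0$. Since $q>0$ and $t \mapsto t^q$ is nondecreasing on $[0,\infty)$, this gives $|\theta_{(k)}|^q \geq |\theta_{(j)}|^q$ for $k=1,\dots,j$. Summing these $j$ inequalities yields
\begin{equation*}
j\,|\theta_{(j)}|^{q} \;\leq\; \sum_{k=1}^{j} |\theta_{(k)}|^{q} \;\leq\; \sum_{k=1}^{p} |\theta_{(k)}|^{q} \;=\; \sum_{k=1}^{p} |\theta_{k}|^{q} \;\leq\; R_{q},
\end{equation*}
where the second-to-last equality uses that sorting is a permutation and hence preserves the sum of $q$-th powers, and the last bound is the hypothesis. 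Taking the maximum over $j \in \{1,\dots,p\}$ gives the claim.

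There is essentially no hard step here; the only things to be careful about are (i) the direction of the sorting convention used in the paper (largest absolute value is $|\theta_{(1)}|$, smallest is $|\theta_{(p)}|$), and (ii) the fact that $q$ is allowed to be any positive number, including $q>1$ and $q<1$, so one should avoid any argument that requires $q\in(0,1]$. The monotonicity argument above is uniform in $q>0$ and therefore handles all relevant cases at once.
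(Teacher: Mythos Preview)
Your proof is correct and follows essentially the same approach as the paper: both use that the $j$ largest sorted absolute values each dominate $|\theta_{(j)}|$, so $j|\theta_{(j)}|^q \le \sum_{k=1}^{j} |\theta_{(k)}|^q \le \sum_{k=1}^{p} |\theta_{k}|^q \le R_q$. The only cosmetic difference is that the paper fixes the maximizing index at the outset rather than bounding an arbitrary $j$ and then taking the maximum.
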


\begin{proof}
\lbrack Proof of Lemma \ref{lemma:Rq}] 
\begin{align*}
\sum\limits_{j=1}^{p} \vert \theta_{j}\vert^{q}=\sum\limits_{j=1}^{p} \vert
\theta_{(j)}\vert^{q}\leq R_{q}.
\end{align*}
Take $i\in\lbrace 1,\dots, p\rbrace$ such that 
\begin{equation*}
\max_{j\leq p} j \vert \theta_{(j)}\vert^{q} = i \vert \theta_{(i)}\vert^{q}.
\end{equation*}
Then 
\begin{equation*}
i \vert \theta_{(i)}\vert^{q} \leq \sum\limits_{j=1}^{i} \vert
\theta_{(j)}\vert^{q} \leq \sum\limits_{j=1}^{p} \vert \theta_{(j)}\vert^{q}
=R_{q}.
\end{equation*}
\end{proof}

The following proposition collects the results announced in the discussion
of Examples \ref{ex:parametric_weakly_sparse} and \ref{ex:nonparam}.

\begin{proposition}
\label{prop:approx_sparse_ex} $\ $

\begin{enumerate}
\item Assume that there exist fixed positive constants $k,K$ such that $%
\sup_{\Vert \Delta\Vert_{2}=1} \Vert \langle \Delta, \phi(Z) \rangle
\Vert_{\psi_{2}}\leq K$, $k\leq
\lambda_{min}\left(\Sigma_{1}\right)\leq\lambda_{max}\left(\Sigma_{1}\right)%
\leq K$ and that $\varphi$ satisfies 
\begin{equation*}
\vert \varphi(u) -\varphi(v)\vert \leq K\exp\left\lbrace K \left( \vert
u\vert+\vert v \vert \right) \right\rbrace \vert u -v\vert \quad \text{for
all } u,v \in\mathbb{R}.
\end{equation*}
Take $l \in \left\lbrace 1,2 \right\rbrace$, $\alpha>1/l$ and $M>0$. Let $%
\overline{l}=2$ if $l=1$ and $\overline{l}=1$ if $l=2$. Consider the class
of functions $\mathcal{W}_{n}(\phi,t,\alpha,l,M,\varphi)$ defined in Example %
\ref{ex:parametric_weakly_sparse}. Let 
\begin{equation*}
s=t(n)^{1/\alpha} \left(\frac{n}{\log(p)}\right)^{1/(l\alpha)}.
\end{equation*}%
Then $\mathcal{W}_{n}(\phi,t,\alpha,l,M,\varphi)$ is contained in the AGLS
class $\mathcal{G}_{n}(\phi, s, \overline{l}, \varphi)$.

Moreover, for each $q\in(0,2)$, $\mathcal{N}_{n}(\phi, t,q,M,\varphi)$ is
contained in $\mathcal{W}_{n}(\phi,t^{1/q}, 1/q,2, \varphi)$.

\item Assume that there exists a fixed positive constants $K$ such that the
density of $Z$ satisfies $f_{Z}(z)\leq K$ for all $z$, and that $%
\left\lbrace \phi_{j} \right\rbrace$ is an orthonormal basis of $%
L_{2}[0,1]^{d}$, for a fixed $d$. Take $l \in \left\lbrace 1,2 \right\rbrace$%
, $\alpha>1/l$. Let $\overline{l}=2$ if $l=1$ and $\overline{l}=1$ if $l=2$.
Consider the class of functions $\mathcal{S}_{n}(\phi,t,\alpha,l)$ defined
in Example \ref{ex:nonparam}. Let 
\begin{equation*}
s=t(n)^{1/\alpha} \left(\frac{n}{\log(p)}\right)^{1/(l\alpha)}.
\end{equation*}%
Then $\mathcal{S}_{n}(\phi,t,\alpha,l)$ is contained in the AGLS class $%
\mathcal{G}_{n}(\phi^{n}, s, \overline{l}, id)$ with $\phi^{n}=\left(%
\phi_{1},\dots,\phi_{p(n)}\right)$.
\end{enumerate}
\end{proposition}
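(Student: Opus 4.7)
For all three claims, the proof reduces to a hard-thresholding argument: for a function in the source class, I would write the corresponding coefficient vector, keep its $s$ largest entries in absolute value, and control the squared $L_{2}(P_{Z,\eta})$ norm of what is discarded. The asserted inclusion then comes down to an algebraic identity between $t(n)^{2}s^{1-2\alpha}$ (the typical size of the tail) and $(s\log(p)/n)^{\overline{l}}$ for the specific $s=t(n)^{1/\alpha}(n/\log(p))^{1/(l\alpha)}$.

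\textbf{Part 1(a).} Given $c(Z)=\varphi(\langle \theta,\phi(Z)\rangle)$ in $\mathcal{W}_{n}(\phi,t,\alpha,l,M,\varphi)$, let $\theta^{\ast}\in\mathbb{R}^{p}$ be obtained from $\theta$ by zeroing out all but the $s$ coordinates with largest $|\theta_{j}|$. Then $\|\theta^{\ast}\|_{0}\leq s$ and $r(Z)=\varphi(\langle\theta,\phi(Z)\rangle)-\varphi(\langle\theta^{\ast},\phi(Z)\rangle)$ is the approximation error. The Lipschitz hypothesis on $\varphi$, followed by Cauchy-Schwarz, gives
\begin{equation*}
E_{\eta}[r(Z)^{2}]\leq K^{2}\bigl(E_{\eta}[\exp(4K(|\langle\theta,\phi(Z)\rangle|+|\langle\theta^{\ast},\phi(Z)\rangle|))]\bigr)^{1/2}\bigl(E_{\eta}[\langle\theta-\theta^{\ast},\phi(Z)\rangle^{4}]\bigr)^{1/2},
\end{equation*}
and the first factor is bounded by a constant depending only on $K$ and $M$ because $\|\theta\|_{2},\|\theta^{\ast}\|_{2}\leq M$ together with the sub-Gaussian assumption on linear forms of $\phi(Z)$ make the moment generating function uniformly finite. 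The sub-Gaussianity of $\langle\theta-\theta^{\ast},\phi(Z)\rangle$, combined with $\lambda_{\max}(\Sigma_{1})\leq K$, bounds the second factor by a constant times $\|\theta-\theta^{\ast}\|_{2}^{2}$. Thus $E_{\eta}[r(Z)^{2}]\lesssim\|\theta-\theta^{\ast}\|_{2}^{2}$, and the sorted decay $|\theta_{(j)}|\leq t(n)j^{-\alpha}$ yields, via an integral comparison (valid because $\alpha>1/l\geq 1/2$),
\begin{equation*}
\|\theta-\theta^{\ast}\|_{2}^{2}=\sum_{j>s}\theta_{(j)}^{2}\leq t(n)^{2}\sum_{j>s}j^{-2\alpha}\lesssim \frac{t(n)^{2}s^{1-2\alpha}}{2\alpha-1}.
\end{equation*}
A direct substitution of $s=t(n)^{1/\alpha}(n/\log(p))^{1/(l\alpha)}$ then shows $t(n)^{2}s^{1-2\alpha}=t(n)^{1/\alpha}(\log(p)/n)^{(2\alpha-1)/(l\alpha)}$, which I would match against $(s\log(p)/n)^{\overline{l}}=t(n)^{\overline{l}/\alpha}(\log(p)/n)^{\overline{l}(1-1/(l\alpha))}$; the two quantities are equal for $(l,\overline{l})=(2,1)$ and differ by the factor $(\log(p)/(nt(n)))^{1/\alpha}$ when $(l,\overline{l})=(1,2)$, which stays bounded under the standing constraint that $s\geq 1$ for large $n$ (a direct consequence of the second hypothesis $t(n)^{1/\alpha}(n/\log(p))^{1/(l\alpha)}\leq p$ together with $p\geq 1$). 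The inclusion $s\leq p$ needed for $\theta^{\ast}\in\mathbb{R}^{p}$ is precisely the second hypothesis in the definition of $\mathcal{W}_{n}$.

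\textbf{Part 1(b).} Given $\theta\in\mathbb{R}^{p}$ with $\sum_{j=1}^{p}|\theta_{j}|^{q}\leq t(n)$ and $\|\theta\|_{2}\leq M$, Lemma \ref{lemma:Rq} directly yields $\max_{j}j|\theta_{(j)}|^{q}\leq t(n)$, hence $|\theta_{(j)}|\leq t(n)^{1/q}j^{-1/q}$, i.e.\ $\max_{j}j^{1/q}|\theta_{(j)}|\leq t(n)^{1/q}$. Thus $\theta$ belongs to the sorted-decay family with parameters $\widetilde{\alpha}=1/q$ and $\widetilde{t}(n)=t(n)^{1/q}$. The two growth conditions of $\mathcal{W}_{n}(\phi,t^{1/q},1/q,2,M,\varphi)$ become $t(n)(\log(p)/n)^{1-q/2}\to 0$ and $t(n)(n/\log(p))^{q/2}\leq p$, which are exactly the hypotheses defining $\mathcal{N}_{n}(\phi,t,q,M,\varphi)$.

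\textbf{Part 2.} For $c\in\mathcal{S}_{n}(\phi,t,\alpha,l)$, I would define $\theta^{\ast}\in\mathbb{R}^{p(n)}$ by setting $\theta^{\ast}_{\pi(j)}=\theta_{j}$ for $j=1,\dots,s$ and $\theta^{\ast}_{k}=0$ otherwise. Then $\|\theta^{\ast}\|_{0}\leq s$ and the residual
\begin{equation*}
r(Z)=c(Z)-\langle\theta^{\ast},\phi^{n}(Z)\rangle=\sum_{j=s+1}^{p}\theta_{j}\phi_{\pi(j)}(Z)+\sum_{j>p}\theta_{j}\phi_{j}(Z)
\end{equation*}
is a linear combination of basis elements disjoint from those indexed by $\pi(1),\dots,\pi(s)$. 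The orthonormality of $\{\phi_{j}\}_{j\in\mathbb{N}}$ in $L^{2}([-1,1]^{d})$ gives $\int r(z)^{2}dz=\sum_{j>s}\theta_{j}^{2}\leq t(n)^{2}\sum_{j>s}j^{-2\alpha}\lesssim t(n)^{2}s^{1-2\alpha}$, and the density bound $f_{Z}\leq K$ upgrades this to $E_{\eta}[r(Z)^{2}]\leq K\cdot t(n)^{2}s^{1-2\alpha}/(2\alpha-1)$. The algebra identifying $t(n)^{2}s^{1-2\alpha}$ with a multiple of $(s\log(p)/n)^{\overline{l}}$ is identical to the one in Part 1(a). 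The main (and only) genuine obstacle in the whole proof is the bookkeeping in this algebraic identification; everything else is routine thresholding and moment control.
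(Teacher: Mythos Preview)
Your proposal is correct and follows essentially the same route as the paper: hard-threshold the coefficient vector at the $s$ largest entries, control $E_\eta[r(Z)^2]$ by $\|\theta-\theta^\ast\|_2^2$ via the link's Lipschitz-type bound and sub-Gaussian moments (the paper packages this step as Lemma~\ref{lemma:lipschitz}, while you do the Cauchy--Schwarz/MGF argument by hand), bound the tail sum by $t(n)^2 s^{1-2\alpha}$, and then identify this with $s(\log(p)/n)^{2/l}$. Part~1(b) via Lemma~\ref{lemma:Rq} and Part~2 via orthonormality plus the density bound are likewise the paper's arguments.

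One slip: your justification that $s\geq 1$ for large $n$ (needed in the $l=1$ case so that $s(\log(p)/n)^2 \leq (s\log(p)/n)^2$) is wrong---the hypothesis $t(n)^{1/\alpha}(n/\log(p))^{1/(l\alpha)}\leq p$ says $s\leq p$, not $s\geq 1$. The paper's proof has the same unstated assumption (it writes $s(\log(p)/n)^2\leq C_2(s\log(p)/n)^2$ ``for some fixed constant $C_2$'' without comment), so this is a shared tacit convention that $s$ is a bona fide sparsity count bounded below by a constant, not a defect unique to your write-up.
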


\begin{proof}
\lbrack Proof of Proposition \ref{prop:approx_sparse_ex}] We prove part one
first. Let $c(z)=\varphi\left(\langle\theta, \phi(z) \rangle \right)\in 
\mathcal{W}_{n}(\phi,t,\alpha,l,M,\varphi)$. Then 
\begin{equation*}
\max_{j\leq p}j^{\alpha }|\theta _{(j)}|\leq t\left( n\right) \quad \text{and%
} \quad \Vert \theta\Vert_{2}\leq M,
\end{equation*}%
where the sequence $t\left( n\right) $ satisfies 
\begin{equation*}
t\left( n\right) ^{1/\alpha }\left( \frac{\log (p)}{n}\right) ^{1-1/(l\alpha
)}\underset{n\rightarrow \infty }{\rightarrow }0 \quad \text{and} \quad
t(n)^{1/\alpha} \left(\frac{n}{\log(p)}\right)^{1/(l\alpha)}\leq p .
\end{equation*}%
Let 
\begin{equation*}
s=t(n)^{1/\alpha} \left(\frac{n}{\log(p)}\right)^{1/(l\alpha)}.
\end{equation*}
Then ${s\log(p)}/{n}\to 0, $ which implies $s(\log(p)/{n})^{2/l} \to 0$. In
particular, $s(\log(p)/{n})^{2/l}$ is bounded.

Let $i_{1},\dots,i_{s}$ be the indices corresponding to the $s$ largest
values (in absolute value) of $\theta$. Take $\theta^{\ast}$ to be $p$%
-dimensional, and $\theta^{\ast}_{i_j}$ to be equal to $\theta_{i_j}$ for
the values of $j=1,\dots, s$ and zero elsewhere. Let $S$ be the support of $%
\theta^{\ast}$. Then 
\begin{align*}
E \left[\left( \langle \theta, \phi(Z)\rangle - \langle \theta^{\ast},
\phi(Z)\rangle \right)^{2}\right]= E \left[ \left( \sum\limits_{j\in 
\overline{S}, j < p+1} \theta_{j} \phi_{j}(Z) \right)^{2} \right] &\leq
K^{2} \sum\limits_{j\in \overline{S}, j < p+1} (\theta_{j})^{2} \\
&\leq K^{2} \sum\limits_{j= s+1}^{p} \frac{t(n)^{2}}{j^{2\alpha}} \\
&\leq C_0 K^{2} t(n)^{2}s^{1-2\alpha}=C_0 K^{2} s \left(\frac{\log(p)}{n}%
\right)^{2/l}
\end{align*}
where $C_0$ depends only on $\alpha$ and in the first inequality we used
that $\lambda_{max}(\Sigma_{1})\leq K$. Since $\lambda_{min}(\Sigma_{1})\geq
k$, 
\begin{equation*}
\Vert \theta-\theta^{\ast} \Vert_{2} \leq \sqrt{\frac{C_{0} K^{2}}{k^{2}}
s\left(\frac{\log(p)}{n}\right)^{2/l} }\leq C_{1}
\end{equation*}
for some positive constant $C_{1}$. On the other hand, by Lemma \ref%
{lemma:lipschitz}, 
\begin{equation*}
E^{1/2} \left[\left( c(z)- \varphi\left(\langle \theta^{\ast},
\phi(Z)\rangle\right) \right)^{2}\right]\leq B_{1}(\Vert \theta\Vert_{2},
\Vert \theta-\theta^{\ast} \Vert_{2},2, k, K) E^{1/2}\left(\langle
\theta-\theta^{\ast}, \phi(Z) \rangle^{2} \right),
\end{equation*}
where $B_{1}$ is a function that is increasing in $\Vert \theta\Vert_{2}$
and in $\Vert \theta-\theta^{\ast} \Vert_{2}$. By assumption $\Vert
\theta\Vert_{2}\leq M. $ Thus 
\begin{equation*}
E^{1/2} \left[\left( c(z)- \varphi\left(\langle \theta^{\ast},
\phi(Z)\rangle\right) \right)^{2}\right]\leq B_{1}(M, C_{1},2, k, K) \sqrt{%
C_0 K^{2} s \left(\frac{\log(p)}{n}\right)^{2/l}}.
\end{equation*}
If $l=2$, this implies that $c(z)\in\mathcal{G}_{n}(\phi, s, 1, \varphi)$.
If $l=1$ 
\begin{equation*}
s \left(\frac{\log(p)}{n}\right)^{2/l}=s \left(\frac{\log(p)}{n}\right)^{2}
\leq C_{2} \left(\frac{s\log(p)}{n} \right)^{2},
\end{equation*}
for some fixed constant $C_2$. Thus, if $l=1$, $c(z)\in\mathcal{G}_{n}(\phi,
s, 2, \varphi)$.

Let $q\in(0,2)$. If $c(z)\in\mathcal{N}_{n}(\phi, t, q, M, \varphi)$, then $%
c(z)=\varphi\left(\langle\theta, \phi(z) \rangle \right)$, where 
\begin{equation*}
\sum\limits_{j=1}^{p}\vert \theta_{j}\vert^{q}\leq t(n) \quad \text{and}
\quad \Vert \theta\Vert_{2}\leq M,
\end{equation*}%
and the sequence $t\left( n\right) $ satisfies 
\begin{equation*}
t\left( n\right)\left( \frac{\log (p)}{n}\right) ^{1-q/2}\underset{%
n\rightarrow \infty }{\rightarrow }0 \quad \text{and} \quad t(n) \left(\frac{%
n}{\log(p)}\right)^{q/2}\leq p .
\end{equation*}%
By Lemma \ref{lemma:Rq}, 
\begin{equation*}
\max_{j\leq p}j^{1/q}|\theta _{(j)}|\leq t\left( n\right)^{1/q}.
\end{equation*}
Thus, $c(z)\in\mathcal{W}(\phi, t^{1/q}, 1/q, 2, M, \varphi)$.

Now for part two, let $c(z)\in \mathcal{S}_{n}(\phi,t,\alpha,l)$. Then there
exists $p$ and a permutation $\left\{ \phi _{\pi \left( j\right) }\right\} $
of the first $p=p\left( n\right) $ elements of the basis such that 
\begin{equation*}
c(Z)=\sum\limits_{j=1}^{p}\theta _{j}\phi _{\pi \left( j\right)
}(Z)+\sum\limits_{j=p+1}^{\infty }\theta _{j}\phi _{j}(Z)\quad \text{ }
\end{equation*}%
where $|\theta _{j}|j^{\alpha }\leq t\left( n\right) $ for all $j$ and $t(n)$
is such that%
\begin{equation*}
t\left( n\right) ^{1/\alpha }\left( \frac{\log (p)}{n}\right) ^{1-1/(l\alpha
)}\underset{n\rightarrow \infty }{\rightarrow }0 \quad \text{and} \quad
t(n)^{1/\alpha} \left(\frac{n}{\log(p)}\right)^{1/(l\alpha)}\leq p .
\end{equation*}%
Equivalently 
\begin{equation*}
c(Z)=\sum\limits_{j=1}^{\infty}\beta_{j}\phi _{j}(Z),
\end{equation*}%
where 
\begin{equation*}
j^{\alpha} \vert \beta_{(j)}\vert \leq t(n) \quad \text{for} \quad j\leq p
\quad \text{and} \quad j^{\alpha} \vert \beta_{j}\vert \leq t(n) \quad \text{%
for} \quad j>p
\end{equation*}
and $\vert \beta_{(p)} \vert \leq \dots \vert \beta_{(1)} \vert$. Let 
\begin{equation*}
s=t(n)^{1/\alpha} \left(\frac{n}{\log(p)}\right)^{1/(l\alpha)}.
\end{equation*}
Then $s(\log(p)/{n})^{2/l} \to 0$. Let $i_{1},\dots,i_{s}$ be the indices
corresponding to the $s$ largest values of $\vert \beta_{j}\vert,
j=1,\dots,p $. Take $\theta^{\ast}$ to be $p$-dimensional, and $%
\theta^{\ast}_{i_j}$ to be equal to $\beta_{i_j}$ for the values of $%
j=1,\dots, s$ and zero elsewhere. Let $S$ be the support of $\theta^{\ast}$.
Then 
\begin{align*}
E \left[\left( c(Z) - \langle \theta^{\ast}, \phi(Z)\rangle \right)^{2}%
\right]= E \left[ \left( \sum\limits_{j\in \mathbb{N} \backslash S}
\beta_{j} \phi_{j}(Z) \right)^{2} \right] &\leq K \int\limits_{[0,1]^{d}} %
\left[ \left( \sum\limits_{j\in \mathbb{N} \backslash S} \beta_{j}
\phi_{j}(z) \right)^{2} \right] \\
&= K \sum\limits_{j\in \mathbb{N} \backslash S} (\beta_{j})^{2} \\
&= K \sum\limits_{j\in \overline{S}, j < p+1} \beta_{j}^{2} + K
\sum\limits_{j= p+1}^{\infty} \beta_{j}^{2} \\
&\leq K \sum\limits_{j= s+1}^{p} \frac{t(n)^{2}}{j^{2\alpha}} + K
\sum\limits_{j= p+1}^{\infty} \frac{t(n)^{2}}{j^{2\alpha}} \\
&\leq C_{0} K t(n)^{2}s^{1-2\alpha} = C_0 Ks \left(\frac{\log(p)}{n}%
\right)^{2/l},
\end{align*}
where $C_{0}$ depends only on $\alpha$ and in the first inequality we used
that the density of $Z$ is uniformly bounded by $K$. If $l=2$, this implies
that $c(z)\in\mathcal{G}_{n}(\phi^{n}, s, 1, id)$. If $l=1$ 
\begin{equation*}
s \left(\frac{\log(p)}{n}\right)^{2/l}=s \left(\frac{\log(p)}{n}\right)^{2}
\leq C_{1} \left(\frac{s\log(p)}{n} \right)^{2},
\end{equation*}
for some fixed constant $C_1$. Thus, if $l=1$, $c(z)\in\mathcal{G}%
_{n}(\phi^{n}, s, 2, id)$. This finishes the proof of the proposition.
\end{proof}

\subsection{An example in which the probability limit of the estimator of
the incorrectly modelled nuisance function satisfies the regularity
conditions}

Suppose a researcher wants to estimate the expected conditional covariance
functional of Example \ref{ex:CC}. The following proposition shows an
admittedly rather artificial setting in which if the researcher mistakenly
used a linear link to model $E\left( D | Z \right)$, then the $\ell_{1}$
regularized linear regression estimator used in Algorithm \ref{Algo both ALS}
will converge to the zero vector, which is obviously sparse. In this case,
Conditions Lin.L.W, Lin.E.W and Lin.V.W only impose assumptions on the true
nuisance function.

\begin{proposition}
\label{prop:ex_misp} Consider the expected conditional covariance functional
of Example \ref{ex:CC}. Assume that $Z$ is a zero mean multivariate normal
random vector with a identity covariance matrix. Let $\theta_{a}\in\mathbb{R}%
^{d}$ be such that $\Vert \theta_{a}\Vert_{2}\leq K$. Let $\varphi:\mathbb{R}%
^{2d}\to \mathbb{R}$ be a function such that $\varphi(\theta_{a}, z)$ is an
even function of each coordinate of $z$. Assume that $E\left( D | Z
\right)=\varphi(\theta_{a}, Z)$. Then the minimizer over $\theta$ of 
\begin{equation*}
E\left( \frac{\langle \theta, Z \rangle^{2}}{2}- D \langle \theta, Z \rangle
\right)
\end{equation*}
is the zero vector.
\end{proposition}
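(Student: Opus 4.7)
The plan is to directly compute the minimizer by first-order conditions and then exploit the symmetry of $\varphi$ together with the symmetry of the standard normal law to show the resulting expectation vanishes.

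First I would expand the objective. Writing $\Sigma = E(ZZ^\top) = I_d$ (since $Z$ is standard multivariate normal) and $\mu = E(DZ)$, we have
\begin{equation*}
E\!\left( \tfrac{1}{2}\langle \theta, Z\rangle^2 - D\langle \theta, Z\rangle\right) \;=\; \tfrac{1}{2}\theta^{\top}\theta - \theta^{\top}\mu,
\end{equation*}
which is a strictly convex quadratic in $\theta$ whose unique minimizer is $\theta^{\ast} = \mu = E(DZ)$. Thus the claim reduces to showing $E(DZ) = 0$.

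Next I would apply the tower property, conditioning on $Z$, to obtain
\begin{equation*}
E(DZ) \;=\; E\!\bigl(E(D\mid Z)\,Z\bigr) \;=\; E\!\bigl(\varphi(\theta_a, Z)\,Z\bigr).
\end{equation*}
I then argue coordinatewise: for each $j\in\{1,\dots,d\}$, let $Z_{-j}$ denote the remaining coordinates. Because $Z$ is standard multivariate normal with identity covariance, the coordinates are mutually independent and each $Z_j$ has a marginal distribution symmetric about $0$. Conditionally on $Z_{-j}$, the function $z_j \mapsto \varphi(\theta_a, (z_j, Z_{-j}))\, z_j$ is odd in $z_j$, because by hypothesis $\varphi(\theta_a, \cdot)$ is even in its $j$-th argument while $z_j$ itself is odd. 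An odd integrable function integrated against a density symmetric about zero gives zero, so
\begin{equation*}
E\!\bigl(\varphi(\theta_a, Z)\, Z_j \,\big|\, Z_{-j}\bigr) \;=\; 0 \quad \text{a.s.}
\end{equation*}
Taking expectation over $Z_{-j}$ and iterating over $j$ yields $E(\varphi(\theta_a, Z)Z) = 0$, hence $\theta^{\ast}=0$. The only delicate point is integrability, which follows from $\varphi(\theta_a, Z) = E(D\mid Z) \in [?,?]$ being bounded (as a conditional probability when $D$ is binary) or at worst in $L_2$, combined with $Z_j \in L_2$, so Fubini applies and the conditional argument is legitimate. Nothing else is needed.
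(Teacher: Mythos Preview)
Your proof is correct and follows essentially the same approach as the paper: identify the minimizer as $E(DZ)$, apply the tower property to write this as $E(\varphi(\theta_a,Z)Z)$, and then use that $\varphi(\theta_a,z)z_j$ is odd in $z_j$ together with the symmetry of the standard normal coordinates to conclude each component vanishes. One small remark: in Example~\ref{ex:CC} the variable $D$ is real-valued, not binary, so $E(D\mid Z)$ is not a conditional probability; your fallback to $L_2$ integrability is the right justification.
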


\begin{proof}
\lbrack Proof of Proposition \ref{prop:ex_misp} The minimizer of 
\begin{equation*}
E\left( \frac{\langle \theta, Z \rangle^{2}}{2}- D \langle \theta, Z \rangle
\right)
\end{equation*}
is given by $E\left(DZ \right). $ Now 
\begin{align*}
E\left(DZ \right)=E\left( E(D| Z) Z \right)=E\left( \varphi(\theta_{a}, Z) Z
\right).
\end{align*}
Since $Z$ is a vector of independent standard normal random variables and $%
\varphi(\theta_{a}, z)z_{j}$ is an odd function of $j$ for all $%
j\in\left\lbrace 1,\dots,d\right\rbrace$ we have that $E\left(DZ \right)=0$.
\end{proof}

\subsection{Asymptotic results for the estimator $\widehat{\protect\chi }%
_{lin}$}

\label{sec:app_asym_lin}

\subsubsection{Rate double robustness for the estimator $\widehat{\protect%
\chi }_{lin}$}

\label{sec:app_dr_lin}

\begin{proof}
\lbrack Proof of Theorem \ref{theo:rate_double_lin}] Let $n_{k}$ be the size
of sample $\mathcal{D}_{nk}, k=1,2$. To simplify the proof, we assume that $%
n $ is even, so that $n_k=n/2, k=1,2$. Now 
\begin{equation*}
\sqrt{n}\left\{ \widehat{\chi }_{lin}-\chi \left( \eta \right) \right\} =%
\sqrt{n}\left\{ \frac{1}{2}\sum_{k=1}^{2}\left[ \widehat{\chi }%
_{lin}^{\left( k\right) }-\chi \left( \eta \right) \right] \right\}
\end{equation*}%
so, since $n_k=n/2$, to show \eqref{eq:lin_rate_DR_expansion} it suffices to
show that for $k=1,2$ 
\begin{equation*}
\sqrt{n_k}\left[ \widehat{\chi }_{lin}^{\left( k\right) }-\chi \left( \eta
\right) \right] =\mathbb{G}_{nk}\left[ \Upsilon \left( a,b\right) \right]
+O_{p}\left( \sqrt{\frac{s_{a}s_{b}}{n_k}}\log (p)\right) +o_{p}\left(
1\right).
\end{equation*}%
To simplify the notation we let $m=\overline{m}=n/2$, $\mathcal{D}_{m}$
denote $\mathcal{D}_{n1}$ and $\mathcal{D}_{m}^{c}$ denote $\mathcal{D}%
_{n}\setminus \mathcal{D}_{n1}$, and $\widetilde{a}\left( Z\right) \equiv
\left\langle \widetilde{\theta }_{a},\phi \left( Z\right) \right\rangle $
and $\widetilde{b}\left( Z\right) \equiv \left\langle \widetilde{\theta }%
_{b},\phi \left( Z\right) \right\rangle $ denote the estimators $\widehat{a}%
_{\left( \overline{1}\right) }\left( Z\right) $ and $\widehat{b}_{\left( 
\overline{1}\right) }\left( Z\right) $ computed using data $\mathcal{D}%
_{m}^{c}.$ Now,%
\begin{equation*}
\widehat{\chi }^{\left( 1\right) }_{lin}-\chi \left( \eta \right)
=N_{m}+\Gamma _{a,m}+\Gamma _{b,m}+\Gamma _{ab,m}
\end{equation*}
where 
\begin{equation*}
N_{m}\equiv \mathbb{P}_{m}\left[ \Upsilon \left( a,b\right) -\chi \left(
\eta \right) \right]
\end{equation*}%
\begin{equation*}
\Gamma _{a,m}\equiv \mathbb{P}_{m}\left[ S_{ab}\left( \widetilde{a}-a\right)
\left( Z\right) b\left( Z\right) +m_{a}\left( O,\widetilde{a}\right)
-m_{a}\left( O,a\right) \right]
\end{equation*}%
\begin{equation*}
\Gamma _{b,m}\equiv \mathbb{P}_{m}\left[ S_{ab}\left( \widetilde{b}-b\right)
\left( Z\right) a\left( Z\right) +m_{b}\left( O,\widetilde{b}\right)
-m_{b}\left( O,b\right) \right]
\end{equation*}%
\begin{equation*}
\Gamma _{ab,m}\equiv \mathbb{P}_{m}\left[ S_{ab}\left( \widetilde{a}%
-a\right) \left( Z\right) \left( \widetilde{b}-b\right) \left( Z\right) %
\right].
\end{equation*}
Invoking Theorem \ref{theo:rate_main_lin}, Condition Lin.L implies that 
\begin{equation*}
\left\Vert \widetilde{\theta }_{a}-\theta _{a}^{\ast }\right\Vert _{\Sigma
_{1}}=O_{p}\left( \sqrt{\frac{s_{a}\log \left( p\right) }{m}}\right) \text{
and }\left\Vert \widetilde{\theta }_{b}-\theta _{b}^{\ast }\right\Vert
_{\Sigma _{1}}=O_{p}\left( \sqrt{\frac{s_{b}\log \left( p\right) }{m}}%
\right).
\end{equation*}
In addition, 
\begin{eqnarray*}
\left\Vert \widetilde{a}-a\right\Vert _{L_{2}\left( P_{\eta }\right) } &\leq
&\left\Vert \widetilde{a}-\left\langle \theta _{a}^{\ast },\phi
\right\rangle \right\Vert _{L_{2}\left( P_{\eta }\right) }+\left\Vert
a-\left\langle \theta _{a}^{\ast },\phi \right\rangle \right\Vert
_{L_{2}\left( P_{\eta }\right) } \\
&=&\left\Vert \widetilde{\theta }_{a}-\theta _{a}^{\ast }\right\Vert
_{\Sigma _{1}}+\left\Vert a-\left\langle \theta _{a}^{\ast },\phi
\right\rangle \right\Vert _{L_{2}\left( P_{\eta }\right) } \\
&=&O_{p}\left( \sqrt{\frac{s_{a}\log \left( p\right) }{m}}\right)
\end{eqnarray*}%
where the last equality follows because, by assumption $a\in \mathcal{G}%
\left( \phi ,s_{a},j=1,\varphi =id\right) $ and the approximation error $%
\left\Vert a-\left\langle \theta _{a}^{\ast },\phi \right\rangle \right\Vert
_{L_{2}\left( P_{\eta }\right) }$ is $O\left( \sqrt{s_{a}\log \left(
p\right) /{m}}\right) .$ Likewise, 
\begin{equation*}
\left\Vert \widetilde{b}-b\right\Vert _{L_{2}\left( P_{\eta }\right)
}=O_{p}\left( \sqrt{\frac{s_{b}\log \left( p\right) }{m}}\right).
\end{equation*}

Next, we show that $\sqrt{m}\Gamma _{a,m}=o_{p}\left( 1\right)$. By the
Dominated Convergence Theorem it suffices to show that for any $\varepsilon
>0,$ 
\begin{equation*}
P_{\eta }\left[ \sqrt{m}\Gamma _{a,m}>\varepsilon |\mathcal{D}_{m}^{c}\right]
=o_{p}\left( 1\right).
\end{equation*}%
In turn, to prove the latter, by Markov's inequality it suffices to show
that $E_{\eta }\left[ \sqrt{m}\Gamma _{a,m}|\mathcal{D}_{m}^{c}\right] =0$
and $Var_{\eta }\left[ \sqrt{m}\Gamma _{a,m}|\mathcal{D}_{m}^{c}\right]
=o_{p}\left( 1\right) .$ Now, 
\begin{eqnarray*}
E_{\eta }\left[ \sqrt{m}\Gamma _{a,m}|\mathcal{D}_{m}^{c}\right] &=&E_{\eta }%
\left[ S_{ab}\left( \widetilde{a}-a\right) \left( Z\right) b\left( Z\right)
+m_{a}\left( O,\widetilde{a}\right) -m_{a}\left( O,a\right) |\mathcal{D}%
_{m}^{c}\right] \\
&=&0
\end{eqnarray*}%
by Proposition \ref{prop:DR functional}. On the other hand 
\begin{eqnarray*}
Var_{\eta }\left[ \sqrt{m}\Gamma _{a,m}|\mathcal{D}_{m}^{c}\right]
&=&E_{\eta }\left[ \left. \left\{ S_{ab}\left( \widetilde{a}-a\right) \left(
Z\right) b\left( Z\right) +m_{a}\left( O,\widetilde{a}\right) -m_{a}\left(
O,a\right) \right\} ^{2}\right\vert \mathcal{D}_{m}^{c}\right] \\
&=&o_{p}\left( 1\right)
\end{eqnarray*}%
by Condition Lin.E.1 and the fact that $\left\Vert \widetilde{a}%
-a\right\Vert _{L_{2}\left( P_{\eta }\right) }=O_{p}\left( \sqrt{s_{a}\log
\left( p\right)/{m} }\right) =o_{p}\left( 1\right) $ because by assumption ${%
s_{a}\log \left( p\right) }/{m}\rightarrow 0$ as $m\rightarrow \infty $. The
same line of argument proves that $\sqrt{m}\Gamma _{b,m}=o_{p}\left(
1\right) $.

Next, by Cauchy-Schwartz inequality, 
\begin{eqnarray*}
\sqrt{m}\Gamma _{ab,m} &\equiv &\sqrt{m}\mathbb{P}_{m}\left[ S_{ab}\left( 
\widetilde{a}-a\right) \left( Z\right) \left( \widetilde{b}-b\right) \left(
Z\right) \right] \\
&\leq &\sqrt{m}\sqrt{\mathbb{P}_{m}\left[ S_{ab}\left( \widetilde{a}%
-a\right) ^{2}\left( Z\right) \right] }\sqrt{\mathbb{P}_{m}\left[
S_{ab}\left( \widetilde{b}-b\right) ^{2}\left( Z\right) \right] }.
\end{eqnarray*}
We will show that 
\begin{equation}
\sqrt{\mathbb{P}_{m}\left[ S_{ab}\left( \widetilde{a}-a\right) ^{2}\left(
Z\right) \right] } = O_{P}\left(\sqrt{\frac{s_{a}\log(p)}{m}} \right)
\label{eq:DR_CS_a}
\end{equation}
and 
\begin{equation}
\sqrt{\mathbb{P}_{m}\left[ S_{ab}\left( \widetilde{b}-b\right) ^{2}\left(
Z\right) \right] } = O_{P}\left(\sqrt{\frac{s_{b}\log(p)}{m}} \right) .
\label{eq:DR_CS_b}
\end{equation}
We begin with \eqref{eq:DR_CS_a}. Fix $\varepsilon>0$. Take $L_0, L_1>0$.
Let 
\begin{equation*}
A=\left\lbrace \Vert \widetilde{\theta}_{a} -
\theta^{\ast}_{a}\Vert_{\Sigma_{1}}^{2} \leq L_1 \frac{s_{a}\log(p)}{m}%
\right\rbrace.
\end{equation*}
Then 
\begin{align*}
&P_{\eta}\left(\mathbb{P}_{m}\left\lbrace S_{ab}(\widetilde{a}%
-a)^{2}\right\rbrace \geq L_0 \frac{s_{a}\log(p)}{m} \right)=
E_{\eta}\left\lbrace P\left(\mathbb{P}_{m}\left\lbrace S_{ab}(\widetilde{a}%
-a)^{2}\right\rbrace \geq L_0 \frac{s_{a}\log(p)}{m}\mid \widetilde{\theta}%
_{a}\right)\right\rbrace = \\
& E_{\eta}\left\lbrace P\left(\mathbb{P}_{m}\left\lbrace S_{ab}(\widetilde{a}%
-a)^{2}\right\rbrace \geq L_0 \frac{s_{a}\log(p)}{m} \mid \widetilde{\theta}%
_{a}\right) \mid A \right\rbrace P_{\eta}(A) + \\
& E_{\eta}\left\lbrace P\left(\mathbb{P}_{m}\left\lbrace S_{ab}(\widetilde{a}%
-a)^{2}\right\rbrace \geq L_0 \frac{s_{a}\log(p)}{m} \mid \widetilde{\theta}%
_{a}\right) \mid A^{c} \right\rbrace P_{\eta}(A^{c}) \leq \\
& E_{\eta}\left\lbrace P\left(\mathbb{P}_{m}\left\lbrace S_{ab}(\widetilde{a}%
-a)^{2} \right\rbrace\geq L_0 \frac{s_{a}\log(p)}{m} \mid \widetilde{\theta}%
_{a}\right) \mid A \right\rbrace + P_{\eta}(A^{c}).
\end{align*}
By Theorem \ref{theo:rate_main_lin} we may choose $L_1$ such that for all
sufficiently large $n$, $P_{\eta}(A^{c})<\varepsilon/2$. We will show that
we can choose $L_0$ such that the first term in the right hand side of the
last display is smaller than $\varepsilon/2$. This will show that %
\eqref{eq:DR_CS_a} holds. Since $\widetilde{\theta}_{a}$ is independent of
the data in $\mathcal{D}_{m}$, by Markov's inequality it suffices to show
that there exists $L_2>0$ depending only on $K$ and $L_1$ such that for all $%
\theta$ that satisfy 
\begin{equation*}
\Vert\theta - \theta^{\ast}_{a}\Vert_{\Sigma_{1}}^{2} \leq L_1 \frac{%
s_{a}\log(p)}{m},
\end{equation*}
it holds that 
\begin{equation}
E_{\eta}\left\lbrace S_{ab}\left(\langle \theta,X\rangle- a\right)^{2}
\right\rbrace \leq L_{2} \frac{s_{a}\log(p)}{m}  \label{eq:bound_Saab}
\end{equation}
Hence, take $\theta$ such that 
\begin{equation*}
\Vert \theta - \theta^{\ast}_{a}\Vert_{\Sigma_{1}}^{2} \leq L_1 \frac{%
s_{a}\log(p)}{m}.
\end{equation*}
By Condition Lin.L.5 
\begin{equation*}
E_{\eta}\left\lbrace S_{ab}\left(\langle \theta,\phi(Z)\rangle- a\right)^{2}
\right\rbrace \leq K E\left\lbrace \left(\langle \theta,\phi(Z)\rangle-
a\right)^{2} \right\rbrace .
\end{equation*}
Moreover, 
\begin{equation*}
E_{\eta}^{1/2}\left\lbrace \left(\langle \theta,\phi(Z)\rangle- a\right)^{2}
\right\rbrace \leq \Vert \theta - \theta^{\ast}_{a}\Vert_{\Sigma_1} +
E_{\eta}^{1/2}\left\lbrace a - \langle \theta^{\ast}_{a},\phi(Z)
\rangle\right\rbrace \leq \sqrt{\frac{L_{1} s_{a}\log(p)}{m}} + \sqrt{K 
\frac{s_{a}\log(p)}{m}}.
\end{equation*}
Hence \eqref{eq:bound_Saab} holds, which implies \eqref{eq:DR_CS_a}. %
\eqref{eq:DR_CS_b} is shown similarly.

Now \eqref{eq:DR_CS_a} together with \eqref{eq:DR_CS_b} implies 
\begin{equation*}
\sqrt{m}\Gamma _{ab,m}=O_{P}\left( \sqrt{\frac{s_{a}s_{b}}{m}}\log
(p)\right) .
\end{equation*}%
Thus, we have shown that 
\begin{equation*}
\sqrt{m}\left( \widehat{\chi }^{\left( 1\right) }_{lin}-\chi \left( \eta
\right) \right) =\mathbb{G}_{m}\left[ \Upsilon \left( a,b\right) \right]
+O_{P}\left( \sqrt{\frac{s_{a}s_{b}}{m}}\log (p)\right) +o_{P}(1).
\end{equation*}%
This proves the first part of the Theorem. To prove the second part, we
verify the assumptions of Lyapunov's Central Limit Theorem. Let 
\begin{equation*}
T_{i,m}=\frac{S_{ab,i}ab+m_{a,i}(a)+m_{b,i}(b)+S_{0,i}-E_{\eta }\left\{
S_{ab,i}ab+m_{a,i}(a)+m_{b,i}(b)+S_{0,i}\right\} }{\sqrt{m}}
\end{equation*}%
where $m_{a,i}(a)\equiv m_{a}(O_{i},a)$, $m_{b,i}(b)\equiv m_{b}(O_{i},b)$
and 
\begin{equation*}
s_{m}^{2}=\sum\limits_{i=1}^{m}E_{\eta }\left\{ T_{i,m}^{2}\right\} =E_{\eta
}\left\{ \left( \chi _{\eta }^{1}\right) ^{2}\right\} .
\end{equation*}%
Then by Condition Lin.E.2 a) 
\begin{equation*}
\frac{1}{s_{m}^{3}}\sum\limits_{i=1}^{m}E_{\eta }\left\{ T_{i,m}^{3}\right\}
=\frac{1}{m^{1/2}}\frac{E_{\eta }\left\{ \left\vert \chi _{\eta
}^{1}\right\vert ^{3}\right\} }{E_{\eta }^{3/2}\left\{ \left( \chi _{\eta
}^{1}\right) ^{2}\right\} }\leq \frac{1}{m^{1/2}}\frac{K}{k^{3/2}}%
\rightarrow 0.
\end{equation*}%
Lyapunov's Central Limit Theorem together with the assumption that 
\begin{equation*}
\sqrt{\frac{s_{a}s_{b}}{m}}\log (p)\rightarrow 0
\end{equation*}%
implies 
\begin{equation*}
\frac{\sqrt{m}\left( \widehat{\chi }^{\left( 1\right) }_{lin}-\chi \left(
\eta \right) \right) }{E_{\eta }^{1/2}\left\{ \left( \chi _{\eta
}^{1}\right) ^{2}\right\} }=\frac{1}{s_{m}}\sum%
\limits_{i=1}^{m}T_{i,m}+o_{P}(1)\overset{d}{\rightarrow }N(0,1).
\end{equation*}%
This proves the second part.

To prove the last part, by Slutzky's Lemma, it suffices to show that 
\begin{equation*}
\frac{\mathbb{P}_{m}^{1/2}\left\{ \left( \Upsilon (\widetilde{a},\widetilde{b%
})-\widehat{\chi }_{lin}^{(1)}\right) ^{2}\right\} }{E_{\eta }^{1/2}\left\{
(\chi _{\eta }^{1})^{2}\right\} }\overset{P}{\rightarrow }1.
\end{equation*}%
We will show first that 
\begin{equation}
\left\vert \mathbb{P}_{m}^{1/2}\left\{ (\chi _{\eta }^{1})^{2}\right\}
-E^{1/2}\left\{ (\chi _{\eta }^{1})^{2}\right\} \right\vert \overset{P}{%
\rightarrow }0.  \label{eq:estim_var_sample}
\end{equation}%
Clearly 
\begin{equation*}
E_{\eta}\left\{ \mathbb{P}_{m}\left\{ (\chi _{\eta }^{1})^{2}\right\}
\right\} =E_{\eta }\left\{ (\chi _{\eta }^{1})^{2}\right\} .
\end{equation*}%
Moreover by Condition Lin.E.2 b) 
\begin{equation*}
Var_{\eta }\left\{ \mathbb{P}_{m}\left\{ (\chi _{\eta }^{1})^{2}\right\}
\right\} =\frac{1}{m}Var_{\eta }\left\{ (\chi _{\eta }^{1})^{2}\right\} \leq 
\frac{K}{m}\rightarrow 0.
\end{equation*}%
Hence 
\begin{equation*}
\left\vert \mathbb{P}_{m}\left\{ (\chi _{\eta }^{1})^{2}\right\} -E_{\eta
}\left\{ (\chi _{\eta }^{1})^{2}\right\} \right\vert \overset{P}{\rightarrow 
}0.
\end{equation*}%
Since by Condition Lin.E.2 a) 
\begin{equation*}
k^{1/2}\leq E^{1/2}\left\{ (\chi _{\eta }^{1})^{2}\right\} ,
\end{equation*}%
by the Mean Value Theorem \eqref{eq:estim_var_sample} holds. Next, we will
show that 
\begin{equation}
\left\vert \mathbb{P}_{n3}^{1/2}\left\{ (\chi _{\eta }^{1})^{2}\right\} -%
\mathbb{P}_{n3}^{1/2}\left\{ \left( \Upsilon (\widetilde{a},\widetilde{b})-%
\widehat{\chi }^{(1)}_{lin}\right) ^{2}\right\} \right\vert \overset{P}{%
\rightarrow }0.  \label{eq:estim_var_plugin}
\end{equation}%
By the triangle inequality 
\begin{align*}
\left\vert \mathbb{P}_{m}^{1/2}\left\{ (\chi _{\eta }^{1})^{2}\right\} -%
\mathbb{P}_{m}^{1/2}\left\{ \left( \Upsilon (\widetilde{a},\widetilde{b})-%
\widehat{\chi }^{(1)}_{lin}\right) ^{2}\right\} \right\vert & \leq \mathbb{P}%
_{m}^{1/2}\left\{ \left( \chi _{\eta }^{1}-\Upsilon (\widetilde{a},%
\widetilde{b})+\widehat{\chi }_{lin}^{(1)}\right) ^{2}\right\} \\
& =\mathbb{P}_{m}^{1/2}\left\{ \left( \Upsilon (a,b)-\Upsilon (\widetilde{a},%
\widetilde{b})+\widehat{\chi }_{lin}^{(1)}-\chi (\eta )\right) ^{2}\right\}
\\
& \leq \mathbb{P}_{m}^{1/2}\left\{ \left( \Upsilon (a,b)-\Upsilon (%
\widetilde{a},\widetilde{b})\right) ^{2}\right\} +\mathbb{P}%
_{m}^{1/2}\left\{ \left( \widehat{\chi }_{lin}^{(1)}-\chi (\eta )\right)
^{2}\right\} \\
& =\mathbb{P}_{m}^{1/2}\left\{ \left( \Upsilon (a,b)-\Upsilon (\widetilde{a},%
\widetilde{b})\right) ^{2}\right\} +|\widehat{\chi }_{lin}^{(1)}-\chi (\eta
)|.
\end{align*}%
By the second part of this Theorem, the last term in the right hand side of
the last display converges to zero in probability. We will show that the
first term converges to zero too. For the first term, note that $\Upsilon
\left( \widetilde{a},\widetilde{b}\right) -\Upsilon \left( a,b\right) $ is
equal to 
\begin{equation*}
S_{ab}a\left( \widetilde{b}-b\right) +m_{b}(O,\widetilde{b}%
)-m_{b}(O,b)+S_{ab}b\left( \widetilde{a}-a\right) +m_{a}(O,\widetilde{a}%
)-m_{a}(O,a)+S_{ab}\left( \widetilde{b}-b\right) \left( \widetilde{a}%
-a\right) .
\end{equation*}%
Hence, by the triangle inequality, that first term is bounded by 
\begin{align*}
& \mathbb{P}_{m}^{1/2}\left\{ \left( S_{ab}a\left( \widetilde{b}-b\right)
+m_{b}(O,\widetilde{b})-m_{b}(O,b)\right) ^{2}\right\} +\mathbb{P}%
_{m}^{1/2}\left\{ \left( S_{ab}b\left( \widetilde{a}-a\right) +m_{a}(O,%
\widetilde{a})-m_{a}(O,a)\right) ^{2}\right\} + \\
& \mathbb{P}_{m}^{1/2}\left\{ S_{ab}^{2}\left( \widetilde{b}-b\right)
^{2}\left( \widetilde{a}-a\right) ^{2}\right\} .
\end{align*}%
The first two terms in the last display can be shown to be $o_{P}(1)$ using
Condition Lin.E.1 and arguments similar to the ones used earlier in this
proof. By symmetry, we can assume that Condition Lin.V.3 holds for $b$. To
bound the last term, note that by Condition Lin.V.1 
\begin{align*}
\max\limits_{i\leq n}\left\vert \widetilde{b}(Z_{i})-b(Z_{i})\right\vert &
\leq \max\limits_{i\leq n}\left\vert \langle \widetilde{\theta }_{b}-\theta
_{b}^{\ast },\phi (Z)\rangle \right\vert +\max\limits_{i\leq n}\left\vert
b(Z_{i})-\langle \theta _{b}^{\ast },\phi (Z)\rangle \right\vert \\
& \leq K\Vert \widetilde{\theta }_{b}-\theta _{b}^{\ast }\Vert
_{1}+\max\limits_{i\leq n}|R_{b}(Z_{i})|
\end{align*}%
where $R_{b}(Z_{i})=b(Z_{i})-\langle \theta _{b}^{\ast },\phi (Z)\rangle$.
Hence 
\begin{equation*}
\mathbb{P}_{m}\left\{ S_{ab}^{2}\left( \widetilde{b}-b\right) ^{2}\left( 
\widetilde{a}-a\right) ^{2}\right\} \leq 2K^{2}\Vert \widetilde{\theta }%
_{b}-\theta _{b}^{\ast }\Vert _{1}^{2}\mathbb{P}_{m}\left\{ S_{ab}^{2}\left( 
\widetilde{a}-a\right) ^{2}\right\} +2\max\limits_{i\leq n}|R_{b}(Z_{i})|^{2}%
\mathbb{P}_{m}\left\{ S_{ab}^{2}\left( \widetilde{a}-a\right) ^{2}\right\}
\end{equation*}%
By Theorem \ref{theo:rate_main_lin} 
\begin{equation*}
\Vert \widetilde{\theta }_{b}-\theta _{b}^{\ast }\Vert _{1}^{2}=O_{P}\left( 
\frac{s_{b}^{2}\log (p)}{m}\right) .
\end{equation*}%
By Condition Lin.V.3 
\begin{equation*}
\max\limits_{i\leq n}|R_{b}(Z_{i})|^{2}=O_{P}\left( \frac{s_{b}^{2}\log (p)}{%
m}\right) .
\end{equation*}%
Arguing as before and using Condition Lin.V.2, it is easy to show that 
\begin{equation*}
\mathbb{P}_{m}\left\{ S_{ab}^{2}\left( \widetilde{a}-a\right) ^{2}\right\}
=O_{P}\left( {\frac{s_{a}\log (p)}{m}}\right) .
\end{equation*}%
Thus 
\begin{equation*}
\mathbb{P}_{m}\left\{ S_{ab}^{2}\left( \widetilde{b}-b\right) ^{2}\left( 
\widetilde{a}-a\right) ^{2}\right\} \leq O_{P}\left( \frac{%
s_{b}^{2}s_{a}\log (p)^{2}}{m^{2}}\right) =o_{P}(1)
\end{equation*}%
since by assumption 
\begin{equation*}
\frac{s_{a}s_{b} \log(p)}{m}=o(1)
\end{equation*}%
and 
\begin{equation*}
\frac{s_{b}\log (p)}{m}=o(1).
\end{equation*}
This finishes the proof of the third part of the Theorem.
\end{proof}

\subsubsection{Model double robustness for the estimator $\widehat{\protect%
\chi }_{lin}$}

We will need the following lemma.

\begin{lemma}
\label{lemma:bound_m_term} Assume Condition M.W holds and $a\in\mathcal{G}%
_{n}(\phi, s_{a}, j=2, \varphi)$ with associated parameter $%
\theta^{\ast}_{a} $ such that 
\begin{equation*}
\frac{s_{a}\log(p)}{\sqrt{n}}\to 0.
\end{equation*}
Let 
\begin{equation*}
r(Z)=a(Z)-\varphi\left(\langle \theta^{\ast}_{a}, \phi(Z) \rangle\right)
\end{equation*}
Let $h$ be such that 
\begin{equation*}
E_{\eta}((S_{ab}h)^{2})\leq K.
\end{equation*}
Then 
\begin{equation*}
\sqrt{n}E_{\eta}\left[\left\vert S_{ab}h r + m_{a}(0,r) \right\vert\right]
\to 0.
\end{equation*}
\end{lemma}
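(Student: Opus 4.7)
The plan is to apply the triangle inequality to split $\sqrt{n}E_\eta[|S_{ab}hr + m_a(O,r)|]$ into two terms and show each is $o(1)$ using Cauchy--Schwartz together with the control on $E_\eta[r^2]$ coming from $a\in\mathcal{G}_n(\phi,s_a,j=2,\varphi)$, namely $E_\eta[r(Z)^2]\leq K(s_a\log(p)/n)^2$.

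First, I would bound
\begin{equation*}
\sqrt{n}\,E_\eta\left[|S_{ab}hr|\right] \leq \sqrt{n}\,E_\eta^{1/2}\left[(S_{ab}h)^2\right]\,E_\eta^{1/2}[r^2] \leq \sqrt{n}\,K^{1/2}\cdot K^{1/2}\,\frac{s_a\log(p)}{n} = K\,\frac{s_a\log(p)}{\sqrt{n}},
\end{equation*}
which tends to $0$ by the assumption $s_a\log(p)/\sqrt{n}\to 0$. Here I used $E_\eta[(S_{ab}h)^2]\leq K$ and the $j=2$ AGLS bound on $r$.

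Second, I would use Condition M.W to handle the $m_a(O,r)$ term, which is the only part that is not immediately a product of two functions of $Z$. By the domination $|m_a(o,r)|\leq m_a^\ddagger(o,|r|)$ and the fact that $h\in L_2(P_{Z,\eta})\mapsto E_\eta[m_a^\ddagger(O,h)]$ has Riesz representer $\mathcal{R}_a^\ddagger$ (applied to $|r|\in L_2(P_{Z,\eta})$, which is finite by the AGLS bound), I get
\begin{equation*}
E_\eta[|m_a(O,r)|] \leq E_\eta[m_a^\ddagger(O,|r|)] = E_\eta[\mathcal{R}_a^\ddagger(Z)|r(Z)|] \leq E_\eta^{1/2}[(\mathcal{R}_a^\ddagger)^2]\,E_\eta^{1/2}[r^2] \leq K^{1/2}\cdot K^{1/2}\,\frac{s_a\log(p)}{n}.
\end{equation*}
Multiplying by $\sqrt{n}$ gives a $K s_a\log(p)/\sqrt{n}=o(1)$ bound as before. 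Combining with the first bound via the triangle inequality yields the claim.

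There is no real obstacle: the lemma is essentially a careful application of Cauchy--Schwartz, and the only subtlety is that $m_a(O,r)$ is not a pointwise product, which is precisely why Condition M.W is invoked---it converts the abstract linear functional $m_a(O,\cdot)$ into a pointwise product $\mathcal{R}_a^\ddagger(Z)|r(Z)|$ after taking expectations, allowing Cauchy--Schwartz to apply. The crucial role of $j=2$ (as opposed to $j=1$) in the AGLS hypothesis is that it upgrades the rate of $E_\eta^{1/2}[r^2]$ from $\sqrt{s_a\log(p)/n}$ to $s_a\log(p)/n$, which is exactly what is needed to compensate for the $\sqrt{n}$ factor under the sparsity condition $s_a\log(p)/\sqrt{n}\to 0$.
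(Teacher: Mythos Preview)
Your proof is correct and follows essentially the same approach as the paper's: triangle inequality to split the two terms, Cauchy--Schwartz on each, with Condition M.W invoked to convert $E_\eta[|m_a(O,r)|]$ into $E_\eta[\mathcal{R}_a^\ddagger |r|]$ via the domination and Riesz representer, and the $j=2$ AGLS bound supplying $E_\eta^{1/2}[r^2]\leq K^{1/2}s_a\log(p)/n$.
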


\begin{proof}
\lbrack Proof of Lemma \ref{lemma:bound_m_term}] By Condition M.W 
\begin{equation*}
E_{\eta}\left[\left\vert S_{ab}h r(Z) + m_{a}(0,r) \right\vert\right] \leq
E_{\eta}\left[\left\vert S_{ab}h r(Z) \right\vert \right] +E_{\eta}\left[
m^{\ddagger}_{a}(0,\left\vert r \right\vert) \right]= E_{\eta}\left[%
\left\vert S_{ab}h r(Z) \right\vert \right] +E_{\eta}\left[ \mathcal{R}%
_{a}^{\ddagger} \left\vert r(Z) \right\vert \right].
\end{equation*}
Now, using Cauchy-Schwartz 
\begin{equation*}
E_{\eta}\left[\left\vert S_{ab}h r(Z) + m_{a}(0,r) \right\vert\right] \leq
E_{\eta}^{1/2}\left[ (S_{ab}h)^{2} \right] E_{\eta}^{1/2}\left[ r(Z)^{2}%
\right]+E_{\eta}^{1/2}\left[ \left( \mathcal{R}_{a}^{\ddagger}\right)^{2} %
\right] E_{\eta}^{1/2}\left[ r(Z)^{2}\right].
\end{equation*}
Since $a\in\mathcal{G}_{n}(\phi, s_{a}, j=2, \varphi)$ 
\begin{equation*}
E_{\eta}^{1/2}\left[ r(Z)^{2}\right] \leq K\frac{s_{a}\log(p)}{n}.
\end{equation*}
Then 
\begin{equation*}
\sqrt{n}E_{\eta}\left[\left\vert S_{ab}h r(Z) + m_{a}(0,r) \right\vert\right]
\leq 2 \sqrt{K}K\frac{s_{a}\log(p)}{\sqrt{n}}\to 0.
\end{equation*}
\end{proof}

\label{sec:proof_dr_model_lin}

\begin{proof}
\lbrack Proof of Theorem \ref{theo:model_double_lin}] To simplify the proof,
we assume that $n$ is even, so that $n_{k}=n/2,k=1,2$. As in the proof of
rate double robustness it suffices to show that for $k=1,2$ 
\begin{equation*}
\sqrt{n_{k}}\left[ \widehat{\chi }_{lin}^{\left( k\right) }-\chi \left( \eta
\right) \right] =\mathbb{G}_{nk}\left[ \Upsilon \left( a,b^{0}\right) \right]
+O_{p}\left( \sqrt{\frac{s_{a}s_{b}}{n_{k}}}\log (p)\right) +o_{p}\left(
1\right).
\end{equation*}%
Here we also use the notation $m=\overline{m}=n/2$, $\mathcal{D}_{m}$
denoting $\mathcal{D}_{n1}$ and $\mathcal{D}_{m}^{c}$ denoting $\mathcal{D}%
_{n}\setminus \mathcal{D}_{n2}$, and $\widetilde{a}\left( Z\right) \equiv
\left\langle \widetilde{\theta }_{a},\phi \left( Z\right) \right\rangle $
and $\widetilde{b}\left( Z\right) \equiv \left\langle \widetilde{\theta }%
_{b},\phi \left( Z\right) \right\rangle $ denoting the estimators $\widehat{a%
}_{\left( \overline{1}\right) }\left( Z\right) $ and $\widehat{b}_{\left( 
\overline{1}\right) }\left( Z\right) $ computed using data $\mathcal{D}%
_{m}^{c}.$ Now,%
\begin{equation*}
\widehat{\chi }^{\left( 1\right)}_{lin}-\chi \left( \eta \right)
=N_{m}^{\ast }+\Gamma _{a,m}^{\ast }+\Gamma _{b,m}^{\ast }+\Gamma
_{ab,m}^{\ast }
\end{equation*}%
where 
\begin{align*}
& N_{m}^{\ast }\equiv \mathbb{P}_{m}\left[ \Upsilon \left( a,b^{0}\right)
-\chi \left( \eta \right) \right] , \\
& \Gamma _{a,m}^{\ast }\equiv \mathbb{P}_{m}\left[ S_{ab}\left( \widetilde{a}%
-a\right) \left( Z\right) b^{0}\left( Z\right) +m_{a}(O,\widetilde{a}%
)-m_{a}(O,a)\right] , \\
& \Gamma _{b,m}^{\ast }\equiv \mathbb{P}_{m}\left[ S_{ab}\left( \widetilde{b}%
-b^{0}\right) \left( Z\right) a\left( Z\right) +m_{b}(O,\widetilde{b}%
)-m_{b}(O,b^{0})\right] , \\
& \Gamma _{ab,m}^{\ast }\equiv \mathbb{P}_{m}\left[ S_{ab}\left( \widetilde{a%
}-a\right) \left( Z\right) \left( \widetilde{b}-b^{0}\right) \left( Z\right) %
\right] .
\end{align*}%
Invoking Theorem \ref{theo:rate_main_lin}, Condition Lin.L.W implies that 
\begin{equation*}
\left\Vert \widetilde{\theta }_{a}-\theta _{a}^{\ast }\right\Vert _{\Sigma
_{1}}=O_{p}\left( \sqrt{\frac{s_{a}\log \left( p\right) }{m}}\right) \text{
and }\left\Vert \widetilde{\theta }_{b}-\theta _{b}^{\ast }\right\Vert
_{\Sigma _{1}}=O_{p}\left( \sqrt{\frac{s_{b}\log \left( p\right) }{m}}\right)
\end{equation*}
Moreover 
\begin{equation*}
\Vert b^{0} - \widetilde{b}\Vert_{L_{2}(P_{Z,\eta})} \leq \Vert
\theta^{\ast}_{b} - \widetilde{\theta}_{b} \Vert_{\Sigma_1} + \Vert
\theta^{\ast}_{b} - \theta_{b} \Vert_{\Sigma_1} \leq O_{p}\left( \sqrt{\frac{%
s_{b}\log \left( p\right) }{m}}\right) + \sqrt{K \frac{s_{b}\log(p)}{m}},
\end{equation*}
since $b^{0}(Z)=\langle \theta_{b},\phi(Z) \rangle\in \mathcal{G}%
_{n}(\phi^{n}, s_{b}, j=1, \varphi=id)$ with associated parameter $%
\theta^{\ast}_{b}$ by Condition Lin.L.W. Then 
\begin{equation*}
\Vert b^{0} - \widetilde{b}\Vert_{L_{2}(P_{Z,\eta})} = O_{P}\left( \sqrt{%
\frac{s_{b}\log \left( p\right) }{m}}\right)=o_{P}(1),
\end{equation*}
since $s_{b}\log(p)/m=o(1)$.

Then, following the arguments in the proof of Theorem \ref%
{theo:rate_double_lin} it can be shown that 
\begin{equation*}
\sqrt{m}\Gamma _{ab,m}^{\ast }=O_{P}\left( \sqrt{\frac{s_{a}s_{b}}{m}}\log
(p)\right)
\end{equation*}%
and 
\begin{equation*}
\sqrt{m}\Gamma _{b,m}^{\ast }=o_{p}\left( 1\right)
\end{equation*}%
We next show that $\sqrt{m}\Gamma _{a,m}^{\ast }=o_{p}\left( 1\right)$.
Recall that $a(Z)=\langle \theta _{a}^{\ast },\phi (Z)\rangle +r(Z)$. Hence 
\begin{align*}
\Gamma _{a,m}^{\ast }& =\mathbb{P}_{m}\left[ S_{ab}\left( \widetilde{a}%
-a\right) \left( Z\right) b^{0}\left( Z\right) +m_{a}(O,\widetilde{a}-a)%
\right] \\
& =\mathbb{P}_{m}\left[ S_{ab}\left( \widetilde{a}\left( Z\right) -\langle
\theta _{a}^{\ast },\phi (Z)\rangle -r(Z)\right) b^{0}\left( Z\right)
+m_{a}(O,\widetilde{a}-\langle \theta _{a}^{\ast },\phi \rangle -r)\right] \\
& =\mathbb{P}_{m}\left[ S_{ab}b^{0}\langle \widetilde{\theta }_{a}-\theta
_{a}^{\ast },\phi (Z)\rangle +m_{a}(O,\langle \widetilde{\theta }_{a}-\theta
_{a}^{\ast },\phi \rangle )\right] -\mathbb{P}_{m}\left[ S_{ab}b^{0
}(Z)r(Z)+m_{a}(O,r)\right] \\
& =\langle \mathbb{P}_{m}\left[ S_{ab}b^{0}\phi (Z)+m_{a}(O,\phi )\right] ,%
\widetilde{\theta }_{a}-\theta _{a}^{\ast }\rangle -\mathbb{P}_{m}\left[
S_{ab}b^{0}(Z)r(Z)+m_{a}(O,r)\right] .
\end{align*}%
By Holder's inequality 
\begin{equation*}
\left\vert \langle \mathbb{P}_{m}\left[ S_{ab}b^{0}\phi (Z)+m_{a}(O,\phi )%
\right] ,\widetilde{\theta }_{a}-\theta _{a}^{\ast }\rangle \right\vert \leq
\Vert \mathbb{P}_{m}\left[ S_{ab}b^{0}\phi (Z)+m_{a}(O,\phi )\right] \Vert
_{\infty }\Vert \widetilde{\theta }_{a}-\theta _{a}^{\ast }\Vert _{1}.
\end{equation*}%
By Theorem \ref{theo:rate_main_lin} 
\begin{equation}
\Vert \widetilde{\theta }_{a}-\theta _{a}^{\ast }\Vert _{1}=O_{P}\left( s_{a}%
\sqrt{\frac{\log (p)}{m}}\right) .  \label{eq:lin_model_rate_l1}
\end{equation}%
Because $b^{0}(Z)=\langle \theta _{b},\phi (Z)\rangle $ and $\theta _{b} $
satisfies 
\begin{equation*}
\theta _{b}\in \arg \min\limits_{\theta \in \mathbb{R}^{p}}E_{\eta }\left[
Q_{b}\left( \theta ,\phi ,w=1\right) \right]
\end{equation*}%
we have that 
\begin{equation*}
E_{\eta }\left[ S_{ab}b^{0}\phi (Z)+m_{a}(O,\phi )\right] =0.
\end{equation*}%
Hence, by Condition Lin.L.W.2, Nemirovski's inequality (see Lemma 14.24 in 
\cite{Buhlmann-book}) implies that 
\begin{equation*}
\Vert \mathbb{P}_{m}\left[ S_{ab}b^{0}\phi (Z)+m_{a}(O,\phi )\right] \Vert
_{\infty }=O_{P}\left( \sqrt{\frac{\log (p)}{m}}\right) .
\end{equation*}%
This together with \eqref{eq:lin_model_rate_l1} 
\begin{equation}
\left\vert \langle \mathbb{P}_{m}\left[ S_{ab}b^{0}\phi (Z)+m_{a}(O,\phi )%
\right] ,\widetilde{\theta }_{a}-\theta _{a}^{\ast }\rangle \right\vert
=O_{P}\left( s_{a}\frac{\log (p)}{m}\right) .
\label{eq:lin_model_first_term}
\end{equation}%
On the other hand, by Conditions M.W, Lin.E.W.1 and Lin.L.W.1, Lemma \ref%
{lemma:bound_m_term} implies 
\begin{equation}
\left\vert \mathbb{P}_{m}\left[ S_{ab}b^{0}(Z)r(Z)+m_{a}(O,r)\right]
\right\vert =o_{P}(m^{-1/2}).  \label{eq:lin_model_second_term}
\end{equation}%
Then \eqref{eq:lin_model_first_term} and \eqref{eq:lin_model_second_term}
imply 
\begin{equation*}
\sqrt{m}\Gamma _{a,m}^{\ast }=O_{P}\left( s_{a}\frac{\log (p)}{\sqrt{m}}%
\right) +o_{P}(1)=o_{P}(1)
\end{equation*}%
since 
\begin{equation*}
O_{P}\left( s_{a}\frac{\log (p)}{\sqrt{m}}\right) =o_{P}(1)
\end{equation*}%
by Condition Lin.L.W.1. This finishes the proof of the first part of the
Theorem.

Note that 
\begin{equation*}
E_{\eta}\left[\Upsilon(a, b^{0}) - \chi(\eta) \right]=0
\end{equation*}
by \eqref{bias1}. Then the second and third parts of
the Theorem are proven following the arguments in the proof of Theorem \ref%
{theo:rate_double_lin}.
\end{proof}

\subsection{Asymptotic results for the estimator $\widehat{\protect\chi }%
_{nonlin}$}

\label{sec:app_asym_nonlin}

\subsubsection{Rate double robustness for the estimator $\widehat{\protect%
\chi }_{nonlin}$}

\label{sec:app_dr_nonlin}

\begin{proof}
\lbrack Proof of Theorem \ref{theo:rate_double_nonlin}] Let $n_{k}$ be the
size of sample $\mathcal{D}_{nk}, k=1,2,3$. To simplify the proof, we assume
that $n_k=n/3, k=1,2,3$. Now 
\begin{equation*}
\sqrt{n}\left\{ \widehat{\chi }_{nonlin}-\chi \left( \eta \right) \right\} =%
\sqrt{n}\left\{ \frac{1}{3}\sum_{k=1}^{3}\left[ \widehat{\chi }%
_{lin}^{\left( k\right) }-\chi \left( \eta \right) \right] \right\}
\end{equation*}%
so, since $n_k=n/3$, to show \eqref{eq:nonlin_rate_DR_expansion} it suffices
to show that for $k=1,2,3$ 
\begin{equation}
\sqrt{n_k}\left[ \widehat{\chi }_{nonlin}^{\left( k\right) }-\chi \left(
\eta \right) \right] =\mathbb{G}_{nk}\left[ \Upsilon \left( a,b\right) %
\right] +O_{p}\left( \sqrt{\frac{s_{a}s_{b}}{n_k}}\log (p)\right)
+o_{p}\left( 1\right).  \label{eq:rate_nonlin_goal}
\end{equation}

Applying Theorem \ref{theo:rate_main} with $w\equiv 1$, $\widehat{\beta }%
\equiv \beta ^{\ast }\equiv 0$, we have that for all $k\in \left\{
1,2,3\right\} $ and $c\in \left\{ a,b\right\} $ 
\begin{equation*}
\Vert \widehat{\theta }_{c,(k)}^{0}-\theta _{c}^{\ast }\Vert
_{2}=O_{P}\left( \sqrt{\frac{s_{c}\log (p)}{n_{k}}}\right) .
\end{equation*}%
Fix any $k\in \left\{ 1,2,3\right\} $, $c\in \left\{ a,b\right\} $ and $l\in
\left\{ 1,2\right\} $. Recall that if $c=a$ then $\overline{c}=b$ and vice
versa. Then applying Theorem \ref{theo:rate_main} with $w=\varphi _{%
\overline{c}}^{\prime }$, $\widehat{\beta }=\widehat{\theta }_{\overline{c}%
,(j_{l}(k))}^{0}$ and $\beta ^{\ast }=\theta _{\overline{c}}^{\ast }$ we get 
\begin{equation*}
\Vert \widehat{\theta }_{c,(k),j_{l}(k)}-\theta _{c}^{\ast }\Vert
_{2}=O_{P}\left( \sqrt{\frac{s_{c}\log (p)}{n_{k}}}\right) .
\end{equation*}

Take $k=1$, we will show that \eqref{eq:rate_nonlin_goal} holds. The proof
for $k=2,3$ is entirely analogous. To simplify the notation let $m=n/3$, let 
$\mathcal{D}_{m}$ denote $\mathcal{D}_{n1}$ and $\mathcal{D}_{m}^{c}$ denote 
$\mathcal{D}_{n}\setminus \mathcal{D}_{n1}$. Let 
\begin{equation*}
\widetilde{\theta}_{a}=\frac{\widetilde{\theta }_{a,(2),3} + \widetilde{%
\theta }_{a,(3),2}}{2} \quad \text{and} \quad \widetilde{\theta}_{b}=\frac{%
\widetilde{\theta }_{b,(2),3} + \widetilde{\theta }_{b,(3),2}}{2}.
\end{equation*}
Let 
\begin{equation*}
\widetilde{a}\left( Z\right) \equiv \varphi_{a}\left(\left\langle \widetilde{%
\theta }_{a},\phi \left( Z\right) \right\rangle\right) \quad \text{and}
\quad \widetilde{b}\left( Z\right) \equiv \varphi_{b}\left(\left\langle 
\widetilde{\theta }_{b},\phi \left( Z\right) \right\rangle\right)
\end{equation*}
denote the estimators $\widehat{a}_{\left( \overline{1}\right) }\left(
Z\right) $ and $\widehat{b}_{\left( \overline{1}\right) }\left( Z\right) $
computed using data $\mathcal{D}_{m}^{c}$.

We first show that $\widetilde{a}$ converges to $a$ and $\widetilde{b}$
converges to $b$. By the triangle inequality $\left\Vert \widetilde{a}%
(Z)-a(Z)\right\Vert _{L_{2}\left( P_{\eta }\right) }$ is bounded by 
\begin{align*}
&\left\Vert \widetilde{a}(Z)-\varphi_{a}\left(\left\langle \theta _{a}^{\ast
},\phi(Z) \right\rangle\right) \right\Vert _{L_{2}\left( P_{\eta }\right)
}+\left\Vert a(Z)-\varphi_{a}\left(\left\langle \theta _{a}^{\ast },\phi(Z)
\right\rangle\right)\right\Vert _{L_{2}\left( P_{\eta }\right) } = \\
& \left\Vert \varphi_{a}\left(\left\langle \widetilde{\theta}_{a},\phi(Z)
\right\rangle\right) -\varphi_{a}\left(\left\langle \theta _{a}^{\ast
},\phi(Z) \right\rangle\right) \right\Vert _{L_{2}\left( P_{\eta }\right)
}+\left\Vert a(Z)-\varphi_{a}\left(\left\langle \theta _{a}^{\ast },\phi(Z)
\right\rangle\right)\right\Vert _{L_{2}\left( P_{\eta }\right) } .
\end{align*}%
By Condition NLin.L.1 and Jensen's inequality we have that 
\begin{equation*}
\left\Vert a(Z)-\varphi_{a}\left(\left\langle \theta _{a}^{\ast },\phi(Z)
\right\rangle\right)\right\Vert _{L_{2}\left( P_{\eta }\right) } \leq
E^{1/8} \left[ \left\{ a\left( Z\right) -\varphi _{a}\left( \left\langle
\theta _{a}^{\ast },\phi \left( Z\right) \right\rangle \right) \right\} ^{8}%
\right] \leq \sqrt{\frac{Ks_{a}\log \left( p\right)}{n}}.
\end{equation*}
Using Conditions NLin.L.3, NLin.L.6 and NLin.Link.2, by Lemma \ref%
{lemma:lipschitz} 
\begin{align*}
&\left\Vert \varphi_{a}\left(\left\langle \widetilde{\theta}_{a},\phi(Z)
\right\rangle\right) -\varphi_{a}\left(\left\langle \theta _{a}^{\ast
},\phi(Z) \right\rangle\right) \right\Vert _{L_{2}\left( P_{\eta }\right) }
\leq B_{1}(\Vert \theta_{a}^{\ast}\Vert_{2}, \Vert \widetilde{\theta}%
_{a}-\theta^{\ast}_{a} \Vert_{2},2, k, K) E^{1/2}\left(\langle \phi(Z), 
\widetilde{\theta}_{a}-\theta^{\ast}_{a}\rangle^{2} \right),
\end{align*}
where $B_1$ is a function that is increasing in $\Vert
\theta_{a}^{\ast}\Vert_{2}$ and in $\Vert \widetilde{\theta}%
_{a}-\theta^{\ast}_{a} \Vert_{2}$. Since 
\begin{equation*}
\Vert \widetilde{\theta}_{a}-\theta^{\ast}_{a} \Vert_{2}=O_{P}\left(\sqrt{%
\frac{s_{a}\log(p)}{m}} \right),
\end{equation*}
and by Condition NLin.L.1 $\Vert \theta^{\ast}_{a}\Vert_{2} \leq K$, we have
that 
\begin{equation*}
B_{1}(\Vert \theta_{a}^{\ast}\Vert_{2}, \Vert \widetilde{\theta}%
_{a}-\theta^{\ast}_{a} \Vert_{2},2, k, K) =O_{P}(1).
\end{equation*}
Moreover, by Condition NLin.L.6 
\begin{equation*}
E^{1/2}\left(\langle \phi(Z), \widetilde{\theta}_{a}-\theta^{\ast}_{a}%
\rangle^{2} \right) = O_{P}\left(\sqrt{\frac{s_{a}\log(p)}{m}} \right).
\end{equation*}
Thus 
\begin{equation}
\left\Vert \widetilde{a}(Z)-a(Z)\right\Vert _{L_{2}\left( P_{\eta }\right)
}=O_{P}\left(\sqrt{\frac{s_{a}\log(p)}{m}} \right)=o_{P}(1),
\label{eq:nonlin_rate_atil}
\end{equation}
since $s_{a}\log(p)/m \to 0$ by assumption. Similarly, 
\begin{equation}
\left\Vert \widetilde{b}(Z)-b(Z)\right\Vert _{L_{2}\left( P_{\eta }\right)
}=O_{P}\left(\sqrt{\frac{s_{b}\log(p)}{m}} \right)=o_{P}(1).
\label{eq:nonlin_rate_btil}
\end{equation}

Now,%
\begin{equation*}
\widehat{\chi }^{\left( 1\right) }_{nonlin}-\chi \left( \eta \right)
=N_{m}+\Gamma _{a,m}+\Gamma _{b,m}+\Gamma _{ab,m}
\end{equation*}
where 
\begin{align*}
&N_{m}\equiv \mathbb{P}_{m}\left[ \Upsilon \left( a,b\right) -\chi \left(
\eta \right) \right] \\
&\Gamma _{a,m}\equiv \mathbb{P}_{m}\left[ S_{ab}\left( \widetilde{a}%
-a\right) \left( Z\right) b\left( Z\right) +m_{a}\left( O,\widetilde{a}%
\right) -m_{a}\left( O,a\right) \right] \\
&\Gamma _{b,m}\equiv \mathbb{P}_{m}\left[ S_{ab}\left( \widetilde{b}%
-b\right) \left( Z\right) a\left( Z\right) +m_{b}\left( O,\widetilde{b}%
\right) -m_{b}\left( O,b\right) \right] \\
& \Gamma _{ab,m}\equiv \mathbb{P}_{m}\left[ S_{ab}\left( \widetilde{a}%
-a\right) \left( Z\right) \left( \widetilde{b}-b\right) \left( Z\right) %
\right].
\end{align*}
We will show that $\sqrt{m} \Gamma_{a,m}=o_{P}(1)$. By the Dominated
Convergence Theorem it suffices to show that for any $\varepsilon >0,$ 
\begin{equation*}
P_{\eta }\left[ \sqrt{m}\Gamma _{a,m}>\varepsilon |\mathcal{D}_{m}^{c}\right]
=o_{p}\left( 1\right).
\end{equation*}%
By Markov's inequality it suffices to show that $E_{\eta }\left[ \sqrt{m}%
\Gamma _{a,m}|\mathcal{D}_{m}^{c}\right] =0$ and $Var_{\eta }\left[ \sqrt{m}%
\Gamma _{a,m}|\mathcal{D}_{m}^{c}\right] =o_{p}\left( 1\right)$. Now, 
\begin{eqnarray*}
E_{\eta }\left[ \sqrt{m}\Gamma _{a,m}|\mathcal{D}_{m}^{c}\right] &=&E_{\eta }%
\left[ S_{ab}\left( \widetilde{a}-a\right) \left( Z\right) b\left( Z\right)
+m_{a}\left( O,\widetilde{a}\right) -m_{a}\left( O,a\right) |\mathcal{D}%
_{m}^{c}\right] \\
&=&0
\end{eqnarray*}%
by Proposition \ref{prop:DR functional}. On the other hand 
\begin{eqnarray*}
Var_{\eta }\left[ \sqrt{m}\Gamma _{a,m}|\mathcal{D}_{m}^{c}\right]
&=&E_{\eta }\left[ \left. \left\{ S_{ab}\left( \widetilde{a}-a\right) \left(
Z\right) b\left( Z\right) +m_{a}\left( O,\widetilde{a}\right) -m_{a}\left(
O,a\right) \right\} ^{2}\right\vert \mathcal{D}_{m}^{c}\right] \\
&=&o_{p}\left( 1\right)
\end{eqnarray*}%
by Condition Lin.E.1 and the fact that by \eqref{eq:nonlin_rate_atil} $%
\left\Vert \widetilde{a}-a\right\Vert _{L_{2}\left( P_{\eta }\right)
}=o_{p}\left( 1\right)$. The same line of argument proves that $\sqrt{m}%
\Gamma _{b,m}=o_{p}\left( 1\right) $.

To bound $\sqrt{m}\Gamma _{ab,m} $, by the Cauchy-Schwartz inequality, 
\begin{eqnarray*}
\sqrt{m}\Gamma _{ab,m} &\equiv &\sqrt{m}\mathbb{P}_{m}\left[ S_{ab}\left( 
\widetilde{a}-a\right) \left( Z\right) \left( \widetilde{b}-b\right) \left(
Z\right) \right] \\
&\leq &\sqrt{m}\sqrt{\mathbb{P}_{m}\left[ S_{ab}\left( \widetilde{a}%
-a\right) ^{2}\left( Z\right) \right] }\sqrt{\mathbb{P}_{m}\left[
S_{ab}\left( \widetilde{b}-b\right) ^{2}\left( Z\right) \right] }.
\end{eqnarray*}
We will show that 
\begin{equation}
\sqrt{\mathbb{P}_{m}\left[ S_{ab}\left( \widetilde{a}-a\right) ^{2}\left(
Z\right) \right] } = O_{P}\left(\sqrt{\frac{s_{a}\log(p)}{m}} \right)
\label{eq:nonlinDR_CS_a}
\end{equation}
and 
\begin{equation}
\sqrt{\mathbb{P}_{m}\left[ S_{ab}\left( \widetilde{b}-b\right) ^{2}\left(
Z\right) \right] } = O_{P}\left(\sqrt{\frac{s_{b}\log(p)}{m}} \right) .
\label{eq:nonlinDR_CS_b}
\end{equation}
We begin with \eqref{eq:nonlinDR_CS_a}. Fix $\varepsilon>0$. Take $L_0,
L_1>0 $. Let 
\begin{equation*}
A=\left\lbrace \Vert \widetilde{\theta}_{a} - \theta^{\ast}_{a}\Vert_{2}^{2}
\leq L_1 \frac{s_{a}\log(p)}{m} \right\rbrace.
\end{equation*}
Then 
\begin{align*}
&P_{\eta}\left(\mathbb{P}_{m}\left\lbrace S_{ab}(\widetilde{a}%
-a)^{2}\right\rbrace \geq L_0 \frac{s_{a}\log(p)}{m} \right)=
E_{\eta}\left\lbrace P\left(\mathbb{P}_{m}\left\lbrace S_{ab}(\widetilde{a}%
-a)^{2}\right\rbrace \geq L_0 \frac{s_{a}\log(p)}{m}\mid \widetilde{\theta}%
_{a} \right)\right\rbrace = \\
& E_{\eta}\left\lbrace P\left(\mathbb{P}_{m}\left\lbrace S_{ab}(\widetilde{a}%
-a)^{2}\right\rbrace \geq L_0 \frac{s_{a}\log(p)}{m} \mid \widetilde{\theta}%
_{a}\right) \mid A \right\rbrace P_{\eta}(A) + \\
& E_{\eta}\left\lbrace P\left(\mathbb{P}_{m}\left\lbrace S_{ab}(\widetilde{a}%
-a)^{2}\right\rbrace \geq L_0 \frac{s_{a}\log(p)}{m} \mid \widetilde{\theta}%
_{a} \right) \mid A^{c} \right\rbrace P_{\eta}(A^{c}) \leq \\
& E_{\eta}\left\lbrace P\left(\mathbb{P}_{m}\left\lbrace S_{ab}(\widetilde{a}%
-a)^{2} \right\rbrace\geq L_0 \frac{s_{a}\log(p)}{m} \mid \widetilde{\theta}%
_{a}\right) \mid A \right\rbrace + P_{\eta}(A^{c}).
\end{align*}
By Theorem \ref{theo:rate_main} we may choose $L_1$ such that for all
sufficiently large $n$, $P_{\eta}(A^{c})<\varepsilon/2$. We will show that
we can choose $L_0$ such that the first term in the right hand side of the
last display is smaller than $\varepsilon/2$. This will show that %
\eqref{eq:nonlinDR_CS_a} holds. Since $\widetilde{\theta}_{a}$ is
independent of the data in $\mathcal{D}_{m}$, by Markov's inequality it
suffices to show that there exists $L_2>0$ depending only on $k, K$ and $L_1$
such that for all $\theta$ that satisfy 
\begin{equation*}
\Vert\theta - \theta^{\ast}_{a}\Vert_{2}^{2} \leq L_1 \frac{s_{a}\log(p)}{m}
\end{equation*}
it holds that 
\begin{equation*}
E_{\eta}\left[ S_{ab} \left(\varphi_{a}(\langle \theta, \phi(Z)\rangle)
-a(Z) \right)^{2} \right] \leq L_{2}\frac{s_{a}\log(p)}{n}.
\end{equation*}

Take then $\theta$ that satisfies 
\begin{equation*}
\Vert\theta - \theta^{\ast}_{a}\Vert_{2}^{2} \leq L_1 \frac{s_{a}\log(p)}{m}.
\end{equation*}
By the Cauchy-Schwartz inequality 
\begin{align*}
E_{\eta}\left[ S_{ab} \left(\varphi_{a}(\langle \theta, \phi(Z)\rangle)
-a(Z) \right)^{2} \right] \leq E_{\eta}^{1/2}\left[S_{ab}^{2} \right]
E_{\eta}^{1/2}\left[ \left(\varphi_{a}(\langle \theta, \phi(Z)\rangle)
-a(Z)\right)^{4}\right].
\end{align*}
By Condition NLin.L.5 and Jensen's inequality 
\begin{equation}
E_{\eta}^{1/2}\left[S_{ab}^{2} \right] \leq K^{1/4}.  \label{eq:nonlin_sab}
\end{equation}
Using Lemma \ref{lemma:lipschitz} and Condition NLin.L.1, it is easy to show
that there exists $L_{3}$ depending only on $k, K$ and $L_1$ such that 
\begin{align*}
E_{\eta}^{1/2}\left[ \left( \varphi_{a}(\langle \theta, \phi(Z)\rangle)
-a(Z) \right)^{4}\right]\leq L_{3} \left(\frac{s_{a}\log(p)}{m} \right).
\end{align*}
This together with \eqref{eq:nonlin_sab} implies that 
\begin{equation*}
E_{\eta}\left[ S_{ab} \left(\varphi_{a}(\langle \theta, \phi(Z)\rangle)
-a(Z) \right)^{2} \right] \leq L_{2}\left(\frac{s_{a}\log(p)}{m} \right),
\end{equation*}
where $L_{2}$ depends only on $k, K$ and $L_1$. Thus \eqref{eq:nonlinDR_CS_a}
holds. \eqref{eq:nonlinDR_CS_b} is proven analogously. Hence 
\begin{equation*}
\sqrt{m}\Gamma_{ab,m}=\sqrt{m} O_{P}\left(\sqrt{\frac{s_{a}\log(p)}{m} }
\right) O_{P}\left(\sqrt{\frac{s_{b}\log(p)}{m} } \right)=O_{P}\left(\sqrt{%
\frac{s_a s_b}{m}}\log(p) \right).
\end{equation*}
This finishes the proof the first part of the Theorem. The proof of the
second part follows by using the same arguments used in the proof of the
second part of Theorem \ref{theo:rate_double_lin}.

The third part can be proven using the same arguments used in the proof of
the third part of Theorem \ref{theo:rate_double_lin}, the only difference
being the proof that 
\begin{equation}
\mathbb{P}_{m}^{1/2}\left\{ S_{ab}^{2}\left( \widetilde{b}-b\right)
^{2}\left( \widetilde{a}-a\right) ^{2}\right\}=o_{P}(1).
\label{eq:nonlin_crossvar}
\end{equation}
To bound this term, we use Cauchy-Schwartz to get 
\begin{equation*}
\mathbb{P}_{m}^{1/2}\left\{ S_{ab}^{2}\left( \widetilde{b}%
-b\right)^{2}\left( \widetilde{a}-a\right) ^{2}\right\}\leq \mathbb{P}%
_{m}^{1/4}\left\{S_{ab}^{2} \left( \widetilde{b}-b\right) ^{4}\right\} 
\mathbb{P}_{m}^{1/4}\left\{S_{ab}^{2} \left( \widetilde{a}%
-a\right)^{4}\right\}.
\end{equation*}
Using the same type of arguments used to prove \eqref{eq:nonlinDR_CS_a} and %
\eqref{eq:nonlinDR_CS_b} it is easy to show that 
\begin{equation*}
\mathbb{P}_{m}^{1/4}\left\{ S_{ab}^{2}\left( \widetilde{a}-a\right)
^{4}\right\} =O_{P}\left(\sqrt{\frac{s_{a}\log(p)}{m}} \right) \quad \text{%
and} \quad \mathbb{P}_{m}^{1/4}\left\{S_{ab}^{2} \left( \widetilde{b}%
-b\right)^{4}\right\} =O_{P}\left(\sqrt{\frac{s_{b}\log(p)}{m}} \right).
\end{equation*}
Thus \eqref{eq:nonlin_crossvar} holds and the third part of the Theorem is
proven.
\end{proof}

\subsubsection{Model double robustness for the estimator $\widehat{\protect%
\chi }_{nonlin}$}

\label{sec:proof_dr_model_nonlin}

\begin{proof}
\lbrack Proof of Theorem \ref{theo:model_DR_Nonlin}] Let $n_{k}$ be the size
of sample $\mathcal{D}_{nk}, k=1,2,3$. To simplify the proof, we assume that 
$n_k=n/3, k=1,2,3$. Now 
\begin{equation*}
\sqrt{n}\left\{ \widehat{\chi }_{nonlin}-\chi \left( \eta \right) \right\} =%
\sqrt{n}\left\{ \frac{1}{3}\sum_{k=1}^{3}\left[ \widehat{\chi }%
_{lin}^{\left( k\right) }-\chi \left( \eta \right) \right] \right\}
\end{equation*}%
so, since $n_k=n/3$, to show \eqref{eq:Nonlin_model_DR_expansion} it
suffices to show that for $k=1,2,3$ 
\begin{equation}
\sqrt{n_k}\left[ \widehat{\chi }_{nonlin}^{\left( k\right) }-\chi \left(
\eta \right) \right] =\mathbb{G}_{nk}\left[ \Upsilon \left( a,b\right) %
\right] +O_{p}\left( \sqrt{\frac{s_{a}s_{b}}{n_k}}\log (p)\right)
+o_{p}\left( 1\right).  \label{eq:model_nonlin_goal}
\end{equation}

Applying Theorem \ref{theo:rate_main} with $w\equiv 1$, $\widehat{\beta}%
\equiv\beta^{\ast}\equiv 0$, we have that for all $k\in\left\lbrace 1,2,3
\right\rbrace$ 
\begin{align*}
\Vert \widehat{\theta}^{0}_{a,(k)}-\theta^{\ast}_{a}\Vert_{2}=O_{P}\left(%
\sqrt{\frac{s_{a}\log(p)}{n_{k}}} \right) \quad \text{and} \quad \Vert 
\widehat{\theta}^{0}_{b,(k)}-\theta^{0\ast}_{b}\Vert_{2}=O_{P}\left(\sqrt{%
\frac{s_{b}\log(p)}{n_{k}}} \right).
\end{align*}
Fix any $k\in\left\lbrace 1,2,3\right\rbrace$ and $l\in\left\lbrace 1,2
\right\rbrace$. Then applying Theorem \ref{theo:rate_main} with $%
w=\varphi_{b}^{\prime}$, $\widehat{\beta}=\widehat{\theta}%
^{0}_{b,(j_{l}(k))} $ and $\beta^{\ast}=\theta^{0\ast}_{b}$ we get 
\begin{align*}
\Vert \widehat{\theta}_{a,(k),
j_{l}(k)}-\theta^{\ast}_{a}\Vert_{2}=O_{P}\left(\sqrt{\frac{s_{a}\log(p)}{%
n_{k}}} \right) \quad \text{and} \quad \Vert \widehat{\theta}_{a,(k),
j_{l}(k)}-\theta^{\ast}_{a}\Vert_{1}=O_{P}\left(s_{a}\sqrt{\frac{\log(p)}{%
n_{k}}} \right).
\end{align*}
Applying Theorem \ref{theo:rate_main} with $w=\varphi_{a}^{\prime}$, $%
\widehat{\beta}=\widehat{\theta}^{0}_{a,(j_{l}(k))}$ and $%
\beta^{\ast}=\theta^{\ast}_{a}$ we get 
\begin{align*}
\Vert \widehat{\theta}_{b,(k),
j_{l}(k)}-\theta^{1\ast}_{b}\Vert_{2}=O_{P}\left(\sqrt{\frac{%
\max(s_{b},s_{a})\log(p)}{n_{k}}} \right).
\end{align*}

Take $k=1$, we will show that \eqref{eq:model_nonlin_goal} holds. The proof
for $k=2,3$ is entirely analogous. To simplify the notation let $m=n/3$, let 
$\mathcal{D}_{m}$ denote $\mathcal{D}_{n1}$ and $\mathcal{D}_{m}^{c}$ denote 
$\mathcal{D}_{n}\setminus \mathcal{D}_{n1}$. Let 
\begin{equation*}
\widetilde{\theta}_{a}=\frac{\widetilde{\theta }_{a,(2),3} + \widetilde{%
\theta }_{a,(3),2}}{2} \quad \text{and} \quad \widetilde{\theta}_{b}=\frac{%
\widetilde{\theta }_{b,(2),3} + \widetilde{\theta }_{b,(3),2}}{2}.
\end{equation*}
Let 
\begin{equation*}
\widetilde{a}\left( Z\right) \equiv \varphi_{a}\left(\left\langle \widetilde{%
\theta }_{a},\phi \left( Z\right) \right\rangle\right) \quad \text{and}
\quad \widetilde{b}\left( Z\right) \equiv \varphi_{b}\left(\left\langle 
\widetilde{\theta }_{b},\phi \left( Z\right) \right\rangle\right)
\end{equation*}
denote the estimators $\widehat{a}_{\left( \overline{1}\right) }\left(
Z\right) $ and $\widehat{b}_{\left( \overline{1}\right) }\left( Z\right) $
computed using data $\mathcal{D}_{m}^{c}$.

Arguing like in the proof of Theorem \ref{theo:rate_double_nonlin}, it is
easy to show that 
\begin{equation}
\left\Vert \widetilde{a}(Z)-a(Z)\right\Vert _{L_{2}\left( P_{\eta }\right)
}=O_{P}\left(\sqrt{\frac{s_{a}\log(p)}{m}} \right)=o_{P}(1),
\label{eq:nonlin_model_atil}
\end{equation}
and 
\begin{equation}
\left\Vert \widetilde{b}(Z)-b^{1}(Z)\right\Vert _{L_{2}\left( P_{\eta
}\right) }=O_{P}\left(\sqrt{\frac{\max(s_{b},s_{a})\log(p)}{m}}
\right)=o_{P}(1).  \label{eq:nonlin_model_btil}
\end{equation}

Now,%
\begin{equation*}
\widehat{\chi }^{\left( 1\right) }_{nonlin}-\chi \left( \eta \right)
=N_{m}^{\ast }+\Gamma _{a,m}^{\ast }+\Gamma _{b,m}^{\ast }+\Gamma
_{ab,m}^{\ast }
\end{equation*}%
where 
\begin{align*}
& N_{m}^{\ast }\equiv \mathbb{P}_{m}\left[ \Upsilon \left( a,b^{1}\right)
-\chi \left( \eta \right) \right] , \\
& \Gamma _{a,m}^{\ast }\equiv \mathbb{P}_{m}\left[ S_{ab}\left( \widetilde{a}
-a\right) \left( Z\right) b^{1}\left( Z\right) +m_{a}(O,\widetilde{a}
)-m_{a}(O,a)\right] , \\
& \Gamma _{b,m}^{\ast }\equiv \mathbb{P}_{m}\left[ S_{ab}\left( \widetilde{b}%
-b^{1}\right) \left( Z\right) a\left( Z\right) +m_{b}(O,\widetilde{b}
)-m_{b}(O,b^{1})\right] , \\
& \Gamma _{ab,m}^{\ast }\equiv \mathbb{P}_{m}\left[ S_{ab}\left( \widetilde{a%
}-a\right) \left( Z\right) \left( \widetilde{b}-b^{1}\right) \left( Z\right) %
\right] .
\end{align*}%
Following the arguments in the proof of Theorem \ref{theo:rate_double_nonlin}
it can be shown that 
\begin{equation*}
\sqrt{m}\Gamma _{ab,m}^{\ast }=O_{P}\left( \sqrt{\frac{s_{a}\max(s_{b},s_{a})%
}{m}}\log (p)\right)
\end{equation*}%
and 
\begin{equation*}
\sqrt{m}\Gamma _{b,m}^{\ast }=o_{p}\left( 1\right).
\end{equation*}%
So, to prove \eqref{eq:model_nonlin_goal} it suffices to show that 
\begin{equation}
\sqrt{m}\Gamma _{a,m}^{\ast }=o_{P}\left(1\right).  \label{eq:Op_II}
\end{equation}

Recall that 
\begin{equation*}
r(Z)\equiv a(Z)- \varphi_{a}\left(\langle \theta^{\ast}_{a}, \phi(Z)\rangle
\right).
\end{equation*}
Then 
\begin{equation*}
\sqrt{m} \mathbb{P}_{m}\left[ S_{ab}b^{1}\left( \varphi_{a}\left(\langle 
\widetilde{\theta}_{a}, \phi(Z)\rangle \right)-a(Z)\right)
+m_{a}(O,\varphi_{a}\left(\langle \widetilde{\theta}_{a}, \phi\rangle
\right)-a) \right]
\end{equation*}
is equal to 
\begin{align*}
&\sqrt{m} \mathbb{P}_{m}\left[ S_{ab}b^{1}\left( \varphi_{a}\left(\langle 
\widetilde{\theta}_{a}, \phi(Z)\rangle \right)-\varphi_{a}\left(\langle
\theta^{\ast}_{a}, \phi(Z)\rangle \right)\right)
+m_{a}(O,\varphi_{a}\left(\langle \widetilde{\theta}_{a}, \phi\rangle
\right)-\varphi_{a}\left(\langle \theta^{\ast}_{a}, \phi\rangle \right)) %
\right]- \\
& \sqrt{m} \mathbb{P}_{m}\left[ S_{ab}b^{1} r(Z) +m_{a}(O,r) \right].
\end{align*}
By Conditions M.W, NLin.E.W.1 and NLin.L.W.1, Lemma \ref{lemma:bound_m_term}
implies 
\begin{equation*}
\left\vert \mathbb{P}_{m}\left[ S_{ab}b^{1}(Z)r(Z)+m_{a}(O,r)\right]
\right\vert =o_{P}(m^{-1/2}).
\end{equation*}
Thus, to prove \eqref{eq:Op_II}, it suffices to show that 
\begin{align*}
&\left\vert \sqrt{m} \mathbb{P}_{m}\left[ S_{ab}b^{1}\left(
\varphi_{a}\left(\langle \widetilde{\theta}_{a}, \phi(Z)\rangle
\right)-\varphi_{a}\left(\langle \theta^{\ast}_{a}, \phi(Z)\rangle
\right)\right) +m_{a}(O,\varphi_{a}\left(\langle \widetilde{\theta}_{a},
\phi\rangle \right)-\varphi_{a}\left(\langle \theta^{\ast}_{a}, \phi\rangle
\right)) \right]\right\vert \\
&=O_{P}\left( s_{a}\sqrt{\frac{\log(p)}{m}}\right),
\end{align*}
since by Condition NLin.W.1. 
\begin{equation*}
s_{a}\sqrt{\frac{\log(p)}{m}}=o(1).
\end{equation*}

Fix $\varepsilon>0$ and take $L_1 >0$ to be chosen later. Let 
\begin{align*}
A=\left\lbrace \Vert \widetilde{\theta}_{a} - \theta^{\ast}_{a}
\Vert_{1}\leq L_{1} s_{a}\sqrt{\frac{\log(p)}{m}}, \quad \Vert \widetilde{%
\theta}_{a} - \theta^{\ast}_{a} \Vert_{2}\leq L_{1} \sqrt{\frac{s_{a}\log(p)%
}{{m}}}\right\rbrace
\end{align*}
and 
\begin{equation*}
\mathcal{W}=\sqrt{m} \mathbb{P}_{m}\left[ S_{ab}b^{1}\left(
\varphi_{a}\left(\langle \widetilde{\theta}_{a}, \phi(Z)\rangle
\right)-\varphi_{a}\left(\langle \theta^{\ast}_{a}, \phi(Z)\rangle
\right)\right) +m_{a}(O,\varphi_{a}\left(\langle \widetilde{\theta}_{a},
\phi\rangle \right)-\varphi_{a}\left(\langle \theta^{\ast}_{a}, \phi\rangle
\right)) \right].
\end{equation*}
Then 
\begin{align*}
&P_{\eta}\left( \vert \mathcal{W}\vert \geq L_{0} \frac{s_{a}\log(p)}{\sqrt{m%
}}\right) = E_{\eta}\left\lbrace P\left( \vert \mathcal{W} \vert \geq L_0 
\frac{s_{a}\log(p)}{\sqrt{m}} \mid \widetilde{\theta}_{a}\right)
\right\rbrace= \\
& E_{\eta}\left\lbrace P\left( \vert \mathcal{W} \vert \geq L_0 \frac{%
s_{a}\log(p)}{\sqrt{m}} \mid \widetilde{\theta}_{a} \right) \mid
A\right\rbrace P_{\eta}(A) + \\
& E_{\eta}\left\lbrace P\left( \vert \mathcal{W} \vert \geq L_0 \frac{%
s_{a}\log(p)}{\sqrt{m}} \mid \widetilde{\theta}_{a}\right) \mid
A^{c}\right\rbrace P_{\eta}(A^{c}) \leq \\
& E_{\eta}\left\lbrace P\left( \vert \mathcal{W} \vert \geq L_0 \frac{%
s_{a}\log(p)}{\sqrt{m}} \mid \widetilde{\theta}_{a}\right) \mid
A\right\rbrace + P_{\eta}(A^{c}).
\end{align*}
By Theorem \ref{theo:rate_main} we can choose $L_1$ such that for all
sufficiently large $n$, $P_{\eta}(A^{c})< \varepsilon/2$. We will show that
we can choose $L_0$ such that the first term in the last display is smaller
than $\varepsilon/2$ too. Since $\widetilde{\theta}_{a}$ is independent of
the data in sample $\mathcal{D}_{m}$, by Markov's inequality it suffices to
show that there exists $L_2>0$ depending only on $L_1$, $k$ and $K$ such
that 
\begin{equation*}
\sqrt{m}E_{\eta} \left\lbrace \left\vert \mathbb{P}_{m}\left[
S_{ab}b^{1}\left( \varphi_{a}\left(\langle \theta, \phi(Z)\rangle \right)-
\varphi_{a}\left(\langle \theta^{\ast}_{a}, \phi(Z)\rangle \right) \right)
+m_{a}(O,\varphi_{a}\left(\langle \theta, \phi\rangle \right)-
\varphi_{a}\left(\langle \theta^{\ast}_{a}, \phi\rangle \right) ) \right]
\right\vert \right\rbrace
\end{equation*}
is bounded by 
\begin{equation*}
L_{2}\left( \frac{s_{a}\log(p)}{\sqrt{m}} \right)
\end{equation*}
for all $\theta$ such that 
\begin{equation*}
\Vert \theta- \theta^{\ast}_{a} \Vert_{1}\leq L_{1} s_{a}\sqrt{\frac{\log(p)%
}{m}} \quad \text{and} \quad \Vert \theta- \theta^{\ast}_{a} \Vert_{2}\leq
L_{1} \sqrt{\frac{s_{a}\log(p)}{m}}.
\end{equation*}

Take $m$ large enough such that 
\begin{equation*}
s_{a}\sqrt{\frac{\log(p)}{m}} \leq 1 \quad \text{ and } \quad \frac{%
s_{a}\log(p)}{{m}} \leq 1.
\end{equation*}
Take $\theta$ such that 
\begin{equation*}
\Vert \theta- \theta^{\ast}_{a} \Vert_{1}\leq L_{1} s_{a}\sqrt{\frac{\log(p)%
}{m}} \quad \text{and} \quad \Vert \theta- \theta^{\ast}_{a} \Vert_{2}\leq
L_{1} \sqrt{\frac{s_{a}\log(p)}{m}}.
\end{equation*}
Using a first order Taylor expansion, we get that 
\begin{equation*}
\sqrt{m} \mathbb{P}_{m}\left[ S_{ab}b^{1}\left( \varphi_{a}\left(\langle
\theta, \phi(Z)\rangle \right)- \varphi_{a}\left(\langle \theta^{\ast}_{a},
\phi(Z)\rangle \right) \right) +m_{a}(O,\varphi_{a}\left(\langle \theta,
\phi\rangle \right)- \varphi_{a}\left(\langle \theta^{\ast}_{a}, \phi\rangle
\right) ) \right]
\end{equation*}
is equal to 
\begin{align}
& \sqrt{m} \mathbb{P}_{m}\left[ S_{ab}b^{1} \varphi^{\prime}_{a}\left(
\langle \theta^{\ast}_{a}, \phi(Z)\rangle\right) \langle
\theta-\theta^{\ast}_{a}, \phi(Z)\rangle +m_{a}(O,
\varphi^{\prime}_{a}\left( \langle \theta^{\ast}_{a}, \phi\rangle\right)
\langle \theta-\theta^{\ast}_{a}, \phi\rangle) \right] +  \notag \\
& \frac{\sqrt{m}}{2} \mathbb{P}_{m}\left[ S_{ab}b^{1}
\varphi^{\prime\prime}_{a}\left( \langle \theta_{a}^{\dagger},
\phi(Z)\rangle\right) \langle \theta-\theta^{\ast}_{a}, \phi(Z)\rangle^{2}
+m_{a}(O, \varphi^{\prime\prime}_{a}\left( \langle \theta^{\dagger}_{a},
\phi\rangle\right) \langle \theta-\theta^{\ast}_{a}, \phi\rangle^{2}) \right]%
,  \label{eq:mod_double_taylor}
\end{align}
where $\Vert\theta^{\dagger}_{a}-\theta^{\ast}_{a}\Vert_{2}\leq
\Vert\theta-\theta^{\ast}_{a}\Vert_{2}$. Using the linearity of $%
m_a(O,\cdot) $ and Holder's inequality 
\begin{align}
&\left\vert \sqrt{m} \mathbb{P}_{m}\left[ S_{ab}b^{1}
\varphi^{\prime}_{a}\left( \langle \theta^{\ast}_{a}, \phi(Z)\rangle\right)
\langle \theta-\theta^{\ast}_{a}, \phi(Z)\rangle +m_{a}(O,
\varphi^{\prime}_{a}\left( \langle \theta^{\ast}_{a}, \phi\rangle\right)
\langle \theta-\theta^{\ast}_{a}, \phi\rangle) \right] \right\vert =  \notag
\\
& \sqrt{m}\left\vert \langle \theta-\theta^{\ast}_{a}, \mathbb{P}_{m}\left[
S_{ab}b^{1} \phi(Z)\varphi^{\prime}_{a}\left( \langle \theta^{\ast}_{a},
\phi(Z)\rangle\right) +m_{a}(O, \varphi^{\prime}_{a}\left( \langle
\theta^{\ast}_{a}, \phi\rangle\right) \phi) \right] \rangle\right\vert \leq 
\notag \\
& \sqrt{m} \Vert \theta-\theta^{\ast}_{a} \Vert_{1} \Vert \mathbb{P}_{m}%
\left[ S_{ab}b^{1} \phi(Z)\varphi^{\prime}_{a}\left( \langle
\theta^{\ast}_{a}, \phi(Z)\rangle\right) +m_{a}(O,
\varphi^{\prime}_{a}\left( \langle \theta^{\ast}_{a}, \phi\rangle\right)
\phi) \right] \Vert_{\infty}.  \label{eq:nonlin_mod_holder}
\end{align}
By assumption 
\begin{equation}
\Vert \theta - \theta^{\ast}_{a} \Vert_{1}\leq L_1 s_{a} \sqrt{\frac{ \log(p)%
}{m}}.  \label{eq:bound_l1_nonlin_mod}
\end{equation}
On the other hand, 
\begin{equation*}
E_{\eta} \left\lbrace S_{ab}b^{1} \phi(Z)\varphi^{\prime}_{a}\left( \langle
\theta^{\ast}_{a}, \phi(Z)\rangle\right) +m_{a}(O,
\varphi^{\prime}_{a}\left( \langle \theta^{\ast}_{a}, \phi\rangle\right)
\phi) \right\rbrace = 0,
\end{equation*}
because $b^{1}(Z)=\varphi_{b}\left(\langle \theta^{1}_{b} ,
\phi(Z)\rangle\right)$ and $\theta^{1}_{b}$ is defined by 
\begin{equation*}
\theta^{1}_{b}\in \arg\min\limits_{\theta\in\mathbb{R}^{p}}
E_{\eta}\left\lbrace S_{ab}\psi_{b}\left(\langle \theta, \phi(Z) \rangle
\right)\varphi_{a}^{\prime}\left(\langle \theta^{\ast}_{a}, \phi(Z) \rangle
\right) + \langle \theta, m_{a}(O, \varphi_{a}^{\prime}\left(\langle
\theta^{\ast}_{a}, \phi \rangle \right) \phi\rangle \right\rbrace.
\end{equation*}
Then, using Conditions NLin.L.W.2, NLin.L.W.4 and arguments similar to those
used in the proof of Lemma \ref{lemma:lambda_choice} in Appendix A 
\begin{equation}
E_{\eta} \left\lbrace \Vert \mathbb{P}_{m}\left[ S_{ab}b^{1}
\phi(Z)\varphi^{\prime}_{a}\left( \langle \theta^{\ast}_{a},
\phi(Z)\rangle\right) +m_{a}(O, \varphi^{\prime}_{a}\left( \langle
\theta^{\ast}_{a}, \phi\rangle\right) \phi) \right] \Vert_{\infty}\right%
\rbrace\leq L_{3}\sqrt{\frac{\log(p)}{m}},  \label{eq:bound_inf_taylor_mod}
\end{equation}
where $L_3$ depends only on $K$. Hence by \eqref{eq:nonlin_mod_holder}, %
\eqref{eq:bound_l1_nonlin_mod}, \eqref{eq:bound_inf_taylor_mod} 
\begin{align}
&E_{\eta}\left\lbrace \left\vert \sqrt{m} \mathbb{P}_{m}\left[ S_{ab}b^{1}
\varphi^{\prime}_{a}\left( \langle \theta^{\ast}_{a}, \phi(Z)\rangle\right)
\langle \theta-\theta^{\ast}_{a}, \phi(Z)\rangle +m_{a}(O,
\varphi^{\prime}_{a}\left( \langle \theta^{\ast}_{a}, \phi\rangle\right)
\langle \theta-\theta^{\ast}_{a}, \phi\rangle) \right] \right\vert\right%
\rbrace\leq  \notag \\
& L_{1}L_{3} \frac{s_{a} \log(p)}{\sqrt{m}}.  \label{bound_nemir_term}
\end{align}

Now, to bound the expectation of the absolute value of%
\eqref{eq:mod_double_taylor}, using Condition M.W 
\begin{align}
& E_{\eta}\left\lbrace \left\vert \mathbb{P}_{m}\left[ S_{ab}b^{1}
\varphi^{\prime\prime}_{a}\left( \langle \theta_{a}^{\dagger},
\phi(Z)\rangle\right) \langle \theta-\theta^{\ast}_{a}, \phi(Z)\rangle^{2}
+m_{a}(O, \varphi^{\prime\prime}_{a}\left( \langle \theta^{\dagger}_{a},
\phi\rangle\right) \langle \theta-\theta^{\ast}_{a}, \phi\rangle^{2}) \right]
\right\vert\right\rbrace \leq  \notag \\
& E_{\eta}\left\lbrace \left\vert S_{ab}b^{1}
\varphi^{\prime\prime}_{a}\left( \langle \theta_{a}^{\dagger},
\phi(Z)\rangle\right) \langle \theta-\theta^{\ast}_{a},
\phi(Z)\rangle^{2}\right\vert\right\rbrace +E_{\eta}\left\lbrace \left\vert
m_{a}(O, \varphi^{\prime\prime}_{a}\left( \langle \theta^{\dagger}_{a},
\phi\rangle\right) \langle \theta-\theta^{\ast}_{a}, \phi\rangle^{2})
\right\vert\right\rbrace\leq  \notag \\
& E_{\eta}\left\lbrace \left\vert S_{ab}b^{1}
\varphi^{\prime\prime}_{a}\left( \langle \theta_{a}^{\dagger},
\phi(Z)\rangle\right) \langle \theta-\theta^{\ast}_{a},
\phi(Z)\rangle^{2}\right\vert\right\rbrace +E_{\eta}\left\lbrace
m^{\ddagger}_{a}(O, \left\vert \varphi^{\prime\prime}_{a}\left( \langle
\theta^{\dagger}_{a}, \phi\rangle\right) \langle \theta-\theta^{\ast}_{a},
\phi\rangle^{2}\right\vert) \right\rbrace=  \notag \\
& E_{\eta}\left\lbrace \left\vert S_{ab}b^{1}
\varphi^{\prime\prime}_{a}\left( \langle \theta_{a}^{\dagger},
\phi(Z)\rangle\right) \langle \theta-\theta^{\ast}_{a},
\phi(Z)\rangle^{2}\right\vert\right\rbrace +E_{\eta}\left\lbrace \mathcal{R}%
^{\ddagger}_{a} \left\vert \varphi^{\prime\prime}_{a}\left( \langle
\theta^{\dagger}_{a}, \phi\rangle\right) \langle \theta-\theta^{\ast}_{a},
\phi\rangle^{2}\right\vert \right\rbrace.  \notag
\end{align}

By Cauchy Schwartz 
\begin{equation*}
E_{\eta}\left[ \left\vert S_{ab}b^{1} \varphi^{\prime\prime}_{a}\left(
\langle \theta_{a}^{\dagger}, \phi(Z)\rangle\right) \langle
\theta-\theta^{\ast}_{a}, \phi(Z)\rangle^{2}\right\vert\right] \leq
E_{\eta}^{1/2}\left[(S_{ab}b^{1})^{2}\right] E_{\eta}^{1/4}\left[ \left(
\varphi^{\prime\prime}_{a}\left( \langle \theta_{a}^{\dagger},
\phi(Z)\rangle\right) \right)^{4} \right] E_{\eta}^{1/4}\left[ \langle
\theta-\theta^{\ast}_{a}, \phi(Z)\rangle^{8}\right].
\end{equation*}
By Condition NLin.E.W.1 
\begin{equation*}
E_{\eta}^{1/2}\left[(S_{ab}b^{1})^{2}\right] \leq K^{1/2}.
\end{equation*}
Using Condition NLin.Link.4, by Lemma \ref{lemma:bound_moment_lipschitz} 
\begin{equation*}
E_{\eta}^{1/4}\left\lbrace \left\vert
\varphi_{a}^{\prime\prime}\left(\langle \theta_{a}^{\dagger},\phi(Z) \rangle
\right) \right\vert^{4} \right\rbrace \leq B_{2}(\vert
\varphi_{a}^{\prime\prime}(0)\vert, \Vert \theta_{a}^{\dagger}\Vert_{2}, 4,
k, K),
\end{equation*}
where $B_2$ is a function that is increasing in $\Vert
\theta_{a}^{\dagger}\Vert_{2}$. Now by Condition NLin.L.W.1, $\Vert
\theta^{\ast}_{a}\Vert_{2}\leq K$, and since $\sqrt{s_{a}\log(p)/m}\leq 1$ 
\begin{equation*}
\Vert \theta_{a}^{\dagger}\Vert_{2} \leq \Vert
\theta^{\ast}_{a}-\theta_{a}^{\dagger}\Vert_{2} + \Vert
\theta^{\ast}_{a}\Vert_{2} \leq \Vert\theta^{\ast}_{a}-\theta\Vert_{2} +
\Vert \theta^{\ast}_{a}\Vert_{2} \leq L_{1} \sqrt{\frac{s_a\log(p)}{m}} +
K\leq L_1 +K.
\end{equation*}
This implies 
\begin{equation}
E_{\eta}^{1/4}\left\lbrace \left\vert
\varphi_{a}^{\prime\prime}\left(\langle \widetilde{\theta}_{a},\phi(Z)
\rangle \right) \right\vert^{4} \right\rbrace \leq B_{2}(\vert
\varphi_{a}^{\prime\prime}(0)\vert, L_1 +K, 4, k, K).  \notag
\end{equation}
On the other hand, by Condition NLin.L.6 
\begin{equation*}
E_{\eta}^{1/4}\left[ \langle \theta-\theta^{\ast}_{a}, \phi(Z)\rangle^{8}%
\right]\leq 8K^{2} \Vert \theta-\theta^{\ast}_{a}\Vert_{2}^{2}\leq 8 K^{2}
L_{1}^{2}\frac{s_{a}\log(p)}{m}.
\end{equation*}
Thus 
\begin{equation*}
E_{\eta}\left[ \left\vert S_{ab}b^{1} \varphi^{\prime\prime}_{a}\left(
\langle \theta_{a}^{\dagger}, \phi(Z)\rangle\right) \langle
\theta-\theta^{\ast}_{a}, \phi(Z)\rangle^{2}\right\vert\right] \leq K^{1/2}
B_{2}(\vert \varphi_{a}^{\prime\prime}(0)\vert, L_1 +K, 4, k, K)8 K^{2}
L_{1}^{2}\frac{s_{a}\log(p)}{m}
\end{equation*}
Similarly 
\begin{equation*}
E_{\eta}\left[ \mathcal{R}_{a}^{\ddagger}\left\vert
\varphi^{\prime\prime}_{a}\left( \langle \theta_{a}^{\dagger},
\phi(Z)\rangle\right) \langle \theta-\theta^{\ast}_{a},
\phi(Z)\rangle^{2}\right\vert\right] \leq K^{1/2} B_{2}(\vert
\varphi_{a}^{\prime\prime}(0)\vert, L_1 +K, 4, k, K)8 K^{2} L_{1}^{2}\frac{%
s_{a}\log(p)}{m}.
\end{equation*}
Hence, there exists a constant $L_4$, depending only on $k,K$ and $L_1$,
such that the expectation of the absolute value of %
\eqref{eq:mod_double_taylor} is bounded by 
\begin{equation*}
L_{4}\frac{s_{a}\log(p)}{\sqrt{m}}.
\end{equation*}
This together with \eqref{bound_nemir_term} implies that there exists $L_2>0$
depending only on $L_1$, $k$ and $K$ such that 
\begin{equation*}
\sqrt{m}E_{\eta}\left\lbrace \left\vert \mathbb{P}_{m}\left[
S_{ab}b^{1}\left( \varphi_{a}\left(\langle \theta, \phi(Z)\rangle \right)-
\varphi_{a}\left(\langle \theta^{\ast}_{a}, \phi(Z)\rangle \right) \right)
+m_{a}(O,\varphi_{a}\left(\langle \theta, \phi\rangle \right)-
\varphi_{a}\left(\langle \theta^{\ast}_{a}, \phi\rangle \right) ) \right]
\right\vert \right\rbrace
\end{equation*}
is bounded by 
\begin{equation*}
L_{2}\left( \frac{s_{a}\log(p)}{\sqrt{m}} \right),
\end{equation*}
which finishes the proof of the first part of the theorem.

Note that 
\begin{equation*}
E_{\eta}\left[\Upsilon(a, b^{1}) - \chi(\eta) \right]=0
\end{equation*}
by \eqref{bias1}. The second and third part are then
proven using arguments similar to those used in the proofs of Theorems \ref%
{theo:rate_double_lin} and \ref{theo:rate_double_nonlin}.
\end{proof}

\section{Appendix C: Auxiliary technical results}

\label{sec:app_tech}

This appendix contains the proofs of various technical results that are
needed in the proofs appearing in the previous sections.

\begin{lemma}
\label{lemma:mgf_subgauss} Let $U$ be a random variable satisfying $\Vert U
\Vert_{\psi_2} <\infty$ and let $t>0$. Then 
\begin{equation*}
E\left(\exp(t \vert U \vert) \right)\leq B_{0}(\Vert U \Vert_{\psi_2}, t),
\end{equation*}
where $B_{0}(r_1,r_2)$ is a function defined on $\mathbb{R}_{+}^{2}$ that is
increasing in $r_1$ and in $r_2$.
\end{lemma}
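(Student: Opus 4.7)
The plan is to reduce the bound on $E[\exp(t|U|)]$ to a tail/moment bound that is explicit in $\|U\|_{\psi_2}$. The cleanest route is via the standard equivalence between the $\psi_2$-norm (defined through moments) and the exponential-square integrability $E[\exp(U^2/K_0^2)] \leq 2$. This equivalence — up to an absolute multiplicative constant relating $K_0$ to $\|U\|_{\psi_2}$ — is one of the five or so classical characterizations of sub-Gaussianity (see the discussion in \cite{compressed-book} referenced in the paper), so I would invoke it as a black box rather than re-derive it here.

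Granted this, the key step is the AM--GM inequality
\begin{equation*}
t|U| \;=\; \frac{|U|}{K_0}\cdot (t K_0) \;\leq\; \frac{U^2}{2 K_0^2} + \frac{t^2 K_0^2}{2},
\end{equation*}
which linearizes $t|U|$ at the cost of introducing the quadratic $U^2$ that is precisely what the sub-Gaussian condition controls. Taking expectations and pulling out the deterministic factor gives
\begin{equation*}
E\bigl[\exp(t|U|)\bigr] \;\leq\; \exp(t^2 K_0^2/2)\, E\bigl[\exp(U^2/(2K_0^2))\bigr].
\end{equation*}
By Jensen's inequality applied to the concave map $x\mapsto x^{1/2}$, $E[\exp(U^2/(2K_0^2))] \leq E[\exp(U^2/K_0^2)]^{1/2} \leq \sqrt{2}$.

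Combining and using $K_0 \leq C\|U\|_{\psi_2}$ for the absolute constant $C$ furnished by the equivalence, we obtain
\begin{equation*}
E\bigl[\exp(t|U|)\bigr] \;\leq\; \sqrt{2}\exp\!\left( C^2 t^2 \|U\|_{\psi_2}^2/2 \right),
\end{equation*}
so the choice $B_0(r_1,r_2) = \sqrt{2}\exp(C^2 r_1^2 r_2^2/2)$ works and is manifestly increasing in each of $r_1,r_2 \geq 0$. The only subtlety — and the main place one has to be careful — is citing the right form of the sub-Gaussian equivalence so that the constant $K_0$ appears as a fixed multiple of $\|U\|_{\psi_2}$; as long as this is pinned down, the rest of the argument is two lines of elementary inequalities. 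An alternative, self-contained route would be to bound the moment series $E[\exp(t|U|)] = \sum_k t^k E[|U|^k]/k!$ by $\sum_k t^k \|U\|_{\psi_2}^k k^{k/2}/k!$ using the very definition of $\|\cdot\|_{\psi_2}$ and then Stirling, but this is less clean and yields the same qualitative $B_0$.
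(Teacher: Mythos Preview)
Your proof is correct, but it takes a different route from the paper's. The paper proceeds exactly along what you call the ``alternative, self-contained route'': it expands the exponential as a power series, bounds each moment directly from the definition of $\|\cdot\|_{\psi_2}$ (namely $E|U|^k \leq \|U\|_{\psi_2}^k\, k^{k/2}$), applies the crude Stirling bound $k! \geq k^k/e^k$, and then \emph{defines} $B_0(r_1,r_2) = \sum_{k\geq 0}(r_1 r_2 e/\sqrt{k})^k$. Your approach instead black-boxes the equivalence between the moment-based $\psi_2$-norm and the Orlicz condition $E[\exp(U^2/K_0^2)] \leq 2$, then combines AM--GM with Jensen to obtain the cleaner closed form $B_0(r_1,r_2)=\sqrt{2}\exp(C^2 r_1^2 r_2^2/2)$. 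Your route yields an explicit Gaussian-type bound at the cost of importing the external equivalence constant $C$; the paper's route is fully self-contained relative to the moment definition of $\|\cdot\|_{\psi_2}$ adopted in the text, with the trade-off that $B_0$ is left as a convergent series rather than a closed-form expression. For the purposes of the downstream lemmas (\ref{lemma:lipschitz} and \ref{lemma:bound_moment_lipschitz}), either $B_0$ works equally well, since only monotonicity in each argument is used.
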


\begin{proof}
\lbrack Proof of Lemma \ref{lemma:mgf_subgauss}] 
\begin{align*}
E\left(\exp(t \vert U \vert) \right)= \sum\limits_{k=0}^{\infty}\frac{%
E(\vert tU \vert^{k})}{k!}\leq \sum\limits_{k=0}^{\infty}\frac{\Vert
tU\Vert_{\psi_2}^{k} k^{k/2}}{k!}\leq \sum\limits_{k=0}^{\infty}\frac{\Vert
tU\Vert_{\psi_2}^{k} e^{k} k^{k/2}}{k^{k}}=\sum\limits_{k=0}^{\infty}\left(%
\frac{t\Vert U\Vert_{\psi_2}e}{k^{1/2}} \right)^{k}<\infty,
\end{align*}
where in the second inequality we used the definition of $\Vert U
\Vert_{\psi_2}$ and in the third inequality we used the bound $k!\geq
k^{k}/e^k$. Defining 
\begin{equation*}
B_{0}(r_1, r_2)=\sum\limits_{k=0}^{\infty}\left(\frac{r_1 r_2 e}{k^{1/2}}
\right)^{k},
\end{equation*}
the result follows.
\end{proof}

\begin{lemma}
\label{lemma:higher_isotropy} Assume $W$ is a random vector that satisfies 
\begin{equation*}
k \leq \lambda_{min}\left(E\left\lbrace W W^{\top} \right\rbrace \right)
\end{equation*}
and 
\begin{equation*}
\sup_{\Vert \Delta\Vert_{2}=1} \Vert \langle \Delta,W \rangle
\Vert_{\psi_{2}}\leq K.
\end{equation*}
Then for all $\Delta\in\mathbb{R}^{p}$ and $l\in\mathbb{N}$ 
\begin{equation*}
E^{1/l}\left( \vert \langle W,\Delta \rangle \vert^{l} \right) \leq \frac{K}{%
k} \sqrt{l} E^{1/2}\left(\langle W,\Delta \rangle^{2} \right).
\end{equation*}
\end{lemma}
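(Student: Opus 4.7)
The plan is to chain together two elementary observations: a reverse bound on $\Vert \Delta\Vert_{2}$ in terms of the $L^{2}$ norm of $\langle W,\Delta\rangle$ coming from the eigenvalue condition, and the standard moment bound for sub-Gaussian variables coming from the $\psi_{2}$ assumption. First I would dispose of the trivial case $\Delta=0$, and otherwise work with the rescaled vector $\widetilde{\Delta}=\Delta/\Vert \Delta\Vert_{2}$ so that the assumed bound $\sup_{\Vert \Delta\Vert_{2}=1}\Vert \langle W,\Delta\rangle\Vert_{\psi_{2}}\leq K$ applies to $\widetilde{\Delta}$.

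Next, using linearity of the map $\Delta\mapsto \langle W,\Delta\rangle$ and the homogeneity of the $\psi_{2}$ norm, $\Vert \langle W,\Delta\rangle\Vert_{\psi_{2}}=\Vert \Delta\Vert_{2}\,\Vert \langle W,\widetilde{\Delta}\rangle\Vert_{\psi_{2}}\leq K\Vert \Delta\Vert_{2}$. Then the very definition of the $\psi_{2}$ norm, namely $\Vert U\Vert_{\psi_{2}}=\sup_{k}(E|U|^{k})^{1/k}/\sqrt{k}$, gives immediately the moment bound $E^{1/l}(|\langle W,\Delta\rangle|^{l})\leq \sqrt{l}\,\Vert \langle W,\Delta\rangle\Vert_{\psi_{2}}\leq \sqrt{l}\,K\,\Vert \Delta\Vert_{2}$ for every $l\in\mathbb{N}$.

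Finally, the lower eigenvalue hypothesis $k\leq \lambda_{\min}(E(WW^{\top}))$ yields $E(\langle W,\Delta\rangle^{2})=\Delta^{\top}E(WW^{\top})\Delta\geq k\Vert \Delta\Vert_{2}^{2}$, so $\Vert \Delta\Vert_{2}\leq E^{1/2}(\langle W,\Delta\rangle^{2})/\sqrt{k}$ (and, since we may assume without loss of generality that $k\leq 1$ by shrinking $k$ if necessary, also $\Vert \Delta\Vert_{2}\leq E^{1/2}(\langle W,\Delta\rangle^{2})/k$). Plugging this into the moment bound from the previous step delivers the claimed inequality $E^{1/l}(|\langle W,\Delta\rangle|^{l})\leq (K/k)\sqrt{l}\,E^{1/2}(\langle W,\Delta\rangle^{2})$.

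There is no real obstacle here; the only thing to be mildly careful about is the direction of the $k$ versus $\sqrt{k}$ comparison in the denominator. Since the constants $k,K$ are fixed and the conclusion is used only up to a multiplicative constant depending on $k,K$, this is inconsequential, and the three-line chain above is the whole proof.
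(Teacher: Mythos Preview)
Your proof is correct and essentially identical to the paper's: both use the definition of the $\psi_{2}$ norm to bound $E^{1/l}$ by $\sqrt{l}\,K\,\Vert\Delta\Vert_{2}$ and then invoke the eigenvalue lower bound to replace $\Vert\Delta\Vert_{2}$ by $E^{1/2}(\langle W,\Delta\rangle^{2})$, with a normalization/homogeneity step to reduce to unit vectors. You are in fact a bit more careful than the paper about the $k$ versus $\sqrt{k}$ in the denominator; the paper's displayed chain implicitly uses $k\leq 1$ at the same spot, whereas you make this explicit.
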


\begin{proof}
\lbrack Proof of Lemma \ref{lemma:higher_isotropy}] Take $\Delta $ with $%
\Vert \Delta \Vert _{2}=1$. 
\begin{equation*}
E^{1/l}\left( |\langle W,\Delta \rangle |^{l}\right) \leq \Vert \langle
W,\Delta \rangle \Vert _{\psi _{2}}\sqrt{l}\leq K\sqrt{l}\leq \frac{K}{k}%
\sqrt{l}E^{1/2}\left( \langle W,\Delta \rangle ^{2}\right) .
\end{equation*}%
Now for an arbitrary $\Delta $ the results follows from applying the
preceding inequality to $\Delta /\Vert \Delta \Vert _{2}$.
\end{proof}

\begin{lemma}
\label{lemma:lipschitz} Assume $W$ is a random vector that satisfies 
\begin{equation*}
k \leq \lambda_{min}\left(E\left\lbrace W W^{\top} \right\rbrace \right)
\end{equation*}
and 
\begin{equation*}
\sup_{\Vert \Delta\Vert_{2}=1} \Vert \langle \Delta,W \rangle
\Vert_{\psi_{2}}\leq K.
\end{equation*}
Let $f:\mathbb{R}\to\mathbb{R}$ be a function satisfying 
\begin{equation*}
\vert f(u) -f(v)\vert \leq K\exp\left\lbrace K \left( \vert u\vert+\vert v
\vert \right) \right\rbrace \vert u -v\vert \quad \text{for all } u,v \in%
\mathbb{R}.
\end{equation*}
Then for all $l\in\mathbb{N}$, $\theta,\Delta \in\mathbb{R}^{p}$ we have 
\begin{equation*}
E^{1/l} \left\lbrace \left\vert f(\langle \theta+\Delta, W\rangle)-f(\langle
\theta, W\rangle)\right\vert^{l} \right\rbrace\leq B_{1}(\Vert
\theta\Vert_{2}, \Vert \Delta \Vert_{2},l, k, K) E^{1/2}\left(\langle
W,\Delta \rangle^{2} \right),
\end{equation*}
where $B_{1}(r_1,r_2,r_3,r_4,r_5) $ is a function defined in $\mathbb{R}%
_{+}^{5}$ that is increasing in $r_1$ and $r_2$.
\end{lemma}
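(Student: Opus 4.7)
The plan is to start from the pointwise hypothesis on $f$: raising it to the $l$-th power gives
\[
|f(\langle\theta+\Delta,W\rangle) - f(\langle\theta,W\rangle)|^l \leq K^l \exp\bigl(lK(|\langle\theta+\Delta,W\rangle|+|\langle\theta,W\rangle|)\bigr)\,|\langle\Delta,W\rangle|^l.
\]
Taking expectations and applying Cauchy--Schwarz twice (once to separate the exponential factor from $|\langle\Delta,W\rangle|^l$, and once more to split the exponential of a sum into a product), I reduce the problem to controlling three terms: two exponential moments $E\exp(4lK|\langle\theta',W\rangle|)$ (with $\theta'\in\{\theta,\theta+\Delta\}$) and the higher moment $E|\langle\Delta,W\rangle|^{2l}$.

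Next, for the exponential moments I would invoke Lemma \ref{lemma:mgf_subgauss}. The sub-Gaussian hypothesis gives, for any fixed $\theta'$, the bound $\Vert\langle\theta',W\rangle\Vert_{\psi_2}\leq K\Vert\theta'\Vert_2$ (by applying it to $\theta'/\Vert\theta'\Vert_2$). Hence $E\exp(4lK|\langle\theta,W\rangle|)\leq B_0(K\Vert\theta\Vert_2,4lK)$ and $E\exp(4lK|\langle\theta+\Delta,W\rangle|)\leq B_0(K(\Vert\theta\Vert_2+\Vert\Delta\Vert_2),4lK)$; both are monotone in $\Vert\theta\Vert_2$ and $\Vert\Delta\Vert_2$ because $B_0$ is increasing in its first argument. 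For the higher moment of the inner product I would apply Lemma \ref{lemma:higher_isotropy} with exponent $2l$, which yields $E^{1/(2l)}|\langle\Delta,W\rangle|^{2l}\leq (K/k)\sqrt{2l}\,E^{1/2}\langle\Delta,W\rangle^2$, so that $\{E|\langle\Delta,W\rangle|^{2l}\}^{1/(2l)}$ is bounded by a constant depending only on $l,k,K$ times $E^{1/2}\langle\Delta,W\rangle^2$.

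Combining the three bounds and taking the $l$-th root yields the claim with
\[
B_1(r_1,r_2,l,k,K) = K\cdot B_0(Kr_1,4lK)^{1/(4l)}\,B_0(K(r_1+r_2),4lK)^{1/(4l)}\,\frac{K}{k}\sqrt{2l},
\]
which is indeed increasing in $r_1$ and $r_2$ since $B_0$ is.

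The proof is essentially routine given the two auxiliary lemmas; the only real obstacle is bookkeeping, i.e.\ choosing the Hölder splitting symmetrically enough that every factor surviving the $l$-th root depends only on the allowed arguments and inherits monotonicity in $\Vert\theta\Vert_2$ and $\Vert\Delta\Vert_2$. The symmetric Cauchy--Schwarz splitting described above accomplishes this cleanly.
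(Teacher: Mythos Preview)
Your proof is correct and follows essentially the same approach as the paper: apply the Lipschitz hypothesis on $f$, use Cauchy--Schwarz to separate the exponential factor from $|\langle\Delta,W\rangle|^l$, control the exponential moment via Lemma~\ref{lemma:mgf_subgauss}, and control the higher moment via Lemma~\ref{lemma:higher_isotropy}. The only cosmetic difference is that the paper treats $|\langle\theta+\Delta,W\rangle|+|\langle\theta,W\rangle|$ as a single sub-Gaussian variable (via the triangle inequality for $\Vert\cdot\Vert_{\psi_2}$) and applies Lemma~\ref{lemma:mgf_subgauss} once with $t=2lK$, whereas you apply a second Cauchy--Schwarz to split the exponential and invoke Lemma~\ref{lemma:mgf_subgauss} twice with $t=4lK$; both routes yield a valid $B_1$ increasing in $r_1,r_2$.
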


\begin{proof}
\lbrack Proof of Lemma \ref{lemma:lipschitz}] Using the assumption on $f$
and Cauchy-Schwartz we get 
\begin{align*}
&E^{1/l} \left\lbrace \left\vert f(\langle \theta+\Delta,
W\rangle)-f(\langle \theta, W\rangle)\right\vert^{l} \right\rbrace \leq
KE^{1/l}\left\lbrace \exp\left\lbrace l K \left( \vert\langle \theta+\Delta,
W\rangle \vert+\vert \langle \theta, W\rangle \vert \right) \right\rbrace
\vert \langle \Delta, W\rangle\vert^{l} \right\rbrace \\
& \leq K E^{1/(2l)}\left\lbrace \exp\left\lbrace 2l K \left( \vert\langle
\theta+\Delta, W\rangle \vert+\vert \langle \theta, W\rangle \vert \right)
\right\rbrace \right\rbrace E^{1/(2l)} \left(\langle \Delta, W\rangle^{2l}
\right).
\end{align*}
Using Lemma \ref{lemma:higher_isotropy} we get 
\begin{equation*}
E^{1/(2l)} \left(\langle \Delta, W\rangle^{2l} \right) \leq \frac{K}{k} 
\sqrt{2l} E^{1/2}\left(\langle W,\Delta \rangle^{2} \right).
\end{equation*}
By the triangle inequality 
\begin{equation*}
\Vert \vert\langle \theta+\Delta, W\rangle \vert+\vert \langle \theta,
W\rangle \vert \Vert_{\psi_2} \leq K \left(\Vert \theta + \Delta \Vert_{2}
+\Vert \theta\Vert_{2} \right) \leq K \left( \Vert\Delta \Vert_{2} + 2 \Vert
\theta\Vert_{2} \right).
\end{equation*}
Hence by Lemma \ref{lemma:mgf_subgauss} 
\begin{equation*}
E^{1/(2l)}\left\lbrace \exp\left\lbrace 2l K \left( \vert\langle
\theta+\Delta, W\rangle \vert+\vert \langle \theta, W\rangle \vert \right)
\right\rbrace \right\rbrace \leq B_{0}^{1/(2l)}( K \left( \Vert\Delta
\Vert_{2} + 2 \Vert \theta\Vert_{2} \right), 2 l K).
\end{equation*}
We have shown that 
\begin{equation*}
E^{1/l} \left\lbrace \left\vert f(\langle \theta+\Delta, W\rangle)-f(\langle
\theta, W\rangle)\right\vert^{l} \right\rbrace \leq K B_{0}^{1/(2l)}( K
\left( \Vert\Delta \Vert_{2} + 2 \Vert \theta\Vert_{2} \right), 2 l K)\frac{K%
}{k} \sqrt{2l} E^{1/2}\left(\langle W,\Delta \rangle^{2} \right).
\end{equation*}
Defining 
\begin{equation*}
B_{1}(r_{1}, r_{2}, r_{3}, r_{4}, r_{5})=r_{5} B_{0}^{1/(2r_{3})}(r_{5}
\left( r_{2} + 2 r_{1}\right), 2 r_{3} r_{5})\frac{r_{5}}{r_{4}} \sqrt{2r_{3}%
},
\end{equation*}
the result follows.
\end{proof}

\begin{lemma}
\label{lemma:bound_moment_lipschitz} Assume $W$ is a random vector that
satisfies 
\begin{equation*}
k \leq \lambda_{min}\left(E\left\lbrace W W^{\top} \right\rbrace \right)
\end{equation*}
and 
\begin{equation*}
\sup_{\Vert \Delta\Vert_{2}=1} \Vert \langle \Delta,W \rangle
\Vert_{\psi_{2}}\leq K.
\end{equation*}
Let $f:\mathbb{R}\to\mathbb{R}$ be a function satisfying 
\begin{equation*}
\vert f(u) -f(v)\vert \leq K\exp\left\lbrace K \left( \vert u\vert+\vert v
\vert \right) \right\rbrace \vert u -v\vert \quad \text{for all } u,v \in%
\mathbb{R}.
\end{equation*}
Then for all $l\in\mathbb{N}$, $\theta \in\mathbb{R}^{p}$ 
\begin{equation*}
E^{1/l}\left\lbrace \vert f(\langle \theta, W \rangle)
\vert^{l}\right\rbrace\leq B_{2}(\vert f(0)\vert, \Vert \theta \Vert_{2},l,
k, K),
\end{equation*}
where $B_{2}(r_1,r_2,r_3,r_4,r_5)$ is a function that is increasing in $r_2$.
\end{lemma}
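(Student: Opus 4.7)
The plan is to reduce $|f(\langle \theta, W\rangle)|$ to a combination of $|f(0)|$ and a product of an exponential factor and a polynomial factor in $\langle \theta, W\rangle$, then bound each factor using the already-established Lemmas \ref{lemma:mgf_subgauss} and \ref{lemma:higher_isotropy}. First, I would apply the triangle inequality together with the Lipschitz-type hypothesis on $f$ taken at $v = 0$:
\begin{equation*}
|f(\langle \theta, W\rangle)| \leq |f(0)| + K \exp\!\left(K|\langle \theta, W\rangle|\right) |\langle \theta, W\rangle|.
\end{equation*}
Taking $L^l$ norms and using Minkowski, this gives
\begin{equation*}
E^{1/l}\!\left\{|f(\langle \theta, W\rangle)|^l\right\} \leq |f(0)| + K\, E^{1/l}\!\left\{\exp\!\left(lK|\langle \theta, W\rangle|\right)|\langle \theta, W\rangle|^l\right\}.
\end{equation*}

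Next, I would apply Cauchy--Schwartz inside the expectation to separate the exponential and polynomial factors:
\begin{equation*}
E^{1/l}\!\left\{\exp\!\left(lK|\langle \theta, W\rangle|\right)|\langle \theta, W\rangle|^l\right\} \leq E^{1/(2l)}\!\left\{\exp\!\left(2lK|\langle \theta, W\rangle|\right)\right\} E^{1/(2l)}\!\left\{|\langle \theta, W\rangle|^{2l}\right\}.
\end{equation*}
The sub-Gaussian hypothesis gives $\|\langle \theta, W\rangle\|_{\psi_2} \leq K\|\theta\|_2$, so Lemma \ref{lemma:mgf_subgauss} applied with $t = 2lK$ bounds the first factor by $B_0(K\|\theta\|_2, 2lK)^{1/(2l)}$, which is manifestly increasing in $\|\theta\|_2$. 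For the second factor, Lemma \ref{lemma:higher_isotropy} yields $E^{1/(2l)}\{|\langle \theta, W\rangle|^{2l}\} \leq (K/k)\sqrt{2l}\, E^{1/2}\{\langle \theta, W\rangle^2\}$, and the second moment is at most $\sqrt{2}K\|\theta\|_2$ since sub-Gaussianity with norm at most $K\|\theta\|_2$ implies an $L^2$ bound of $\sqrt{2}K\|\theta\|_2$.

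Putting these bounds together yields
\begin{equation*}
E^{1/l}\!\left\{|f(\langle \theta, W\rangle)|^l\right\} \leq |f(0)| + K\, B_0(K\|\theta\|_2, 2lK)^{1/(2l)} \cdot \frac{K}{k}\sqrt{2l}\cdot \sqrt{2}K\|\theta\|_2,
\end{equation*}
and I would define $B_2(r_1, r_2, r_3, r_4, r_5)$ to equal the right-hand side with $|f(0)|, \|\theta\|_2, l, k, K$ replaced by $r_1, r_2, r_3, r_4, r_5$. Since $B_0$ is increasing in its first argument and the explicit multiplier $r_2$ is increasing in $r_2$, the resulting $B_2$ is increasing in $r_2$ as required. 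The only mildly delicate point is verifying monotonicity in $\|\theta\|_2$: this is routine because every appearance of $\|\theta\|_2$ in the composite bound enters through a monotone increasing function, so no obstacle arises.
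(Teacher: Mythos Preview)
Your proof is correct and follows essentially the same approach as the paper. The only difference is packaging: the paper invokes Lemma~\ref{lemma:lipschitz} (applied with base point $0$ and increment $\theta$) to bound $E^{1/l}\{|f(\langle\theta,W\rangle)-f(0)|^l\}$ in one step, whereas you inline the content of that lemma by applying the Lipschitz hypothesis at $v=0$, then Cauchy--Schwartz, then Lemmas~\ref{lemma:mgf_subgauss} and~\ref{lemma:higher_isotropy} directly; the resulting bounds and the definition of $B_2$ coincide up to harmless constants.
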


\begin{proof}
\lbrack Proof of Lemma \ref{lemma:bound_moment_lipschitz}] Using the
triangle inequality and Lemma \ref{lemma:lipschitz} 
\begin{align*}
E^{1/l}\left\lbrace \vert f(\langle \theta, W \rangle)
\vert^{l}\right\rbrace &\leq \vert f(0)\vert+E^{1/l}\left\lbrace \vert
f(\langle \theta, W \rangle) -f(0)\vert^{l}\right\rbrace \\
& \leq \vert f(0)\vert + B_{1}(0, \Vert \theta \Vert_{2},l, k, K)
E^{1/2}\left(\langle W,\theta \rangle^{2} \right).
\end{align*}
Moreover, 
\begin{equation*}
\vert f(0)\vert + B_{1}(0, \Vert \theta \Vert_{2},l, k, K)
E^{1/2}\left(\langle W,\theta \rangle^{2} \right) \leq f(0)\vert + B_{1}(0,
\Vert \theta \Vert_{2},l, k, K)\sqrt{2}K\Vert \theta \Vert_{2}.
\end{equation*}
The result now follows easily.
\end{proof}

\begin{lemma}
\label{lemma:squaredsubgaus} Let $\Delta\in\mathbb{R}^{p}$ be a fixed vector
and let $W$ be a random vector in $\mathbb{R}^{p}$ satisfying $\Vert \langle
W, \Delta \rangle \Vert_{\psi_2}\leq C_{0}$. Then 
\begin{equation*}
\Vert \langle W, \Delta \rangle^{2} - E\left( \langle W, \Delta
\rangle^{2}\right)\Vert_{\psi_1} \leq 4 C_{0}^{2}.
\end{equation*}
\end{lemma}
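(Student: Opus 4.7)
The plan is to set $U = \langle W, \Delta \rangle$ so that $\|U\|_{\psi_2} \leq C_0$, and then bound $\|U^2 - E(U^2)\|_{\psi_1}$ by a centering argument: $\|U^2 - E(U^2)\|_{\psi_1} \leq \|U^2\|_{\psi_1} + \|E(U^2)\|_{\psi_1}$, where I will show each summand is at most $2C_0^2$. The triangle inequality for $\|\cdot\|_{\psi_1}$, as defined in the paper via the supremum over $k$ of $(E|\cdot|^k)^{1/k}/k$, follows directly from Minkowski's inequality inside the supremum.

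For the first term, I would compute directly from the definition: for every $k \in \mathbb{N}$,
\begin{equation*}
\frac{(E|U^2|^k)^{1/k}}{k} = \frac{(E|U|^{2k})^{1/k}}{k} = \frac{\bigl((E|U|^{2k})^{1/(2k)}\bigr)^2}{k}.
\end{equation*}
Using the sub-Gaussian bound $(E|U|^{2k})^{1/(2k)} \leq \|U\|_{\psi_2}\sqrt{2k} \leq C_0\sqrt{2k}$, the right-hand side is at most $2C_0^2$. Taking the supremum over $k$ yields $\|U^2\|_{\psi_1} \leq 2 C_0^2$.

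For the second term, note that $E(U^2)$ is a nonnegative constant, so $\|E(U^2)\|_{\psi_1} = E(U^2)$ (attained at $k=1$ in the defining supremum). Applying the definition of $\|U\|_{\psi_2}$ at $k=2$ gives $(E U^2)^{1/2} \leq C_0\sqrt{2}$, hence $E(U^2) \leq 2 C_0^2$. Combining both bounds via the triangle inequality gives $\|U^2 - E(U^2)\|_{\psi_1} \leq 4 C_0^2$.

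There is no real obstacle here; the only thing worth double-checking is that the ``moment-based'' $\|\cdot\|_{\psi_1}$ defined in the paper satisfies the triangle inequality, which it does because Minkowski's inequality preserves the supremum structure. Everything else is a mechanical application of the definitions of $\|\cdot\|_{\psi_1}$ and $\|\cdot\|_{\psi_2}$.
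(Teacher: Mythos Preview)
Your proof is correct and is essentially identical to the paper's own proof: both bound $\|U^2\|_{\psi_1}\le 2C_0^2$ via the identity $(E|U|^{2k})^{1/k}/k = 2\bigl((E|U|^{2k})^{1/(2k)}/\sqrt{2k}\bigr)^2$, bound $E(U^2)\le 2C_0^2$ from the $k=2$ case of the $\psi_2$ definition, and combine via the triangle inequality for $\|\cdot\|_{\psi_1}$.
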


\begin{proof}
\lbrack Proof of Lemma \ref{lemma:squaredsubgaus}] Using the definition of $%
\Vert \cdot\Vert_{\psi_2}$ 
\begin{equation*}
\frac{\left(E\left( \langle W, \Delta \rangle^{2}\right)\right)^{1/2}}{\sqrt{%
2}}\leq \Vert \langle W, \Delta \rangle\Vert_{\psi_2}\leq C_{0}
\end{equation*}
and hence 
\begin{equation*}
E\left( \langle W, \Delta \rangle^{2}\right) \leq 2 C_{0}^{2}.
\end{equation*}
On the other hand for all $l\in\mathbb{N}$ 
\begin{equation*}
\frac{\left(E\left(\vert\langle W,
\Delta\rangle\vert^{2l}\right)\right)^{1/l}}{l}=2 \left\lbrace \frac{%
\left(E\left(\vert\langle W, \Delta\rangle\vert^{2l}\right)\right)^{1/2l}}{%
\sqrt{2l}} \right\rbrace^{2}\leq 2 \Vert \langle W,
\Delta\rangle\Vert_{\psi_2}^{2}\leq 2 C_{0}^{2}.
\end{equation*}
Hence 
\begin{equation*}
\Vert \langle W, \Delta \rangle^{2} \Vert_{\psi_1} =\sup\limits_{l\in\mathbb{%
N}} \frac{\left(E\left(\vert\langle W,
\Delta\rangle\vert^{2l}\right)\right)^{1/l}}{l} \leq 2C_{0}^{2}.
\end{equation*}
The result now follows from 
\begin{equation*}
\Vert \langle W, \Delta \rangle^{2} - E\left( \langle W, \Delta
\rangle^{2}\right)\Vert_{\psi_1} \leq \Vert \langle W, \Delta \rangle^{2}
\Vert_{\psi_1} + \Vert E\left( \langle W, \Delta
\rangle^{2}\right)\Vert_{\psi_1}=\Vert \langle W, \Delta \rangle^{2}
\Vert_{\psi_1} + E\left( \langle W, \Delta \rangle^{2}\right).
\end{equation*}
\end{proof}

The following lemma is a straightforward application of Proposition 5.16
from \cite{compressed-book}.

\begin{lemma}
\label{lemma:concent_cov_fixed} Let $\Delta\in\mathbb{R}^{p}$ be a fixed
vector and let $W_{1},\dots,W_{n}$ be i.i.d. random vectors in $\mathbb{R}%
^{p}$ such that $\Vert \langle W_{1}, \Delta \rangle \Vert_{\psi_2}\leq
C_{0} $. Then there exists a universal constant $C>0$ such that 
\begin{equation*}
P\left( \vert \mathbb{P}_{n}\left( \langle W, \Delta \rangle^{2}\right) -
E\left( \langle W, \Delta \rangle^{2}\right) \vert\geq t \right) \leq
2\exp\left\lbrace -C\min\left(\frac{nt^{2}}{16 C_{0}^{4}}, \frac{nt}{4
C_{0}^{2}}\right) \right\rbrace.
\end{equation*}
\end{lemma}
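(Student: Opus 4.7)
The plan is to reduce this to a Bernstein-type concentration inequality for sums of independent centered sub-exponential random variables, which is exactly the content of Proposition 5.16 in \cite{compressed-book} that the lemma invokes. The key observation is that squaring a sub-Gaussian random variable produces a sub-exponential one, with a controlled $\psi_1$ norm, and this has already been recorded in Lemma \ref{lemma:squaredsubgaus}.

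More concretely, I would first set $X_{i} = \langle W_{i},\Delta\rangle^{2} - E\left(\langle W_{i},\Delta\rangle^{2}\right)$ for $i=1,\dots,n$. The $X_{i}$ are i.i.d., centered, and by the hypothesis $\Vert \langle W_{1},\Delta\rangle\Vert_{\psi_{2}}\leq C_{0}$ together with Lemma \ref{lemma:squaredsubgaus}, each $X_{i}$ satisfies $\Vert X_{i}\Vert_{\psi_{1}}\leq 4C_{0}^{2}$. Then I would apply the Bernstein-type inequality of Proposition 5.16 in \cite{compressed-book}, which states that for independent centered sub-exponential random variables $X_{1},\dots,X_{n}$ with $\max_{i}\Vert X_{i}\Vert_{\psi_{1}}\leq K_{0}$, there is a universal constant $C>0$ such that
\begin{equation*}
P\left(\left\vert \frac{1}{n}\sum_{i=1}^{n} X_{i}\right\vert \geq t\right) \leq 2\exp\left\{-C n \min\left(\frac{t^{2}}{K_{0}^{2}},\frac{t}{K_{0}}\right)\right\}.
\end{equation*}
Taking $K_{0}=4C_{0}^{2}$ gives $K_{0}^{2}=16C_{0}^{4}$ and $K_{0}=4C_{0}^{2}$, so the right-hand side becomes exactly $2\exp\{-C n \min(t^{2}/(16C_{0}^{4}),t/(4C_{0}^{2}))\}$, which matches the statement once we identify $\frac{1}{n}\sum_{i=1}^{n} X_{i}$ with $\mathbb{P}_{n}(\langle W,\Delta\rangle^{2})-E(\langle W,\Delta\rangle^{2})$.

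There is no real obstacle here: the only mildly nontrivial step is the passage from $\psi_{2}$ control of the linear functional to $\psi_{1}$ control of its square, which is already established in Lemma \ref{lemma:squaredsubgaus}. After that, the argument is a one-line invocation of the cited sub-exponential Bernstein inequality. The constant $C$ in the conclusion is the same universal constant that appears in Proposition 5.16 of \cite{compressed-book}, and it does not depend on $C_{0}$, $n$, $t$ or the law of $W$.
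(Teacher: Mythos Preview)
Your proposal is correct and is essentially identical to the paper's own proof: the paper also invokes Lemma \ref{lemma:squaredsubgaus} to bound the sub-exponential norm of the centered squares by $4C_{0}^{2}$ and then applies Proposition 5.16 from \cite{compressed-book}.
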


\begin{proof}
\lbrack Proof of Lemma \ref{lemma:concent_cov_fixed}] By Lemma \ref%
{lemma:squaredsubgaus}, the random variables 
\begin{equation*}
\langle W_{i}, \Delta \rangle^{2} - E\left( \langle W_{i}, \Delta
\rangle^{2}\right)
\end{equation*}
have sub-Exponential norm bounded by $4C_{0}^{2}$. Moreover, they have zero
mean. The result now follows from Proposition 5.16 from \cite%
{compressed-book}. 
\end{proof}

The following lemma is a straightforward adaptation of Lemma 15 in \cite%
{Loh-smooth}, the main difference being that we do not require the random
variables involved to have zero mean.

\begin{lemma}
\label{lemma:concent_cov_sparse} Let $W_{1},\dots,W_{n}$ be i.i.d. random
vectors in $\mathbb{R}^{p}$ such that $\Vert \langle W_{1}, \Delta \rangle
\Vert_{\psi_2}\leq C_{0}$ for all $\Delta$ with $\Vert \Delta \Vert_{2}=1$.
Then there exists a fixed universal constant $C>0$ such that for all $l\geq
1 $ 
\begin{equation*}
P\left( \sup\limits_{\Delta \in \mathbb{K}(2l)}\vert \mathbb{P}_{n}\left(
\langle W, \Delta \rangle^{2}\right) - E\left( \langle W, \Delta
\rangle^{2}\right) \vert\geq t \right) \leq 2\exp\left\lbrace -C\min\left(%
\frac{nt^{2}}{16 C_{0}^{4}}, \frac{nt}{4 C_{0}^{2}}\right) + 2 l
\log(p)\right\rbrace.
\end{equation*}
\end{lemma}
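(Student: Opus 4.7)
The plan is to combine a union bound over sparse supports with a discretization argument on each support, then invoke the pointwise Bernstein-type bound of Lemma \ref{lemma:concent_cov_fixed}. Set $f(\Delta) = \mathbb{P}_n(\langle W, \Delta \rangle^2) - E(\langle W, \Delta \rangle^2)$. For each $S \subset \{1, \dots, p\}$ with $|S| \leq 2l$, let $\mathbb{K}_S = \{\Delta \in \mathbb{K}(2l) : \mathrm{supp}(\Delta) \subseteq S\}$, which is contained in the unit $\ell_2$-ball of $\mathbb{R}^{S}$. By a standard volumetric argument, there is a $1/4$-net $\mathcal{N}_S$ of $\mathbb{K}_S$ with $|\mathcal{N}_S| \leq 9^{|S|} \leq 9^{2l}$.

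Next, I would exploit the quadratic-form structure to pass from the net to the whole ball. Writing $f(\Delta) = \Delta^\top M_n \Delta$ with $M_n = \mathbb{P}_n(WW^\top) - E(WW^\top)$, the standard polarization/absorption argument (for symmetric quadratic forms on the unit ball discretized by a $1/4$-net) yields
\[
\sup_{\Delta \in \mathbb{K}_S} |f(\Delta)| \;\leq\; 2 \sup_{\Delta \in \mathcal{N}_S} |f(\Delta)|.
\]
Consequently, for each fixed $S$,
\[
P\!\left(\sup_{\Delta \in \mathbb{K}_S} |f(\Delta)| \geq t\right) \;\leq\; \sum_{\Delta \in \mathcal{N}_S} P\!\left(|f(\Delta)| \geq t/2\right),
\]
and Lemma \ref{lemma:concent_cov_fixed}, applied to each fixed $\Delta \in \mathcal{N}_S$ (whose sub-Gaussian norm is bounded by $C_0$ since $\|\Delta\|_2 \leq 1$), bounds each summand by $2 \exp\{-C' \min(nt^2/(64 C_0^4), nt/(8 C_0^2))\}$ for the universal constant $C'$ from that lemma.

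Finally, I would union-bound over all supports of size at most $2l$. Since $\binom{p}{2l} \leq p^{2l}$, the total union-bound factor is
\[
|\mathcal{N}_S| \cdot \#\{S : |S| \leq 2l\} \;\leq\; 9^{2l} \cdot p^{2l} \;=\; \exp\{2l \log p + 2l \log 9\},
\]
yielding
\[
P\!\left(\sup_{\Delta \in \mathbb{K}(2l)} |f(\Delta)| \geq t\right) \;\leq\; 2 \exp\!\left\{-C' \min\!\left(\tfrac{nt^2}{64 C_0^4}, \tfrac{nt}{8 C_0^2}\right) + 2l \log p + 2l \log 9\right\}.
\]
Absorbing the $1/4$, $1/2$ factors and the $2l \log 9$ term into a single new universal constant $C$ (small enough that the claimed bound follows for all $l \geq 1$) delivers the stated inequality.

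The only genuinely delicate step is the discretization bound $\sup_{\mathbb{K}_S} |f| \leq 2 \sup_{\mathcal{N}_S} |f|$: one must verify that the absorption argument goes through uniformly in $S$ and $l$, since the constants there are what ultimately determine how the $2l \log p$ exponent emerges. Once that is in place, the rest is a mechanical combination of Lemma \ref{lemma:concent_cov_fixed} with a union bound, and the adaptation from the zero-mean case of \cite{Loh-smooth} is immediate because Lemma \ref{lemma:concent_cov_fixed} already centers $\langle W, \Delta \rangle^2$ by its expectation.
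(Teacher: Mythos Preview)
Your proposal is correct and follows essentially the same approach as the paper, which simply defers to the proof of Lemma~15 in \cite{Loh-smooth} with Lemma~\ref{lemma:concent_cov_fixed} substituted for their pointwise bound. The union-over-supports plus $\varepsilon$-net argument you outline is exactly that proof, and your observation that centering is already handled by Lemma~\ref{lemma:concent_cov_fixed} is precisely the paper's stated adaptation.
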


\begin{proof}
\lbrack Proof of Lemma \ref{lemma:concent_cov_sparse}] The proof is
identical to that of Lemma 15 from \cite{Loh-smooth}, replacing the use of
their Lemma 14, with our Lemma \ref{lemma:concent_cov_fixed}.
\end{proof}

Finally, Condition R below is needed to prove Proposition \ref{prop:DR
functional}. See \cite{globalclass} for the proof.

\begin{condition}[Condition R]
\label{cond:R1}There exists a dense set $H_{a}$ of $L_{2}\left( P_{\eta
,Z}\right) $ such that  $H_{a}\cap 
\mathcal{A}\not=\varnothing$, and for each $\eta $ and each $h\in H_{a},$ there
exists $\varepsilon \left( \eta ,h\right) >0$ such that $a+th$ $\in \mathcal{%
A}$ if $\left\vert t\right\vert <\varepsilon \left( \eta ,h\right) $ where $%
a\left( Z\right) \equiv a\left( Z;\eta \right) .$  The same holds replacing $a$ with $b$ and $%
\mathcal{A}$ with $\mathcal{B}.$ Furthermore $E_{\eta }\left\{ \left\vert
S_{ab}b\left( Z\right) h\left( Z\right) \right\vert \right\} <\infty $ for $%
h\in H_{a}$ and $E_{\eta }\left\{ \left\vert S_{ab}a\left( Z\right) h\left(
Z\right) \right\vert \right\} <\infty $ for $h\in H_{b}.$  Moreover for all $\eta$, $E_{\eta}\left[ \left\vert S_{ab}a^{\prime} b^{\prime} \right\vert \right] < \infty$ for all $a^{\prime}\in \mathcal{A}$ and $b^{\prime}\in \mathcal{B}$.
\end{condition}

\section{Appendix D: Simulation studies}

\label{sec:sim}

In this Appendix we report the results of four simulation experiments
conducted to study the performance of our estimators of $\chi \left( \eta
\right) =E_{\eta }\left[ a\left( Z\right) b\left( Z\right) \right] $ where
the data are $n=1000$ i.i.d. copies of $O=\left( Y,D,Z\right) $ and $a\left(
Z\right) =E_{\eta }\left( Y|Z\right) $ and $b\left( L\right) =E_{\eta
}\left( D|Z\right) .$ All simulations were carried out in \texttt{R}, using
the package \texttt{glmnet} and using the tuning parameter that minimizes
the ten-fold cross-validated mean square prediction error.

All experiments consisted of 500 replicates. In each replicate we generated $%
n=1000$ iid copies of $O.$ $Z$ was a $p\times 1$ multivariate normal vector $%
Z$ with 0 mean and a Toeplitz covariance matrix $\Sigma
_{ij}=0.1^{\left\vert i-j\right\vert }$ where $p=200.$ Also, in all
experiments, $E_{\eta }\left( Y|Z\right) =\varphi _{a}\left( \left\langle
\theta _{a},Z\right\rangle \right) $ and $E_{\eta }\left( D|Z\right)
=\varphi _{b}\left( \left\langle \theta _{b},Z\right\rangle \right) $ where $%
\theta _{a,j}=c_{a}j^{-\alpha _{a}}\left( -1\right) ^{j}$ and $\theta
_{b,j}=c_{b}j^{-\alpha _{b}}\left( -1\right) ^{j+1},$ $j=1,\dots ,p,$ with $%
\varphi _{a},\varphi _{b},c_{a},c_{b}$ depending on the experiment and, in
all experiments, $\left( \alpha _{a},\alpha _{b}\right) $ varying over the
set $\left\{ 0.8,1.5,3,5\right\} \times \left\{ 0.8,1.5,3,5\right\} $.
Notice that this is the setup in Example \ref{ex:parametric_weakly_sparse}.
When both working models are correctly specified, regardless of the links
used in the working models, our results establish that, under regularity
conditions, our estimators of $\chi \left( \eta \right) $ should be
asymptotically mean zero normal provided $\alpha _{a}^{-1}+\alpha
_{b}^{-1}<2.$ Among the values of $\left( \alpha _{a},\alpha _{b}\right) $
used$,$ only the choice $\left( \alpha _{a},\alpha _{b}\right) =\left(
0.8,0.8\right) $ fails this condition. The choices $\left( \alpha
_{a},\alpha _{b}\right) =\left( 1.5,0.8\right) $ and $\left( \alpha
_{a},\alpha _{b}\right) =\left( 0.8,1.5\right) $ are borderline. In
addition, we chose the constants $c_{a}$ and $c_{b}$ so as to set the signal
to noise ratio defined as $SNR_{\eta }\left( \cdot \right) =Var_{\eta }\left[
E_{\eta }\left( \cdot |Z\right) \right] /E_{\eta }\left[ Var_{\eta }\left(
\cdot |Z\right) \right] $ equal to 2. \ Besides the data generating process
the experiments differed in the estimators used. In experiments 1 and 2 we
computed estimators using algorithm \ref{Algo both ALS}, i.e. in which both
working models used links equal to the identity. In experiments 3 and 4 we
computed estimators using algorithm \ref{Algo both GALS}, in which both
working models used links equal to the exponential function. Using these
estimators and the estimators of their asymptotic variance as given in
Theorems \ref{theo:rate_double_lin} and \ref{theo:rate_double_nonlin}, we
computed Normal theory Wald based confidence intervals.

In addition, for comparison, we also computed three estimators that did not
use sample splitting or cross-fitting, called "naive\_a", "naive\_b" and
naive\_ab" in the pictures displaying the results. The estimator "naive\_a"
is of the form $\mathbb{P}_{n}\left[ D\widehat{a}_{N}\left( Z\right) \right]
,$ the estimator "naive\_b" is of the form $\mathbb{P}_{n}\left[ Y\widehat{b}%
_{N}\left( Z\right) \right] $ and the estimator "naive\_ab" is of the form $%
\mathbb{P}_{n}\left[ \widehat{a}_{N}\left( Z\right) \widehat{b}_{N}\left(
Z\right) \right] ,$ where $\widehat{a}_{N}$ and $\widehat{b}_{N}\,\ $are
specific estimators of $a$ and $b$ which vary with the experiment.
Specifically, $\widehat{c}_{N}\left( Z\right) =\varphi _{c}\left\langle 
\widehat{\theta }_{c,N},Z\right\rangle ,\,\ $for $c=a$ and $b,$ where $%
\widehat{\theta }_{c,N}=\arg \min_{\theta }\mathbb{P}_{n}\left[ Q_{c}\left(
\theta ,\phi =id,w=1\right) \right] +\lambda \left\Vert \theta \right\Vert
_{1}$ with $Q_{c}$ defined in equation \eqref{eq:lossDef} and $\lambda $ is
chosen by ten-fold cross-validation. In experiments 1 and 2, $\varphi
_{c}\left( u\right) =u$ and the link function used in $Q_{c}$ is also the
identity. In experiments 3 and 4, $\varphi _{c}\left( u\right) =\exp \left(
u\right) $ and the link function used in $Q_{c}$ is also the exponential
function.

For the naive estimators we computed the following ad-hoc estimators of
their variance. We estimated the variance of the "naive\_a" estimator with
similarly and 
\begin{align*}
& \widehat{Var}_{naive\_a}=n^{-1}\left[ \mathbb{P}_{n}\left[ \left\{ D%
\widehat{a}_{N}\left( Z\right) \right\} ^{2}\right] -\left\{ \mathbb{P}_{n}%
\left[ D\widehat{a}_{N}\left( Z\right) \right] \right\} ^{2}\right], \\
& \widehat{Var}_{naive\_b}=n^{-1}\left[ \mathbb{P}_{n}\left[ \left\{ Y%
\widehat{b}_{N}\left( Z\right) \right\} ^{2}\right] -\left\{ \mathbb{P}_{n}%
\left[ Y\widehat{b}_{N}\left( Z\right) \right] \right\} ^{2}\right], \\
& \widehat{Var}_{naive\_ab}=n^{-1}\left[ \mathbb{P}_{n}\left[ \left\{ 
\widehat{a}_{N}\left( Z\right) \widehat{b}_{N}\left( Z\right) \right\} ^{2}%
\right] -\left\{ \mathbb{P}_{n}\left[ \widehat{a}_{N}\left( Z\right) 
\widehat{b}_{N}\left( Z\right) \right] \right\} ^{2}\right].
\end{align*}
We used these variance estimators to compute Wald confidence intervals of
the form Estimator 
\begin{equation*}
\pm 1.96\sqrt{\text{Estimated Variance}}.
\end{equation*}
Note that these ad-hoc variance estimators and confidence intervals are not,
in principle, supported by theory since the naive estimators of $\chi \left(
\eta \right) $ may not even be $\sqrt{n}-$consistent.

For each experiment we provide three figures, the first figure reports the
absolute value of the estimated bias over the 500 replications, i.e. the
absolute value of the mean of the 500 estimators minus the true parameter
value, the second figure reports the standard deviation of the estimators of 
$\chi \left( \eta \right) $ over the 500 replications, the third figure
reports the estimated coverage probability of the nominal 95\% confidence
intervals.

The goal of the first experiment was to study the performance of the
estimator in Algorithm \ref{Algo both ALS}, i.e. when the links of both
working models are the identity, and under a scenario in which both models
are correctly specified. We used $c_{a}=c_{b}=1,$ and we generated 
\begin{align*}
& Y=\langle \theta _{a},Z\rangle +U_{a}, \\
& D=\langle \theta _{b},Z\rangle +U_{b},
\end{align*}%
where $(U_{a},U_{b})$ is a bivariate normal random vector, independent of $Z$%
, and with $Var(U_{a})=\theta _{a}^{T}\Sigma \theta _{a}/2$, $%
Var(U_{b})=\theta _{b}^{T}\Sigma \theta _{b}/2$ and $corr(U_{a},U_{b})=0.1$.

\begin{figure}[tbp]
\centering
\includegraphics[scale=0.7]{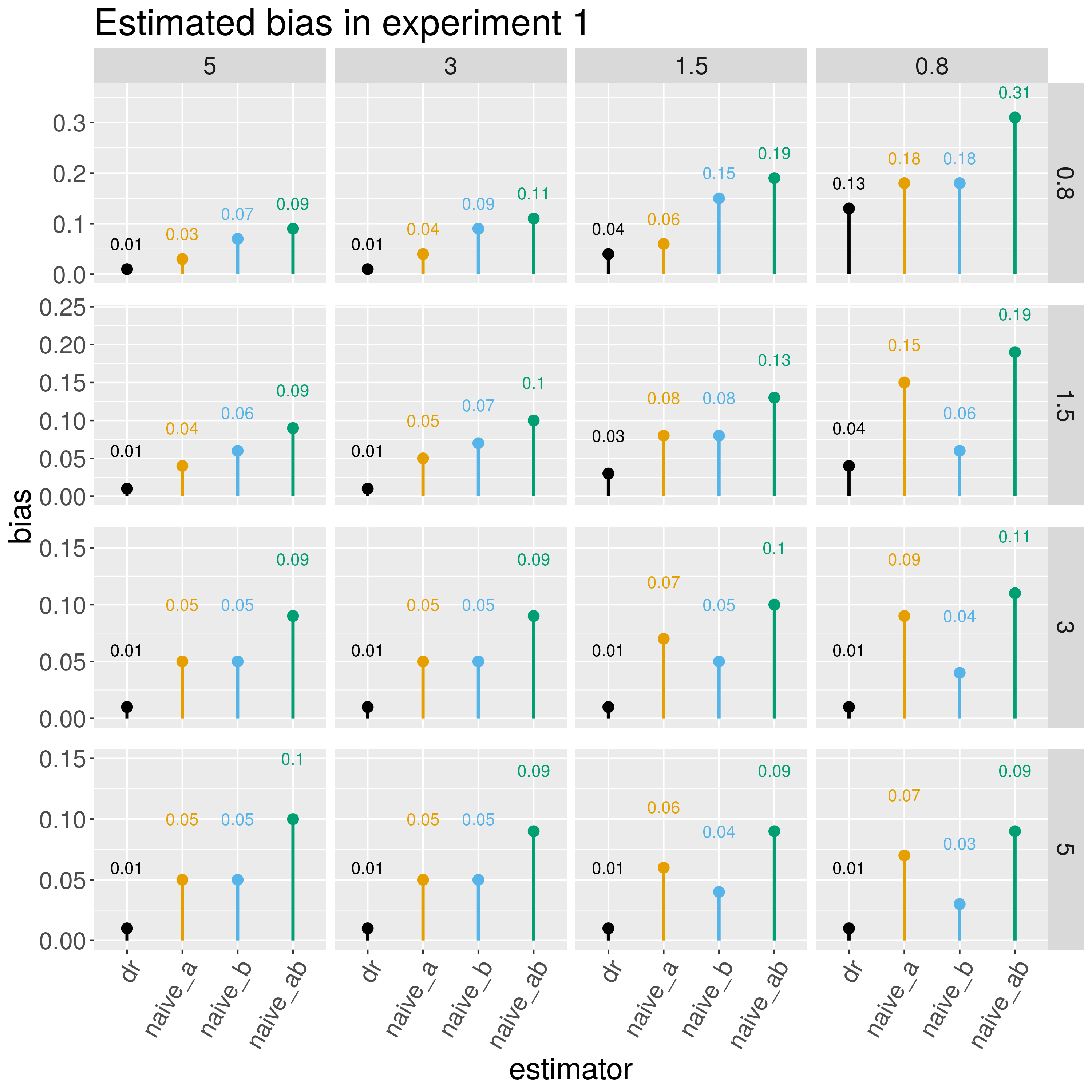}
\caption{Estimated bias in experiment 1. Rows correspond to different values
of $\protect\alpha_{a}$ and columns to different values of $\protect\alpha%
_{b}$.}
\label{fig:exp1_bias}
\end{figure}

\begin{figure}[tbp]
\centering
\includegraphics[scale=0.7]{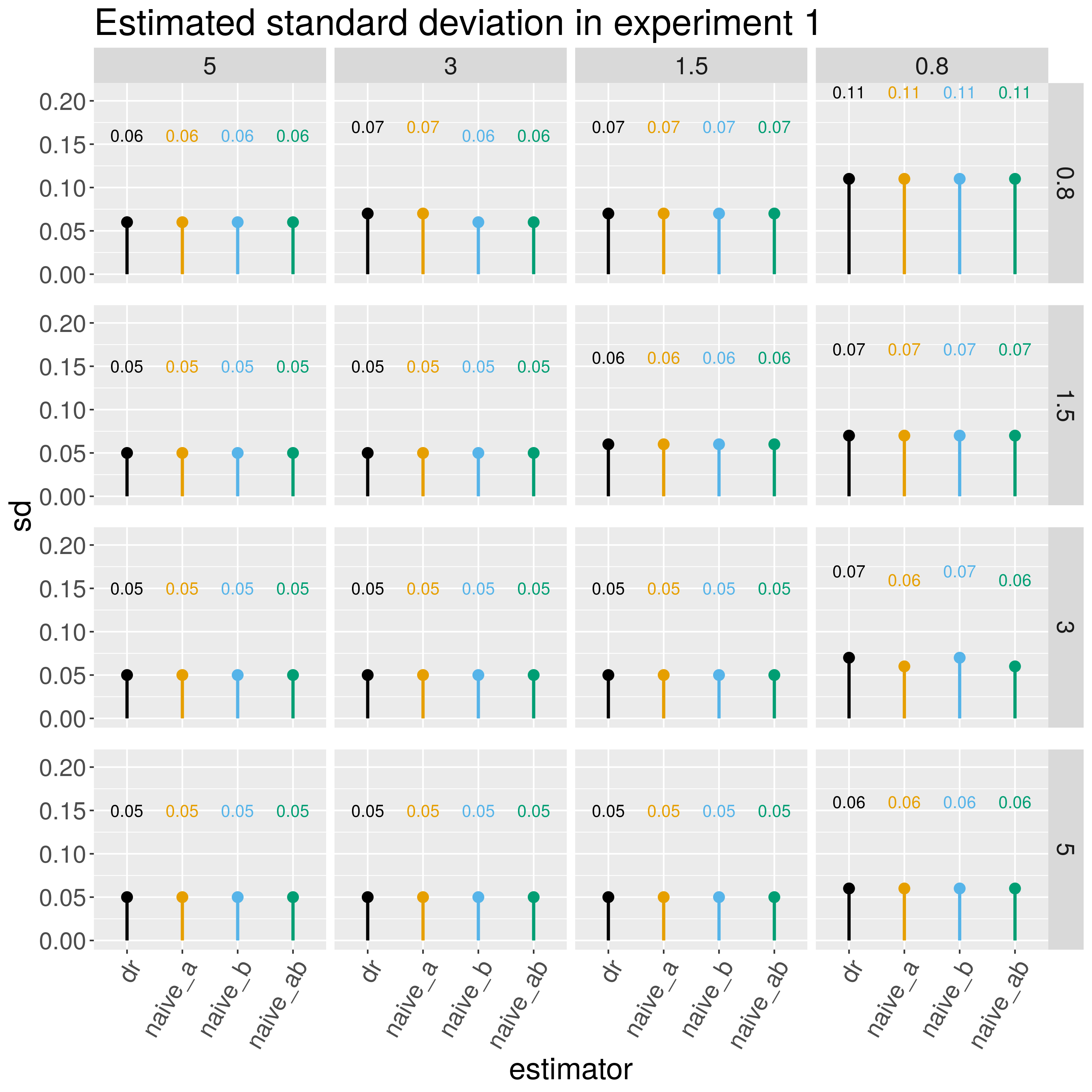}
\caption{Estimated standard errors in experiment 1. Rows correspond to
different values of $\protect\alpha_{a}$ and columns to different values of $%
\protect\alpha_{b}$.}
\label{fig:exp1_sd}
\end{figure}

\begin{figure}[tbp]
\centering
\includegraphics[scale=0.7]{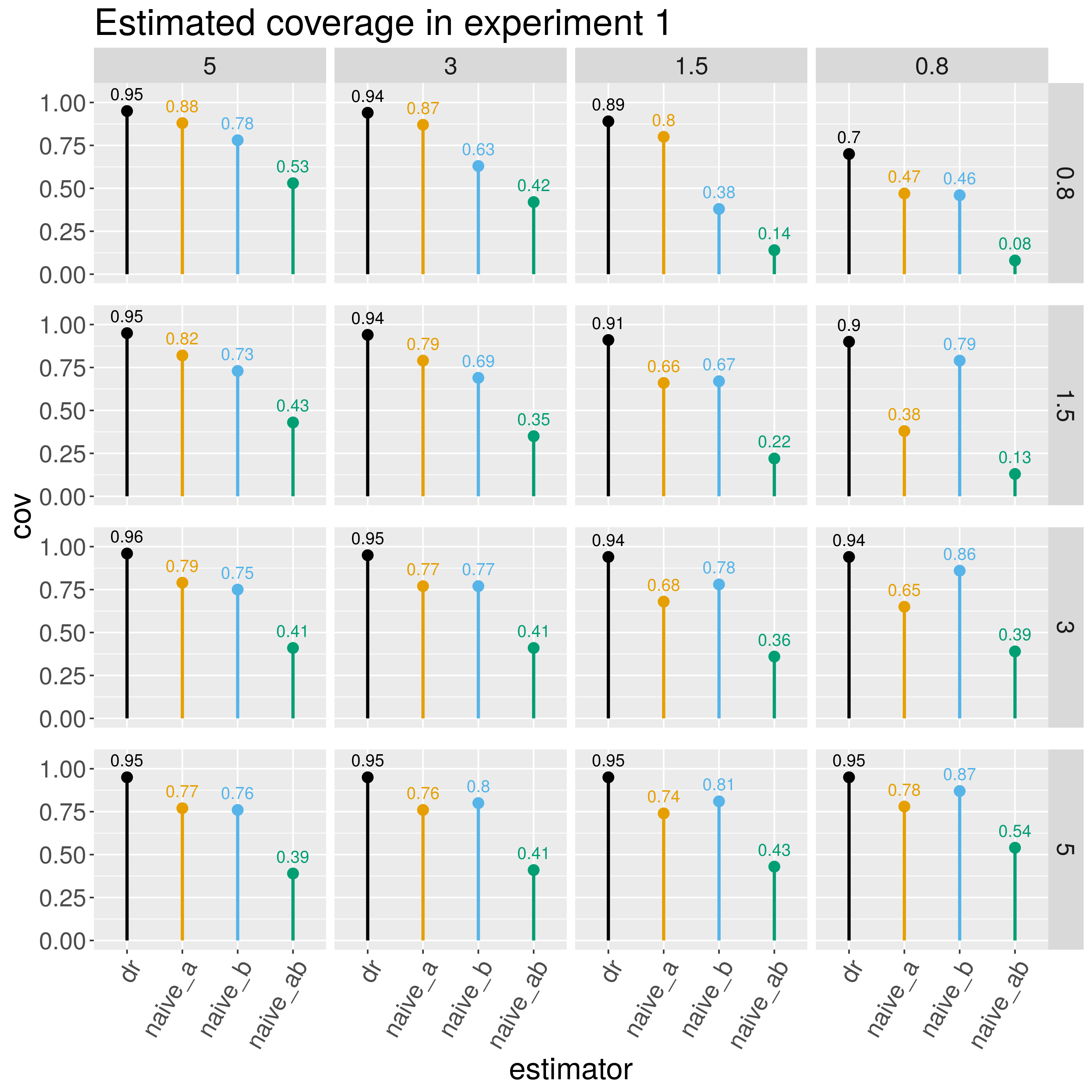}
\caption{Estimated coverage at the 95\% nominal level in experiment 1. Rows
correspond to different values of $\protect\alpha_{a}$ and columns to
different values of $\protect\alpha_{b}$.}
\label{fig:exp1_cov}
\end{figure}

\FloatBarrier

The goal of the second experiment was to study the performance of the
estimator in Algorithm \ref{Algo both ALS}, i.e. when the links of both
working models are the identity, and under a scenario in which the model for 
$a(Z)$ is correctly specified but the model for $b(Z)$ is incorrectly
specified. We used $c_{a}=1$, $c_{b}=(\log (3)\theta _{b}^{T}\Sigma \theta
_{b})^{-1/2}$ and we generated 
\begin{align*}
& Y=\langle \theta _{a},Z\rangle +U_{a}, \\
& D|Z\sim Pois(\nu \left( Z\right) )\text{ with }\nu \left( Z\right) =\exp
(c_{b}\langle \theta _{b},Z\rangle ),
\end{align*}%
where $U_{a}$ is normal with $Var(U_{a})=\theta _{a}^{T}\Sigma \theta _{a}/2$%
.

\begin{figure}[tbp]
\centering
\includegraphics[scale=0.7]{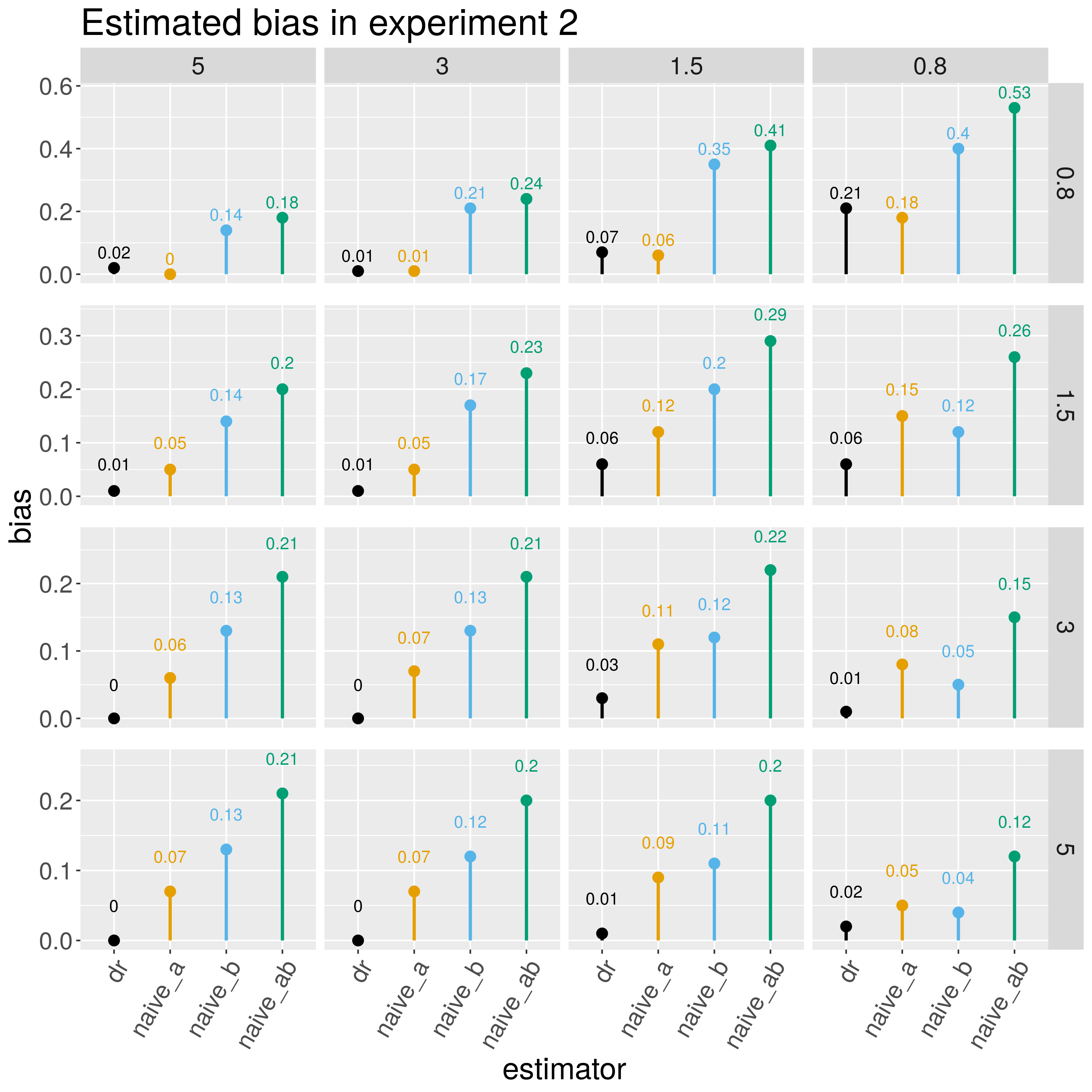}
\caption{Estimated bias in experiment 2. Rows correspond to different values
of $\protect\alpha_{a}$ and columns to different values of $\protect\alpha%
_{b}$.}
\label{fig:exp2_bias}
\end{figure}

\begin{figure}[tbp]
\centering
\includegraphics[scale=0.7]{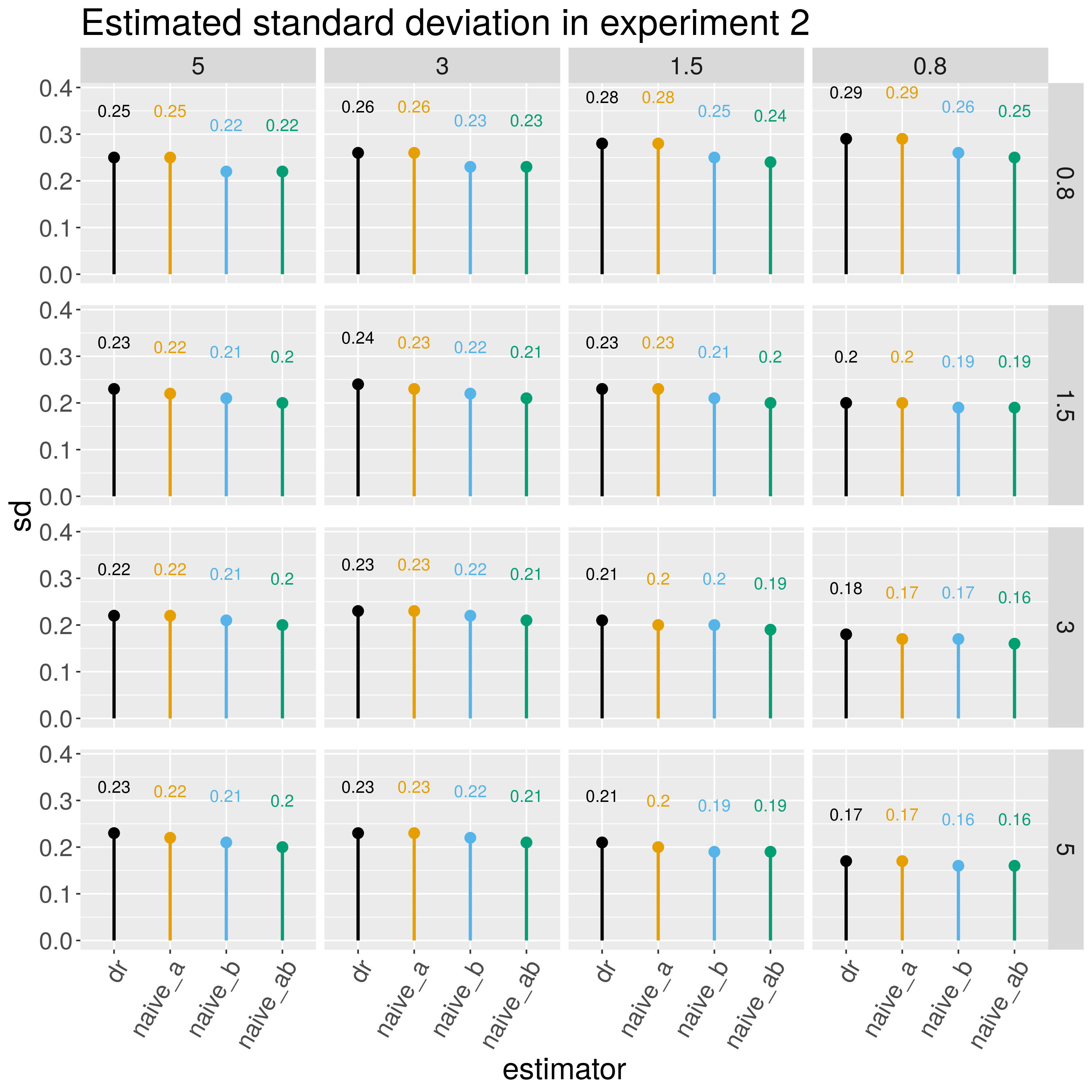}
\caption{Estimated standard errors in experiment 2. Rows correspond to
different values of $\protect\alpha_{a}$ and columns to different values of $%
\protect\alpha_{b}$.}
\label{fig:exp2_sd}
\end{figure}

\begin{figure}[tbp]
\centering
\includegraphics[scale=0.7]{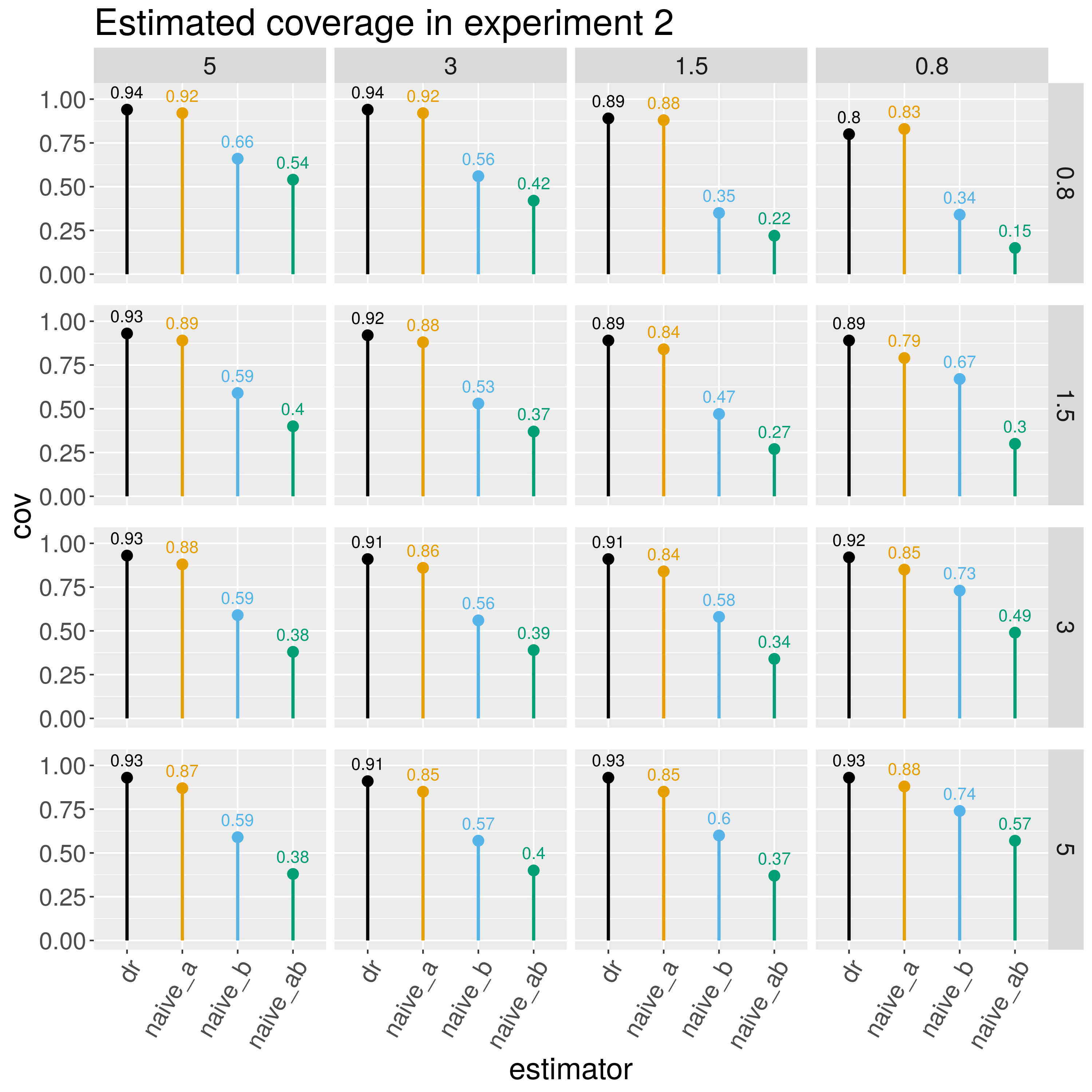}
\caption{Estimated coverage at the 95\% nominal level in experiment 2. Rows
correspond to different values of $\protect\alpha_{a}$ and columns to
different values of $\protect\alpha_{b}$.}
\label{fig:exp2_cov}
\end{figure}

\FloatBarrier

The goal of the third experiment was to study the performance of the
estimator in Algorithm \ref{Algo both GALS}, when the links of both working
models are the exponential function, and under a scenario in which both
models are correctly specified. We used $c_{a}=(\log (3)\theta
_{a}^{T}\Sigma \theta _{a})^{-1/2},$ $c_{b}=(\log (3)\theta _{b}^{T}\Sigma
\theta _{b})^{-1/2}$ and we generated $Y$ and $D$ independent given $Z,$
with 
\begin{align*}
& Y|Z\sim Pois(\nu _{a}\left( Z\right) )\text{ with }\nu _{a}\left( Z\right)
=\exp (c_{a}\langle \theta _{a},Z\rangle ) \\
& D|Z\sim Pois(\nu _{b}\left( Z\right) )\text{ with }\nu _{b}\left( Z\right)
=\exp (c_{b}\langle \theta _{b},Z\rangle ).
\end{align*}

\begin{figure}[tbp]
\centering
\includegraphics[scale=0.7]{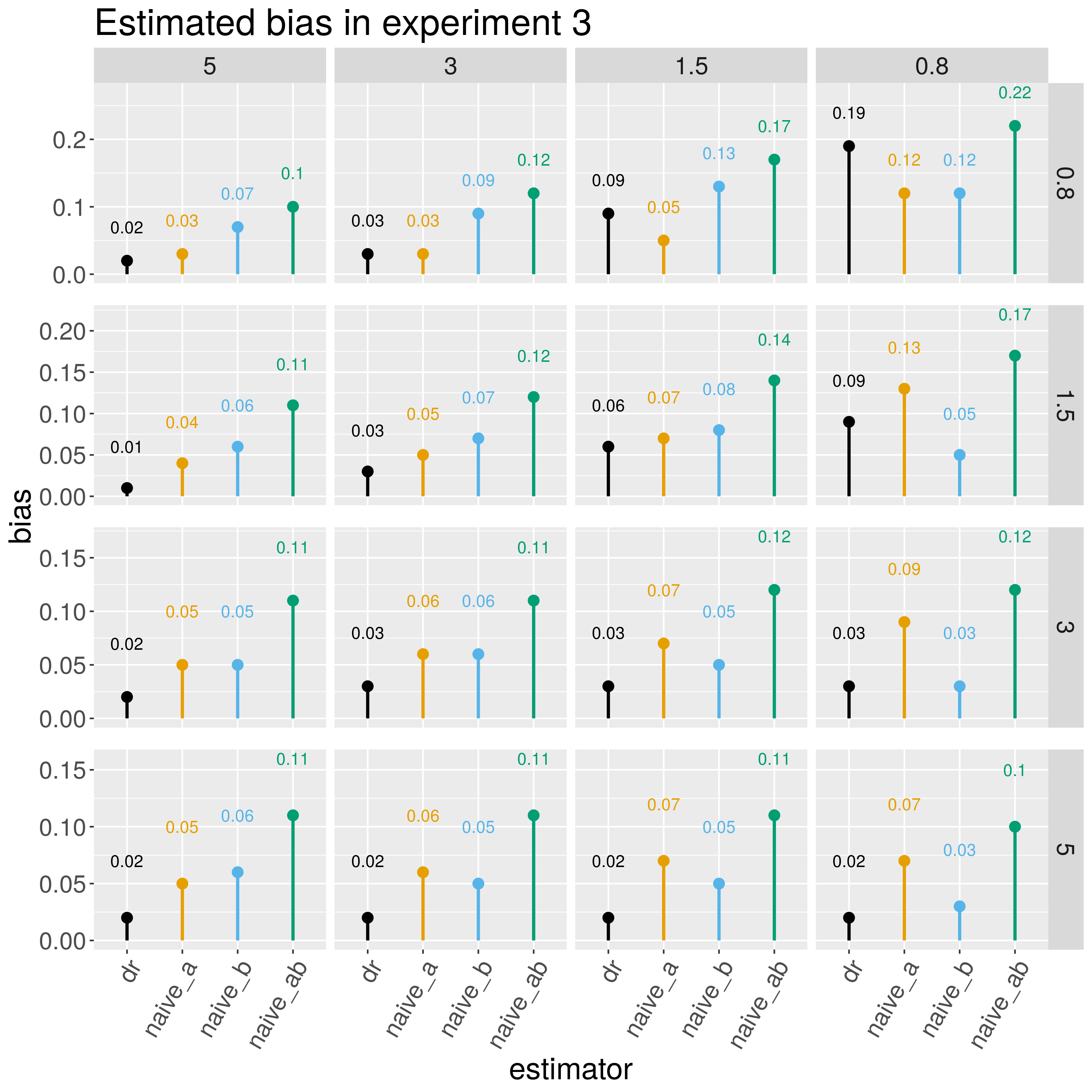}
\caption{Estimated bias in experiment 3. Rows correspond to different values
of $\protect\alpha_{a}$ and columns to different values of $\protect\alpha%
_{b}$.}
\label{fig:exp3_bias}
\end{figure}

\begin{figure}[tbp]
\centering
\includegraphics[scale=0.7]{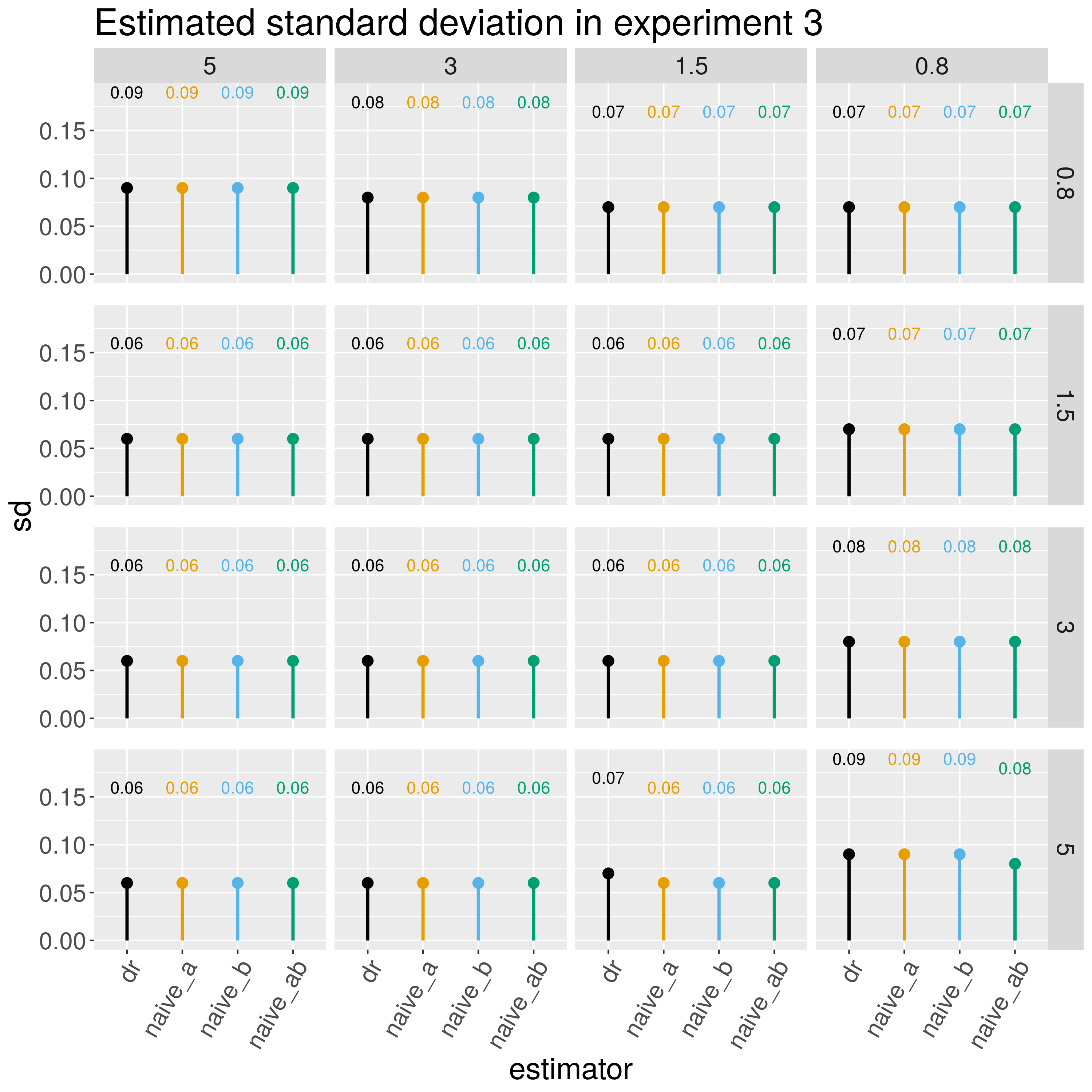}
\caption{Estimated standard errors in experiment 3. Rows correspond to
different values of $\protect\alpha_{a}$ and columns to different values of $%
\protect\alpha_{b}$.}
\label{fig:exp3_sd}
\end{figure}

\begin{figure}[tbp]
\centering
\includegraphics[scale=0.7]{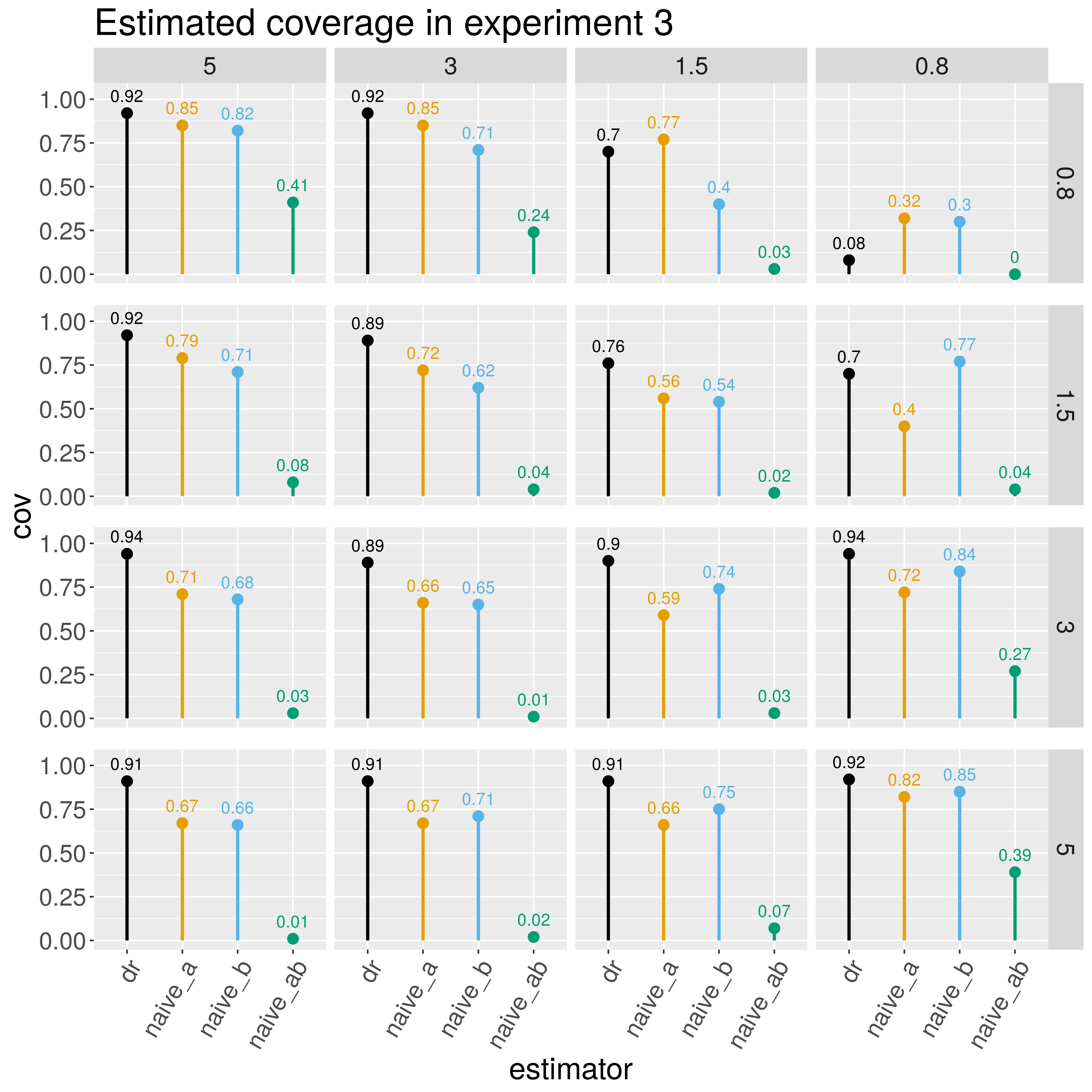}
\caption{Estimated coverage at the 95\% nominal level in experiment 3. Rows
correspond to different values of $\protect\alpha_{a}$ and columns to
different values of $\protect\alpha_{b}$.}
\label{fig:exp3_cov}
\end{figure}

\FloatBarrier

The goal of the fourth experiment was to study the performance of the
estimator in Algorithm \ref{Algo both GALS}, when the links of both working
models are the exponential function, and under a scenario in which the model
for $a(Z)$ is correctly specified but the model for $b(Z)$ is incorrectly
specified. We used $c_{a}=(\log (3)\theta _{a}^{T}\Sigma \theta
_{a})^{-1/2}, $ $c_{b}=(\theta _{b}^{T}\Sigma \theta _{b})^{-1/2}$ and we
generated $Y$ and $D$ independent given $Z,$ with 
\begin{align*}
& Y|Z\sim Pois(\nu _{a}\left( Z\right) )\text{ with }\nu _{a}\left( Z\right)
=\exp (c_{a}\langle \theta _{a},Z\rangle ) \\
& D|Z\sim Pois(\nu _{b}\left( Z\right) )\text{ with }\nu _{b}\left( Z\right)
=(c_{b}\langle \theta _{b},Z\rangle )^{2}.
\end{align*}

We chose this specific generative process for the law of $D|Z$ so as to
roughly satisfy the regularity conditions on the $\ell_{1}$ regularized
estimator of $b\left( Z\right) $ of step 2 under an incorrect model. This is
because results in \cite{glm-misp} imply that for large $n,$ this estimator
should be close to $\exp \langle \theta _{b}^{L},Z\rangle $ where $\theta
_{b}^{L}=Kc_{b}\theta _{b}$ for some constant $K.$ Thus, the coefficients of 
$\theta _{b}^{L}$ exhibit the same geometric decay as $\theta _{b}.$

\begin{figure}[tbp]
\centering
\includegraphics[scale=0.7]{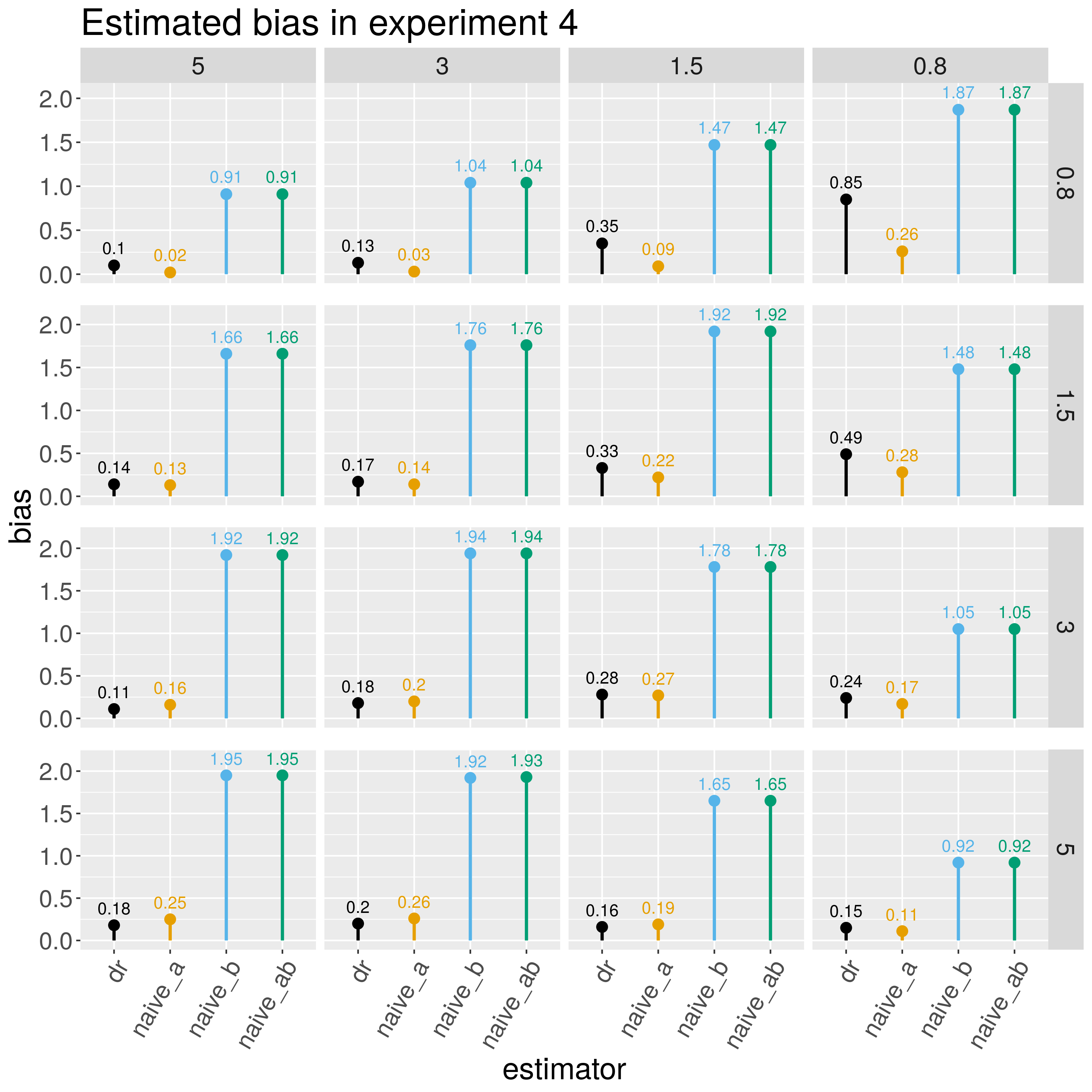}
\caption{Estimated bias in experiment 4. Rows correspond to different values
of $\protect\alpha_{a}$ and columns to different values of $\protect\alpha%
_{b}$.}
\label{fig:exp4_bias}
\end{figure}

\begin{figure}[tbp]
\centering
\includegraphics[scale=0.7]{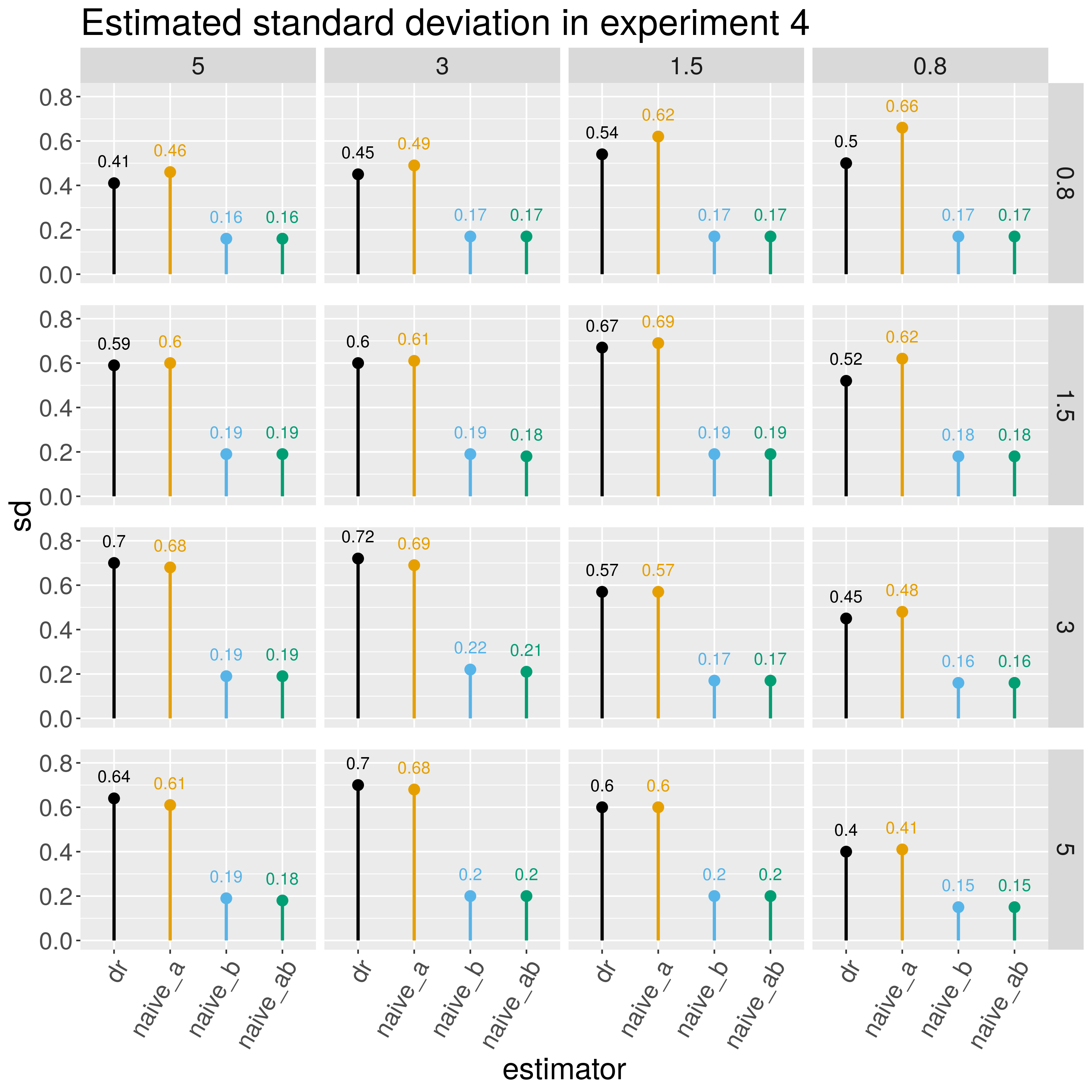}
\caption{Estimated standard errors in experiment 4. Rows correspond to
different values of $\protect\alpha_{a}$ and columns to different values of $%
\protect\alpha_{b}$.}
\label{fig:exp4_sd}
\end{figure}

\begin{figure}[tbp]
\centering
\includegraphics[scale=0.7]{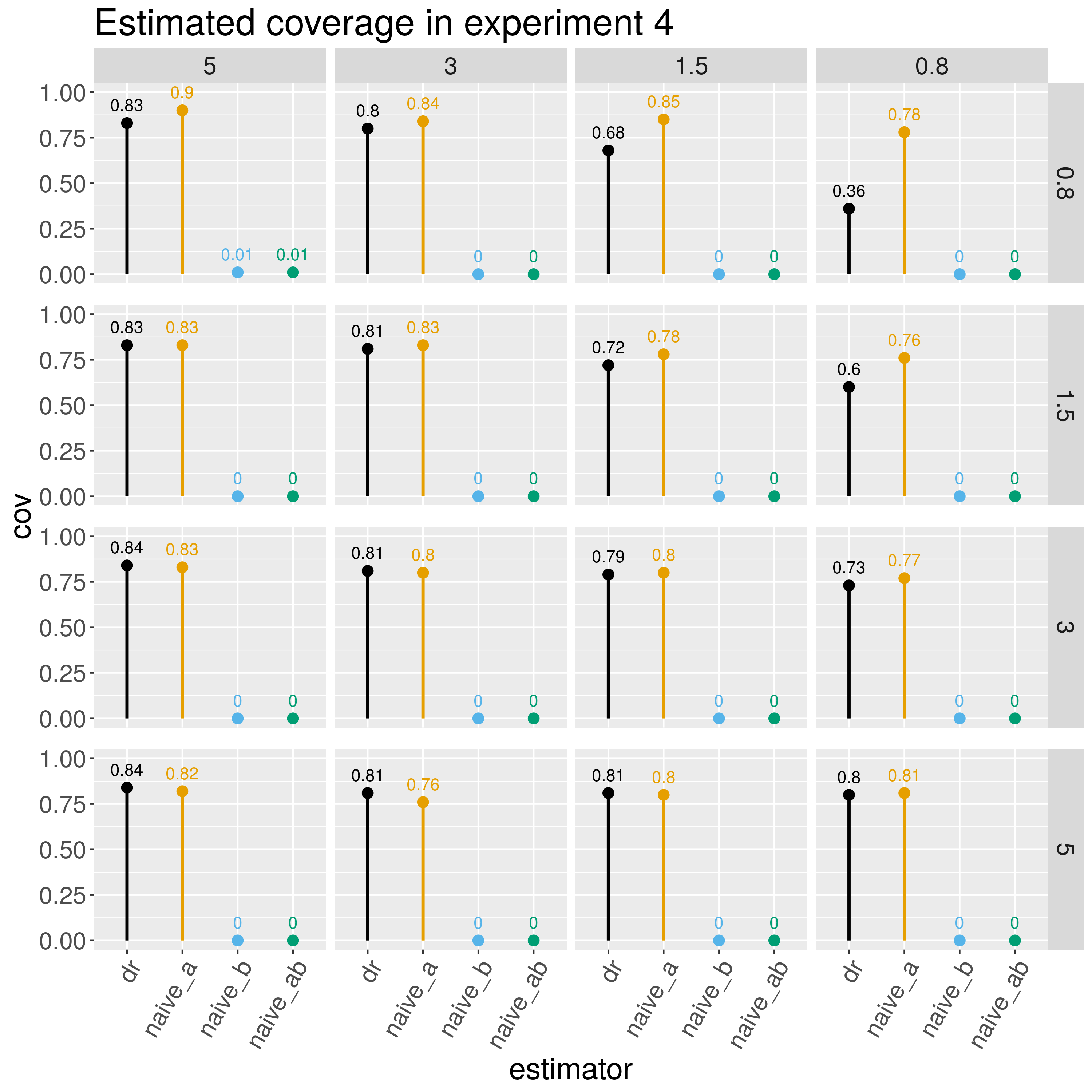}
\caption{Estimated coverage at the 95\% nominal level in experiment 4. Rows
correspond to different values of $\protect\alpha_{a}$ and columns to
different values of $\protect\alpha_{b}$.}
\label{fig:exp4_cov}
\end{figure}

\FloatBarrier

Figures \ref{fig:exp1_bias}-\ref{fig:exp1_cov} and \ref{fig:exp3_bias}-\ref%
{fig:exp3_cov} show that in experiments 1 and 3, i.e. in the cases in which
the working models for $a$ and $b$ are correctly specified, the bias of the
doubly robust estimator tends to diminish with increasing values of $\alpha
_{a}$ and $\alpha _{b}$. Moreover, the bias of the doubly robust estimator
is, for nearly all choices of $\left( \alpha _{a},\alpha _{b}\right) $ is
small, relative to its standard error, and much lower than the bias of the
naive estimators. The exceptions are the cases 
\begin{equation}
\left( \alpha _{a},\alpha _{b}\right) \in \left\{ \left( 0.8,0.8\right)
,\left( 0.8,1.5\right) ,\left( 1.5,0.8\right) \right\}
\label{eq:alpha_cases}
\end{equation}%
and, in experiment 3, also the case $\left( \alpha _{a},\alpha _{b}\right)
=\left( 1.5,1.5\right) .$ In these cases the bias of the DR estimator is of
the same magnitude or larger than that of its standard error. Recall that
for $\left( \alpha _{a},\alpha _{b}\right) =\left( 0.8,0.8\right) $ the key
condition $\alpha _{a}^{-1}+\alpha _{b}^{-1}<2$ does not hold and therefore
the convergence of our estimators at rate $\sqrt{n}$ is not supported by
theory. On the other hand, for the cases $\left( \alpha _{a},\alpha
_{b}\right) =\left( 1.5,0.8\right) $ and $\left( \alpha _{a},\alpha
_{b}\right) =\left( 0.8,1.5\right) ,\alpha _{a}^{-1}+\alpha _{b}^{-1}=1.92,$
and this might reflect the fact that for this borderline case the asymptotic
normal approximation is poor. A similar phenomenon might be operating in the
case $\left( \alpha _{a},\alpha _{b}\right) =\left( 1.5,1.5\right) $ for
experiment 3. The coverage probability of the Wald confidence intervals
centered at the DR estimators is very close to the nominal 95\%, once again,
for all choices of $\left( \alpha _{a},\alpha _{b}\right) ,$ except, as
expected, in the aforementioned cases. In contrast, the coverage of the Wald
confidence intervals centered at the naive estimators are, for most cases,
very poor.

Turn now to experiment 2, i.e. when both working models use linear links,
the working model for $a$ is correct but the working model for $b$ is
incorrect. Figure \ref{fig:exp2_bias} shows that in that experiment, the
bias of the doubly robust estimator tends to diminish with increasing values
of $\alpha _{a}$ and $\alpha _{b}.$ We note that Theorem \ref%
{theo:model_DR_Nonlin} does not guarantee asymptotic normality of the DR
estimator for the case $\alpha _{a}=0.8$ because the key condition of ultra
sparsity for the true and correctly modeled function $a$ does not hold. So,
the low biases observed for the row corresponding to $\alpha _{a}=0.8\,\ $%
and $\alpha _{b}>0.8$ are not supported by our theoretical results. Also, in
experiment 2, Figure \ref{fig:exp2_sd} shows that the bias of the DR
estimator is much smaller than its standard error except for the cases $%
\left( \ref{eq:alpha_cases}\right) $ and the case $\left( \alpha _{a},\alpha
_{b}\right) =\left( 1.5,1.5\right) .$ For the cases $\left( \alpha
_{a},\alpha _{b}\right) =\left( 1.5,0.8\right) $ and $\left( \alpha
_{a},\alpha _{b}\right) =\left( 1.5,1.5\right) $ the poor behaviour of the
DR estimator might reflect the fact that the asymptotic theory might be a
poor approximation to the finite sample behaviour. The coverages of the
confidence intervals centered at the DR estimator are close to the nominal
95\% value except for the aforementioned cases.

Finally turn to experiment 4, i.e. when both working models use exponential
links, the working model for $a$ is correct but the working model for $b$ is
incorrect. Unlike experiment 2, the DR estimator exhibits a somewhat poor
behavior, even for the cases in which $\left( \alpha _{a},\alpha _{b}\right) 
$ satisfy $\alpha _{a}^{-1}+\alpha _{b}^{-1}<2$ and $\alpha _{a}>0.8,$ i.e.
cases in which the assumptions of Theorem \ref{theo:model_DR_Nonlin} are
satisfied. For instance, when $\left( \alpha _{a},\alpha _{b}\right) =\left(
3,1.5\right) $ the bias of the DR estimator is 0.28 which is half its Monte
Carlo standard error of 0.57. We suspect that this poor behaviour is, once
again, due to the poor approximation of the asymptotic distribution to its
finite sample counterpart. This suspicion is supported by the behaviour of
the DR estimator when the number of covariates is $p=100$ instead of 200 and 
$n$ stays equal to 1000. Figures \ref{fig:exp5_bias}-\ref{fig:exp5_cov}
report the results of this latter experiment, again based on 500
replications.

\begin{figure}[tbp]
\centering
\includegraphics[scale=0.7]{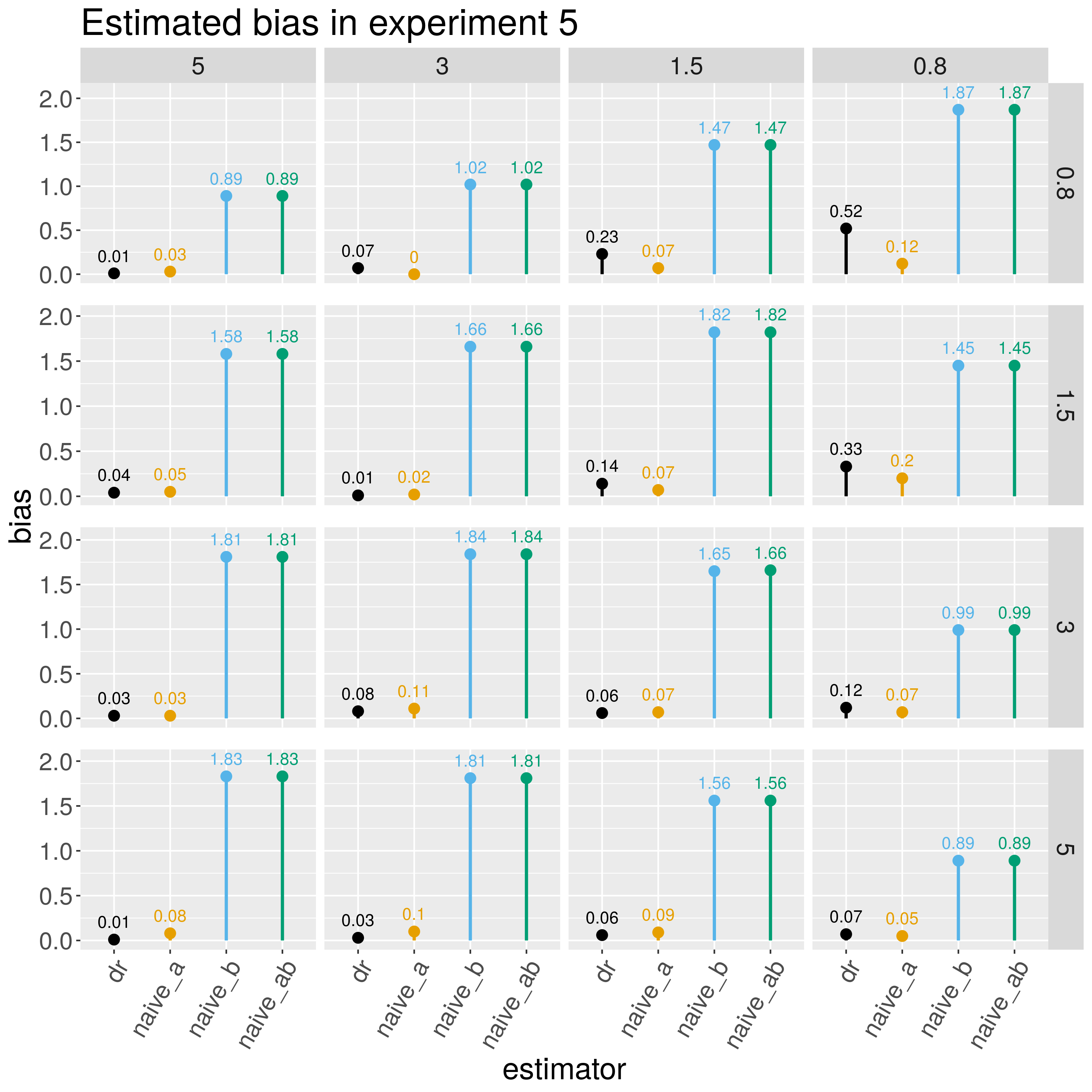}
\caption{Estimated bias in experiment 5. Rows correspond to different values
of $\protect\alpha_{a}$ and columns to different values of $\protect\alpha%
_{b}$.}
\label{fig:exp5_bias}
\end{figure}

\begin{figure}[tbp]
\centering
\includegraphics[scale=0.7]{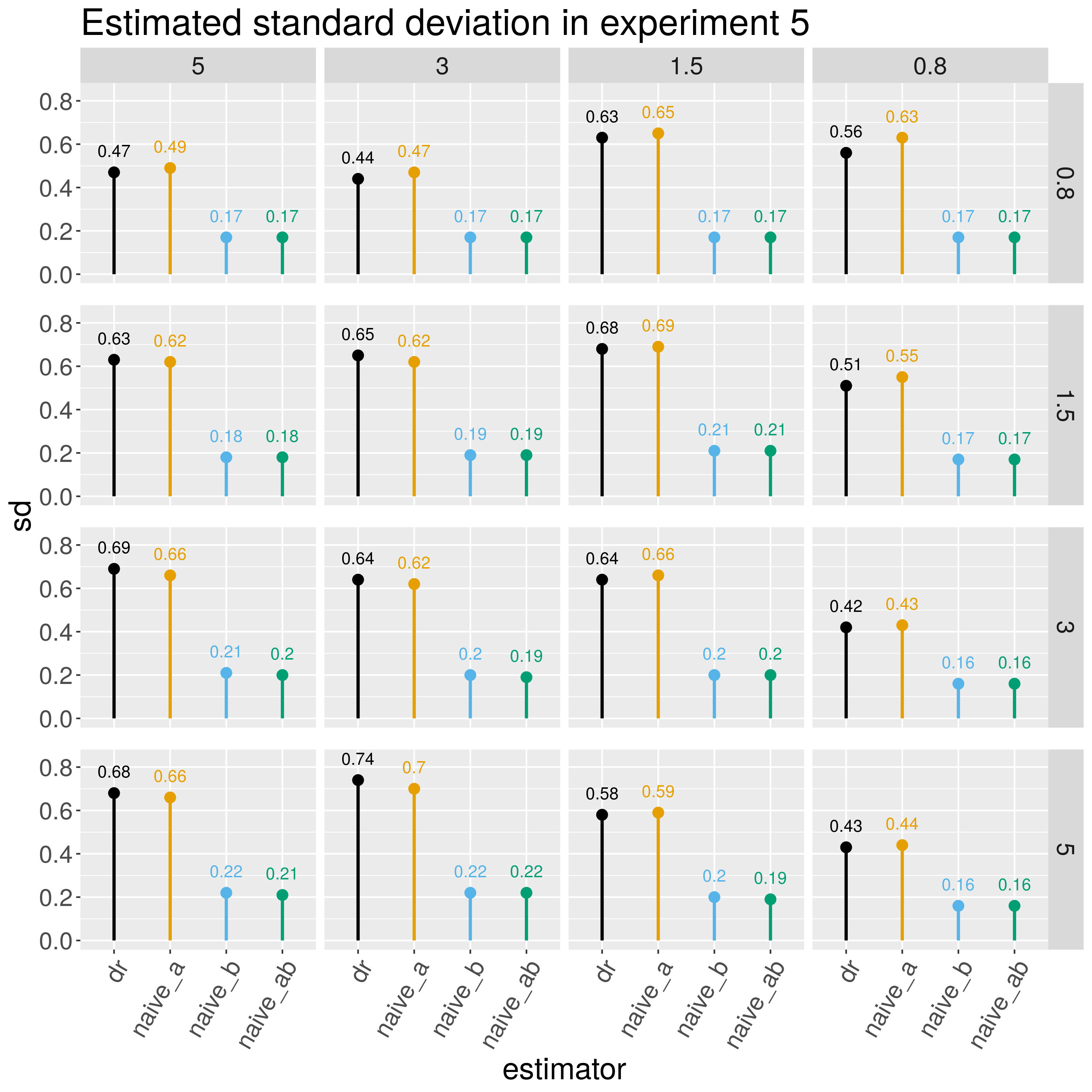}
\caption{Estimated standard errors in experiment 5. Rows correspond to
different values of $\protect\alpha_{a}$ and columns to different values of $%
\protect\alpha_{b}$.}
\label{fig:exp5_sd}
\end{figure}

\begin{figure}[tbp]
\centering
\includegraphics[scale=0.7]{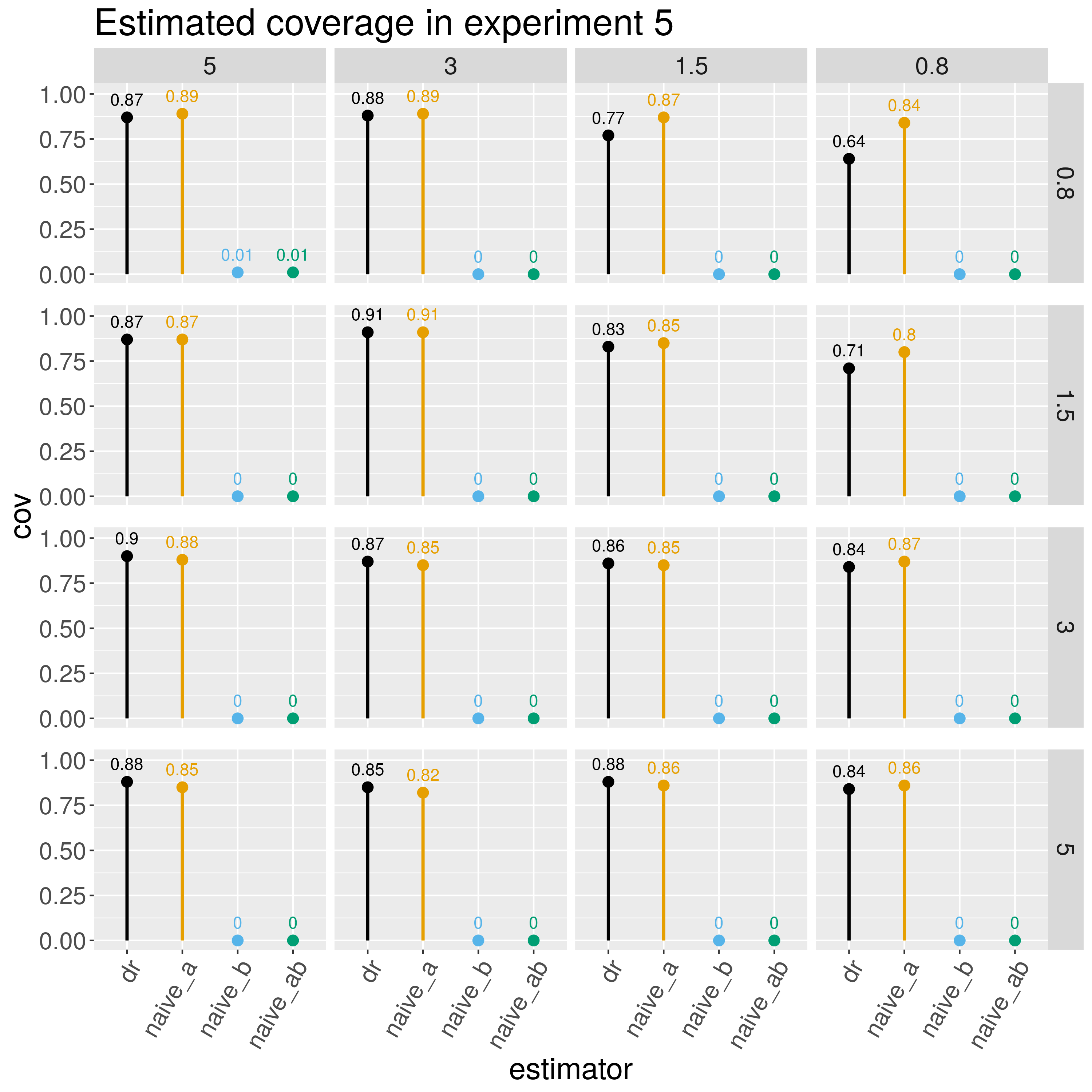}
\caption{Estimated coverage at the 95\% nominal level in experiment 5. Rows
correspond to different values of $\protect\alpha_{a}$ and columns to
different values of $\protect\alpha_{b}$.}
\label{fig:exp5_cov}
\end{figure}

\FloatBarrier

Observe that for this experiment the bias of the DR estimator is orders of
magnitude smaller than its standard error. Another concern is the poor
coverage probability of the nominal 95\% Wald confidence intervals centered
at the DR estimator, this problem being more predominant when $p=200$ but
also present when $p=100.$ When $p=200$, part of this poor coverage can be
attributed in part to the bias of the DR estimator, and also in part to the
fact that the standard error estimator underestimates the true standard
error. For instance, for a bias that is equal to half the standard error,
one would expect the nominal 95\% Wald confidence interval to cover with
probability roughly equal to 0.92. Yet, when $\left( \alpha _{a},\alpha
_{b}\right) =\left( 3,1.5\right) $, the bias is half the standard error but
the coverage probability is equal to 0.79. A possible explanation for this
excess undercoverage is the fact that the mean and median of the estimated
standard errors over the 500 replications are equal to 0.51 and 0.45 whereas
the Monte Carlo standard error is 0.57. Curiously, in experiment 4, the bias
of the "naive\_a" estimator $\mathbb{P}_{n}\left[ D\widehat{a}_{N}\left(
Z\right) \right] $ is roughly of the same magnitude as the bias of the DR
estimator, or even smaller for the scenarios $\left( \ref{eq:alpha_cases}%
\right) $ and $\left( \alpha _{a},\alpha _{b}\right) =\left( 1.5,1.5\right)
. $ This peculiar behavior of the "naive\_a"~estimator was not exhibited in
the experiments in which both working models were correctly specified
(experiments 1 and 3), so we suspect that it might be due to the specific
data generating process that we used in experiment 4. Further investigation
of this peculiarity is warranted.

\bibliographystyle{apalike}
\bibliography{non-linear}

\end{document}